\newif\ifcoloronly
\newif\ifbwonly
\newif\ifbwcompromise
\ifcoloronly\includeversion{coloronly}\else\excludeversion{coloronly}\fi
\ifbwonly\includeversion{bwonly}\else\excludeversion{bwonly}\fi
\ifbwcompromise\includeversion{bwcompromise}\else\excludeversion{bwcompromise}\fi
\definecolor{darkgreen}{rgb}{0,0.45,0}
\newif\ifhyperref
\let\your@state\state
\def\state#1{\my@state#1}
\def\my@state#1.{\gdef\currthmtype{#1}\your@state{#1.}}
\let\your@staterm\staterm
\def\staterm#1{\my@staterm#1}
\def\my@staterm#1.{\gdef\currthmtype{#1}\your@staterm{#1.}}
\def\currthmtype{}
\def\autoref#1{\ref*{label@name@#1}~\ref{#1}}
\def\autoref#1{\ref{label@name@#1}~\ref{#1}}
  \let\old@label\label%
  \def\label#1{%
    {\let\your@currentlabel\@currentlabel%
      \edef\@currentlabel{\currthmtype}%
      \old@label{label@name@#1}}%
    \old@label{#1}}
\newtheorem{thm}{Theorem}[section]
\newtheorem{cor}{Corollary}
\newtheorem{prop}{Proposition}
\newtheorem{lem}{Lemma}
\newcommand{\sB}{\ensuremath{\mathscr{B}}}
\newcommand{\sC}{\ensuremath{\mathscr{C}}}
\newcommand{\bbZ}{\ensuremath{\mathbb{Z}}}
\newcommand{\bC}{\ensuremath{\mathbf{C}}}
\newcommand{\bS}{\ensuremath{\mathbf{S}}}
\newcommand{\bT}{\ensuremath{\mathbf{T}}}
\newcommand{\fa}{\ensuremath{\mathfrak{a}}}
\newcommand{\fl}{\ensuremath{\mathfrak{l}}}
\newcommand{\fr}{\ensuremath{\mathfrak{r}}}
\newcommand{\ten}{\ensuremath{\otimes}}
\newcommand{\Id}{\ensuremath{\operatorname{Id}}}
\newcommand{\id}{\ensuremath{\operatorname{id}}}
\newcommand{\Ho}{\ensuremath{\operatorname{Ho}}}
\newcommand{\op}{\ensuremath{^{\mathit{op}}}}
\newcommand{\iso}{\cong}
\newcommand{\eqv}{\simeq}
\newcommand{\too}[1][]{\ensuremath{\overset{#1}{\longrightarrow}}}
\newcommand{\ot}{\ensuremath{\leftarrow}}
\newcommand{\maps}{\colon}
\newcommand{\rdual}[1]{{{#1}^{\bigstar}}}
\newcommand{\tr}{\ensuremath{\operatorname{tr}}}
\def\calBi#1#2{\ensuremath{\mathscr{#1}\!/\!_{\mathbf{#2}}}}
\newcommand{\ep}{\ensuremath{\varepsilon}}
\newcommand{\Set}{\ensuremath{\mathbf{Set}}}
\newcommand{\Ab}{\ensuremath{\mathbf{Ab}}}
\newcommand{\Top}{\ensuremath{\mathbf{Top}}}
\newcommand{\Sp}{\ensuremath{\mathbf{Sp}}}
\newcommand{\bCh}[1]{\ensuremath{\mathbf{Ch}_{#1}}}
\newcommand{\bEx}[1]{\ensuremath{\mathbf{Sp}_{#1}}}
\newcommand{\bMod}[1]{\ensuremath{\mathbf{Mod}}_{#1}}
\newcommand{\Cat}{\ensuremath{\mathcal{C}\mathit{at}}}
\newcommand{\SymMonCat}{\ensuremath{\mathcal{S}\mathit{ymMonCat}}}
\let\xto\xrightarrow
\def\slashedarrowfill@#1#2#3#4#5{%
  $\m@th\thickmuskip0mu\medmuskip\thickmuskip\thinmuskip\thickmuskip
   \relax#5#1\mkern-7mu%
   \cleaders\hbox{$#5\mkern-2mu#2\mkern-2mu$}\hfill
   \mathclap{#3}\mathclap{#2}%
   \cleaders\hbox{$#5\mkern-2mu#2\mkern-2mu$}\hfill
   \mkern-7mu#4$%
}
\def\rightslashedarrowfill@{%
  \slashedarrowfill@\relbar\relbar\mapstochar\rightarrow}
\newcommand\xslashedrightarrow[2][]{%
  \ext@arrow 0055{\rightslashedarrowfill@}{#1}{#2}}
\def\hto{\xslashedrightarrow{}}
\newcommand{\sh}[1]{{\ensuremath{\hspace{1mm}\makebox[-1mm]{$\langle$}%
      \makebox[0mm]{$\langle$}\hspace{1mm}{#1}\makebox[1mm]{$\rangle$}%
      \makebox[0mm]{$\rangle$}}}}
\newcommand{\bigsh}[1]{{\ensuremath{\hspace{1mm}%
      \makebox[-1mm]{$\big\langle$}\makebox[0mm]{$\big\langle$}%
      \hspace{1mm}{#1}\makebox[1mm]{$\big\rangle$}%
      \makebox[0mm]{$\big\rangle$}}}}
\let\c@equation\c@subsection
\numberwithin{equation}{section}
\DeclareRobustCommand\widecheck[1]{{\mathpalette\@widecheck{#1}}}
\def\@widecheck#1#2{%
  \setbox\z@\hbox{\m@th$#1#2$}%
  \setbox\tw@\hbox{\m@th$#1%
    \widehat{%
      \vrule\@width\z@\@height\ht\z@
      \vrule\@height\z@\@width\wd\z@}$}%
  \dp\tw@-\ht\z@
  \@tempdima\ht\z@ \advance\@tempdima2\ht\tw@ \divide\@tempdima\thr@@
  \setbox\tw@\hbox{%
    \raise\@tempdima\hbox{\scalebox{1}[-1]{\lower\@tempdima\box
        \tw@}}}%
  {\ooalign{\box\tw@ \cr \box\z@}}}
\tikzset{ed/.style={auto,inner sep=0pt,font=\scriptsize}} 
\tikzset{>=stealth'}
\tikzset{vert/.style={draw,circle,inner sep=1pt,fill=white}}
\tikzset{vert2/.style={draw,circle,inner sep=2pt,fill=white}}
\colorlet{myblue}{blue!40!white}
\colorlet{myred}{red!35!white}
\colorlet{mygreen}{green!30!white}
\colorlet{myyellow}{yellow!10!white}
\tikzset{bluefill/.style={fill=myblue}}
\tikzset{redfill/.style={fill=myred}}
\tikzset{greenfill/.style={fill=mygreen}}
\tikzset{yellowfill/.style={fill=myyellow}}
\tikzset{dotsF/.style={pattern=north east lines,pattern color=black!60!white}}
\tikzset{dotsG/.style={pattern=north west lines,pattern color=black!60!white}}
\colorlet{mydarkred}{red!80!black}
\colorlet{mydarkblue}{blue!70!black}
\gdef\setinnerouter{\tikzset{inner/.style={mydarkblue,densely dotted}}
  \tikzset{outer/.style={mydarkred,densely dashed}}
  \tikzset{outervert/.style={mydarkred,solid,vert}}}
\gdef\innertext{blue\xspace}
\gdef\outertext{red\xspace}
\gdef\setinnerouter{\tikzset{inner/.style={densely dotted}}
  \tikzset{outer/.style={densely dashed}}
  \tikzset{outervert/.style={solid,vert}}}
\gdef\innertext{\relax}
\gdef\outertext{\relax}
\gdef\setinnerouter{\tikzset{inner/.style={blue}}
  \tikzset{outer/.style={red}}
  \tikzset{outervert/.style={red,vert}}}
\gdef\innertext{blue\xspace}
\gdef\outertext{red\xspace}
\tikzset{innervert/.style={inner,circle,fill,inner sep=1pt}}
\tikzset{topvert/.style={innervert}}
\tikzset{innerunit/.style={inner,regular polygon,regular polygon sides=3,%
    shape border rotate=180,fill,inner sep=1pt}}
\tikzset{innercounit/.style={inner,regular polygon,regular polygon sides=3,%
    fill,inner sep=1pt}}
\tikzset{inneriso/.style={inner,circle,fill,inner sep=1pt}}
\tikzset{innerbc/.style={inner,diamond,fill,inner sep=1.3pt}}
\tikzset{transf/.style={decorate,decoration={zigzag,amplitude=1pt,segment length=3pt}}}
\def\bgcylinder#1#2#3#4#5#6{
  \def\cylempty{}\def\cylfrontcolor{#5}\def\cylbackcolor{#6}
  \begin{pgfonlayer}{background}
    \ifx\cylfrontcolor\cylempty\draw\else\fill[fill=my#5]\fi
    (#1) coordinate (dl)
    -- ++(0,#2) node[coordinate] (ul) {} 
    arc (-180:0:#3 and #4) coordinate (ur)
    -- ++(0,-#2) node[coordinate] (dr) {}
    arc (0:-180:#3 and #4);
    \ifx\cylbackcolor\cylempty\draw\else\fill[fill=my#6!80!black]\fi
    ($(ul)!.5!(ur)$) node[coordinate] (top) {} ellipse (#3 and #4);
    \path (dl) arc (-180:-90:#3 and #4) node[coordinate] (bot) {};
    \clip (dl) -- (ul) arc (-180:0:#3 and #4) -- (dr) arc (0:-180:#3 and #4);
  \end{pgfonlayer}
  \begin{pgfonlayer}{foreground}
    \clip (dl) -- (ul) arc (-180:0:#3 and #4) -- (dr) arc (0:-180:#3 and #4);
  \end{pgfonlayer}
  \clip (dl) -- (ul) arc (-180:0:#3 and #4) -- (dr) arc (0:-180:#3 and #4);
  \path (ul) ++(-0.1,0.1) coordinate (ul');
  \path (ur) ++(0.1,0.1) coordinate (ur');
  \path (dl) ++(-0.1,-0.1) coordinate (dl');
  \path (dr) ++(0.1,-0.1) coordinate (dr');
  \path (top) ++(0,0.1) coordinate (top');
  \path (bot) ++(-0,-0.1) coordinate (bot');
}
\tikzset{fiber/.style={draw,rectangle,inner sep=2pt,font=\scriptsize}}
\tikzset{pb/.style={draw,regular polygon,regular polygon sides=3,inner sep=0pt,shape border rotate=180,font=\scriptsize}}
\tikzset{pbe/.style={draw,regular polygon,regular polygon sides=3,inner sep=2pt,shape border rotate=180}} 
\tikzset{pbsm/.style={draw,regular polygon,regular polygon sides=3,inner sep=-.5pt,shape border rotate=180,font=\scriptsize}} 
\tikzset{pbflat/.style={draw,regular polygon,regular polygon sides=3,shape border rotate=270,inner sep=1pt,font=\scriptsize}}
\tikzset{pbeflat/.style={draw,regular polygon,regular polygon sides=3,shape border rotate=270,inner sep=2pt}}
\tikzset{pbsmflat/.style={draw,regular polygon,regular polygon sides=3,shape border rotate=270,inner sep=0pt,font=\scriptsize}}
\tikzset{pf/.style={draw,regular polygon,regular polygon sides=3,inner sep=0pt,font=\scriptsize}}
\tikzset{pfe/.style={draw,regular polygon,regular polygon sides=3,inner sep=2pt}} 
\tikzset{pfsm/.style={draw,regular polygon,regular polygon sides=3,inner sep=-.5pt,font=\scriptsize}} 
\tikzset{pfflat/.style={draw,regular polygon,regular polygon sides=3,shape border rotate=90,inner sep=1pt,font=\scriptsize}}
\tikzset{pfeflat/.style={draw,regular polygon,regular polygon sides=3,shape border rotate=90,inner sep=2pt}}
\tikzset{pfsmflat/.style={draw,regular polygon,regular polygon sides=3,shape border rotate=90,inner sep=0pt,font=\scriptsize}}
\tikzset{backwards/.style={z={(-.3,-.7)},x={(-1,0)},y={(0,-1.5)}}}
\tikzset{strings/.style={scale=.75, z={(0,-.4)},x={(-1,0)},y={(0,-2)}}}
\title{Duality and traces for indexed monoidal categories}
\author{Kate Ponto and Michael Shulman}
\thanks{Both authors were supported by National Science Foundation
  postdoctoral fellowships during the writing of this paper.

  The final published version of this article, which is essentially
  identical to this one, is available from the journal's web site at
  \url{http://www.tac.mta.ca/tac}.

\ifcoloronly
    This version of this paper contains diagrams that use color
    to distinguish different features, and is intended for reading on
    a computer screen or printing on a color printer.  An alternative
    version which uses dots and dashes for this purpose, and intended
    for printing on a black and white printer, is available from the
    journal web site and from the authors' web sites.
\fi%
\ifbwonly
    This version of this paper contains diagrams that use dots
    and dashes to distinguish different features, and is intended for
    printing on a black and white printer.  An alternative version
    which uses color for this purpose, and intended for reading on a
    computer screen or printing on a color printer, is available from
    the journal web site and from the authors' web sites.
\fi%
\ifbwcompromise
    This version of this paper contains diagrams that use both
    color and dashes to distinguish different features.  Alternative
    versions which use only color (optimal for reading on a computer
    screen or printing on a color printer) or only dashes (optimal for
    printing on a black and white printer) are available from the
    authors' web sites.
\fi%
}
\date{Version of \today}
\begin{document}
\maketitle

\begin{abstract}  
  By the Lefschetz fixed point theorem, if an endomorphism of a
  topological space is fixed-point-free, then its Lefschetz number
  vanishes.  This necessary condition is not usually sufficient,
  however; for that we need a refinement of the Lefschetz number called
  the Reidemeister trace.  Abstractly, the Lefschetz number is a trace
  in a symmetric monoidal category, while the Reidemeister trace is a
  trace in a bicategory; in this paper we relate these contexts using
  indexed symmetric monoidal categories.

  In particular, we will show that for any symmetric monoidal category
  with an associated indexed symmetric monoidal category, there is an
  associated bicategory which produces refinements of trace analogous to
  the Reidemeister trace.  This bicategory also produces a new notion of
  trace for parametrized spaces with dualizable fibers, which refines
  the obvious ``fiberwise'' traces by incorporating the action of the
  fundamental group of the base space.  We also advance the basic theory
  of indexed monoidal categories, including introducing a string diagram
  calculus which makes calculations much more tractable.  This abstract
  framework lays the foundation for generalizations of these ideas to
  other contexts.
\end{abstract}

\tableofcontents

\section{Introduction}
\label{sec:introduction}

It is well-known that in any symmetric monoidal category, there are
useful intrinsic notions of \emph{duality} and \emph{trace}; see, for
example,~\cite{dp:duality,kl:cpt,jsv:traced-moncat}.  One of the
original motivating examples is that traces in the stable homotopy
category compute fixed-point indices, an observation which leads
directly to the Lefschetz fixed point theorem.  The search for
generalizations and converses of the Lefschetz fixed point theorem has
led to various generalizations and refinements of the fixed-point index,
such as the Reidemeister trace~\cite{b:lefschetz,j:nielsen}.  In this
paper we present an abstract framework for constructing refinements of
traces in symmetric monoidal categories, which produces the Reidemeister
trace as a particular example.  Other examples, which we will not
discuss here, include equivariant~\cite{equiv},
relative~\cite{kate:rel}, and fiberwise~\cite{kate:traces}
generalizations of the Lefschetz number and Reidemeister trace.

These refinements arise from generalizations of duality and trace to
\emph{indexed} symmetric monoidal categories.  An indexed symmetric
monoidal category is a family of symmetric monoidal categories $\sC^A$,
one for each object $A$ of a cartesian monoidal base category \bS,
equipped with base change functors induced by the morphisms of \bS.  The
simplest case is when \bS\ is the category of sets and $\sC^A$ the
category of $A$-indexed families of objects of some fixed symmetric
monoidal category, such as abelian groups or chain complexes.  We can
also allow \bS\ to consist of groupoids or higher groupoids.  The
prototypical example in homotopy theory, which will give rise to the
Reidemeister trace, is when \bS\ is the category of topological spaces
and $\sC^A$ is the homotopy category of parametrized spectra over $A$,
as in~\cite{maysig:pht}.  More generally, \bS\ could be any category of
spaces, or of schemes, or a topos, and $\sC^A$ a category of sheaves of
abelian groups, modules, spaces, or parametrized spectra on $A$, or a
derived category thereof.

In any such context we can, of course, consider duality and trace in the
individual symmetric monoidal categories $\sC^A$.
Following~\cite{maysig:pht} we call these notions \emph{fiberwise},
since in examples they are usually equivalent to duality and trace
acting on each fiber or stalk separately.  Thus, if $M$ is fiberwise
dualizable, any endomorphism $f\colon M\rightarrow M$ has a fiberwise
trace, which is an endomorphism of the unit object $I_A$ of $\sC^A$.  In
examples, this trace essentially calculates the ordinary traces of the
induced endomorphisms $f_x \colon M_x \to M_x$ of the fibers $M_x$, for
all $x\in A$.  Our first theorem gives a refinement of this fiberwise
symmetric monoidal trace.

\begin{thm}\label{thm:intro-fiberwise}
  If $M\in\sC^A$ is fiberwise dualizable and $f\colon M\rightarrow M$ is
  any endomorphism, then the symmetric monoidal trace of $f$ factors as
  a composite
  \[ I_A \too (\pi_A)^* \sh{A} \xto{\tr(\widehat{f})} I_A. \]
\end{thm}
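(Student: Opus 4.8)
The plan is to obtain this factorization, once the relevant machinery is available, from the general comparison between symmetric monoidal trace in a fiber $\sC^A$ and bicategorical trace in the bicategory $\sB$ associated to the indexed symmetric monoidal category $\sC$. First one needs at hand the bicategory $\sB$: its objects are those of $\bS$, its hom-categories are $\sB(A,B)=\sC^{A\times B}$, composition is ``pull back, tensor, push forward'', the unit $1$-cell of $A$ is $U_A=(\Delta_A)_! I_A$, and it carries a shadow $\sh{-}$ which on $\sC^{A\times A}$ is $Q\mapsto(\pi_A)_!(\Delta_A)^* Q\in\sC^1$; in particular $\sh{A}=\sh{U_A}=(\pi_A)_!(\Delta_A)^*(\Delta_A)_! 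I_A$, while $\sh{U_1}=I_1$. (Producing $\sB$ and its shadow is where the standing hypotheses on $\sC$---existence of the left adjoints $f_!$, the Beck--Chevalley isomorphisms---are used.)

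Next I would view $M\in\sC^A=\sC^{A\times 1}$ as a $1$-cell $M\colon A\to 1$ of $\sB$ and check that fiberwise dualizability of $M$ makes it right dualizable there, with right dual the fiberwise dual $D_A M\colon 1\to A$. The composite $M\odot D_A M\in\sC^{A\times A}$ works out to $p_1^* M\otimes p_2^* D_A M$, with $p_1,p_2\colon A\times A\to A$ the projections, and $D_A M\odot M\in\sC^1$ to $(\pi_A)_!(D_A M\otimes M)$; and---this is the crucial dictionary---the unit $\eta\colon U_A\Rightarrow M\odot D_A M$ and counit $\epsilon\colon D_A M\odot M\Rightarrow U_1$ of the dual pair are precisely the $(\Delta_A)_!\dashv(\Delta_A)^*$- and $(\pi_A)_!\dashv(\pi_A)^*$-adjuncts of the fiberwise coevaluation $c\colon I_A\to M\otimes D_A M$ and evaluation $e\colon D_A M\otimes M\to I_A$. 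Then $\widehat{f}$ is the endomorphism $2$-cell of this $1$-cell induced by $f$, and the arrow labelled $\tr(\widehat{f})$ in the statement is $(\pi_A)^*$ of the bicategorical trace of $\widehat{f}$, which is a map $\sh{A}\to I_1$ in $\sC^1$ (using $(\pi_A)^* I_1=I_A$).

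Granting this, unwinding the bicategorical trace presents it as
\[ \sh{A}\xrightarrow{\sh{\eta}}(\pi_A)_!(M\otimes D_A M)\xrightarrow{(\pi_A)_!(f\otimes\id)}(\pi_A)_!(M\otimes D_A M)\xrightarrow{(\pi_A)_! s}(\pi_A)_!(D_A M\otimes M)\xrightarrow{\sh{\epsilon}}I_1, \]
where $s$ is the symmetry of $\sC^A$ and the point needing care is that the shadow's cyclicity isomorphism reduces here to $(\pi_A)_! s$. The unlabelled first map $I_A\to(\pi_A)^*\sh{A}$ is the canonical one: the unit $I_A\to(\Delta_A)^*(\Delta_A)_! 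I_A$ followed by the unit $(\Delta_A)^*(\Delta_A)_! I_A\to(\pi_A)^*(\pi_A)_!(\Delta_A)^*(\Delta_A)_! I_A$. Applying $(\pi_A)^*$ to the display, precomposing with this canonical map, and repeatedly invoking naturality of the unit $\id\Rightarrow(\pi_A)^*(\pi_A)_!$ lets one slide every $(\pi_A)^*(\pi_A)_!$ away; the leftover tail $(\pi_A)^* I_1\to(\pi_A)^*(\pi_A)_!(\pi_A)^* I_1\to(\pi_A)^* I_1$ collapses by a triangle identity, and the adjunction relations between $(\eta,\epsilon)$ and $(c,e)$ reduce what remains to
\[ I_A\xrightarrow{c}M\otimes D_A M\xrightarrow{f\otimes\id}M\otimes D_A M\xrightarrow{s}D_A M\otimes M\xrightarrow{e}I_A, \]
which is the symmetric monoidal trace of $f$.

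The hard part is not this diagram chase, which the string diagram calculus renders routine, but the structural bookkeeping preceding it: constructing $\sB$ together with its shadow, and pinning down the dictionary between the fiberwise duality data $(c,e,s)$ in $\sC^A$ and the bicategorical data $(\eta,\epsilon)$ together with the shadow's cyclicity in $\sB$---the composition formulas, the unit and counit $1$- and $2$-cells, and the cyclicity isomorphism. With that correspondence in place, the stated factorization is forced.
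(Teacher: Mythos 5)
Your plan is sound and is essentially the route the paper takes. You identify the right ingredients in the right order: the bicategory $\calBi CS$ with its shadow, the embedding $M\mapsto\widehat{M}\colon A\hto\star$, the observation that fiberwise dualizability of $M$ in $\sC^A$ corresponds to right dualizability of $\widehat{M}$ with the coevaluation/evaluation $(\eta,\epsilon)$ being the $(\Delta_A)_!\dashv(\Delta_A)^*$- and $(\pi_A)_!\dashv(\pi_A)^*$-adjuncts of the fiberwise $(c,e)$, and then the reduction of the bicategorical trace to the symmetric monoidal one by sliding adjunction units and triangle identities. Your identification of the shadow cyclicity isomorphism with $(\pi_A)_!s$ in this special case is also correct.

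The one structural difference is that the paper does not prove the untwisted statement directly; it establishes the more general Theorem~\ref{thm:fibtrace} about twisted traces with arbitrary $Q$ and $P$, proves part~\ref{item:fibtrace1} first (that the bicategorical trace of a maximally general $g$ and the symmetric monoidal trace of a corresponding $\overline g$ are adjuncts under $(\pi_A)_!\dashv(\pi_A)^*$), then derives parts~\ref{item:fibtrace2} and~\ref{item:fibtrace3} from it, and finally specializes to $Q=I_A$, $P=I_\star$ to get Theorem~\ref{thm:fibtrace4}, the precise form of the statement at hand. Your direct argument is a legitimate shortcut if one only wants the untwisted case, but the paper needs the general version anyway for later comparisons (e.g.\ the transfer examples), so it pays the cost up front. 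Either way, the mathematical content---the dictionary between $(c,e)$ and $(\eta,\epsilon)$, and the unit/triangle manipulations---is the same, and the real work is the string-diagram verification you correctly flag as ``routine but lengthy.''
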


(In fact, we prove a stronger theorem about ``partial traces'' whose
domains and codomains are ``twisted'' by an additional object.)

The first morphism in this factorization is defined purely in terms of
$A$, while the second morphism contains the information about $M$ and
$f$.  Thus, this theorem refines $\tr(f)$ by lifting its domain to a
potentially larger one.  In examples, the refined trace
$\tr(\widehat{f})$ also includes the traces of the composites of the
fiberwise endomorphisms $f_x$ with the action of ``loops'' in $A$ on the
fibers of $M$ (in cases where $A$ is something which contains loops,
like a groupoid or a topological space).

Often, however, we are in a somewhat different situation: we want to
extract trace-like information (such as fixed point invariants) from an
endomorphism $f\colon A \to A$ in some \emph{cartesian} monoidal
category \bS, such as sets, groupoids, or spaces.  Since there are no
nontrivial dualities in a cartesian monoidal category, the standard
approach is to choose a non-cartesian monoidal category \bC\ (such as
abelian groups, chain complexes, or spectra), apply a functor
$\Sigma\colon \bS \to \bC$ (such as the free abelian group or suspension
spectrum), and then consider the symmetric monoidal trace of $\Sigma(f)$
in \bC.  The trace obtained from the free abelian group functor in this
way simply counts the number of fixed points of a set-endofunction,
while that obtained from the suspension spectrum functor is exactly the
classical fixed-point index.

Now, it turns out that in most examples where this is done, there is
actually an indexed symmetric monoidal category over \bS, such that $\bC
= \sC^\star$ is the category indexed by the terminal object of \bS.
Moreover, the functor $\Sigma\colon \bS\to \sC^\star$ is definable in
terms of this structure: $\Sigma(A)$ is the ``pushforward'' to
$\sC^\star$ of the unit object of $\sC^A$.  For example, the free
abelian group functor arises from ``set-indexed families of abelian
groups'', and the suspension spectrum functor arises from spectra
parametrized over spaces.

Our second theorem gives a refinement of the trace of $\Sigma(f)$ in
this situation.  We need a slightly stronger hypothesis, however: rather
than requiring $\Sigma(A)$ to be dualizable in $\sC^\star$ (the
requirement in order for $\Sigma(f)$ to have a trace), we need to assume
that $I_A$ is \emph{totally dualizable} in $\sC^A$.  Total duality is a
new type of duality, not reducible to symmetric monoidal duality, which
can be defined in any good indexed symmetric monoidal category; it was
first noticed in the case of parametrized spectra by Costenoble and
Waner~\cite{cw:eohc}, and studied further in~\cite{maysig:pht}.  Total
dualizability of $I_A$ implies ordinary dualizability of $\Sigma(A)$ in
$\sC^\star$, and in examples it seems to be not much stronger than this
(if at all).

\begin{thm}\label{thm:intro-total}
  If $I_A$ is totally dualizable and $f\colon A\rightarrow A$ is an
  endomorphism in \bS, then the symmetric monoidal trace of $\Sigma(f)$
  factors as a composite
  \[ I_\star \xto{\tr(\widecheck{f})} \sh{A_f} \too I_\star. \]
\end{thm}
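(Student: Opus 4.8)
The plan is to deduce this from the general theory of traces in bicategories with shadows, applied to the bicategory $\Bi(\sC)$ associated to the indexed symmetric monoidal category $\sC$. Recall that $\Bi(\sC)$ will have the objects of $\bS$ as $0$-cells, hom-categories $\Bi(\sC)(A,B)=\sC^{A\times B}$, unit $1$-cell $U_A=(\Delta_A)_!I_A$, and a shadow functor $\sh{-}\colon\Bi(\sC)(B,B)\to\sC^\star$ sending $N\in\sC^{B\times B}$ to $(\pi_B)_!\Delta_B^*N$. Since $\star\times\star\iso\star$ and $\star\times A\iso A$, the object $I_A\in\sC^A$ may be regarded both as a $1$-cell $M\colon A\to\star$ and as a $1$-cell $M^\vee\colon\star\to A$, and a short Beck--Chevalley computation identifies the horizontal composite $M\odot M^\vee\in\Bi(\sC)(\star,\star)=\sC^\star$ with $(\pi_A)_!I_A=\Sigma(A)$. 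The first ingredient is the characterization --- one of the payoffs of the ``total duality'' theory developed in the body of the paper --- that $I_A$ is totally dualizable precisely when $M$ is a dualizable $1$-cell of $\Bi(\sC)$ with dual $M^\vee$; this simultaneously re-derives dualizability of $\Sigma(A)\iso M\odot M^\vee$ in $\sC^\star$, which is what makes $\tr(\Sigma(f))$ defined in the first place.

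Next I would encode $f\colon A\to A$ bicategorically. Its graph $\Gamma_f=\langle\id_A,f\rangle\colon A\to A\times A$ gives a $1$-cell $A_f:=(\Gamma_f)_!I_A\in\sC^{A\times A}=\Bi(\sC)(A,A)$, playing the role of the ``unit $1$-cell twisted by $f$''. Two base-change computations are needed here. First, pulling $\Gamma_f$ back along $\Delta_A$ --- producing, homotopically, the fixed-point object $A^f$ --- and applying the projection formula identifies $\sh{A_f}\iso\Sigma(A^f)$ and produces a canonical augmentation $\sh{A_f}\too I_\star$, namely $\Sigma$ of the unique map $A^f\to\star$. Second, the base-change isomorphism $f^*I_A\iso I_A$ assembles into a twisted endomorphism $\widecheck f\colon U_\star\odot M\to M\odot A_f$ of $M$ with twist $A_f$, and a further base-change computation shows that the endomorphism of $\Sigma(A)\iso M\odot M^\vee$ that $\widecheck f$ induces through the duality data for $M$ is exactly $\Sigma(f)$.

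With these identifications the theorem is an instance of the general machinery. Since $M$ is a dualizable $1$-cell, the trace of a bicategory with shadows applied to the twisted endomorphism $\widecheck f\colon U_\star\odot M\to M\odot A_f$ produces a morphism $\tr(\widecheck f)\colon\sh{U_\star}\to\sh{A_f}$, that is, $\tr(\widecheck f)\colon I_\star\to\sh{A_f}$. It then remains to identify the composite of $\tr(\widecheck f)$ with the augmentation $\sh{A_f}\too I_\star$ as $\tr_{\sC^\star}(\Sigma(f))$; this is exactly the compatibility lemma relating the two notions of trace, which says that for a dualizable $1$-cell $M$ the symmetric monoidal trace in $\sC^\star$ of the endomorphism of $M\odot M^\vee$ induced by a twisted endomorphism of $M$ is computed by the bicategorical trace of that twisted endomorphism followed by the collapse of the twist. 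Plugging in the identification of the induced endomorphism as $\Sigma(f)$ gives precisely
\[ \tr_{\sC^\star}\bigl(\Sigma(f)\bigr)=\Bigl(I_\star\xto{\tr(\widecheck f)}\sh{A_f}\too I_\star\Bigr). \]

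The main obstacles are the two inputs flagged above, both of which belong to the preparatory development rather than to this argument itself: the equivalence ``$I_A$ totally dualizable $\Leftrightarrow$ $M$ dualizable in $\Bi(\sC)$'', and the compatibility lemma between symmetric monoidal and bicategorical trace --- these are where the indexed structure is genuinely used. Granting those, the remaining work is the bookkeeping of Beck--Chevalley and projection-formula isomorphisms needed to match $\Sigma(A)$, $\Sigma(f)$, and $\sh{A_f}$ with their bicategorical avatars, and in particular to keep the twist by $A_f$ straight so that the intermediate object really is the refined $\sh{A_f}$ and not merely $\Sigma(A)$. This is precisely the kind of manipulation that the string diagram calculus for indexed monoidal categories introduced in this paper is designed to make tractable: once the hypothesis has been translated into bicategorical dualizability, the whole comparison becomes a diagram chase.
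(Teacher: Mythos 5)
Your proposal follows the same overall strategy as the paper: encode $f$ as a twisted endomorphism $\widecheck f\colon\widecheck{I_A}\to\widecheck{I_A}\odot A_f$ of the $1$-cell $\widecheck{I_A}$ in $\calBi CS$, take its bicategorical trace, and show that composing with an augmentation of the shadow recovers the symmetric monoidal trace of $\Sigma(f)$ via functoriality of bicategorical trace together with the identification of $\calBi CS(\star,\star)\simeq\sC^\star$ as a symmetric monoidal category whose shadow is the identity. The two ``inputs'' you flag at the end are indeed exactly the paper's \autoref{thm:total-duality} and \autoref{thm:totaltr2} together with \autoref{prop:xi_circ_f}, so in that sense you've correctly carved the argument at its joints.

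However, there is a genuine error in the first of your flagged inputs. You claim ``$I_A$ is totally dualizable precisely when $M=\widehat{I_A}$ is a dualizable $1$-cell with dual $M^\vee=\widecheck{I_A}$.'' That dual pair $(\widehat{I_A},\widecheck{I_A})$ is \emph{always} present, unconditionally: it is nothing but the dual pair of base change objects $({}_{\pi_A}\star,\star_{\pi_A})$ for the projection $\pi_A\colon A\to\star$ (\autoref{thm:bco-dual-indexed}). So as stated your characterization would make the hypothesis of the theorem vacuous. The correct statement is that total dualizability of $I_A$ is the (nontrivial) assertion that the \emph{other} $1$-cell $\widecheck{I_A}\colon\star\hto A$ is right dualizable, and its right dual --- the Costenoble--Waner dual --- is in general \emph{not} $\widehat{I_A}$ but some other $\widehat{D}$ (for parametrized spectra over a manifold, $D$ is a Thom spectrum of the normal bundle, not the sphere). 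The argument you actually run is consistent with the correct version: when you invoke dualizability to form $\tr(\widecheck f)$, the $M$ in play is $\widecheck{I_A}$, and the automatic dual pair $(\widehat{I_A},\widecheck{I_A})$ then enters separately, in showing that $\Sigma(A)\cong\widecheck{I_A}\odot\widehat{I_A}$ is dualizable as a composite of dual pairs and in setting up the augmentation $\xi$. You should disentangle these two facts; conflating them also produces the orientation slips elsewhere (you write $M\odot M^\vee$ for the composite that lands in $\sC^\star$, which should be $M^\vee\odot M$ in your notation, and your $A_f=(\Gamma_f)_!I_A$ with $\Gamma_f=(\id_A,f)$ is the base change object ${}_fA$, not the paper's $A_\phi\cong(\phi,\id_A)_!I_A$; these are transposes of one another and the proof works either way, but consistency matters once you start chasing Beck--Chevalley isomorphisms).

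One more caution: you describe the augmentation $\sh{A_f}\to I_\star$ as ``$\Sigma$ of the unique map $A^f\to\star$'' under the identification $\sh{A_f}\cong\Sigma(L_fA)$. The paper instead constructs it intrinsically as $\tr(\xi)$ for a specific $2$-cell $\xi\colon A_\phi\odot\widehat{I_A}\to\widehat{I_A}$, and then the final step really is just functoriality of bicategorical trace applied to $\xi$. It is plausible (and true in the examples) that your description agrees, but that identification is itself a computation; the paper's choice of $\xi$ is what makes \autoref{thm:totaltr2} apply directly, and what makes the remaining content precisely the string-diagram verification that $\xi\circ f=\Sigma(f)$ in \autoref{prop:xi_circ_f}.
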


Here, it is the \emph{first} morphism in the factorization which should
be regarded as a refined trace of $f$, while the second morphism forgets
the additional information contained therein (although in this case, the
intermediate object $\sh{A_f}$ also depends on $f$).  In examples, the
object $\sh{A_f}$ is ``generated by'' the ``fixed-point classes'' of
$f$, and the refined trace $\tr(\widecheck{f})$ separates out the
contributions to the fixed-point index of $f$ depending on which
fixed-point class they belong to.  In particular, in the case of spectra
parametrized over spaces, we obtain the Reidemeister trace, and
\autoref{thm:intro-total} says that the Reidemeister trace refines the
fixed-point index.

As with \autoref{thm:intro-fiberwise}, \autoref{thm:intro-total} is a
special case of a stronger theorem about ``partial traces'' and
arbitrary totally dualizable objects.  We can also exploit this
generalization to describe a further connection to the classical
symmetric monoidal theory.  Perhaps the most important ``partial'' or
``twisted'' trace is the trace of the composite
\[ \Sigma(A) \xto{\Sigma(f)} \Sigma(A) \xto{\Sigma(\Delta_A)} \Sigma(A)
\otimes \Sigma(A) \] which is a map $I_\star \to \Sigma(A)$ called the
\textbf{transfer of $f$} (or the ``trace of $f$ with respect to
$\Delta$'').  The transfer of $f$ is also a refinement of
$\tr(\Sigma(f))$, in that when composed with the augmentation
\[\Sigma(\pi_A)\colon \Sigma(A) \to \Sigma(\star) = I_\star\]
it reproduces the symmetric monoidal trace of $f$.  The generalized
version of \autoref{thm:intro-total} then implies that
$\tr(\widecheck{f})$ refines not only $\tr(\Sigma(f))$, but also the
trace of $f$ with respect to $\Delta$.

\begin{thm}\label{thm:intro-total2}
  In the situation of \autoref{thm:intro-total}, the transfer of $f$
  factors as a composite
  \[ I_\star \xto{\tr(\widecheck{f})} \sh{A_f} \too \Sigma(A). \]
\end{thm}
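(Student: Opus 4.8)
The plan is to obtain this as a special case of the ``partial trace'' generalization of \autoref{thm:intro-total}. First I would rewrite the transfer of $f$ as an honest partial trace: using the monoidality isomorphism $\Sigma(A)\otimes\Sigma(A)\iso\Sigma(A\times A)$ supplied by the projection formula and Beck--Chevalley, the defining composite $\Sigma(A)\xto{\Sigma(f)}\Sigma(A)\xto{\Sigma(\Delta_A)}\Sigma(A)\otimes\Sigma(A)$ becomes $\Sigma(\Delta_A\circ f)\colon\Sigma(A)\to\Sigma(A\times A)$, whose symmetric monoidal trace over one tensor factor --- i.e.\ the trace of $\Sigma(f)$ ``with respect to the comultiplication $\Sigma(\Delta_A)$'' --- is by definition the transfer. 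So the transfer is exactly the instance of the generalized theorem in which the twisting object is $Q=\Sigma(A)$ and the twisting comultiplication $\Sigma(A)\to\Sigma(A)\otimes Q$ is $\Sigma$ applied to the diagonal $\Delta_A$ of the cartesian category \bS; the totally dualizable object is still $I_A$, so no extra generality beyond \autoref{thm:intro-total} is needed on that side.

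Feeding this data into the generalized theorem produces a factorization $I_\star\xto{\tr(\widecheck{f})}\sh{A_f}\to Q=\Sigma(A)$, and two points then remain. (1) The first map must be literally the \emph{same} refined trace $\tr(\widecheck{f})$ as in \autoref{thm:intro-total}. This should be immediate from the shape of the generalized theorem: $\tr(\widecheck{f})$ is built from the total-duality data of $I_A$ and the map $f$ alone, with no reference to the twist, so the theorem yields one universal morphism $I_\star\to\sh{A_f}$ and it is only the second, collapse map that records the choice of $Q$. Taking $Q=I_\star$ with the trivial (augmentation) comultiplication $\Sigma(\pi_A)$ recovers \autoref{thm:intro-total}; and since $(A,\Delta_A,\pi_A)$ is a comonoid in \bS, counitality gives $(\id\otimes\Sigma(\pi_A))\circ\Sigma(\Delta_A)=\id_{\Sigma(A)}$, so the collapse map for $Q=I_\star$ is the composite of the one for $Q=\Sigma(A)$ with $\Sigma(\pi_A)\colon\Sigma(A)\to I_\star$ --- matching the remark already in the paper that post-composing the transfer with $\Sigma(\pi_A)$ returns $\tr(\Sigma(f))$, and giving a useful consistency check. (2) The collapse map for $Q=\Sigma(A)$ must be identified with the asserted map $\sh{A_f}\to\Sigma(A)$: concretely, after identifying $\sh{A_f}$ with the pushforward to $\sC^\star$ of the ``fixed-point'' object obtained by restricting the graph of $f$ along the diagonal, this collapse map is the pushforward of the canonical map from that object to $I_A$, i.e.\ $\Sigma$ of the inclusion of homotopy fixed points into $A$.

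The main obstacle is item (2): one has to track the Beck--Chevalley and projection-formula isomorphisms through the proof of the generalized \autoref{thm:intro-total} --- which presumably passes through a bicategorical trace in $\calBi{C}{S}$ together with its shadow --- and verify that, for the twist induced by $\Delta_A$, the collapse map agrees on the nose, not merely up to coherent isomorphism, with this $\Sigma$-of-an-inclusion. Underlying this is the compatibility of the strong monoidality of $\Sigma$ with the diagonal and projection maps of \bS, i.e.\ the statement that $\Sigma$ carries the cartesian comonoid structure on $A$ to the comonoid structure on $\Sigma(A)$ used to define the transfer; this is a diagram chase in the indexed monoidal structure, but one that must be carried out attentively because of the twisting. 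Once that identification is in place, \autoref{thm:intro-total2} follows formally from the generalized theorem.
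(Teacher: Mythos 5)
Your plan matches the paper's: both reduce the statement to the ``partial trace'' generalization \autoref{thm:totaltr2}, applied with $M=I_A$, $Q=I_\star$, $P=A_f$, and a specific $\xi$ (the paper's $\zeta$, which is the earlier $\xi$ of \autoref{cor:totaltr3} post-composed with the unit of $(\pi_A)_!\dashv(\pi_A)^*$) having target $R=\Sigma(A)$, together with the string-diagram verification that $\zeta\circ f=\Sigma(\Delta_A\circ f)$, which is \autoref{thm:zeta-circ-f}. You correctly flag that verification as the crux, and your counitality consistency check is exactly the paper's remark that $\tr(\zeta)$ factors the augmentation. One small correction to your framing of item (2): the theorem only asserts that the transfer \emph{factors} through $\tr(\widecheck{f})$, so once $\zeta\circ f=\Sigma(\Delta_A\circ f)$ is established, the factorization follows from \autoref{thm:totaltr2} immediately with second leg $\tr(\zeta)$, whatever that is. You do not need a ``nose-on-the-nose'' identification of the collapse map with $\Sigma$ of the inclusion of homotopy fixed points---that description is valuable for computing examples (\autoref{eg:gpd-totaltrace}, \autoref{eg:rtrace}) but is not part of the claim. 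The paper's verification instead targets the morphism being traced: it reuses \autoref{prop:xi_circ_f} ($\xi\circ f=\Sigma(\phi)$) and then shows the remaining half of $\zeta$ produces $\Sigma(\Delta_A)$, via the broken-zigzag and pseudofunctor-coherence manipulations of \autoref{fig:half-zeta}. So your ``main obstacle'' is real, but its content is identifying $\zeta\circ f$, not identifying $\tr(\zeta)$.
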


Although they may seem different on the surface, Theorems
\ref{thm:intro-fiberwise} and
\ref{thm:intro-total}--\ref{thm:intro-total2} actually apply in
\emph{formally dual} situations (although their conclusions and proofs
are not dual).  This observation is originally due to May and
Sigurdsson~\cite{maysig:pht}, who constructed a \emph{bicategory} out of
parametrized spectra, and identified fiberwise and Costenoble-Waner
duality as cases of the general notion of duality in this bicategory
(which comes in dual flavors, since composition in a bicategory is not
symmetric).  In~\cite{shulman:frbi}, the second author generalized their
construction to any indexed symmetric monoidal category.  Part of
proving the above theorems, therefore, will be to extend the results
of~\cite{maysig:pht} on duality to the case of a general indexed
symmetric monoidal category.

The consideration of \emph{traces} for bicategorical duality, on the
other hand, requires an additional structure called a \emph{shadow},
introduced by the first author in~\cite{kate:traces} and studied further
in~\cite{PS2}.  Thus, another necessary preliminary will be to prove
that the bicategory arising from any indexed symmetric monoidal category
has a canonical shadow.  This done, we will be able to identify the
morphisms $\tr(\widehat{f})$ and $\tr(\widecheck{f})$ as traces relative
to the bicategorical incarnations of fiberwise and total duality,
respectively, and deduce Theorems \ref{thm:intro-fiberwise},
\ref{thm:intro-total}, and \ref{thm:intro-total2}.

Finally, there is one further ingredient in this paper: the use of a
\emph{string diagram} calculus for indexed monoidal categories.  In
general, string diagram calculus is a ``Poincar\'e dual'' way of drawing
composition in categorical structures, which tends to make complicated
computations much more visually evident.  In particular, many basic
equalities, such as the naturality of tensor products, are realized by
simple isotopies.  String diagrams for monoidal categories and
bicategories are described
in~\cite{penrose:negdimten,js:geom-tenscalc-i,js:geom-tenscalc-ii,js:pdta,jsv:traced-moncat,selinger:graphical,street:ldtop-hocat},
and a generalization for bicategories with shadows was given
in~\cite{PS2}.

Several of the results of this paper require fairly involved
computations which are made much more tractable by an appropriate string
diagram calculus.  However, for clarity of exposition, we postpone the
introduction of string diagrams (along with the proofs which depend on
them, of course) to the very end of the paper.  Thus, the reader can
avoid string diagrams altogether by stopping after \S\ref{sec:total}.

The organization of this paper is as follows.  In
\S\S\ref{sec:shad-from-mono}--\ref{sec:indexed-coproducts} we introduce
indexed symmetric monoidal categories.  Then in \S\ref{sec:smc-traces}
we recall the classical notion of trace in symmetric monoidal
categories, and construct the functor $\Sigma$ referred to above.  In
\S\ref{sec:shadows} we describe the construction of a bicategory with a
shadow from an indexed symmetric monoidal category.

In \S\ref{sec:fiberwise} we state the ``fiberwise'' duality and trace
theorems---\autoref{thm:intro-fiberwise} and its
generalizations---although the proofs are postponed until
\S\ref{sec:fiberwise-proofs}.  Next, \S\ref{sec:base-change} is devoted
to the consideration of ``base change objects'', an additional bit of
structure that exists in bicategories arising from indexed monoidal
categories.  This is then used in \S\ref{sec:total}, where we state and
prove the ``total'' duality and trace theorems, \ref{thm:intro-total}
and \ref{thm:intro-total2} and their generalizations.  The proofs in
this case are actually easier than the fiberwise case, although some
calculations still have to be postponed to \S\ref{sec:tdproofs}.

Finally, in \S\S\ref{sec:string-diagrams}--\ref{sec:colored-strings} we
introduce our string diagram calculus, and in
\S\S\ref{sec:fiberwise-proofs}--\ref{sec:tdproofs} we apply it to the
postponed calculations.

Throughout the paper, we make use of three primary examples:
\begin{enumerate}
\item $\bS=$ sets, $\sC^A=$ $A$-indexed families of abelian groups.
\item $\bS=$ topological spaces, $\sC^A=$ spectra parametrized over $A$.
\item $\bS=$ groupoids, $\sC^A=$ $A$-indexed diagrams of chain complexes.
\end{enumerate}
The first is a ``toy'' example, which is easy to compute with, but which
is so degenerate that it fails to display all of the interesting
phenomena.  The second is the motivating example which matters most in
applications (together with various generalizations), but it is
technically fairly complicated, so we omit all proofs relating to it;
many of them can be found in~\cite{kate:higher}.  (In particular, no
knowledge of spectra is necessary to read this paper.)  The third is an
intermediate example which is easier to understand, but which still
displays most of the interesting phenomena.

We would like to thank David Corfield, Todd Trimble, and Daniel
Sch\"appi for helpful suggestions about string diagrams and monoidal
bicategories.  The second-named author gratefully acknowledges the
hospitality of the University of Kentucky.

\section{Indexed monoidal categories}
\label{sec:shad-from-mono}

We begin with the following standard definition (cf.\ for
instance~\cite{ps:indexed-aft} or~\cite[B1.2]{ptj:elephant1}).

\begin{defn}
  Let \bS\ be a category.  An \textbf{\bS-indexed category} \sC\ is a
  pseudofunctor from $\bS\op$ to \Cat.
\end{defn}

In elementary terms, this means we have a category for each object
$A\in\bS$, which we write as $\sC^A$, and a functor for each morphism
$f\maps A\to B$ in \bS, which we write as $f^*\maps \sC^B\to\sC^A$, such
that composition and identities are preserved up to coherent natural
isomorphism.  We sometimes refer to \bS\ as the \emph{base category} and
the $\sC^A$ as the \emph{fiber categories}, and we refer to the functors
$f^*$ as \emph{reindexing functors}.

\begin{eg}\label{eg:ring-ind-mod}
  If $\bS=\mathbf{Ring}$ is the category of rings, there is an
  \bS-indexed category sending a ring $A$ to $\bMod{A}$, with $f^*$
  given by restriction of scalars.  There are similar examples using
  chain complexes, DGAs, or ring spectra.
\end{eg}

\begin{eg}\label{eg:families}
  For any category \bC, there is a \Set-indexed category sending each
  set $A$ to the category $\bC^A$ of $A$-indexed families of objects
  of \bC.
\end{eg}

\begin{eg}\label{eg:spaces-over-spaces}
  If \bS\ has finite limits, then there is an \bS-indexed category
  sending each object $A$ to its slice category $\bS/A$, with the
  functors $f^*$ given by pullback.

  In the important special case when \bS\ is the category \Top\ of
  topological spaces, we also have a ``homotopy'' version $A\mapsto
  \Ho(\Top/A)$, in which we invert the weak homotopy equivalences in
  each fiber category.  See~\cite{maysig:pht} for an extensive formal
  development of this example.  Similar examples can be constructed
  whenever \bS\ has a ``homotopy theory'' (e.g.\ when it is a Quillen
  model category).
\end{eg}

``Derived'' examples such as the last one are of particular interest to us.
Here are two more such examples.

\begin{eg}\label{eg:spectra-over-spaces}
  If \Top\ denotes a suitably nice category of topological spaces, there
  is a \Top-indexed category defined by $A\mapsto \bEx{A}$, where
  $\bEx{A}$ denotes a point-set--level category of spectra parametrized
  over $A$, such as the parametrized orthogonal spectra used
  in~\cite{maysig:pht}.  We can similarly invert the weak equivalences
  of spectra to obtain a derived \Top-indexed category $A\mapsto
  \Ho(\bEx{A})$.
\end{eg}

\begin{eg}\label{eg:derived-families}
  If the category \bC\ in \autoref{eg:families} has a homotopy theory,
  then we can pass to homotopy categories in that example to obtain
  another \Set-indexed category $\Ho(\bC^A)$.  However, since
  $\Ho(\bC^A) \simeq (\Ho(\bC))^A$ for any set $A$, we gain no extra
  generality thereby.
\end{eg}

We will see in later sections that qualitatively different phenomena
arise in ``derived'' situations, when the base category \bS\ also has a
``homotopy theory'' which is taken into account somehow in the fibers.
This is the case in Examples~\ref{eg:spaces-over-spaces}
and~\ref{eg:spectra-over-spaces}, but not in
\autoref{eg:derived-families}.  Since
Examples~\ref{eg:spaces-over-spaces} and~\ref{eg:spectra-over-spaces}
are somewhat technically complicated, it will be useful to have a
simpler example which exhibits some of the same derived phenomena.  We
can obtain such examples by replacing spaces by groupoids (which, up to
homotopy, can be identified with homotopy 1-types).

\begin{eg}\label{eg:gpds}
  If $\bS=\mathbf{Gpd}$ is the category of groupoids and \bC\ is any
  category, we can define $\sC^A = \bC^A$ to be the category of functors
  from the groupoid $A$ to \bC.  For simplicity, we usually take \bC\ to
  be abelian groups.  This is an enlargement of \autoref{eg:families},
  if we consider sets as discrete groupoids.  On the other hand, if $A$
  is a one-object groupoid, i.e. a group $G$, then $\Ab^A$ can be
  identified with the category of modules over the group ring $\bbZ[G]$.
\end{eg}

\begin{eg}\label{eg:hogpds}
  We have ``derived phenomena'' in \autoref{eg:gpds} even when \bC\ does
  not have its own homotopy theory, because the construction
  automatically ``sees'' the homotopy theory of groupoids.  (For
  instance, we have $\bC^A\simeq \bC^B$ whenever $A\simeq B$ are
  \emph{equivalent} groupoids, not only when they are isomorphic).
  However, we can also ``derive it further'' if \bC\ \emph{does} have a
  homotopy theory, obtaining a $\mathbf{Gpd}$-indexed category with
  $\sC^A = \Ho(\bC^A)$, where we invert the objectwise weak equivalences
  in $\bC^A$.  To maximize concreteness and familiarity, we may take
  $\bC=\bCh{\mathbb{Z}}$ to be the category of (unbounded) chain
  complexes, with its usual homotopy theory.

  Note that in contrast to \autoref{eg:derived-families}, when $A$ is
  not discrete, $\Ho(\bC^A)$ will generally be different from
  $(\Ho(\bC))^A$.  Thus, this example is genuinely different from
  \autoref{eg:gpds}.
\end{eg}

Now in most of these examples, each category $\sC^A$ has a monoidal
structure, which is respected by the transition functors.  Namely:
\begin{itemize}
\item If \bC\ is a monoidal category (such as \Ab\ or $\bCh{\mathbb{Z}}$
  with tensor product), then so is $\bC^A$ for any set or groupoid $A$;
\item When \bS\ has finite limits, each slice category $\bS/A$ is
  cartesian monoidal;
\item Each category $\bEx{A}$ has a parametrized smash product; and
\item All of these monoidal structures descend to homotopy categories.
\end{itemize}
This ``fiberwise'' monoidal structure is simply described by the
following definition.

\begin{defn}
  For a category \bS, an \textbf{\bS-indexed symmetric monoidal
    category} is a pseudofunctor from $\bS\op$ to the 2-category
  \SymMonCat\ of symmetric monoidal categories, strong symmetric
  monoidal functors, and monoidal natural transformations.
\end{defn}

In elementary terms, this means an indexed category such that each
$\sC^A$ is a symmetric monoidal category, each reindexing functor is
strong symmetric monoidal, and each constraint isomorphism is a monoidal
transformation.  We write $\ten_A$ and $I_A$ for the tensor product and
unit object of the monoidal category $\sC^A$, and refer to them as the
\emph{fiberwise} monoidal structure.

However, for many purposes it is more effective to express the monoidal
structure differently.  If \bS\ has finite products, including a
terminal object $\star$, then in any \bS-indexed monoidal category \sC\
we can define an \emph{external product} functor
\begin{align*}
  \sC^A \times \sC^B &\to \sC^{A\times B}\\
  (M,N) &\mapsto M\boxtimes N \coloneqq (\pi_B^* M) \ten_{A\times B} (\pi_A^* N).
\end{align*}
(Here $\pi_A\colon A\times B\to B$ and $\pi_B\colon A\times B\to A$ are
the projections from the cartesian product.)  This external product is
coherently associative and unital, in a suitable sense
(see~\cite{shulman:frbi}), with unit object $U = I_\star$.  Moreover, we
can recover the fiberwise monoidal structures (or \emph{internal
  products}) from the external products, via the isomorphisms
\begin{align}
  M \ten_A N &\cong \Delta_A^* (M \boxtimes N)\label{eq:intprod}\\
  I_A &\cong \pi_A^* U.\label{eq:intunit}
\end{align}
(Here $\Delta_A\colon A\to A\times A$ is the diagonal of the cartesian
product.)  These hold for any $A\in\bS$ and $M,N\in\sC^A$, by
monoidality of the functors $f^*$.  In the same way we can construct
natural isomorphisms
\begin{equation}\label{eq:extmon}
  f^*M \boxtimes g^*N \cong (f\times g)^*(M\boxtimes N)
\end{equation}
for any $M,N,f,g$ for which this makes sense.

\begin{eg}
  If \bC\ is a symmetric monoidal category, then in the \Set-indexed
  symmetric monoidal category $A\mapsto \bC^A$ of \autoref{eg:families},
  the fiberwise product of $M = (M_a)$ and $N= (N_a)$ in $\bC^A$ is
  defined by
  \[ (M\otimes_A N)_a = M_a \otimes N_a \] whereas the external product
  of $M = (M_a)\in\bC^A$ and $N= (N_b)\in\bC^B$ is defined by
  \[ (M\boxtimes N)_{(a,b)} = M_a \otimes N_b. \] The case of
  \autoref{eg:gpds} is similar.  Note that if $A$ is one-object
  groupoid, hence a group $G$, and we identify $\Ab^A$ with the category
  of $\bbZ[G]$-modules, then its fiberwise monoidal structure is the
  tensor product over \bbZ, made into a $\bbZ[G]$-module using the fact
  that $\bbZ[G]$ is a bialgebra.  It is \emph{not} a tensor product over
  $\bbZ[G]$, which wouldn't make sense anyway since $\bbZ[G]$ is not
  commutative.
\end{eg}

\begin{eg}
  If \bS\ has finite limits, then in the \bS-indexed symmetric monoidal
  category $A\mapsto \bS/A$ of \autoref{eg:spaces-over-spaces}, the
  fiberwise product of $M,N \in \bS/A$ is their fiber product
  (pullback):
  \[ M \otimes_A N = M\times_A N \] whereas the external product of
  $M\in \bS/A$ and $N\in \bS/B$ is just their cartesian product, with
  the induced projection to $A\times B$:
  \[ M \boxtimes N = M\times N. \] The case of
  \autoref{eg:spectra-over-spaces} is similar, with cartesian products
  replaced by smash products.  Both the fiberwise and external products
  of parametrized spectra are used in~\cite{maysig:pht}.
\end{eg}

\begin{rmk}
  When the base category \bS\ is monoidal but not cartesian monoidal, we
  can still define ``an external product'' for \sC\ to consist of
  functors $\sC^A \times \sC^B \to \sC^{A\otimes B}$ which are
  coherently associative and unital.  For instance,
  \autoref{eg:ring-ind-mod} has an external product, but not an internal
  one.  However, our interest here is solely in the case when \bS\ is
  cartesian monoidal.
\end{rmk}

\section{Indexed coproducts}
\label{sec:indexed-coproducts}

In order to construct a bicategory from an indexed symmetric monoidal
category, and thereby prove our refinements of the symmetric monoidal
trace, we will need one further piece of structure.

\begin{defn}\label{def:indcoprod}
  Let \bS\ be a cartesian monoidal category, and \sC\ an \bS-indexed
  category.  We say that \sC\ has \textbf{\bS-indexed coproducts} if
  \begin{enumerate}
  \item Each reindexing functor $f^*$ has a left adjoint $f_!$, and
  \item For any pullback square
    \[\xymatrix@-.5pc{A \ar[r]^f\ar[d]_h & B \ar[d]^g\\
      C\ar[r]_k & D}\]
    in \bS, the composite
    \[f_! h^* \too f_!h^*k^*k_! \too[\iso] f_!f^*g^*k_! \too g^*k_!\]
    is an isomorphism (the \emph{Beck-Chevalley condition}).
  \end{enumerate}
  If \sC\ is symmetric monoidal, we say that \textbf{$\ten$ preserves
    indexed coproducts} (in each variable separately), or that the
  \textbf{projection formula holds}, if
  \begin{enumerate}[resume]
  \item for any $f\maps A\to B$ in \bS\ and any $M\in\sC^B$,
    $N\in\sC^A$, the canonical map
    \[f_!(f^*M \otimes N) \to f_!(f^*M \otimes f^* f_! N)
    \cong f_! f^* (M \otimes f_! N) \to M \otimes f_!N
    \]
    is an isomorphism.\footnote{If $\sC$ is not symmetric, we must
      assert also the analogous condition on the other
      side.}\label{item:prescoprod}
  \end{enumerate}
\end{defn}

We will only use the Beck-Chevalley condition for a few types of
pullback squares.  The basic such squares are shown in
Figure~\ref{fig:pullbacks}.  (These are all actually pullback squares in
any cartesian monoidal category, whether or not it has pullbacks in
general.  See
also~\cite{lawvere:comprehension,seely:hd-beck,trimble:predlogic}.)  We
will also consider the transposes of Figures~\ref{fig:frobpb}
and~\ref{fig:slidpb}, as well as squares obtained from one of these by
taking a cartesian product with a fixed object.

For ease of reference, we have assigned a name to each of these
Beck-Chevalley conditions.  We call Figure~\ref{fig:reindpb}
``commutativity with reindexing'' because the condition $(\id\times
g)_!(f\times\id)^* \cong (f\times\id)^*(\id\times g)_!$ says that the
indexed-coproduct functor $g_!$ commutes with the reindexing functor
$f^*$.  We call Figure~\ref{fig:frobpb} the ``Frobenius axiom'' because
the condition $\Delta_! \Delta^* \cong
(\Delta\times\id)^*(\id\times\Delta)_!$ has the same form as one of the
axioms of a Frobenius algebra.  The name of Figure~\ref{fig:slidpb} will
make more sense once we introduce string diagrams; see
\S\ref{sec:string-diagrams}.  Finally, we call Figure~\ref{fig:mdpb}
``monic diagonals'' because the fact that that square is a pullback says
precisely that $\Delta_A$ is a monomorphism.

\begin{figure}
  \centering
  \begin{tabular}{c@{\hspace{1cm}}c}
    \subfigure[Commutativity with reindexing]{\label{fig:reindpb}
      $\xymatrix@C=4pc{A\times C \ar[r]^{f\times \id_C}\ar[d]_{\id_A\times g}
        & B\times C \ar[d]^{\id_B\times g}\\
        A\times D\ar[r]_{f\times \id_D} & B\times D}$}
    &
    \subfigure[The Frobenius axiom]{\label{fig:frobpb}
      $\xymatrix@C=3pc{A \ar[r]^{\Delta}\ar[d]_{\Delta}
        & A\times A \ar[d]^{\id_A\times \Delta}\\
        A\times A\ar[r]_-{\Delta\times \id_A} & A\times A\times A}$}
    \\
    \subfigure[Sliding and splitting]{\label{fig:slidpb}
      $\xymatrix@C=3pc{A \ar[r]^-{(\id_A,f)}\ar[d]_f
        & A\times B \ar[d]^{f\times \id_B}\\
        B\ar[r]_-{\Delta_B} & B\times B}$}
    &
    \subfigure[Monic diagonals]{\label{fig:mdpb}
      $\xymatrix@C=3pc{A \ar[r]^{\id_A}\ar[d]_{\id_A} & A \ar[d]^{\Delta}\\
        A\ar[r]_-{\Delta} & A\times A}$}
  \end{tabular}
  \caption{Pullback squares involving products}
  \label{fig:pullbacks}
\end{figure}

We have stated the definition of ``$\ten$ preserves indexed coproducts''
in a form which may be most familiar, but we care most about the
following equivalent statement.

\begin{lem}
  If \sC\ is an indexed symmetric monoidal category with \bS-indexed
  coproducts, then these are preserved by $\ten$ if and only if for any
  $f\maps A\to B$ and $g\maps C\to D$ in \bS\ and any $M\in\sC^A$,
  $N\in\sC^C$, the composite
  \begin{align*}
    (f\times g)_!(M\boxtimes N) &\too (f\times g)_!(f^* f_! M\boxtimes g^* g_! N)\\
    &\too[\cong] (f\times g)_!(f\times g)^*(f_! M\boxtimes g_! N)\\
    &\too f_! M\boxtimes g_! N
  \end{align*}
  is an isomorphism.
\end{lem}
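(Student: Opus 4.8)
The plan is to prove the two implications separately, in each case by decomposing the ``canonical map'' in question into a composite of instances of the projection formula (condition~(iii) of \autoref{def:indcoprod}) and of Beck--Chevalley isomorphisms for product-projection pullback squares (variants of commutativity with reindexing, Figure~\ref{fig:reindpb}) and sliding-and-splitting squares (Figure~\ref{fig:slidpb}), together with the pseudofunctoriality constraints and the identities \eqref{eq:intprod}, \eqref{eq:extmon}.  Throughout I write $p$, $q$ for the two projections out of a binary product, so that $M\boxtimes N\iso p^*M\ten q^*N$.

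For the forward implication, assume the projection formula.  Factor $f\times g\colon A\times C\to B\times D$ as $(f\times\id_D)\circ(\id_A\times g)$, so $(f\times g)_!\iso(f\times\id_D)_!\,(\id_A\times g)_!$.  I first compute $(\id_A\times g)_!(M\boxtimes N)$: writing $M\boxtimes N\iso p^*M\ten q^*N$ with $p\colon A\times C\to A$, $q\colon A\times C\to C$, and rewriting $p^*M$ as $(\id_A\times g)^*\tilde p^*M$ for the projection $\tilde p\colon A\times D\to A$, the projection formula pulls $\tilde p^*M$ out of the coproduct, and Beck--Chevalley for the product-projection square with legs $q$, $\id_A\times g$, $\tilde q\colon A\times D\to D$, $g$ rewrites $(\id_A\times g)_!\,q^*N$ as $\tilde q^*g_!N$; hence $(\id_A\times g)_!(M\boxtimes N)\iso M\boxtimes g_!N$.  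Running the same argument for $(f\times\id_D)_!$ applied to $M\boxtimes g_!N$ --- now using the symmetry of $\ten$ to move the pulled-back factor to the side on which the projection formula is stated --- gives $f_!M\boxtimes g_!N$, and the resulting composite is (modulo the coherence check below) the map in the statement.

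For the converse, assume the displayed external-product map is always invertible; fix $f\colon A\to B$, $M\in\sC^B$ and $N\in\sC^A$.  Using the symmetry of $\ten$, then \eqref{eq:intprod} and \eqref{eq:extmon} (with an identity in one slot of the external product), identify $f^*M\ten_A N\iso N\ten_A f^*M$ with $(\id_A,f)^*(N\boxtimes M)$, where $(\id_A,f)\colon A\to A\times B$.  The square with edges $(\id_A,f)$, $f$, $f\times\id_B$ and $\Delta_B$ is a sliding-and-splitting square (Figure~\ref{fig:slidpb}), so Beck--Chevalley gives $f_!\,(\id_A,f)^*\iso\Delta_B^*\,(f\times\id_B)_!$.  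Therefore $f_!(f^*M\ten_A N)\iso\Delta_B^*\,(f\times\id_B)_!(N\boxtimes M)$, and the external-product isomorphism --- applied with $\id_B$ in place of the second map, so that $(\id_B)_!M\iso M$ --- identifies $(f\times\id_B)_!(N\boxtimes M)$ with $f_!N\boxtimes M$; a final application of $\Delta_B^*$ and \eqref{eq:intprod} recovers $f_!N\ten_B M\iso M\ten_B f_!N$, as required.

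The only genuinely laborious part, and the one I expect to be the main obstacle, is coherence: one must verify that each composite of isomorphisms built above agrees, under the identifications made, with the \emph{specific} canonical map named in the statement --- respectively, with the canonical map of \autoref{def:indcoprod} --- namely the map assembled from the adjunction units and counits and the monoidal and pseudofunctor constraints.  This is a routine but fiddly diagram chase using the triangle identities, naturality of units and counits, and the coherence axioms.  It is exactly the kind of bookkeeping that the string diagram calculus of \S\ref{sec:string-diagrams} is designed to trivialize, so in a complete write-up I would either carry it out directly here or defer it to the string-diagram sections.
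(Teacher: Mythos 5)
Your proposal is correct and follows essentially the same route as the paper's own (sketch) proof: the forward direction factors $f\times g$ through $\id\times g$ and $f\times\id$ and uses the projection formula together with the commutativity-with-reindexing Beck--Chevalley isomorphism; the converse rewrites $f^*M\otimes N$ via $\eqref{eq:intprod}$/$\eqref{eq:extmon}$ as a restriction along $(\id_A,f)$, applies the sliding-and-splitting Beck--Chevalley isomorphism, and then the assumed external-product isomorphism. The paper likewise defers the verification that each constructed isomorphism is the canonical one named in \autoref{def:indcoprod}, citing the same kind of diagram chase or the calculus of mates, so your flagged coherence concern matches what the paper leaves implicit.
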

\begin{proof}[(Sketch)]
  We will show that each of the two families of isomorphisms
  \[ f_!(f^*M \otimes N) \cong (M\otimes f_!N)
  \qquad\text{and}\qquad
  (f\times g)_!(M\boxtimes N) \cong (f_!M \boxtimes g_!N)
  \]
  can be constructed from the other.  We omit the proof that the
  constructed isomorphism in each case is in fact the canonical morphism
  exhibited above and in \autoref{def:indcoprod}; in each case this can
  be shown by a diagram chase, or by invoking the technology of
  ``mates'' from~\cite{ks:r2cats}.

  First, we observe that to prove the second isomorphism above, it
  suffices to consider the case when $f$ or $g$ is an identity, since
  then we can conclude
  \[ (f\times g)_!(M\boxtimes N) \cong
  (f\times \id)_!(\id\times g)_!(M\boxtimes N) \cong
  (f\times \id)_! (M \boxtimes g_!N) \cong (f_!M \boxtimes g_!N).
  \]
  Now assuming that $\otimes$ preserves indexed coproducts, for any
  $f\maps A\to B$, $M\in\sC^A$, and $N\in\sC^C$, we have
  \begin{align*}
    (f\times \id_C)_!(M\boxtimes N)
    &= (f\times \id_C)_! (\pi_C^* M \otimes (f\times \id_C)^* \pi_B^* N)\\
    &\cong (f\times \id_C)_! \pi_C^*M \otimes \pi_A^* N\\
    &\cong \pi_C^* f_! M \otimes \pi_A^* N\\
    &= f_!M \boxtimes N
  \end{align*}
  using the assumption and the ``commutativity with reindexing''
  Beck-Chevalley condition.  Conversely, assuming the desired condition,
  then for any $f\maps A\to B$, $M\in\sC^A$, and $N\in\sC^B$, we have
  \begin{align*}
    f_! (M\otimes f^*N)
    &= f_! \Delta^* (M \boxtimes f^*N)\\
    &\cong f_! (\id_A,f)^* (M\boxtimes N)\\
    &\cong \Delta^* (f\times \id_B)_!(M\boxtimes N)\\
    &\cong \Delta^* (f_!M \boxtimes N)\\
    &= f_!M \otimes N
  \end{align*}
  using~\eqref{eq:extmon}, the assumption, and the ``sliding and
  splitting'' Beck-Chevalley condition.
\end{proof}

\begin{rmk}\label{rmk:frobenius}
  In the case when the fiber categories are cartesian monoidal, the
  projection formula is also called the ``Frobenius condition''.  As
  remarked above, however, we reserve the adjective ``Frobenius'' for
  the Beck-Chevalley condition associated to Figure~\ref{fig:frobpb}.
  The two are closely related, however:
  \begin{enumerate}
  \item Trimble has shown that the ``Frobenius'' Beck-Chevalley
    condition follows from either
    \begin{enumerate}
    \item The ``commutativity with reindexing'' Beck-Chevalley condition
      and the projection formula (see~\cite{nlab:frobenius}), or
    \item The Beck-Chevalley conditions ``commutativity with reindexing'' and ``sliding and
      splitting'' 
      (see~\cite{trimble:predlogic}).
    \end{enumerate}
  \item Walters has shown that in a context different than ours (a
    ``cartesian bicategory''), the ``Frobenius'' and ``sliding and
    splitting'' Beck-Chevalley conditions imply the projection formula
    (as a special case of the ``modular law'');
    see~\cite{walters:modular-law}.
  \end{enumerate}
\end{rmk}

We now consider some examples of indexed categories with indexed coproducts.

\begin{eg}
  If \bC\ is a cocomplete symmetric monoidal category with ordinary
  coproducts that are preserved on both sides by $\ten$ (such as if it
  is closed), then the \Set-indexed category $A\mapsto \bC^A$ has
  indexed coproducts preserved by $\ten$, given by taking ordinary
  coproducts along the fibers of a set-function.
\end{eg}

\begin{eg}
  In the \bS-indexed category $A\mapsto \bS/A$, the left adjoints $f_!$
  are given by composition with $f$, and the compatibility conditions
  are elementary lemmas about pullback squares.
\end{eg}

In derived situations, the adjunctions $f_! \dashv f^*$ are usually
unproblematic to obtain, as is the projection formula, but the
Beck-Chevalley conditions are somewhat more subtle.  In general, when
\bS\ has a homotopy theory that is ``seen'' by $A\mapsto \sC^A$, we can
usually only expect the Beck-Chevalley condition to hold for
\emph{homotopy} pullback squares in \bS\ (that is, commutative squares
for which the canonical map from the vertex to ``the'' homotopy pullback
is a suitable sort of equivalence).  The pullback squares in
Figures~\ref{fig:reindpb},~\ref{fig:frobpb}, and~\ref{fig:slidpb} are
always homotopy pullback squares, but the one in Figure~\ref{fig:mdpb}
(monic diagonals) is not.

Motivated by these examples, we define an \bS-indexed category to have
\textbf{indexed homotopy coproducts} if the reindexing functors have
left adjoints which satisfy the Beck-Chevalley condition for all
homotopy pullback squares, including particularly those in
Figures~\ref{fig:reindpb},~\ref{fig:frobpb}, and~\ref{fig:slidpb} (and
its dual), and all squares obtained from them by taking cartesian
products with a fixed object.  In order to ensure our results remain
valid in derived contexts, we will never assume more than this, although
we will see that some theorems reduce to a slightly simpler form if the
Beck-Chevalley condition for Figure~\ref{fig:mdpb} also holds.  We will
refer to this latter case by saying that \emph{diagonals are monic}.
(Of course, diagonals are always monic in the base category \bS; the
question is whether that monicity is ``visible'' to the fiber categories
$\sC^A$.)

\begin{eg}
  In the examples $A\mapsto \Top/A$ and $A\mapsto \bEx{A}$ of
  Examples~\ref{eg:spaces-over-spaces} and~\ref{eg:spectra-over-spaces},
  the adjunctions $f_!\dashv f^*$ are Quillen adjunctions, and thus
  descend to homotopy categories.  The Beck-Chevalley condition is
  proven in~\cite{maysig:pht} for pullbacks \emph{of fibrations} (see
  also~\cite{shulman:dblderived}).  This includes ``commutativity with
  reindexing'' (Figure~\ref{fig:reindpb}) if $B$ or $D$ is a terminal
  object, but never the Frobenius axiom (Figure~\ref{fig:frobpb}).

  For this reason, in~\cite{maysig:pht} and~\cite{shulman:frbi}, the
  presence of closed structure was invoked to avoid the Frobenius axiom.
  In hindsight, the reason this works is that closedness of the fibers
  and the reindexing functors implies the projection formula, by a
  standard argument---and together with ``commutativity of reindexing''
  this actually implies the Frobenius axiom (see
  \autoref{rmk:frobenius}).

  However, there is a more direct argument: Figures~\ref{fig:frobpb}
  and~\ref{fig:slidpb} are \emph{homotopy} pullback squares, and it is
  straightforward to show that if the Beck-Chevalley condition holds for
  pullbacks of fibrations, it also holds for all homotopy pullback
  squares.  Thus, these two examples have indexed homotopy coproducts;
  the projection formula is also proven in~\cite{maysig:pht}.
\end{eg}

\begin{eg}\label{eg:gpds-bc}
  In \autoref{eg:gpds} of diagrams indexed over groupoids, the left
  adjoints $f_!$ are given by left Kan extension.  If \bC\ has a
  homotopy theory as in \autoref{eg:hogpds}, then these adjunctions are
  also Quillen, and therefore descend to homotopy categories.  In both
  cases, the Beck-Chevalley condition for homotopy pullback squares
  (which is to say, pseudo-pullbacks of groupoids) is straightforward to
  verify, as is the projection formula.

  Note that this would not be true if instead of groupoids we allowed
  \bS\ to consist of arbitrary small \emph{categories}.  In that case,
  the Beck-Chevalley condition would hold only for \emph{comma squares}
  (squares containing a natural transformation which exhibits the
  upper-left vertex as (equivalent to) the comma category of the
  functors on the right and the bottom).  In general, none of the
  squares in Figure~\ref{fig:pullbacks} are comma squares.
\end{eg}

Note that diagonals are not generally monic in \autoref{eg:gpds-bc},
even when \bC\ has no homotopy theory itself.  As we remarked in
\S\ref{sec:shad-from-mono}, for ``derived phenomena'' it suffices that
the homotopy theory of \bS\ is ``visible'' to the indexed category.

Here are a couple additional examples of interest.

\begin{eg}\label{eg:factsys}
  If \bS\ is a category with pullbacks and a factorization system
  $(\mathcal{E},\mathcal{M})$, then $\mathcal{M}$ is automatically
  stable under pullback and composition, and so $A \mapsto
  \mathcal{M}/A$ defines an \bS-indexed symmetric monoidal subcategory
  of $A\mapsto \bS/A$.  The $(\mathcal{E},\mathcal{M})$-factorizations
  give left adjoints $f_!$ for this indexed category, and it has indexed
  coproducts preserved by $\times$ if $\mathcal{E}$ is stable under
  pullback.

  For instance, $(\mathcal{E},\mathcal{M})$ could be the (regular epi,
  mono) factorization system on a regular category, such as \Set\ or
  another topos.  In this case we can also write this indexed category
  as $A\mapsto \mathrm{Sub}(A)$, where $\mathrm{Sub}(A)$ is the poset of
  subobjects of $A$.
\end{eg}

\begin{eg}\label{eg:hyperdoctrine}
  The \emph{hyperdoctrines} of
  Lawvere~\cite{lawvere:adjointness,lawvere:comprehension} are, in
  particular, indexed cartesian monoidal categories with indexed
  coproducts preserved by $\times$.  The Beck-Chevalley conditions for
  the squares in Figures~\ref{fig:reindpb} and~\ref{fig:slidpb} appear
  in~\cite[p.~8--9]{lawvere:comprehension}.  In this case we think of
  \bS\ as a category of types (or contexts) and terms in a type theory,
  and the fiber categories as categories of propositions and proofs in a
  given context.
\end{eg}

From now on, \sC\ will always denote an \bS-indexed symmetric monoidal
category which has indexed homotopy coproducts preserved by $\ten$.

\begin{rmk}
  As mentioned in the introduction, in
  \S\S\ref{sec:string-diagrams}--\ref{sec:colored-strings} we will
  introduce a string diagram calculus for reasoning about indexed
  monoidal categories.  This calculus greatly simplifies the
  constructions to be performed in subsequent sections, and is necessary
  (as a practical matter, though not a mathematical one) for the proofs
  of our main theorems in
  \S\S\ref{sec:fiberwise-proofs}--\ref{sec:tdproofs}.  Some readers may
  find it helpful to have the string diagram calculus in mind all
  through the paper, and we encourage such readers to jump forward and
  read \S\S\ref{sec:string-diagrams}--\ref{sec:colored-strings} now.
\end{rmk}

\section{Symmetric monoidal traces}
\label{sec:smc-traces}

As a preliminary to refining the symmetric monoidal trace, in this
section we recall the definition of the latter and explain how it
interacts with the indexed situation.  Recall that an object $M$ of a
symmetric monoidal category is said to be \textbf{dualizable} if there
exists an object $\rdual{M}$, its \textbf{dual}, and evaluation and
coevaluation morphisms $\eta \colon I \rightarrow M \ten \rdual{M}$ and
$\epsilon\colon \rdual{M} \ten M \rightarrow I$ satisfying the usual
triangle identities.  In this case, the \textbf{trace} of a morphism $f
\colon Q \ten M \rightarrow M \ten P$ is defined to be the composite:
\[\xymatrix{Q\ar[r]^-{\id \ten \eta}&
Q \ten M \ten \rdual{M}\ar[r]^-{ f \ten \id}&
M \ten P \ten \rdual{M}\ar[r]^\cong&
\rdual{M}\ten M \ten P\ar[r]^-{\epsilon\ten \id}&
P
}\]
The most basic case is when $Q$ and $P$ are the unit object $I$, so that
the trace of an endomorphism $f\colon M\to M$ is a morphism
$\tr(f)\colon I\to I$.  References
include~\cite{dp:duality,jsv:traced-moncat,kl:cpt}.

\begin{eg}
  Any finitely generated free abelian group is dualizable in \Ab, and
  the trace of an endomorphism is the usual trace of its matrix with
  respect to any basis.  (To be precise, it is the endomorphism
  $\bbZ\to\bbZ$ determined by \emph{multiplying} by the usual numerical
  trace.)  Similarly, in $\bCh{\bbZ}$, any bounded chain complex of
  finitely generated free abelian groups is dualizable, and the trace of
  an endomorphism is (multiplying by) its \emph{Lefschetz number} (the
  alternating sum of its degreewise traces).
\end{eg}

\begin{eg}
  A ``homotopical'' version of this takes place in the stable homotopy
  category of spectra.  Any topological space gives rise to a spectrum
  called its \emph{suspension spectrum}, and if the space was a
  finite-dimensional manifold, then its suspension spectrum is
  dualizable.  Its dual is a desuspension of the stable normal bundle of
  an embedding of the manifold into a suitably-high-dimensional
  Euclidean space; the evaluation and coevaluation are Thom collapse maps.

  Thus, any endomorphism of such a manifold gives rise to an
  endomorphism of its suspension spectrum.  It turns out that the trace
  of this endomorphism can be identified with the \emph{fixed-point
    index} of the original map; see~\cite{dp:duality}.  The fixed-point
  index is a classical invariant which sums the index of the induced
  vector field in the neighborhood of each fixed point of the endomap.
\end{eg}

It is easy to check that in a \emph{cartesian} monoidal category, the
only dualizable object is the terminal object.  However, as remarked in
the introduction, we often want to extract trace-like information from
endomorphisms of objects in a cartesian monoidal category.  Thus, to get
out of the cartesian situation, we apply a monoidal functor landing in a
non-cartesian monoidal category.

If the cartesian monoidal category \bS\ is the base category of some
indexed symmetric monoidal category with indexed coproducts preserved by
$\ten$, then there is a canonical such functor $\Sigma\colon
\bS\to\sC^\star$, defined as follows.  We send an object $A$ to
\[ \Sigma (A) = (\pi_A)_! I_A \iso (\pi_A)_! (\pi_A)^* I_\star \]
and a morphism $\phi\colon A \to B$ to the composite
\begin{equation}
  \Sigma(A) = (\pi_A)_! (\pi_A)^* I_\star \cong
  (\pi_B)_! \phi_! \phi^* (\pi_B)^* I_\star \too
  (\pi_B)_! (\pi_B)^* I_\star = \Sigma(B).\label{eq:sigma-phi}
\end{equation}
Moreover, this functor $\Sigma$ is strong symmetric monoidal:
\begin{align*}
  \Sigma(A\times B) &= (\pi_{A\times B})_! \pi_{A\times B}^* I_\star\\
  &\iso (\pi_{A\times B})_! \pi_{A\times B}^* (I_\star \ten I_\star)\\
  &\iso (\pi_{A\times B})_! (\pi_{A\times B}^*I_\star \ten \pi_{A\times B}^*I_\star)\\
  &\iso (\pi_A\times \pi_B)_! (\pi_{B}^* \pi_A^*I_\star \ten \pi_{A}^*\pi_B^*I_\star)\\
  &\iso (\pi_A)_!\pi_A^* I_\star \ten (\pi_B)_! \pi_B^* I_\star\\
  &= \Sigma A \ten \Sigma B
\end{align*}
(This is especially obvious in string diagram notation; see
Figure~\ref{fig:sigma} on page~\pageref{fig:sigma}.  We leave the
verification of the coherence of these isomorphisms to the reader.)  If
$\Sigma(A)$ is dualizable, for some $A\in \bS$, then we can ask about
the trace of $\Sigma(f)$.

\begin{eg}
  For a cocomplete symmetric monoidal category \bC\ and the \Set-indexed
  category $A\mapsto \bC^A$, we have $\bC^\star \cong \bC$, and the
  functor $\Sigma$ takes a set $A$ to the \emph{copower} $A \cdot I$ of
  the unit object $I\in\bC$ by the set $A$ (that is, the coproduct of
  $A$ copies of $I$).  If \bC\ is \Ab, then $\Sigma(A)$ is the free
  abelian group on $A$, and similarly for modules, chain complexes, and
  so on.

  In the case of abelian groups, if $A$ is finite, then the abelian
  group $\Sigma(A) = \bbZ[A]$ is dualizable.  And if $f\colon A\to A$ is
  an endofunction, then the trace of $\Sigma(f)$ is just the number of
  fixed points of $f$.
\end{eg}

\begin{eg}
  For the \Top-indexed monoidal category $A\mapsto \Ho(\bEx{A})$, the
  category $\sC^\star$ is the stable homotopy category of spectra.  For
  a space $A$, we have $\Sigma(A) = \Sigma^\infty(A_+)$, the suspension
  spectrum of $A$ with a disjoint basepoint.  This should be regarded as
  a homotopical version of the ``free abelian group'' functor.

  We have already remarked that if $A$ is a finite-dimensional closed
  smooth manifold, then $\Sigma(A)$ is dualizable, and if $f\colon A\to
  A$ is an endomorphism, then the trace of $\Sigma(f)$ is the
  fixed-point index of $f$.
\end{eg}

\begin{eg}
  In the case of  the underived groupoid-indexed category $A\mapsto \Ab^A$ 
  from 
  \autoref{eg:gpds}, we again have $\Ab^\star \cong \Ab$, and
  $\Sigma(A)$ is just the free abelian group on the set of connected
  components of $A$.  If $A$ has finitely many connected components,
  then $\Sigma(A)$ is dualizable, and the trace of $\Sigma(f)$ is the
  number of ``fixed components,'' i.e. the number of isomorphism classes
  of objects $x$ such that $f(x)\cong x$.
\end{eg}

\begin{eg}\label{eg:gpd-smtrace}
  By contrast, for the derived version $A\mapsto \Ho(\bCh{\bbZ}^A)$, the
  fiber over $\star$ is $\Ho(\bCh{\bbZ})$, and $\Sigma(A)$ is the
  homotopy colimit of the constant $A$-diagram of shape $\bbZ$.  This
  can be represented concretely by the complex of chains on the nerve of
  $A$.

  For an example of duality and trace, suppose that $A$ is a
  \emph{finitely generated free groupoid}; that is, a groupoid freely
  generated by some finite directed graph.  Thus, it has finitely many
  objects, and the group of automorphisms of any object is a finitely
  generated free group.  Then $\Sigma(A)$ is equivalent to the following
  chain complex concentrated in degrees 1 and 0:
  \[ \bbZ[A_1] \to \bbZ[A_0].
  \]
  Here $A_0$ is the (finite) set of objects of $A$ and $A_1$ a (finite)
  set of \emph{generating} morphisms, and the differential sends each
  generator $\gamma$ to $t(\gamma) - s(\gamma)$, the difference between
  its source and target objects.  Since this complex is bounded and
  finitely generated free, it is dualizable in $\Ho(\bCh{\bbZ})$; its
  dual can be identified with
  \[ \bbZ[A_0] \to \bbZ[A_1]
  \]
  in degrees 0 and $-1$, where now the differential sends an object $x$
  to the sum of all generating morphisms having target $x$, minus the
  sum of all generators having source $x$.

  Now suppose $f\colon A\to A$ is an endomorphism of $A$.  Therefore, it
  takes each object to another object, and each generating morphism to a
  composite of other generators and inverses of generators.  The induced
  endomorphism $\Sigma(f)$ corresponds, in the above representation, to
  the endomorphism
  \[\xymatrix{ \bbZ[A_1] \ar[r] \ar[d] & \bbZ[A_0] \ar[d] \\
    \bbZ[A_1] \ar[r] & \bbZ[A_0]}
  \]
  which acts as $f$ on objects, and sends each generating morphism to
  the \emph{sum} of the generators occurring in its image, counted with
  multiplicity (where inverses of generators contribute negatively).
  Therefore, the trace $\tr(f)$ is the sum of
  \begin{enumerate}
  \item the number of objects (literally) fixed by $f$, and
  \item for each generating morphism $\gamma$, the number of occurrences
    of $\gamma^{-1}$ in the image $f(\gamma)$, minus the number of
    occurrences of $\gamma$ in $f(\gamma)$.\label{item:gst2}
  \end{enumerate}
  (The perhaps-surprising signs in~\ref{item:gst2} come from the sign in
  the symmetry isomorphism for the tensor product of chain complexes.)
  As it must be, the result is invariant under equivalence of groupoids
  (though not manifestly so from the above description).

  Note that a finitely generated free groupoid is essentially the same
  thing as a finite 1-dimensional CW complex, up to homotopy.  It is
  straightforward to check that under this equivalence, the trace
  calculated above agrees with the topological fixed-point index.
\end{eg}

As mentioned at the beginning of this section, we can also consider
traces of more general morphisms of the form $Q \ten M \to M\ten P$.
Probably the most important symmetric monoidal trace of this form is the
trace of
\[ \Sigma(A) \xto{\Sigma(f)} \Sigma(A) \xto{\Sigma(\Delta_A)}
\Sigma(A) \otimes \Sigma(A)
\]
for some endomorphism $f\colon A \to A$ in \bS, which we call the
\textbf{transfer of $f$}.  Note that the transfer is a morphism $I\to
\Sigma(A)$.

\begin{eg}\label{eg:fam-smtransfer}
  Considering the \Set-indexed category $A\mapsto \Ab^A$, the transfer
  of an endomorphism $f\colon A\to A$ of a finite set is a morphism
  $\bbZ \to \bbZ[A]$.  This is equivalent to a single element of
  $\bbZ[A]$, which turns out to be just the formal sum of all the fixed
  points of $f$.
\end{eg}

\begin{eg}\label{eg:gpd-smtransfer}
  Considering the derived groupoid-indexed category $A\mapsto
  \Ho(\bCh{\bbZ}^A)$, if $A$ is a finitely generated free groupoid, then
  using the small chain complex representing $\Sigma(A)$ from
  \autoref{eg:gpd-smtrace}, the diagonal $\Sigma(A) \to \Sigma(A) \ten
  \Sigma(A)$ can be represented by the morphism
  \[ \xymatrix{0 \ar[r] \ar[d] &
    \bbZ[A_1] \ar[r] \ar[d] &
    \bbZ[A_0] \ar[d] \\
    \bbZ[A_1] \ten \bbZ[A_1] \ar[r] &
    \Big(\bbZ[A_1] \ten \bbZ[A_0]\Big) \oplus
    \Big(\bbZ[A_0] \ten \bbZ[A_1]\Big) \ar[r] &
    \bbZ[A_0] \ten \bbZ[A_0] }
  \]
  which sends each object $x$ to $x\ten x$, and each generating morphism
  $\gamma$ to
  \[\gamma \ten s(\gamma) + t(\gamma)\ten \gamma\]
  (where $s(\gamma)$ and $t(\gamma)$ are the source and target objects
  of $\gamma$, respectively).  The transfer of an endomorphism $f$ is
  then a morphism $\bbZ \to \Sigma(A)$ in $\Ho(\bCh{\bbZ}^A)$, i.e.\ an
  element of $H_0(\Sigma(A))$, which is the free abelian group on the
  set of connected components of $A$.  The coefficient of each component
  in this trace is 0 if that component is not mapped to itself, and
  otherwise it is the trace, as in \autoref{eg:gpd-smtrace}, of $f$
  restricted to that component.
\end{eg}

\begin{eg}
  If $M$ is a closed smooth manifold, the transfer of $f\colon
  M\rightarrow M$ is an element of the $0^{\mathrm{th}}$ stable homotopy
  group of $M_+$.  This latter group can be identified with the free
  abelian group on the set of connected components of $M$, and as in the
  previous example, the coefficient of each component in the trace is
  the sum of the indices of the fixed points in that component.
\end{eg}

Thus, in general, we expect that the transfer of $f$ separates out the
contributions to $\tr(f)$ based on in which connected component of $A$
they lie.  (In the case of a plain set, of course, each element is its
own connected component.)

In particular, the transfer is itself a refinement of the ordinary trace
of $\Sigma(f)$.  Note that the unique map $A\to \star$ to the terminal
object induces an augmentation $\Sigma(A) \to \Sigma(\star) = I$;
standard facts about the functoriality of traces then imply that the
composite
\[ I \xto{\tr(\Sigma(\Delta \circ f))} \Sigma(A) \too I
\]
is simply the trace of $\Sigma(f)$.  In the above examples, the
augmentation simply adds up all the coefficients, and this equality is
then obvious.

\section{Shadows from indexed monoidal categories}
\label{sec:shadows}
\label{sec:bicats}

We now move on to our new refinements of the symmetric monoidal trace,
which we will obtain by constructing a \emph{bicategory} out of an
indexed monoidal category and making use of the general notion of
bicategorical trace introduced in~\cite{kate:traces,PS2}.  Bicategorical
traces require some extra structure on the bicategory, however, so we
begin by recalling that.

The notion of \emph{duality} in a symmetric monoidal category, which we
recalled in the previous section, generalizes easily to bicategories.  A
1-cell $M \colon R \hto S$ in a bicategory is \textbf{right dualizable}
if there is a 1-cell $\rdual{M}\colon S \hto R$ called its \textbf{right
  dual}, and evaluation and coevaluation 2-cells $\eta \colon U_R
\rightarrow M \odot \rdual{M}$ and $\epsilon\colon \rdual{M} \odot M
\rightarrow U_S$ satisfying the usual triangle identities.  (Here
$\odot$ denotes the bicategory composition and $U_R$ is the unit 1-cell
associated to the 0-cell $R$.)  Dual pairs in a bicategory are often
also called \emph{adjoints}, since in the bicategory \Cat\ of
categories, functors, and natural transformations they are precisely
adjoint functors.

The definition of \emph{trace} in a symmetric monoidal category requires
the symmetry isomorphism, which is not present in a bicategory.  In
fact, it wouldn't even make sense to ask for it, since in general
$M\odot N$ and $N\odot M$ live in different hom-categories.  However, as
described in~\cite{kate:traces,PS2}, if we impose an extra structure on
a bicategory, we can define an analogous notion of trace.

Specifically, we define a \textbf{shadow functor} on a bicategory \sB\
to consist of functors
\[\bigsh{-}\maps \sB(R,R) \to \bT\]
for each object $R$ of \sB\ and some fixed category \bT, equipped
with a natural isomorphism
\[\theta\maps \sh{M\odot N}\too[\iso] \sh{N\odot M}\]
for $M\maps R\hto S$ and $N\maps S\hto R$, such that the following
diagrams commute whenever they make sense:
\[\xymatrix{\bigsh{(M\odot N)\odot P} \ar[r]^\theta \ar[d]_{\sh{\fa}} &
  \bigsh{P \odot (M\odot N)} \ar[r]^{\sh{\fa}} &
  \bigsh{(P\odot M) \odot N}\\
  \bigsh{M\odot (N\odot P)} \ar[r]^\theta & \bigsh{(N\odot P)
    \odot M} \ar[r]^{\sh{\fa}} & \bigsh{N\odot (P\odot
    M)}\ar[u]_\theta }\]
\[\xymatrix{\bigsh{M\odot U_R} \ar[r]^\theta \ar[dr]_{\sh{\fr}} &
  \bigsh{U_R\odot M} \ar[d]^{\sh{\fl}} \ar[r]^\theta &
  \bigsh{M\odot U_R} \ar[dl]^{\sh{\fr}}\\
  &\bigsh{M}}\]

If $\sB$ is equipped with a shadow functor and $M$ is a right dualizable
1-cell in $\sB$, then the \textbf{trace} of a 2-cell $f \colon Q \odot M
\rightarrow M \odot P$ is defined to be the composite:
\[\xymatrix{\sh{Q}\ar[r]^-{\sh{ \id \odot \eta}}&
\sh{Q \odot M \odot \rdual{M} }\ar[r]^-{\sh{ f \odot \id}}&
\sh{M \odot P \odot \rdual{M} }\ar[r]^\theta&
\sh{ \rdual{M}\odot M \odot P}\ar[r]^-{\sh{\epsilon\odot \id}}&
\sh{P}
}\]
The most basic case is when $Q$ and $P$ are unit 1-cells, so the shadows
of such units are particularly important; we write $\sh{A} = \sh{U_A}$
and call it \emph{the shadow of $A$}.

The following example generally provides the best intuition.

\begin{eg}\label{eg:bimodules}
  There is a bicategory whose 0-cells are rings (not necessarily
  commutative), whose 1-cells are bimodules, and whose 2-cells are
  bimodule homomorphisms.  Composition of 1-cells is done with the
  tensor product of bimodules, and the right dualizable bimodules
  $\mathbb{Z}\hto R$ are precisely the finitely generated projective
  right $R$-modules.  The shadow of an $R$-$R$-bimodule $M$ is the
  abelian group $M / (r \cdot m = m \cdot r)$.  The bicategorical trace
  specializes to the \emph{Hattori-Stallings trace}
  from~\cite{stallings,hattori}, which takes values in $\sh{R}$.

  There are similar bicategories consisting of chain complexes of
  bimodules over rings, or DGAs.  In this case the shadow is essentially
  Hochschild homology, and the bicategorical trace is the alternating
  sum of the levelwise Hattori-Stallings traces, just as in the
  symmetric monoidal category of chain complexes.
\end{eg}

Now we turn to the task of constructing a bicategory from an indexed
symmetric monoidal category.  For motivation, let \bS\ be a category
with finite limits and consider the indexed symmetric monoidal category
$A\mapsto \bS/A$, whose fiberwise monoidal structures are given by
pullback.  There is also a bicategory whose composition operation is
given by pullback in \bS: its objects are those of \bS\ and its 1-cells
are \emph{spans} $A \leftarrow M \to B$ in \bS.  Thus, let us consider
how we might derive the structure of this bicategory from that of
$A\mapsto \bS/A$.

The category of 1-cells from $A$ to $B$ is isomorphic to $\bS/(A\times
B)$, which is just the fiber category over $A\times B$, so all we need
to do is construct the units and composition.  The central observation
is that if $A\ot M \to B$ and $B\ot N \to C$ are spans, then their
composite $M\times_B N$ is isomorphic to $\Delta_B^*(M\boxtimes N)$,
where $\boxtimes$ is the external (cartesian) product of our indexed
category.  Of course, $\Delta_B^*(M\boxtimes N)$ is actually an object
of $\bS/(A\times B\times C)$, so to make it into an object of
$\bS/(A\times C)$, we need to forget the map to $B$ by pushing forward
along the projection $\pi_B\colon A\times B\times C \to A\times C$; thus
we have $M\times_B N \cong (\pi_B)_! \Delta_B^*(M\boxtimes N)$.
Similarly, the unit span $A\ot A \to A$ can be described (somewhat
perversely) as $(\Delta_A)_! I_A$, where $I_A$ is the unit object of the
symmetric monoidal category $\bS/A$ (namely, the identity $A\to A$).

These considerations motivate the following theorem.  Minus the
statement about the shadow, this theorem was first observed
by~\cite{maysig:pht} in a particular case, and then generalized
in~\cite{shulman:frbi}.

\begin{thm}\label{thm:mf-bi}
  Let \bS\ be a cartesian monoidal category, and let \sC\ be an
  \bS-indexed symmetric monoidal category with indexed homotopy
  coproducts preserved by $\ten$.  Then there is a bicategory $\calBi
  CS$, whose 0-cells are the objects of \bS, with
  \[\calBi CS(A,B) = \sC^{A\times B},
  \]
  and with composition and units defined by
  \begin{align*}
    M\odot N &= (\id_A\times\pi_B\times\id_C)_!
    (\id_A\times \Delta_B\times\id_C)^*(M \boxtimes N) \quad\text{and}\\
    U_A &= (\Delta_A)_! \pi_A^* (U)
  \end{align*}
  Moreover, $\calBi CS$ has a shadow with values in $\sC^\star$, defined by
  \[ \sh{M} = (\pi_A)_!(\Delta_A)^* M.\]
\end{thm}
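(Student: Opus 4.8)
The plan is to verify directly that the structure described assembles into a bicategory with a shadow, by exhibiting the necessary constraint 2-cells as composites of reindexing isomorphisms, Beck-Chevalley isomorphisms, and the projection-formula isomorphisms, and then checking the coherence axioms. First I would establish that $\odot$ and $U$ are functorial in 2-cells; this is immediate since $(-)_!$, $(-)^*$, and $\boxtimes$ are all functors. Next I would construct the associativity constraint: given composable $M\in\sC^{A\times B}$, $N\in\sC^{B\times C}$, $P\in\sC^{C\times D}$, both $(M\odot N)\odot P$ and $M\odot(N\odot P)$ unfold—after using \eqref{eq:extmon} and the pseudofunctoriality isomorphisms for $f^*$—into an expression of the shape ``push forward along the projection that forgets $B$ and $C$, applied to the pullback along the map that doubles $B$ and $C$, of $M\boxtimes N\boxtimes P$.'' The two unfoldings differ by the order in which the two ``forget/double'' operations for $B$ and for $C$ are performed, and these commute by a ``commutativity with reindexing'' Beck-Chevalley isomorphism (Figure~\ref{fig:reindpb}) together with coherence of $f^*$. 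The unit constraints are similar but use the ``sliding and splitting'' Beck-Chevalley isomorphism (Figure~\ref{fig:slidpb}) and its transpose: $U_A\odot M$ involves $(\Delta_A)_!\pi_A^*U$ pulled back and pushed forward, and the composite of a left adjoint with a pullback along a pullback square collapses, using also \eqref{eq:intunit}, to $M$ up to canonical isomorphism.

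Having produced the constraints, I would check the pentagon and triangle axioms. These follow from the coherence theorem for pseudofunctors (so that all rearrangements of $f^*$'s agree) together with the standard fact—provable by the ``mates'' calculus of~\cite{ks:r2cats}—that the Beck-Chevalley isomorphisms themselves satisfy compatible coherence conditions (a composite of pullback squares induces the composite Beck-Chevalley isomorphism). In practice each axiom becomes a diagram all of whose faces are instances of naturality, pseudofunctor coherence, or these Beck-Chevalley coherences, so it commutes. I would remark that a cleaner, diagram-free verification is available once the string diagram calculus of \S\S\ref{sec:string-diagrams}--\ref{sec:colored-strings} is in hand, and defer the fully detailed check there.

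For the shadow, I would first observe $\sh{M}=(\pi_A)_!(\Delta_A)^*M$ is a functor $\calBi CS(A,A)\to\sC^\star$. The key step is the natural isomorphism $\theta\maps\sh{M\odot N}\too[\iso]\sh{N\odot M}$ for $M\in\sC^{A\times B}$, $N\in\sC^{B\times A}$. Unfolding, $\sh{M\odot N}$ is obtained by pushing $\Delta_A^*$ of $(\id_A\times\pi_B\times\id_A)_!(\id_A\times\Delta_B\times\id_A)^*(M\boxtimes N)$ forward along $\pi_A$; after applying commutativity with reindexing to pull $\Delta_A^*$ past the pushforward that forgets $B$, this collapses to ``push $M\boxtimes N$ forward along $A\times B\times A\to\star$, through the coequalizing pullbacks that identify the two $A$'s and double the $B$.'' That description is manifestly symmetric in the roles of $A$ and $B$ (it is the ``trace'' of the span data around the cycle $A\to B\to A$), and $\sh{N\odot M}$ yields the same thing with $A$ and $B$ interchanged; the symmetry isomorphism of $\boxtimes$ (via \eqref{eq:extmon} and the symmetry of $\ten$ in each fiber) then supplies $\theta$. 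Finally I would check the two shadow coherence hexagons/triangles; as with the bicategory axioms, each face is naturality, pseudofunctor coherence, a Beck-Chevalley coherence, or the hexagon/pentagon for the symmetry of $\ten$, so they commute.

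The main obstacle I expect is bookkeeping: writing $(M\odot N)\odot P$, $M\odot(N\odot P)$, $\sh{M\odot N}$, and $\sh{N\odot M}$ each as a normal form of the type ``$(\text{forget})_!(\text{diagonalize})^*(M\boxtimes N\boxtimes\cdots)$'' requires repeated, carefully-ordered use of \eqref{eq:extmon}, the pseudofunctor constraints, and the ``commutativity with reindexing'' Beck-Chevalley isomorphism, and then verifying that the comparison isomorphism one gets really is the canonical one (rather than differing by some automorphism)—a point that, as in the proof of the preceding lemma, is most safely handled either by an explicit diagram chase or by the mates formalism. This is precisely the kind of computation the string diagram calculus is designed to make transparent, so I would present the construction here and relegate the exhaustive coherence verification to the later sections.
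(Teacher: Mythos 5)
Your overall strategy matches the paper's: build the constraint 2-cells as explicit composites of pseudofunctor constraints, compatibilities of $\boxtimes$ with reindexing, and Beck-Chevalley isomorphisms; note the associator is driven by ``commutativity with reindexing''; observe the shadow isomorphism $\theta$ combines the symmetry isomorphism for $\boxtimes$ with further commutativity-with-reindexing slides; and defer the exhaustive coherence check to the string-diagram calculus and the constructions of~\cite{shulman:frbi}. All of that aligns with the paper's proof.

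However, your sketch of the unit constraint misidentifies the Beck-Chevalley condition involved. You attribute it to ``sliding and splitting'' (Figure~\ref{fig:slidpb}), but the pullback square actually needed is the \emph{Frobenius} square (Figure~\ref{fig:frobpb}) and its transpose. Unwinding $U_B\odot M$, one first uses preservation of indexed coproducts to rewrite $U_B\boxtimes M\cong(\Delta_B\times\id)_!(I_B\boxtimes M)$, and then must commute $(\id\times\Delta_B)^*$ past $(\Delta_B\times\id)_!$. The pullback of $\id\times\Delta_B$ and $\Delta_B\times\id$ has both projections equal to $\Delta_B$ --- that is exactly the Frobenius square --- while the sliding-and-splitting square does not specialize to this (setting $f=\id$ there gives a trivial square). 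Since Remark~\ref{rmk:frobenius} records that Frobenius is a consequence of sliding-and-splitting together with commutativity-with-reindexing, your plan could be repaired, but as stated the appeal to Figure~\ref{fig:slidpb} is not what the unit constraint actually uses. The associativity and shadow parts of your outline are consistent with the paper.
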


Note that we can equivalently write the composition and units in terms
of the internal monoidal structure, as $M\odot N = (\pi_B)_! (\pi_C^* M
\ten \pi_A^* N)$ and $U_A = (\Delta_A)_! I_A$.

\begin{proof}
  The associativity isomorphism is the composite
  \begin{multline*}
    \scriptstyle
    {\textstyle(}\id_A\times \pi_C\times \id_D{\textstyle )}_!
    {\textstyle(}\id_A\times \Delta_C\times \id_D{\textstyle )}^*
    \big(\big ({\textstyle(}\id_A\times \pi_B\times \id_C{\textstyle )}_!
    {\textstyle(}\id_A\times \Delta_B\times \id_C{\textstyle )}^*
    {\textstyle(}M\boxtimes N{\textstyle )}\big)\boxtimes P\big)\\
    \scriptstyle
    \too[\cong] {\textstyle(}\id_A\times \pi_B\times \pi_C\times \id_D{\textstyle )}_!
    {\textstyle(}\id_A\times \Delta_B\times \Delta_C\times \id_D{\textstyle )}^*
    {\textstyle(}M\boxtimes N\boxtimes P{\textstyle )}\\
    \scriptstyle
    \too[\cong] {\textstyle(}\id_A\times \pi_B\times \id_D{\textstyle )}_!
    {\textstyle(}\id_A\times \Delta_B\times \id_D{\textstyle )}^*
    \big(M\boxtimes \big({\textstyle(}\id_B\times \pi_C\times \id_D{\textstyle )}_!
    {\textstyle(}\id_B\times \Delta_C\times \id_D{\textstyle )}^*
    {\textstyle(}N\boxtimes P{\textstyle )}\big)\big).
  \end{multline*}
  This uses the ``commutativity with reindexing'' Beck-Chevalley
  condition from Figure~\ref{fig:reindpb}, along with
  pseudofunctoriality isomorphisms for the functors $f^*$ and $f_!$.
  The unit isomorphism is
  \begin{multline*}
    \big(\id_A\times\pi_B\times\id_B\big)_!
    \big(\id_A\times\Delta_B\times\id_B\big)^*
    \Big(M\boxtimes (\Delta_B)_!\pi_B^*(U)\Big)\\
    \too[\cong] (\id_A\times\pi_B\times \id_B)_!
    (\id_A\times \Delta_B)_!(\id_A\times\Delta_B)^*
    (\id_A\times\id_B\times \pi_B)^*(M\boxtimes U)\\
    \too[\cong] M
  \end{multline*}
  This uses the ``Frobenius'' isomorphism from Figure~\ref{fig:frobpb},
  together with pseudofunctoriality for the equality $(\pi \times
  \id)\Delta = \id$.  More details, including proofs of the coherence
  axioms, can be found in~\cite{shulman:frbi}.

  To define the isomorphism $\sh{M\odot N}\cong \sh{N\odot M}$, first
  note that for $P\in \sC^A$ and $Q\in \sC^B$ there is an isomorphism
  \begin{equation}
    P\boxtimes Q\rightarrow \gamma^*(Q\boxtimes P),\label{eq:symep}
  \end{equation}
  where $\gamma\colon A\times B \cong B\times A$ is the symmetry
  isomorphism of \bS.  The isomorphism~\eqref{eq:symep} is defined by
  \begin{align*}
    P\boxtimes Q&\coloneqq (\id_A\times \pi_B)^*P
    \otimes_{A\times B}(\pi_A\times \id_B)^*Q\\
    &\xto{\gamma} (\pi_A\times \id_B)^*Q
    \otimes_{A\times B}(\id_A\times \pi_B)^*P\\
    &\cong \gamma^*(\id_B\times \pi_A)^*Q
    \otimes_{A\times B}\gamma^*(\pi_B\times \id_A)^*P\\
    &\cong \gamma^*((\id_B\times \pi_A)^*Q
    \otimes_{B\times A}(\pi_B\times \id_A)^*P)\\
    &\cong \gamma^*(Q\boxtimes P) 
  \end{align*}
  where the arrow labeled $\gamma$ denotes the symmetry isomorphism of
  $\sC^{A\times B}$.  Given this, we can define the isomorphism $\theta$
  to be the composite
  \begin{align*}
    (\pi_A)_!\Delta^*_A&((\id_A\times \pi_B\times \id_A)_!
    (\id_A\times \Delta_B\times \id_A)^*(M\boxtimes N))
    \\&\cong 
    (\pi_A)_!\Delta_A^*(\id_A\times \id_A\times \pi_B)_!\gamma_!
    (\id_A\times \Delta_B\times\id_A)^*(M\boxtimes N)\\
    &\cong (\pi_A)_!(\id_A\times \pi_B)_!(\Delta_A\times \id_B)^*
    \gamma_!(\id_A\times \Delta_B\times \id_A)^*(M\boxtimes N)\\
    &\cong (\pi_A)_!(\id_A\times \pi_B)_!(\Delta_A\times \id_B)^*
    (\id_A\times \id_A\times  \Delta_B)^*(\id_A\times \gamma
    )_!(M\boxtimes N)\\
    &\cong (\pi_A\times \pi_B)_!(\Delta_A\times  \Delta_B)^*
    (\id_A\times \gamma )_!(M\boxtimes N)\\
    &\xto{\eqref{eq:symep}}  (\pi_A\times \pi_B)_!
    (\Delta_A\times  \Delta_B)^*(\id_A\times \gamma )_!
    \gamma^*(N\boxtimes M)\\
    &\cong (\pi_B)_!(\pi_A\times \id_B)_!(\id_A\times  \Delta_B)^*
    (\Delta_A\times \id_B\times \id_B)^*\gamma^*(N\boxtimes M)\\
    &\cong (\pi_B)_!(\pi_A\times \id_B)_!(\id_A\times  \Delta_B)^*
    \gamma^*(\id_B\times \Delta_A\times \id_B)^*(N\boxtimes M)\\
    &\cong (\pi_B)_!(\Delta_B)^*(\pi_A\times \id_B\times \id_B)_!
    \gamma^*(\id_B\times \Delta_A\times \id_B)^*(N\boxtimes M)\\
    &\cong (\pi_B)_!(\Delta_B)^*(\id_B\times \pi_A\times \id_B)_!
    (\id_B\times \Delta_A\times \id_B)^*(N\boxtimes M)
  \end{align*}
  where $\gamma$ denotes various symmetry isomorphisms in $\sC$.  The
  axioms of a shadow functor can be proven by adapting the methods
  of~\cite{shulman:frbi}.
\end{proof}

\begin{rmk}
  We will see in \S\ref{sec:string-diagrams} that the isomorphisms in
  the proof of \autoref{thm:mf-bi} are \emph{dramatically} simplified by
  the use of string diagram notation.  Figure~\ref{fig:bicatops} on
  page~\pageref{fig:bicatops} shows the operations of $\calBi CS$;
  Figure~\ref{fig:bicat-constr} on page~\pageref{fig:bicat-constr} shows
  the associativity and unit isomorphisms; and Figure~\ref{fig:shadow}
  on page~\pageref{fig:shadow} shows the shadow isomorphism.
\end{rmk}

\begin{rmk}\label{rmk:ismc-bicat-surface}
  The bicategory constructed from an indexed symmetric monoidal category
  as in \autoref{thm:mf-bi} has additional structure: it is a symmetric
  monoidal bicategory in which each object is its own dual.
  (In~\cite{shulman:frbi} it is shown to be a ``fibrant'' symmetric
  monoidal double category, and in~\cite{csmb} it is shown how this
  structure gives rise to a symmetric monoidal bicategory.)  The shadow
  defined above can also be constructed from this additional structure,
  as suggested in~\cite{PS2}, but for our purposes it is easier to
  construct it directly.
\end{rmk}

We now consider some examples of \autoref{thm:mf-bi}.

\begin{eg}\label{eg:monfib-span}
  If \bS\ has finite limits, then from the indexed symmetric monoidal
  category $A\mapsto \bS/A$, \autoref{thm:mf-bi} produces the bicategory
  of spans in \bS.  The shadow of an endospan $A \leftarrow M \to A$ is
  the pullback $(\Delta_A)^* M$, regarded as an object of $\bS =
  \bS/\star$.
\end{eg}

\begin{eg}\label{eg:monfib-mat}
  If \bC\ is a symmetric monoidal category with ordinary coproducts
  preserved by $\ten$, then from the \Set-indexed symmetric monoidal
  category $A\mapsto \bC^A$, this theorem  produces the bicategory
  of \bC-valued \emph{matrices}.  Its objects are sets, of course, and
  its 1-cells $A\hto B$ are $(A\times B)$-indexed families of objects of
  \bC, while its composition is by ``matrix multiplication.''

  The shadow of an $A$-by-$A$ matrix $(M_{a_1,a_2})$ is its ``trace''
  $\coprod_a M_{a,a}$.  In particular, the shadow of a set $A$ is
  $\coprod_{a\in A} I$, which is isomorphic to the copower $A\cdot I$.
\end{eg}

\begin{eg}\label{eg:monfib-rel}
  If \bS\ is a regular category, then from the \bS-indexed monoidal
  category $A\mapsto \mathrm{Sub}(A)$, \autoref{thm:mf-bi} produces the
  bicategory of \emph{relations} in \bS.  This bicategory reflects all
  the logical structure of subobjects in \bS; see for
  instance~\cite{freydscedrov:allegories}.  The shadow of a relation
  $R\hookrightarrow A\times A$ is again the pullback $\Delta^* R$, which
  we can interpret as ``the object of all $a\in A$ such that $R(a,a)$''.
\end{eg}

\begin{eg}\label{eg:monfib-prof}
  From \autoref{eg:gpds}, \autoref{thm:mf-bi} produces a bicategory
  whose objects are groupoids and whose 1-cells $A\hto B$ form the
  diagram category $\bC^{A\times B}$.  Since every groupoid is
  canonically isomorphic to its opposite, we can equivalently regard
  this category as $\bC^{A\op\times B}$, whose objects are variously
  called (\bC-valued) ``profunctors'', ``distributors'', ``bimodules'',
  or ``relators'' from $A$ to $B$.  When we do this, the composition of
  1-cells produced by \autoref{thm:mf-bi} becomes identified with the
  usual \emph{tensor product of functors} construction with which we
  compose profunctors.  The unit object is the profunctor defined by
  $U_A(a_1,a_2) = \hom_A(a_1,a_2)\cdot I$, where $I$ is the unit object
  of \bC.

  Similarly, the shadow of a profunctor $M\colon A\hto A$ is the coend
  \[ \int^{a\in A} M(a,a). \]
  In particular, the shadow of the unit $U_A$ is
  \[ \int^{a\in A} \hom_A(a,a)\cdot I \cong
  \left(\int^{a\in A} \hom_A(a,a)\right)\cdot I.
  \]
  The set $\int^{a\in A} \hom_A(a,a)$ is the quotient of the set
  $\coprod_a \hom_A(a,a)$ of all automorphisms of objects of $A$ by the
  ``conjugacy'' relation $\gamma \sim \alpha^{-1}\gamma\alpha$.  Note
  that this set decomposes into a disjoint union, over all connected
  components of $A$, of the set of conjugacy classes of the isotropy
  group\footnote{The isotropy group of a connected component of $A$ is
    just the group of automorphisms of any object in that component.  It
    is well-defined up to conjugacy, so its set of conjugacy classes is
    well-defined up to isomorphism.} of that connected component.

  This is the usual bicategory of groupoids and \bC-valued profunctors,
  with its usual shadow (as discussed in~\cite{PS2}).  Of course, this
  bicategory sits in a larger bicategory of \emph{categories} and
  profunctors, but for the reasons given in \autoref{eg:gpds-bc}, we
  cannot produce the latter using \autoref{thm:mf-bi}.

  An important special case occurs when $\bC=\Ab$ is the category of
  abelian groups, and we restrict to groups (that is, one-object
  groupoids).  In that case, an \Ab-valued profunctor $G\hto H$ can be
  identified with a bimodule between the group rings $\bbZ[G]$ and
  $\bbZ[H]$.  The above formula for the shadow of a group, in the
  bicategory of profunctors, gives us the free abelian group on its set
  of conjugacy classes:
  \[\sh{G} = \bbZ[\mathrm{Conj}(G)]
  \]
  which is also the shadow $\sh{\bbZ[G]}$ of its group ring in the
  bicategory of bimodules.  Thus, the sub-bicategory of \Ab-valued
  profunctors between groups can be identified with the sub-bicategory
  of \autoref{eg:bimodules} determined by group rings.  (Of course, we
  can also replace $\bbZ$ by any commutative ring.)
\end{eg}

Note that for this example, it is important that the proof of
\autoref{thm:mf-bi} only used the Beck-Chevalley condition for
\emph{homotopy} pullback squares.  The same is true of the next example.

\begin{eg}\label{eg:monfib-ex}
  The indexed monoidal categories $A\mapsto \bEx{A}$ and $A\mapsto
  \Ho(\bEx{A})$ of parametrized spectra give rise to point-set--level
  and derived bicategories.  The latter bicategory is the one studied
  in~\cite{maysig:pht}.
\end{eg}

In general, shadows in \autoref{eg:monfib-ex} do not have a simple
description, but in the important case of the shadow of unit 1-cells, we
can say more.  In fact, there is a general way to compute the shadow of
a unit 1-cell which works in most examples.  The definition gives us:
\[ \sh{A} = \sh{U_A} = (\pi_A)_!(\Delta_A)^* U_A =
(\pi_A)_!(\Delta_A)^* (\Delta_A)_! I_A.
\]
Recalling the functor $\Sigma(A) = (\pi_A)_! I_A$ from
\S\ref{sec:smc-traces}, we see that there is a canonical morphism
$\Sigma(A) \to \sh{A}$ induced by the unit of the adjunction $\Delta_!
\dashv \Delta^*$.  If diagonals are monic, this is an isomorphism, so
that we can identify $\sh{A}$ with $\Sigma(A)$.  This is the case in
Examples~\ref{eg:monfib-span}, \ref{eg:monfib-mat},
and~\ref{eg:monfib-rel}.

On the other hand, if diagonals are not monic, as in
Examples~\ref{eg:monfib-prof} and~\ref{eg:monfib-ex}, then $\sh{A}$ can
be noticeably different from $\Sigma(A)$.  However, we can compute
$\sh{A}$ if we can find a different square
\[ \xymatrix{ LA \ar[r]^-p \ar[d]_q & A \ar[d]^{\Delta} \\
  A \ar[r]_-{\Delta} & A\times A,}
\]
for some object $LA$, which \emph{is} a homotopy pullback (and hence
will usually satisfy the Beck-Chevalley condition, although it is not
one of the squares listed in Figure~\ref{fig:pullbacks}).  For if this
is the case, we have
\[ \sh{A} = (\pi_A)_!\Delta_A^* (\Delta_A)_! I_A \cong
(\pi_A)_! p_! q^* I_A \cong (\pi_{LA})_! I_{LA} = \Sigma(LA).
\]
Moreover, in this case the map $\Sigma(A) \to \sh{A} = \Sigma(LA)$ is
simply the image by $\Sigma$ of the comparison map $A\to LA$ (which is
induced by the homotopy pullback property of $LA$).

\begin{eg}\label{eg:free-loop-space}
  In \Top, the homotopy pullback of $\Delta_A$ with itself is the
  \emph{free loop space} $LA$: its points are continuous maps $S^1 \to
  A$ (no basepoints involved).  Therefore, in the bicategory of
  parametrized spectra, the shadow $\sh{A}$ is $\Sigma(LA) =
  \Sigma_+^\infty(LA)$, the suspension spectrum of the free loop space
  of $A$.  The map $A\to LA$ sends each point $x\in A$ to the constant
  loop at $x$.
\end{eg}

\begin{eg}
  We can compute the shadow in \autoref{eg:monfib-prof} in this way too.
  When $A$ is a groupoid, the homotopy pullback (or pseudo-pullback) of
  $\Delta_A$ with itself is a groupoid $LA$ whose objects are pairs
  $(x,\gamma)$, where $x$ is an object of $A$ and
  $\gamma\in\hom_A(x,x)$, and whose morphisms are ``conjugations''.  The
  map $A\to LA$ sends each object $x$ to $(x,\id_x)$.

  Now recall that in the underived case of \Ab-valued profunctors,
  $\Sigma(B)$ is the free abelian group on the set of connected
  components of $B$.  Therefore, we recover the fact that $\sh{A}$ is
  the free abelian group on the set of conjugacy classes of
  automorphisms in $A$.
\end{eg}

\begin{eg}\label{eg:hochschild}
  The derived version of the profunctors example is perhaps easier to
  see from the free-loop-space perspective.  Now the shadow of $A$ is
  the homotopy quotient of the $LA$-diagram constant at $I$.  When $\bC$
  is chain complexes, this just gives the complex of chains on the nerve
  of $LA$.

  The nerve of $LA$, however, is isomorphic to what is called the
  \emph{cyclic nerve} of $A$.  This is a simplicial set $ZA$ whose
  $n$-simplices are ``composable loops'' of $n+1$ morphisms in $A$:
  \[ x_0 \xto{\alpha_0} x_1 \xto{\alpha_1} \cdots
  \xto{\alpha_{n-1}} x_n \xto{\alpha_n} x_0
  \]
  (note that the starting and ending objects are the same).  The face
  maps of $ZA$ compose pairs of adjacent morphisms, with the final face
  map composing around the loop:
  \[ (\alpha_0\,,\,\dots\,,\,\alpha_{n-1},\,\alpha_n)
  \mapsto (\alpha_n\alpha_0\,,\,\dots\,,\,\alpha_{n-1}).
  \]
  The isomorphism $N(LA) \cong ZA$ is as follows: given an $n$-simplex
  \[ (x_0,\gamma_0) \xto{\alpha_1} (x_1,\gamma_1)
  \xto{\alpha_2} \cdots \xto{\alpha_n} (x_n,\gamma_n)
  \]
  in $N(LA)$, where by definition $\alpha_{i+1}^{-1} \gamma_i
  \alpha_{i+1} = \gamma_{i+1}$, we send it to the $n$-simplex
  \[ (\alpha_0, \alpha_1, \dots, \alpha_n) \]
  in $ZA$, where by definition
  \[ \alpha_0 \coloneqq \alpha_n^{-1} \cdots \alpha_0^{-1} \gamma_0.
  \]
  It is straightforward to check that this is an isomorphism of
  simplicial sets (in fact, an isomorphism of ``cyclic sets'').  See,
  for instance,~\cite{bressler:cyclic}.

  The reason for passing across this isomorphism is that when $A$ is a
  group $G$, the chains on $ZG$ form exactly the \emph{Hochschild
    complex} of the group ring $\bbZ[G]$ (as a bimodule over itself).
  More generally, when $A$ is a skeletal groupoid, the chains on $ZA$
  are the direct sum of the Hochschild complexes of $\bbZ[G]$, as $G$
  runs over the isotropy groups of connected components of $A$.  If $A$
  is not skeletal, then the chains on $ZA$ are homotopy equivalent to
  those on its skeleton.  Thus, for any groupoid $A$, we have a
  quasi-isomorphism
  \[ \sh{A} \;\simeq \bigoplus_{x\in\pi_0(A)}
  \mathit{HH}_*\Big(\bbZ[\hom_A(x,x)]\Big).
  \]

  Note that when using $\sh{G}$ as the target of a map into or out of
  \bbZ, the difference between the derived and underived cases is
  negligible.  This is because maps into and out of \bbZ\ are
  determined, up to homotopy, by the 0th homology of a chain complex,
  and the 0th Hochschild homology of $\bbZ[G]$ is exactly its underived
  shadow: the free abelian group on its set of conjugacy classes.  These
  two cases cover most of our examples, so higher Hochschild homology
  will not make much of an appearance hereafter.
\end{eg}

\section{Fiberwise duality}
\label{sec:fiberwise}

At this point we have all the machinery necessary to prove the
``fiberwise'' comparison \autoref{thm:intro-fiberwise} and its
generalizations, by considering duality and trace in the bicategory
$\calBi CS$.  From now on, we will mostly restrict attention to the
three examples mentioned in the introduction:
\begin{enumerate}
\item The \Set-indexed category $A\mapsto \Ab^A$, where $f^*$ is given
  by reindexing and $f_!$ is given by coproducts over the fibers of $f$.
  In this case $\calBi CS$ is the bicategory of \Ab-valued matrices.
\item The \Top-indexed category $A\mapsto \Ho(\bEx{A})$ of
  parametrized spectra, where $f^*$ is given by pullback and $f_!$ by
  pushout along $f$.  In this case $\calBi CS$ is the bicategory of
  parametrized spectra from~\cite{maysig:pht}.
\item The $\mathbf{Gpd}$-indexed category $A\mapsto \Ho(\bCh{\bbZ}^A)$,
  where $f^*$ is given by reindexing and $f_!$ by homotopy left Kan
  extension.  In this case $\calBi CS$ is equivalent to the derived
  bicategory of $\bCh{\bbZ}$-valued profunctors over groupoids.
\end{enumerate}
Until now we have also considered other examples, such as $A\mapsto
\bS/A$ and $A\mapsto \Ho(\Top/A)$.  However, in order to have
interesting dualities, it is generally necessary for the fibers to be
``additive'' in some sense, as is the case in the examples above.

We now observe that there are two canonical ``embeddings'' of the fiber
categories $\sC^A$ into $\calBi CS$.  Namely, we can consider an object
$M\in\sC^A$ either as a 1-cell $\widehat{M}\maps A\hto \star$ or a
1-cell $\widecheck{M}\maps \star\hto A$ in $\calBi CS$, since the
isomorphisms $\star\times A \iso A\iso A\times \star$ induce
equivalences
\begin{align*}
  \sC^A \simeq \sC^{A\times \star} &= \calBi CS(A,\star) \qquad \text{and}\\
  \sC^A \simeq \sC^{\star\times A} &= \calBi CS(\star,A).
\end{align*}
However, unlike $\sC^A$, which is of course a symmetric monoidal
category, the bicategory structure of $\calBi CS$ does not endow $\calBi
CS(A,\star)$ or $\calBi CS(\star,A)$ with monoidal structures; rather,
it equips them with two composition functors
\begin{align*}
  \odot&\maps \calBi CS(A,\star)\times \calBi CS(\star,A)
  \too \calBi CS(A,A) \simeq \sC^{A\times A}\\
  \odot&\maps \calBi CS(\star,A)\times \calBi CS(A,\star)
  \too \calBi CS(\star,\star) \simeq \sC^\star.
\end{align*}
The relationship of these functors to the monoidal structure of
$\sC^A$ is easily obtained from the definition of $\odot$; we have
\begin{align*}
  \widehat{M} \odot \widecheck{N} &\iso M \boxtimes N \in
  \sC^{A\times A}\\
  \widecheck{M} \odot \widehat{N} &\iso (\pi_A)_!(M\ten_A N) \in \sC^\star
\end{align*}
From this we can deduce fundamental relationships between duality and
trace in the fiber categories of \sC\ versus the bicategory $\calBi CS$.
All proofs in this section involve fairly lengthy string diagram
calculations, so we defer them to \S\ref{sec:fiberwise-proofs}.

The following theorem was proven, for parametrized spectra,
in~\cite{maysig:pht}.

\begin{thm}\label{thm:fiberwise-duality}
  An object $M\in\sC^A$ is dualizable in the symmetric monoidal category
  $\sC^A$ if and only if $\widehat{M}\maps A\hto \star$ is right
  dualizable in the bicategory $\calBi CS$.  Moreover, we have
  $\rdual{(\smash{\widehat{M}})} \iso \widecheck{\rdual{M}}$.
\end{thm}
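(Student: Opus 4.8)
The plan is to transpose duality data back and forth across the adjunctions $(\Delta_A)_! \dashv (\Delta_A)^*$ and $(\pi_A)_! \dashv (\pi_A)^*$, using the two identifications recorded just above the theorem,
\[ \widehat{M} \odot \widecheck{N} \iso M \boxtimes N
\qquad\text{and}\qquad
\widecheck{N} \odot \widehat{M} \iso (\pi_A)_!(N \ten_A M) \]
together with $U_A = (\Delta_A)_! I_A$, $U_\star = I_\star$, the isomorphism $M \ten_A N \iso \Delta_A^*(M\boxtimes N)$ of~\eqref{eq:intprod}, and $I_A \iso \pi_A^* I_\star$ of~\eqref{eq:intunit}. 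Since every $1$-cell $\star \hto A$ is $\widecheck{N}$ for a unique $N\in\sC^A$ (because $\calBi CS(\star,A) = \sC^{\star\times A}\simeq\sC^A$), any right dual of $\widehat M$ is necessarily $\widecheck N$ for some $N\in\sC^A$; so the content of the theorem is that $M$ admits a fiberwise dual with unit $\eta'\maps I_A \to M\ten_A \rdual M$ and counit $\epsilon'\maps \rdual M \ten_A M \to I_A$ exactly when $\widecheck{\rdual M}$ can be exhibited as a right dual of $\widehat M$ in $\calBi CS$.

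For the direction $(\Rightarrow)$, given $(\rdual M,\eta',\epsilon')$ I would take the coevaluation $\eta\maps U_A \to \widehat M \odot \widecheck{\rdual M} \iso M\boxtimes\rdual M$ to be the mate of $\eta'$ under $(\Delta_A)_!\dashv\Delta_A^*$: apply $(\Delta_A)_!$ to $\eta'\maps I_A \to \Delta_A^*(M\boxtimes\rdual M)$ and postcompose with the counit $(\Delta_A)_!\Delta_A^*\to\id$. Likewise the evaluation $\epsilon\maps \widecheck{\rdual M}\odot\widehat M \iso (\pi_A)_!(\rdual M \ten_A M)\to U_\star = I_\star$ is the mate of $\epsilon'\maps \rdual M\ten_A M \to \pi_A^* I_\star$ under $(\pi_A)_!\dashv\pi_A^*$. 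For $(\Leftarrow)$ one runs these transpositions in reverse: recover $\eta'$ from $\eta$ using the unit of $(\Delta_A)_!\dashv\Delta_A^*$ and $\epsilon'$ from $\epsilon$ using the unit of $(\pi_A)_!\dashv\pi_A^*$. Transposition across an adjunction is a bijection, so these assignments are mutually inverse on raw data; granting that, the whole theorem---including the identification $\rdual{\widehat M}\iso\widecheck{\rdual M}$, since fiberwise and bicategorical duals are each determined up to canonical isomorphism by their duality data---reduces to the single claim that the bicategorical triangle identities for $(\eta,\epsilon)$ hold if and only if the symmetric monoidal triangle identities for $(\eta',\epsilon')$ do.

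That last equivalence is the real work, and the main obstacle. Spelled out, one composite is assembled from $\odot$, the associativity and unit constraints of $\calBi CS$ (hence from the ``commutativity with reindexing'' and ``Frobenius'' Beck--Chevalley isomorphisms of Figures~\ref{fig:reindpb} and~\ref{fig:frobpb}), and the structural isomorphisms $\widehat M\odot\widecheck N\iso M\boxtimes N$ and $\widecheck N\odot\widehat M\iso(\pi_A)_!(N\ten_A M)$; it must be matched against the corresponding composite in $\sC^A$ built only from $\ten_A$, its unit constraints, and the symmetry. Carried out directly this is a punishing diagram chase through nested $f_!$ and $f^*$, their pseudofunctoriality isomorphisms, and the various Beck--Chevalley coherences. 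The intended remedy---and the reason this proof is postponed to \S\ref{sec:fiberwise-proofs}---is to perform it in the string diagram calculus of \S\S\ref{sec:string-diagrams}--\ref{sec:colored-strings}, in which $\odot$, $U_A$, the constraints, and the mate operations all become elementary pictures and each triangle identity collapses to a planar isotopy of strings.
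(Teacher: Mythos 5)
Your proposal reproduces the paper's own argument (given in \S\ref{sec:fiberwise-proofs}): the paper likewise passes the coevaluation $U_A \to \widehat{M}\odot\widecheck{N}$ and evaluation $\widecheck{N}\odot\widehat{M}\to U_\star$ across the adjunctions $(\Delta_A)_!\dashv\Delta_A^*$ and $(\pi_A)_!\dashv\pi_A^*$ to obtain the bijection with symmetric monoidal duality data, then reduces the theorem to the equivalence of the two sets of triangle identities, which it verifies via exactly the string-diagram manipulations you anticipate (Figures~\ref{fig:eta-ep-fib-to-smc}--\ref{fig:fib-duals-2}). The approach, the key lemma, and the identification of where the real work lies all coincide with the paper.
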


Therefore, duality in $\calBi CS$ includes, as a special case, duality
in the fiber categories $\sC^A$.  The following proposition helps to
clarify the nature of this duality.

\begin{prop}\label{thm:fibdual-fibers}\
  \begin{enumerate}
  \item If $M\in\sC^A$ is dualizable in $\sC^A$, then for any morphism
    $a\colon \star\to A$, we have that $a^*(M)\in\sC^\star$ is
    dualizable in $\sC^\star$.
  \item Suppose that
    \begin{enumerate}
    \item the fiber categories of $\sC$ are all \emph{closed} monoidal,
      as are the reindexing functors $f^*$,
      and\label{item:fiberwise-condition1}
    \item the collection of all functors $a^*\colon \sC^A \to \sC^\star$
      is jointly conservative
      (isomorphism-reflecting).\label{item:fiberwise-condition2}
    \end{enumerate}
    Then if $a^*(M)\in\sC^\star$ is dualizable for every $a\colon
    \star\to A$, it follows that $M$ is dualizable in $\sC^A$.
  \end{enumerate}
\end{prop}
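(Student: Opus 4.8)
The plan is to deduce~(i) from the strong monoidality of the reindexing functors, and~(ii) from the standard closed-category criterion for dualizability together with joint conservativity. For part~(i): the reindexing functor $a^*\maps\sC^A\to\sC^\star$ is strong symmetric monoidal, and a strong monoidal functor carries dual pairs to dual pairs. Concretely, if $(\rdual M,\eta,\epsilon)$ exhibits $M$ as dualizable in $\sC^A$, then applying $a^*$ to $\eta$ and $\epsilon$ and inserting the invertible monoidal constraints of $a^*$ yields a coevaluation $I_\star\to a^*M\ten a^*\rdual M$ and an evaluation $a^*\rdual M\ten a^*M\to I_\star$; the two triangle identities follow by applying $a^*$ to those for $M$ and invoking coherence of the constraints. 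Thus $a^*M$ is dualizable with $\rdual{(a^*M)}\iso a^*\rdual M$. (Closedness and conservativity play no role here, consistent with the hypotheses of~(i).)

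For part~(ii), recall the standard fact that in any closed symmetric monoidal category --- with internal hom written $[-,-]$ and unit $\mathbf{1}$ --- an object $X$ is dualizable if and only if the canonical morphism $\kappa_X\maps[X,\mathbf{1}]\ten X\to[X,X]$ is an isomorphism, in which case $\rdual X\iso[X,\mathbf{1}]$, with coevaluation $\mathbf{1}\to[X,X]\xto{\kappa_X^{-1}}[X,\mathbf{1}]\ten X$ followed by the symmetry. Applying this in $\sC^A$ --- whose fibers are closed by hypothesis~(a) --- we form $DM\coloneqq[M,I_A]$ and the canonical map $\kappa_M\maps DM\ten_A M\to[M,M]$ in $\sC^A$. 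It then suffices to prove that $\kappa_M$ is an isomorphism, and by the joint conservativity of the functors $a^*$ (hypothesis~(b)) it suffices to prove that $a^*\kappa_M$ is an isomorphism for every $a\maps\star\to A$.

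So fix such an $a$. Since $a^*$ is strong symmetric monoidal and strong closed --- the latter being exactly the content of hypothesis~(a) that the reindexing functors are closed, i.e.\ that the canonical comparison $a^*[X,Y]\to[a^*X,a^*Y]$ is an isomorphism --- its structure isomorphisms identify $a^*(DM\ten_A M)$ with $[a^*M,I_\star]\ten_\star a^*M$ and $a^*[M,M]$ with $[a^*M,a^*M]$, and under these identifications $a^*\kappa_M$ becomes the canonical map $\kappa_{a^*M}$. By hypothesis $a^*M$ is dualizable in $\sC^\star$, so $\kappa_{a^*M}$ --- hence $a^*\kappa_M$ --- is an isomorphism. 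As $a$ was arbitrary, joint conservativity gives that $\kappa_M$ is an isomorphism, and therefore $M$ is dualizable in $\sC^A$, with $\rdual M\iso DM$.

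The step I expect to be the main obstacle is the identification of $a^*\kappa_M$ with $\kappa_{a^*M}$: this is the assertion that a strong symmetric monoidal, strong closed functor preserves the canonical map~$\kappa$. The cleanest way to see it is to note that $\kappa_X$ is the transpose, under the adjunction $(-)\ten X\dashv[X,-]$, of a morphism assembled solely from the evaluation $[X,\mathbf{1}]\ten X\to\mathbf{1}$, the unit constraints, and the symmetry --- all of which $a^*$ preserves, the evaluation by the very definition of the closed-functor constraint --- so $\kappa$ is preserved as well; alternatively one appeals to the coherence theorem for closed functors. One should also take care to state the dualizability criterion with no hypothesis on the ambient category beyond closed symmetric monoidality, so that it applies verbatim to each $\sC^A$ and to $\sC^\star$.
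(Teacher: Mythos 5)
Your proof is correct and matches the paper's intent: part~(i) is exactly the observation that strong monoidal functors carry dual pairs to dual pairs, and for part~(ii) the paper simply cites~\cite[15.1.1]{maysig:pht} (noting that the proof needs only hypotheses~(a) and~(b)), which is precisely the argument you reconstruct via the Dold--Puppe/Lewis--May--Steinberger criterion that an object $M$ of a closed symmetric monoidal category is dualizable iff the canonical map $[M,I]\otimes M\to[M,M]$ is invertible, checked fiberwise by joint conservativity.
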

\begin{proof}
  The first statement is easy, since strong monoidal functors (like
  $a^*$) always preserve dualizability.  The second is proven
  in~\cite[15.1.1]{maysig:pht} for parametrized spectra, but the proof
  requires only the two properties mentioned above.
\end{proof}

In our examples, a morphism $a\colon \star\to A$ is the same as a
``point'' of $A$, and the functor $a^*$ computes the ``fiber'' of $M$
over that point.  Thus, since conditions~\ref{item:fiberwise-condition1}
and~\ref{item:fiberwise-condition2} above hold quite frequently (in
particular, they hold in our three primary examples), it is usually the
case that $M$ is dualizable in $\sC^A$ if and only if each of its fibers
is dualizable in $\sC^\star$.  Thus (following~\cite{maysig:pht}), in
the situation of \autoref{thm:fiberwise-duality} we say that $M$ is
\textbf{fiberwise dualizable}.

We can now state an equivalent form of \autoref{thm:intro-fiberwise}.

\begin{thm}\label{thm:fibtrace4}
  Let $M\in \sC^A$ be fiberwise dualizable and let $f\maps M\to M$ be an
  endomorphism in $\sC^A$.  Then the diagram
  \begin{equation}
    \xymatrix{
      (\pi_A)_!I_A\ar[d]_{(\pi_A)_!\tr(f)}\ar[r]&\sh{A}\ar[d]^{\tr(\widehat{f})}\\
      (\pi_A)_!I_A\ar[r]&I_\star
    }\label{eq:fibtr4}
  \end{equation}
  commutes.
\end{thm}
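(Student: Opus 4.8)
The plan is to unwind both legs of the square~\eqref{eq:fibtr4} into composites of morphisms of $\sC^A$ pushed forward along $(\pi_A)_!$, and then to match them term by term. Throughout I use the identification $\rdual{(\smash{\widehat{M}})}\iso\widecheck{\rdual{M}}$ from \autoref{thm:fiberwise-duality}, the formulas $\widehat{M}\odot\widecheck{N}\iso M\boxtimes N$ and $\widecheck{M}\odot\widehat{N}\iso(\pi_A)_!(M\ten_A N)$ recorded just before the theorem, the shadow formula $\sh{M}=(\pi_A)_!(\Delta_A)^*M$ from \autoref{thm:mf-bi}, and the isomorphism~\eqref{eq:intprod} which turns $(\Delta_A)^*(M\boxtimes N)$ into $M\ten_A N$. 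Combining these gives $\sh{\widehat{M}\odot\widecheck{\rdual{M}}}\iso(\pi_A)_!(M\ten_A\rdual{M})$ and, since the shadow on $\calBi CS(\star,\star)=\sC^\star$ is the identity functor, $\sh{\widecheck{\rdual{M}}\odot\widehat{M}}\iso(\pi_A)_!(\rdual{M}\ten_A M)$; also $U_\star=I_\star$ and $\sh{U_\star}=I_\star$.

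Suppressing the unit constraints $U_A\odot\widehat{M}\iso\widehat{M}\iso\widehat{M}\odot U_\star$, along which the endo-$2$-cell $\widehat{f}$ (meaning $f$ regarded as an endomorphism of the $1$-cell $\widehat{M}$) is just $f\colon M\to M$, the definition of the bicategorical trace makes the top-then-right leg of~\eqref{eq:fibtr4} equal to
\begin{multline*}
  (\pi_A)_!I_A \xto{c} \sh{U_A} \xto{\sh{\eta}} (\pi_A)_!(M\ten_A\rdual{M})\\
  \xto{\sh{\widehat{f}}} (\pi_A)_!(M\ten_A\rdual{M}) \xto{\theta} (\pi_A)_!(\rdual{M}\ten_A M) \xto{\sh{\epsilon}} I_\star,
\end{multline*}
where $c$ is the canonical map induced by the unit of $\Delta_!\dashv\Delta^*$ and $\eta,\epsilon$ are the coevaluation and evaluation $2$-cells of \autoref{thm:fiberwise-duality}. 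The left-then-bottom leg is $(\pi_A)_!(\tr f)$ followed by the augmentation $(\pi_A)_!I_A\to I_\star$, where $\tr(f)\colon I_A\to I_A$ is the usual symmetric monoidal trace in $\sC^A$, built from $\eta_M$, $f\ten\id$, the symmetry $\gamma$ of $\sC^A$, and $\epsilon_M$. Hence it suffices to establish four identifications of the intermediate maps: (i) $\sh{\eta}\circ c=(\pi_A)_!(\eta_M)$; (ii) $\sh{\widehat{f}}=(\pi_A)_!(f\ten\id)$, which is immediate from the compatibility of $\odot$ with $\boxtimes$ together with naturality of~\eqref{eq:intprod}; (iii) the shadow isomorphism $\theta$, which \autoref{thm:mf-bi} defines by a long chain of Beck-Chevalley and symmetry isomorphisms, collapses in this special case to $(\pi_A)_!(\gamma)$; and (iv) $\sh{\epsilon}$ equals $(\pi_A)_!(\epsilon_M)$ followed by the augmentation $(\pi_A)_!I_A\to I_\star$. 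Granting (i)--(iv), each leg of~\eqref{eq:fibtr4} becomes $(\pi_A)_!$ of the defining composite for $\tr(f)$, followed by the augmentation, since $(\pi_A)_!$ is a functor; this is the asserted commutativity.

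I expect (iii) to be the main obstacle. One must substitute the explicit formula for $\theta$ from the proof of \autoref{thm:mf-bi}, specialise the two $1$-cells to the embedded forms $\widehat{M}$ and $\widecheck{\rdual{M}}$ so that every reindexing over the ``intermediate'' object in that formula becomes trivial, and then verify that the resulting tower of pseudofunctoriality and ``commutativity with reindexing'' isomorphisms telescopes, leaving only the single instance of~\eqref{eq:symep}, which is itself built from the symmetry of $\sC$; under~\eqref{eq:intprod} this is precisely $(\pi_A)_!(\gamma)$. This is exactly the bookkeeping that the string diagram calculus of \S\ref{sec:string-diagrams} is designed to make transparent, which is why I would defer the computation to \S\ref{sec:fiberwise-proofs}, as the paper does for all the proofs in this section.

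Statements (i) and (iv) are more routine, but still require the explicit descriptions of the coevaluation and evaluation $2$-cells produced in the proof of \autoref{thm:fiberwise-duality}. For (i) one then chases the unit of $\Delta_!\dashv\Delta^*$ through~\eqref{eq:intprod}; for (iv) one recognises the counit of $(\pi_A)_!\dashv(\pi_A)^*$ --- that is, the augmentation --- that emerges when $\sh{\epsilon}$ is computed using $U_\star=I_\star$ and the isomorphism $I_A\iso(\pi_A)^*I_\star$ of~\eqref{eq:intunit}. One should also note that \autoref{thm:fibtrace4} is the ``$(\pi_A)_!$-pushforward'' reformulation of \autoref{thm:intro-fiberwise}: the first map in the latter is the $(\pi_A)_!\dashv(\pi_A)^*$-adjunct of $c$, and the equivalence of the two statements is a formal consequence of the naturality of the counit together with one triangle identity.
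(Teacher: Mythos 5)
Your proposal takes a genuinely different route from the paper. The paper proves this theorem by observing that it is the special case $Q=I_A$, $P=I_\star$ of the more general \autoref{thm:fibtrace}\ref{item:fibtrace3}, which is in turn derived from parts~\ref{item:fibtrace1} and~\ref{item:fibtrace2} of that theorem; all the string-diagram heavy lifting is concentrated in the proof of part~\ref{item:fibtrace1}. You instead unwind both legs of the square directly, using the explicit descriptions of $\overline\eta$, $\overline\epsilon$ from the proof of \autoref{thm:fiberwise-duality} and the shadow formula of \autoref{thm:mf-bi}, and reduce the whole thing to your four identifications (i)--(iv). The reduction is sound: (i) and (iv) are exactly the adjunct relationships established in the proof of \autoref{thm:fiberwise-duality}, (ii) is immediate, and (iii) --- that $\theta$ specializes to $(\pi_A)_!(\gamma)$ under $\sh{\widehat{M}\odot\widecheck{\rdual{M}}}\cong(\pi_A)_!(M\ten\rdual M)$ and $\sh{\widecheck{\rdual M}\odot\widehat{M}}\cong(\pi_A)_!(\rdual M\ten M)$ --- is plausible and is indeed the crux, as you say. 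The trade-offs are worth naming. Your route is leaner for this particular statement, since specializing to $B=\star$ kills most of the intermediate Beck-Chevalley isomorphisms in the formula for $\theta$ before one ever has to draw a diagram, and it sidesteps the construction of the auxiliary $2$-cells $\widetilde f,\widehat f$ from \autoref{thm:fibtrace}. The paper's route costs about the same in string-diagram work but also delivers the twisted versions~\ref{item:fibtrace1}--\ref{item:fibtrace3}, which are used later (e.g.\ in the transfer Examples~\ref{eg:fam-fibtransfer} and~\ref{eg:gpd-fibtransfer} and in the remark at the end of \S\ref{sec:total}), so the generality is not wasted. One caveat: you defer (iii) rather than verify it; if you carried it out you would essentially be reproducing a fragment of the computation in \S\ref{sec:fiberwise-proofs}, so the deferral is appropriate in spirit, but a full write-up would need to actually substitute the formula for $\theta$ from \autoref{thm:mf-bi} and check the telescoping.
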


Note that the lower-left composite in~\eqref{eq:fibtr4} is simply the
adjunct of
\[\tr(f)\colon I_A \to I_A \cong \pi_A^* I_\star
\]
under the adjunction $(\pi_A)_!\dashv \pi_A^*$, so the conclusion of the
theorem is equivalent to the assertion that the symmetric monoidal trace
of $f\maps M\to M$ is equal to the composite
\[ I_A
\to (\pi_A)^*(\pi_A)_! (\Delta_A)^* (\Delta_A)_! I_A
= (\pi_A)^* \sh{A}
\xto{(\pi_A)^* \tr (\widehat{f})} (\pi_A)^* I_\star
= I_A .
\]
(This is the form in which we stated \autoref{thm:intro-fiberwise}.)
In particular, $\tr(f)$ can be recovered from $\tr(\widehat{f})$.
Thus, the bicategorical trace $\tr(\widehat{f})$ carries at least as
much information as the fiberwise trace $\tr(f)$.

Moreover, the top morphism in~\eqref{eq:fibtr4} is an instance of the
``monic diagonals'' Beck-Chevalley morphism.  (In fact, it is exactly
the comparison morphism $\Sigma(A) \to \sh{A}$ that we saw in
\S\ref{sec:shadows}.)  Thus, if diagonals are monic, then the two traces
carry exactly the same information, while otherwise the bicategorical
trace can be strictly more informative.

We now consider fiberwise traces in our three primary examples.

\begin{eg}\label{eg:fib-duality-mat}
  By \autoref{thm:fibdual-fibers}, a \Set-indexed family of abelian
  groups $(M_a)_{a\in A}\in\Ab^A$ is dualizable just when each $M_a$ is
  a dualizable abelian group, which is to say it is finitely generated
  and projective.  If $f = (f_a)\maps M\to M$ is an endomorphism, then
  its trace in $\Ab^A$ is the family of traces of the endomorphisms $f_a
  \colon M_a \to M_a$:
  \[\tr(f) = (\tr f_a)_{a\in A} \colon (\bbZ)_{a\in A} \too (\bbZ)_{a\in A}.
  \]
  Since diagonals are monic in this case, the trace of $\widehat{f}$ is
  just the adjunct of this under the adjunction $(\pi_A)_! \dashv
  (\pi_A)^*$, which turns out to be the induced map
  \[\bigoplus_a \bbZ \xto{[\tr(f_a)]} \bbZ.
  \]
  Of course, by the universal property of a coproduct, knowing this
  morphism is the same as knowing the individual morphisms $\tr(f_a)$.
\end{eg}

\begin{eg}\label{eg:fib-duality-gpd}
  Again, by \autoref{thm:fibdual-fibers}, a $\mathbf{Gpd}$-indexed
  diagram $M$ of chain complexes is dualizable just when each chain
  complex $M(a)$ is dualizable.  In the underived context, this means it
  is finitely generated and projective, while in the derived case it
  just means it is quasi-isomorphic to such a complex.  The symmetric
  monoidal unit $I_A\in \bCh\bbZ^A$ is just the constant functor at the
  unit object $\bbZ\in \bCh{\bbZ}$, and for an endomorphism $f\colon
  M\to M$, the symmetric monoidal trace is the natural transformation
  $I_A\to I_A$ consisting of the individual traces of the morphisms $f_a
  \colon M(a) \to M(a)$.

  As for the bicategorical trace, recall that the shadow of a groupoid
  $A$ is the copower of the unit $I = \bbZ$ by the set of conjugacy
  classes of automorphisms in $A$.  A computation shows that
  $\tr(\widehat{f})\colon \sh{A} \to \bbZ$ sends each automorphism
  $\gamma \in \hom_A(a,a)$ to the ordinary symmetric monoidal trace of
  the composite
  \[M(a) \xto{M(\gamma)} M(a) \xto{f_a} M(a).
  \]
  (Cyclicity of the trace implies that this is invariant under
  conjugacy.)  Since the unit $I_A \to (\pi_A)^* \sh{A}$ picks out the
  identities, it is clear that the composite of these two will find
  exactly the traces of the $f_a$'s.  As we saw above, this is the
  symmetric monoidal traces in $\bCh\bbZ^A$, as asserted by
  \autoref{thm:fiberwise-duality}.  Note that in this case, the
  bicategorical trace does carry strictly more information.
\end{eg}

\begin{eg}\label{eq:fib-duality-sp}
  Once again, by \autoref{thm:fibdual-fibers}, a parameterized spectrum
  $E$ over $B$ is fiberwise dualizable if and only if each of its fibers
  is dualizable in the usual stable homotopy category $\Ho(\Sp)$.

  In particular, if $p\colon E\rightarrow B$ is a fibration with fiber
  $F$ such that $\Sigma^\infty(F_+)$ is dualizable, then the ``fiberwise
  suspension spectrum'' $\Sigma_B^\infty(E\amalg B)$ is fiberwise
  dualizable over $B$.  In this case, for a fiberwise map $f\colon
  E\rightarrow E$, the symmetric monoidal trace of
  $\Sigma^\infty_B(f\amalg \id_B)$ in $\Ho (\Sp_B)$ is the
  \emph{fiberwise fixed point index} of $f$, \cite{d:index}.  This is a
  fiberwise endomorphism of the ``parametrized sphere spectrum'' $S_B$
  over $B$ (this is the unit object $I_A$).  Since $S_B \simeq (\pi_B)^*
  S$, where $S$ is the ordinary sphere spectrum, by adjunction this map
  is equivalent to a map
  \[(\pi_B)_! S_B \cong \Sigma^\infty(B_+)\rightarrow S.
  \]
  The homotopy classes of maps $\Sigma^\infty(B_+)\rightarrow S$ make up
  the $0^{th}$ \emph{stable cohomotopy} of $B_+$.  If $b\colon
  \star\rightarrow B$ is the inclusion of a point $b$ in $B$, the
  composite
  \[\xymatrix@C=3pc{
    S = \Sigma^\infty(\star_+) \ar[r]^-{\Sigma^\infty(b_+)} &
    \Sigma^\infty(B_+)\ar[rr]^-{\tr(\Sigma^\infty_B(f\amalg \id_B))} &&
    S}
  \]
  is the symmetric monoidal trace of the induced map
  $\Sigma^\infty((f|_{p^{-1}(b)})_+)$ on the fiber over $b$.  Thus, just
  as in the previous examples, the symmetric monoidal fiberwise trace
  consists of all the traces on all the fibers, ``put together'' in a
  suitable way (here, in a way ``continuously parametrized'' by $B$).

  As for the bicategorical trace, recall from
  \autoref{eg:free-loop-space} that the shadow of $U_B$ is the
  suspension spectrum of the free loop space of $B$.  Thus,
  \autoref{thm:fibtrace4} gives a factorization of the fiberwise trace
  as
  \[\Sigma^\infty(B_+)\rightarrow \Sigma^\infty(LB_+)\rightarrow S.
  \] 
  The first map is induced by the inclusion of $B$ into $LB$ as constant
  loops.  As for the second, a loop $\gamma$ in $B$ based at $b$ induces
  an endomorphism $E_\gamma$ of $p^{-1}(b)$, and the trace of
  ($\Sigma^\infty_+$ applied to) the composite
  \[\xymatrix@C=3pc{
    p^{-1}(b) \ar[r]^{E_\gamma} & p^{-1}(b) \ar[r]^{f|_{p^{-1}(b)}} & p^{-1}(b)
  }\]
  gives an endomorphism $S\rightarrow S$ of the sphere spectrum.
  Together, these maps comprise the bicategorical trace.
\end{eg}

\begin{rmk}\label{eg:shaug}
  As remarked previously, the unit object $I_A$ of $\sC^A$ is always
  fiberwise dualizable.  The symmetric monoidal trace of $\id_{I_A}$ is
  just itself, but its bicategorical trace is a nontrivial morphism
  $\sh{A} \to I_\star$, which we can view as an ``augmentation'' of
  $\sh{A}$.  In \S\ref{sec:total} we will see that this augmentation
  plays an important role in comparing the two types of traces
  appropriate for total duality.
\end{rmk}

We would also like a version of \autoref{thm:fibtrace4} for twisted
traces.  There are various forms of this, depending on where we choose
the twisting objects $Q$ and $P$ to live.

\begin{thm}\label{thm:fibtrace}
  Let $M\in \sC^A$ be fiberwise dualizable.
  \begin{enumerate}
  \item For any $Q\in \sC^{A\times A}$, $P\in \sC^{\star}$, and $g\colon
    Q\odot \widehat{M} \to \widehat{M}\odot P$, there is a corresponding
    morphism $\overline{g}\colon (\Delta_A)^* Q \otimes M \to M \otimes
    \pi_A^* P$ in $\sC^A$, such that the bicategorical
    trace\label{item:fibtrace1}
    \[ \tr(g) \colon (\pi_A)_! (\Delta_A)^* Q = \sh{Q}\too \sh{P} = P \]
    and the symmetric monoidal trace
    \[ \tr(\overline{g}) \colon (\Delta_A)^* Q \too (\pi_A)^* P \]
    are adjuncts under the adjunction $(\pi_A)_! \dashv (\pi_A)^*$.
    In other words, the following triangles commute.
    \begin{equation}\label{eq:fibtr1}
      \xymatrix{ (\pi_A)_! (\Delta_A)^* Q
        \ar[d]_{(\pi_A)_!\tr(\overline{g})} \ar[dr]^{\tr(g)}\\
        (\pi_A)_!(\pi_A)^* P \ar[r] & P}
      \quad
      \xymatrix{ (\Delta_A)^* Q \ar[r] \ar[dr]_{\tr(\overline{g})} &
        (\pi_A)^*(\pi_A)_! (\Delta_A)^* Q \ar[d]^{(\pi_A)^*\tr(g)} \\
        & (\pi_A)^* P}
    \end{equation}
  \item For any $Q,P\in\sC^A$, and $f\colon Q\otimes M \to M\otimes P$,
    there is a corresponding morphism $\widetilde{f}\colon (\Delta_A)_!
    Q \odot \widehat{M} \to \widehat{M} \odot (\pi_A)_! P$ such that the
    following triangle commutes.\label{item:fibtrace2}
    \begin{equation}
      \xymatrix{ (\pi_A)_! Q \ar[r] \ar[dr]_{(\pi_A)_! \tr(f)} &
        (\pi_A)_! (\Delta_A)^* (\Delta_A)_! Q \ar[d]^{\tr(\widetilde{f})}\\ 
        & (\pi_A)_! P 
      }\label{eq:fibtr2}
    \end{equation}
  \item For any $Q\in\sC^A$ and $P\in \sC^{\star}$, there is a bijection
    between morphisms $f\colon Q\otimes M \to M\otimes (\pi_A)^*P$ and
    morphisms $\widehat{f}\colon (\Delta_A)_! Q \odot \widehat{M} \to
    \widehat{M} \odot P$, and for a corresponding pair of such
    morphisms, the following square commutes.\label{item:fibtrace3}
    \begin{equation}
      \xymatrix{ (\pi_A)_! Q \ar[r] \ar[d]_{(\pi_A)_! \tr(f)} &
        (\pi_A)_! (\Delta_A)^* (\Delta_A)_! Q \ar[d]^{\tr(\widehat{f})}\\
        (\pi_A)_! (\pi_A)^* P \ar[r] & P }\label{eq:fibtr3}
    \end{equation}
  \end{enumerate}
\end{thm}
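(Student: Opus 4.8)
The plan is to derive all three parts from a single input: the identification, worked out while proving \autoref{thm:fiberwise-duality}, of the bicategorical dual pair $(\widehat{M},\widecheck{\rdual{M}})$ with the fiberwise dual pair $(M,\rdual{M})$. Precisely, under the canonical isomorphisms $\widehat{M}\odot\widecheck{\rdual{M}}\cong M\boxtimes\rdual{M}$ and $\widecheck{\rdual{M}}\odot\widehat{M}\cong(\pi_A)_!(\rdual{M}\ten_A M)$, the adjunctions $(\Delta_A)_!\dashv(\Delta_A)^*$ and $(\pi_A)_!\dashv(\pi_A)^*$, and the internalization isomorphisms \eqref{eq:intprod}--\eqref{eq:intunit}, the bicategorical coevaluation $\eta\colon U_A\to\widehat{M}\odot\widecheck{\rdual{M}}$ transposes to the fiberwise coevaluation $I_A\to M\ten_A\rdual{M}$, and the bicategorical evaluation $\epsilon\colon\widecheck{\rdual{M}}\odot\widehat{M}\to U_\star$ transposes to the fiberwise evaluation $\rdual{M}\ten_A M\to I_A$. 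I would isolate this as a lemma and then use it freely.

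Granting that, the heart of the matter is part (i). First, expanding $\odot$ via \autoref{thm:mf-bi} and using the equivalence $\calBi CS(A,\star)\simeq\sC^A$, the domain and codomain of $g\colon Q\odot\widehat{M}\to\widehat{M}\odot P$ become explicit objects of $\sC^A$ built from $Q$, $M$, $P$ by reindexing, $\ten$, and a single $f_!$; precomposing with a canonical comparison map (the $(\Delta_A)^*$-image of a unit $R\to f^* f_! R$ for the relevant projection $f$) and transposing along $(\Delta_A)_!\dashv(\Delta_A)^*$ yields $\overline{g}\colon(\Delta_A)^* Q\ten M\to M\ten\pi_A^* P$. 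Next, one expands both traces: using the shadow formula $\sh{-}=(\pi_A)_!(\Delta_A)^*$, the cyclicity isomorphism $\theta$, and the transposed descriptions of $\eta,\epsilon$, the composite defining $\tr(g)\colon\sh{Q}=(\pi_A)_!(\Delta_A)^* Q\to\sh{P}=P$ is rewritten as $(\pi_A)_!$ applied to the fiberwise trace composite for $\overline{g}$, followed by the counit $(\pi_A)_!(\pi_A)^* P\to P$; this is exactly the assertion of the two triangles in \eqref{eq:fibtr1}.

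Parts (ii) and (iii) then follow formally. For (iii), apply (i) with $Q$ replaced by $(\Delta_A)_! Q$ and $g$ by $\widehat{f}$; using the projection formula together with the fact that $\Delta_A$ is a section of the projections $A\times A\to A$ one identifies $(\Delta_A)_! Q\odot\widehat{M}$ with $Q\ten M$, which exhibits the stated bijection $f\leftrightarrow\widehat{f}$, and the square \eqref{eq:fibtr3} then drops out of the triangle \eqref{eq:fibtr1} by a short chase and functoriality of the trace in its input variable; \autoref{thm:fibtrace4} is the case $Q=I_A$, $P=I_\star$. For (ii), run (iii) with $P$ replaced by $(\pi_A)_! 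P$, so that $\widehat{M}\odot(\pi_A)_! P\cong M\ten\pi_A^*(\pi_A)_! P$; functoriality of the trace in its output variable, together with a triangle identity for $(\pi_A)_!\dashv(\pi_A)^*$, converts the resulting square into the triangle \eqref{eq:fibtr2}.

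The conceptual skeleton above is short, but the real obstacle is the middle step of part (i): expanding $\tr(g)$ and $\tr(\overline{g})$ and verifying they agree. Carried out with the $f^*$, $f_!$, $\ten$ presentation this comparison is swamped by instances of the Beck--Chevalley isomorphisms (commutativity with reindexing, Frobenius, sliding-and-splitting) and by bookkeeping about which of the several copies of $A$ each functor sees. That is why I would postpone the honest proof to \S\ref{sec:fiberwise-proofs} and carry it out in the string diagram calculus: there the fiberwise $\eta$ and $\epsilon$ are a cup and a cap, the shadow is the operation of closing a diagram up around a cylinder, and the equalities asserted by \eqref{eq:fibtr1}--\eqref{eq:fibtr3} become planar isotopies on the cylinder plus a handful of applications of moves already justified when the calculus is set up. The only genuine work is then to draw $g$, $\overline{g}$, $\widehat{f}$, $\widetilde{f}$ and the two trace formulas carefully enough that the required commutativities are visibly isotopies.
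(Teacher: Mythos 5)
Your overall strategy matches the paper's: part~\ref{item:fibtrace1} is where all the real content lives and is deferred to string-diagram calculations in \S\ref{sec:fiberwise-proofs}, while parts~\ref{item:fibtrace2} and~\ref{item:fibtrace3} then follow formally from naturality of the two kinds of traces together with triangle identities for $(\pi_A)_!\dashv(\pi_A)^*$. The one genuine difference is the order of derivation: the paper proves~\ref{item:fibtrace2} from~\ref{item:fibtrace1} by explicitly defining $\widetilde f$ and verifying the square~\eqref{eq:ftilde-square}, then obtains~\ref{item:fibtrace3} from~\ref{item:fibtrace2} by writing $\widehat f$ as $\widetilde f$ followed by $\id\odot\ep$; you instead derive~\ref{item:fibtrace3} directly from~\ref{item:fibtrace1} (applying it to $\widehat f$ with $Q$ replaced by $(\Delta_A)_!Q$) and then recover~\ref{item:fibtrace2} from~\ref{item:fibtrace3} by replacing $P$ with $(\pi_A)_!P$ and invoking a triangle identity. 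Both orderings are correct, and the rest of your outline — the reliance on the dual-pair identification from \autoref{thm:fiberwise-duality}, the construction of $\overline{g}$ by precomposing $g$ with a canonical comparison $(\Delta_A)^*Q\ten M\to Q\odot\widehat M$, the use of the Frobenius Beck--Chevalley isomorphism to identify $(\Delta_A)_!Q\odot\widehat M$ with $Q\ten M$ — all matches. The one caution is that the "short chase" you invoke for~\ref{item:fibtrace3} (showing that applying the $\overline{(-)}$ construction from~\ref{item:fibtrace1} to $\widehat f$ and then precomposing with $\eta_Q\ten\id_M$ recovers $f$) is not free: it is a string-diagram verification of the same flavor and comparable size as the one the paper does for~\eqref{eq:ftilde-square} (Figure~\ref{fig:ftilde2}, a sliding plus a broken zigzag identity), so your ordering relocates the auxiliary calculation rather than avoiding it.
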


Note that $g$ in~\ref{item:fibtrace1} is of the maximally general form
for a morphism of which we could take the bicategorical trace with
respect to $\widehat{M}$.  Thus, the conclusion of~\ref{item:fibtrace1}
says that for any bicategorical trace with respect to $\widehat{M}$,
there is a symmetric monoidal trace with respect to $M$ which carries
exactly the same information.

By contrast, in~\ref{item:fibtrace2}, $f$ is of the maximally general
form for a morphism of which we could take the symmetric monoidal trace
with respect to $M$.  Thus, the conclusion of~\ref{item:fibtrace2} says
that for any symmetric monoidal trace with respect to $M$, there is a
bicategorical trace with respect to $\widehat{M}$ which is related to it
in a suitable way.

Unlike  the situation of~\ref{item:fibtrace1}, neither of these
traces can be recovered from the other in general.  However, if
diagonals are monic, then the top map in~\eqref{eq:fibtr2} is an
isomorphism, and thus the bicategorical trace $\tr(\widetilde{f})$ can
be recovered from the symmetric monoidal trace $\tr(f)$; hence the
latter carries at least as much information as the former.

In the case of~\ref{item:fibtrace3}, the lower-left composite
\[(\pi_A)_!Q \xto{(\pi_A)_! \tr(f)} (\pi_A)_! (\pi_A)^* P \too P
\]
is the adjunct of $\tr (f)$ under the adjunction $(\pi_A)_! \dashv
(\pi_A)^*$, and thus carries exactly the same information.  Therefore,
in the situation of~\ref{item:fibtrace3}, the bicategorical trace
carries at least as much information than the symmetric monoidal one.
If diagonals are monic, then the top morphism in~\eqref{eq:fibtr3} is an
isomorphism, and so the two traces carry exactly the same information.

Note that \autoref{thm:fibtrace4} is a special case of
\autoref{thm:fibtrace}\ref{item:fibtrace3}, taking $Q=I_A$ and
$P=I_\star$; thus we only need to prove
Theorems~\ref{thm:fiberwise-duality} and~\ref{thm:fibtrace}.  In
\S\ref{sec:fiberwise-proofs}, we will do this using string diagram
calculations.

\begin{eg}\label{eg:fam-fibtransfer}
  Let $A$ be a set and $F\in \mathbf{FinSet}^A$ be an $A$-indexed family of
  finite sets, and define $M(a) = \bbZ[F(a)]$.  Then $M\in\Ab^A$ and is
  fiberwise dualizable, since each $M(a)$ is finitely generated and
  free.  Moreover, we have a diagonal morphism
  \[ \Delta\colon M \to M\otimes M \]
  induced by the diagonal $F \to F\times F$.

  Now if $f\colon M \to M$ is any endomorphism, then the composite
  $\Delta \circ f\colon M \to M\otimes M$ is of the form assumed in
  \autoref{thm:fibtrace}\ref{item:fibtrace2}, with $Q=I_A$ and $P=M$.
  Its symmetric monoidal trace, of course, is the transfer of $f$ in the
  symmetric monoidal category $\Ab^A$.  This is a morphism $I_A \to M$,
  which simply picks out at each $a\in A$ the transfer of $f_a$, as in
  \autoref{eg:fam-smtransfer}.  The image of this under $(\pi_A)_!$ is a
  morphism $\bbZ[A] \to \bigoplus_{a\in A} M(a)$, which of course sends
  each generator $a\in A$ to the transfer of $f_a$.  Since diagonals are
  monic in this case, the top morphism in~\eqref{eq:fibtr2} is an
  isomorphism, and so the bicategorical trace of $\widetilde{f}$ is
  exactly the same.
\end{eg}

\begin{eg}\label{eg:gpd-fibtransfer}
  We can perform the same construction as in the previous example, but
  now starting with a \emph{groupoid} $A$ and a finite-set-valued
  functor $F\in \mathbf{FinSet}^A$.  Again we obtain a fiberwise
  dualizable object $M$ equipped with a diagonal, so we can apply
  \autoref{thm:fibtrace}\ref{item:fibtrace2} to $\Delta \circ f$ for any
  $f\colon M\to M$.  The symmetric monoidal transfer will, again,
  consist exactly of all the transfers of the fiber maps $f_a\colon M(a)
  \to M(a)$, and its image under $(\pi_A)_!$ will again be the sum of
  all of these.

  Since $Q=I_A$, the domain $(\pi_A)_! (\Delta_A)^* (\Delta_A)_! Q$ of
  the bicategorical trace is isomorphic to $\sh{A}$.  Unsurprisingly,
  the bicategorical trace $\tr(\widetilde{f})\colon \sh{A} \to (\pi_A)_!
  M$ sends each conjugacy class $[\gamma]$ to the symmetric monoidal
  transfer of the composite
  \[M(a) \xto{M(\gamma)} M(a) \xto{f_a} M(a)
  \]
  thereby combining Examples~\ref{eg:fib-duality-gpd} and
  \ref{eg:fam-fibtransfer}.
\end{eg}

\section{Base change objects}
\label{sec:base-change}

In the previous section we stated our first comparison theorem for
traces, with the proof deferred to \S\ref{sec:fiberwise-proofs}.  In
\S\ref{sec:total} we will state and prove our second such theorem, but
first we need to introduce some more structure implied by an indexed 
symmetric monoidal category.

Recall that we have equivalences $\sC^A\simeq \calBi CS(A,\star) \simeq
\calBi CS(\star,A)$.  In fact, the action of the reindexing and
indexed-coproduct functors is also visible in $\calBi CS$, as follows.
For any morphism $f\colon A\to B$ in \bS, we define its \textbf{base
  change objects} to be
\begin{align*}
  B_f &= (\id_B\times f)^* U_B \qquad\text{and}\\
  _fB &= (f\times \id_B)^* U_B,
\end{align*}
regarded as 1-cells $B\hto A$ and $A\hto B$ in $\calBi CS$,
respectively.  Note that by the Beck-Chevalley condition from
Figure~\ref{fig:slidbc} and its dual (and using the definition of
$U_B$), we have isomorphisms
\begin{align*}
  B_f &=(\id\times f)^*U_B\\
  &= (\id\times f)^*(\Delta_B)_!\pi_B^*(U)\\
  &\cong (f\times \id)_!(\Delta_A)_!f^*\pi_B^*(U)\\
  &\cong (f\times \id)_!(\Delta_A)_!\pi_A^*(U)\\
  &\cong (f\times\id)_! U_A
\end{align*}
and $_fB \cong (\id\times f)_! U_A$.  This equivalence for $_fB$ is
shown using string diagrams in Figure~\ref{fig:bco} on
page~\pageref{fig:bco}.

\begin{eg}
  In the bicategory of spans in \bS, the base change objects of a map
  $f\colon A \to B$ in \bS\ are the spans $A \xleftarrow{\id} A \xto{f}
  B$ and $B \xleftarrow{f} A \xto{\id} A$.
\end{eg}

\begin{eg}
  In the bicategory of \bC-valued matrices, the base change objects of a
  set-function $f\colon A \to B$ are the matrix
  \[({}_fB)_{a,b} =
  \begin{cases}
    I &\quad \text{if }b=f(a)\\
    \emptyset &\quad\text{otherwise}
  \end{cases}
  \]
  and its transpose.
\end{eg}

\begin{eg}
  In the bicategory of \bC-valued profunctors, the base change objects
  of a functor $f\colon A\to B$ are defined by
  \[ ({}_fB)(a,b) = \hom_B(f(a),b) \cdot I
  \qquad \text{and}\qquad
  (B_f)(b,a) = \hom_B(b,f(a)) \cdot I
  \]
\end{eg}

The purpose of introducing the base change objects is to prove the
following lemma.

\begin{lem}\label{thm:bco-action}
  For any $f\colon A\to B$, $M\in\sC^B$, and $N\in\sC^A$, we have
  isomorphisms
  \begin{alignat*}{2}
    \widehat{f^* M} &\cong {}_fB \odot \widehat{M} &\qquad
    \widehat{f_! N} &\cong B_f \odot \widehat{N}\\
    \widecheck{f^* M} &\cong \widecheck{M}\odot B_f &\qquad
    \widecheck{f_! N} &\cong \widecheck{N}\odot {}_fB
  \end{alignat*}
\end{lem}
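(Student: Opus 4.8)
The plan is to unfold both $\odot$ and the base change objects into the explicit formulas of \autoref{thm:mf-bi} and then collapse the result using the projection formula and a little pseudofunctoriality. All four isomorphisms are instances of a single pattern: the $\widehat{(-)}$ and $\widecheck{(-)}$ statements differ only by which side of $\odot$ one works on, and the $f^*$ and $f_!$ statements differ only by which ``graph'' map appears. So I would prove one representative case in full, say $\widehat{f^*M}\iso{}_fB\odot\widehat M$, and then indicate the variations.

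The first ingredient is a pushforward description of the base change objects. The computation displayed just above the lemma gives ${}_fB\iso(\id_A\times f)_!U_A$ and $B_f\iso(f\times\id_A)_!U_A$; since $U_A=(\Delta_A)_!I_A$ and $(\id_A\times f)\circ\Delta_A$ is the graph map $(\id_A,f)\colon A\to A\times B$, $a\mapsto(a,f(a))$ (and likewise $(f\times\id_A)\circ\Delta_A$ is $(f,\id_A)\colon A\to B\times A$), pseudofunctoriality of $(-)_!$ yields
\[ {}_fB\iso(\id_A,f)_!I_A \qquad\text{and}\qquad B_f\iso(f,\id_A)_!I_A. \]
The second ingredient is the internal form $M\odot N=(\pi_B)_!(\pi_C^*M\ten\pi_A^*N)$ recorded just after \autoref{thm:mf-bi}. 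Because the outer $0$-cell $\star$ is terminal, this specializes, for ${}_fB\colon A\hto B$ and $\widehat M\colon B\hto\star$, to
\[ {}_fB\odot\widehat M\iso (\pi_B)_!\bigl({}_fB\ten_{A\times B}\pi_A^*M\bigr), \]
with $\pi_B\colon A\times B\to A$ and $\pi_A\colon A\times B\to B$ the two projections.

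Now I would substitute ${}_fB\iso(\id_A,f)_!I_A$ and apply the projection formula of \autoref{def:indcoprod}\ref{item:prescoprod} along the map $(\id_A,f)$ (together with symmetry of $\ten$ and the fact that $I_A$ is a unit). Since $\pi_A\circ(\id_A,f)=f$, the tensor factor $\pi_A^*M$ is absorbed into the pushforward:
\[ {}_fB\ten_{A\times B}\pi_A^*M \;\iso\; (\id_A,f)_!\bigl((\id_A,f)^*\pi_A^*M\bigr) \;\iso\; (\id_A,f)_!\,f^*M. \]
Pushing forward along $\pi_B$ and using $\pi_B\circ(\id_A,f)=\id_A$ then gives ${}_fB\odot\widehat M\iso(\id_A)_!\,f^*M\iso f^*M=\widehat{f^*M}$, and naturality in $M$ is evident from the construction. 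The remaining three cases run identically: in the two $f^*$ cases one again ends with a pushforward along an identity and recovers $f^*M$ (on the appropriate side), while in the two $f_!$ cases the projection formula instead returns $N$ unchanged --- the relevant graph map composed with the relevant projection is the identity of $A$ --- and the leftover pushforward is along $f$, producing $f_!N$. Only the handedness changes.

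I do not expect a genuine obstacle: the only Beck--Chevalley input is the one already used to identify the base change objects with pushforwards of units (the ``sliding and splitting'' square of Figure~\ref{fig:slidpb} and its transpose), and no new hypotheses on \sC\ are needed. The real difficulty is bookkeeping --- keeping straight the orderings of the product $0$-cells, which projection is which, and which graph map and Beck--Chevalley instance each step invokes --- and, exactly as with the proof of \autoref{thm:mf-bi}, this is the kind of calculation that string diagrams render transparent; in a detailed write-up I would carry it out in that language (cf.\ Figure~\ref{fig:bco}), where each isomorphism above becomes an obvious rearrangement of strings.
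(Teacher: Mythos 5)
Your proof is correct, but it takes a genuinely different route from the paper's. The paper's proof begins from $\widehat{f^*M}$, inserts the trivial identity $(\id_B\times\pi_B)\Delta_B=\id_B$ to write $f^*M\cong f^*(\id_B\times\pi_B)_!(\Delta_B)_!(U\boxtimes M)$, then rearranges using ``commutativity with reindexing'' and the \emph{Frobenius} Beck--Chevalley isomorphism until the composite takes the shape of ${}_fB\odot\widehat M$; the key move is $(\Delta_B)_!(\Delta_B)^*\cong(\id_B\times\Delta_B)^*(\Delta_B\times\id_B)_!$. You instead start from ${}_fB\odot\widehat M$, use the pushforward description ${}_fB\cong(\id_A,f)_!I_A$ (which in the paper is obtained from the ``sliding and splitting'' Beck--Chevalley condition and its transpose), and then collapse the tensor via the \emph{projection formula} along the graph map $(\id_A,f)$. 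These two tools are formally interchangeable --- as recorded in \autoref{rmk:frobenius}, Frobenius follows from commutativity-with-reindexing plus either projection or sliding-and-splitting --- and both are part of the standing hypotheses, so neither proof needs more than the other. What each buys: the paper's Frobenius-based rearrangement translates directly into the string diagram deformation drawn in Figure~\ref{fig:bco}/Figure~\ref{fig:bco-action}; your projection-formula route makes the mechanism more overt (the base change 1-cell is a pushed-forward unit, and tensoring against such a thing is exactly what the projection formula simplifies), and it is perhaps closer to the intuition one would bring from spans or bimodules. Both handle the remaining three cases by the same symmetry considerations.
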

\begin{proof}
  The first isomorphism is the composite
  \begin{align*}
    \widehat{f^* M} 
    &\cong f^*(\id_B\times \pi_B)_!(\Delta_B)_!(U\boxtimes M)\\
    &\cong (\id_A\times \pi_B)_!(f\times \id_B)^*(\Delta_B)_!(U\boxtimes M)\\
    &\cong (\id_A\times \pi_B)_!(f\times \id_B)^*(\Delta_B)_!
    (\Delta_B)^*(\pi_B\times \id_B)^*(U\boxtimes M)\\
    &\cong (\id_A\times \pi_B)_!(f\times \id_B)^*(\id_B\times \Delta_B)^*
    (\Delta_B\times \id_B)_!(\pi_B\times \id_B)^*(U\boxtimes M)\\
    &\cong (\id_A\times \pi_B)_!(\id_A\times \Delta_B)^*
    (f\times \id_B\times \id_B)^*(\Delta_B\times \id_B)_!
    (\pi_B\times \id_B)^*(U\boxtimes M)\\
    &\cong (\id_A\times \pi_B)_!(\id_A\times \Delta_B)^*
    (((f\times \id_B)^*(\Delta_B)_!\pi_B^*(U))\boxtimes M)\\
    &\cong {}_fB \odot \widehat{M}.
  \end{align*}
  These isomorphisms come from the Frobenius axiom and the equality
  $(\pi\times\id)\Delta=\id$.  The other three are analogous.
\end{proof}

The first isomorphism in \autoref{thm:bco-action} is shown using string
diagrams in Figure~\ref{fig:bco-action} on page~\pageref{fig:bco-action}
(and the others are analogous).  As usual, the string diagram version is
considerably simpler.

\begin{rmk}\label{rmk:bicat-ismc-surface}
  In particular, this means that the indexed category
  $\sC\colon\bS\op\to \Cat$ is recoverable from the bicategory $\calBi
  CS$ together with the base change objects.  If we recall from
  \autoref{rmk:ismc-bicat-surface} that $\calBi CS$ is actually a
  symmetric monoidal bicategory, then we can recover the monoidal
  structure of $\sC$ from it as well.  The external product
  $\boxtimes\colon \sC^A\times\sC^B \to \sC^{A\times B}$ can be
  identified with the functor
  \[\boxtimes\colon \calBi CS(A,\star) \times \calBi CS(B,\star) \too
  \calBi CS(A\times B,\star)\]
  arising from the monoidal structure of $\calBi CS$.

  In particular, since $M\otimes_A N \cong (\Delta_A)^*(M\boxtimes N)$,
  we also have
  \[ \widehat{M\otimes_A N} \cong {}_{\Delta}(A\times A) \odot
  (\widehat{M} \boxtimes \widehat{N}).
  \]
  In the language of~\cite{ds:monbi-hopfagbd,franco:autpsmon}, this
  means that $\otimes_A$ is the ``convolution monoidal structure'' on
  $\calBi CS(A,\star)$ induced by the ``autonomous map pseudo-comonoid
  structure'' of $A$ arising from $\Delta$.  Under this identification,
  \autoref{thm:fiberwise-duality} becomes a special case
  of~\cite[Prop.~4.6]{franco:autpsmon}.  (We are indebted to Daniel
  Sch\"appi for pointing out this connection.)  It seems likely that
  versions of Theorems~\ref{thm:fibtrace4} and~\ref{thm:fibtrace} are
  also true in that generality.
\end{rmk}

Essentially the same proof as of \autoref{thm:bco-action} shows that
more generally, for any $f\colon A\to B$ and $M\in \sC^{B\times C}
\simeq \calBi CS(B,C)$, we have ${}_fB \odot M\cong (f\times \id)^* M$.
In particular, if $M = {}_g C$ for some $g\colon B\to C$, we have
\[{}_fB \odot {}_g C \cong (f\times \id)^*({}_gC) \cong
(f\times \id)^*(g\times \id)^* U_C \cong (gf \times\id)^* U_C \cong {}_{gf}C.
\]
It is easy to verify coherence of these isomorphisms, so that the
assignment $f\mapsto {}_f B$ defines a pseudofunctor $\bS \to \calBi CS$
which is the identity on objects.

We can also conclude:

\begin{lem}\label{thm:bco-dual-indexed}
  For any $f\colon A \to B$ in \bS, we have a dual pair $({}_fB,B_f)$.
\end{lem}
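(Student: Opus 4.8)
The plan is to realize $B_f$ as the right dual of ${}_fB$, manufacturing the evaluation and coevaluation 2-cells out of the adjunction $(f\times\id)_!\dashv(f\times\id)^*$ by way of the action of the base change objects on hom-categories. The computation in the proof of \autoref{thm:bco-action}, together with the generalization noted just after it, shows that for every object $C$ the functor ``compose on the left with ${}_fB$'' is isomorphic to $(f\times\id_C)^*\colon\calBi CS(B,C)\to\calBi CS(A,C)$ and ``compose on the left with $B_f$'' is isomorphic to its left adjoint $(f\times\id_C)_!\colon\calBi CS(A,C)\to\calBi CS(B,C)$ (and, symmetrically, composing on the right with ${}_fB$ and $B_f$ yields $(\id_C\times f)_!$ and $(\id_C\times f)^*$). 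Hence $(B_f\odot-)\dashv({}_fB\odot-)$ as an adjunction $\calBi CS(A,C)\rightleftarrows\calBi CS(B,C)$, natural in $C$.

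I would then take for the evaluation $\epsilon\colon B_f\odot{}_fB\to U_B$ the counit of $(f\times\id_B)_!\dashv(f\times\id_B)^*$ at $U_B$, transported across the isomorphism $B_f\odot{}_fB\cong(f\times\id_B)_!({}_fB)=(f\times\id_B)_!(f\times\id_B)^*U_B$ coming from the action formulas and the definition ${}_fB=(f\times\id_B)^*U_B$. Dually, I would take for the coevaluation $\eta\colon U_A\to{}_fB\odot B_f$ the unit of $(f\times\id_A)_!\dashv(f\times\id_A)^*$ at $U_A$, transported across ${}_fB\odot B_f\cong(f\times\id_A)^*B_f\cong(f\times\id_A)^*(f\times\id_A)_!U_A$; unwinding $B_f=(\id_B\times f)^*U_B$ shows that the target here is $(f\times f)^*U_B$, and since $\pi_B f=\pi_A\colon A\to\star$ identifies $U_A=(\Delta_A)_!\pi_A^*U$ with $(\Delta_A)_!f^*\pi_B^*U$, this $\eta$ can equivalently be described as the Beck--Chevalley comparison 2-cell $(\Delta_A)_!f^*\to(f\times f)^*(\Delta_B)_!$ of the commutative square $(f\times f)\Delta_A=\Delta_B f$ (which, note, is a pullback only when $f$ is monic), evaluated at $\pi_B^*U$. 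By construction $\eta$ and $\epsilon$ are simply the $C=A$ and $C=B$ instances of the unit and counit of the adjunction of the previous paragraph, evaluated at the unit 1-cells $U_A$ and $U_B$.

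It remains to verify the two triangle identities for the candidate dual pair $({}_fB,B_f)$. Each is a composite of whiskerings of $\eta$ and $\epsilon$ with the associativity and unit constraints of $\calBi CS$, and the plan is to push the whole composite through the action isomorphisms of \autoref{thm:bco-action}: under this translation the structure constraints of $\calBi CS$ get absorbed, the whiskerings turn into $(f\times\id)^*$ and $(f\times\id)_!$ applied to $\eta$ and $\epsilon$ (here one uses naturality of those isomorphisms in the ``outer'' object $C$), and what survives is exactly a triangle identity of $(f\times\id)_!\dashv(f\times\id)^*$, hence an identity. I expect the main obstacle to be the bookkeeping in this transport --- lining up the several layers of coherence isomorphisms from \autoref{thm:mf-bi} and \autoref{thm:bco-action} correctly --- which is exactly the sort of diagram chase that the string diagram calculus of \S\ref{sec:string-diagrams} is designed to make routine. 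Alternatively, one can sidestep the calculation altogether by appealing to the general fact that in the fibrant double category of~\cite{shulman:frbi} underlying $\calBi CS$ (cf.\ \autoref{rmk:ismc-bicat-surface}) the companion and conjoint of any morphism of \bS\ form a dual pair in the horizontal bicategory.
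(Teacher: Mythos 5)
Your proposal is correct and takes essentially the same approach as the paper: both rest on the observation that ${}_fB\odot-$ and $B_f\odot-$ realize $(f\times\id)^*$ and its left adjoint $(f\times\id)_!$ naturally in the third object $C$, with the paper then simply invoking the bicategorical Yoneda lemma where you unwind the unit, counit, and triangle identities by hand. Your alternative route via companions and conjoints in the framed bicategory of~\cite{shulman:frbi} is also exactly the ``alternative proof'' the paper itself points to in the remark immediately following the lemma.
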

\begin{proof}
  The remarks above show that the functors ${}_fB \odot -$ and $B_f
  \odot -$ are naturally adjoint.  The result then follows from the
  bicategorical Yoneda lemma.
\end{proof}

Alternative proofs of Lemmas~\ref{thm:bco-action}
and~\ref{thm:bco-dual-indexed} can be found in~\cite{shulman:frbi},
using the language of double categories.  Together, they imply that
$\calBi CS$ is a \emph{proarrow equipment} in the sense
of~\cite{wood:proarrows-i} (called a \emph{framed bicategory}
in~\cite{shulman:frbi}).

\section{Total duality}
\label{sec:total}

In \S\ref{sec:fiberwise}, we saw that duality in the symmetric monoidal
category $\sC^A$ can be identified with a special case of duality in the
bicategory $\calBi CS$.  However, even for objects of $\sC^A$, the
bicategory $\calBi CS$ introduces an additional, new type of duality: we
can ask whether $\widecheck{M}$, rather than $\widehat{M}$, is right
dualizable.  In this case we say that $M$ is \textbf{totally
  dualizable}.

In contrast to fiberwise duality, total duality does not take place
entirely within $\sC^A$, and thus can incorporate information about the
object $A$ as well.  This is made precise by the following theorems,
whose proofs are surprisingly easier than the corresponding ones in
\S\ref{sec:fiberwise}.  (However, we will still need to postpone some
computations to be done with string diagrams in \S\ref{sec:tdproofs}.)
The first theorem was proven by~\cite{maysig:pht} in the case of
parametrized spectra, 
and we give the same proof here.

\begin{thm}\label{thm:total-duality}
  If $M\in \sC^A$ is totally dualizable, then $(\pi_A)_! M$ is
  dualizable in $\sC^\star$.
\end{thm}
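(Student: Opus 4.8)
The plan is to transport the statement into the bicategory $\calBi CS$, where being \emph{totally dualizable} is by definition a statement about right dualizability of a 1-cell, and then to recognize $(\pi_A)_! M$ as a composite of two 1-cells each of which is manifestly right dualizable.

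First I would invoke \autoref{thm:fiberwise-duality} with $A$ replaced by the terminal object $\star$: an object of $\sC^\star$ is dualizable there if and only if it is right dualizable as a 1-cell $\star\hto\star$ in $\calBi CS$. (For objects of $\sC^\star$ the two embeddings $\widehat{(-)}$ and $\widecheck{(-)}$ agree, since $\star\times\star\iso\star$.) So it suffices to prove that $(\pi_A)_! M$, viewed as such a 1-cell, is right dualizable.

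Next I would apply the last isomorphism of \autoref{thm:bco-action} with $f=\pi_A\colon A\to\star$, which gives
\[ \widecheck{(\pi_A)_! M} \;\cong\; \widecheck{M}\odot {}_{\pi_A}\star, \]
exhibiting our 1-cell as the composite $\star \xhto{\widecheck{M}} A \xhto{{}_{\pi_A}\star} \star$. Here $\widecheck{M}$ is right dualizable --- this is exactly the hypothesis that $M$ is totally dualizable --- and ${}_{\pi_A}\star$ is right dualizable by \autoref{thm:bco-dual-indexed}, which furnishes the dual pair $({}_{\pi_A}\star,\star_{\pi_A})$. Since in any bicategory a composite of right dualizable 1-cells is again right dualizable (with right dual the composite of the individual right duals in the opposite order, the evaluation and coevaluation being assembled from those of the factors together with the associativity constraint), the composite $\widecheck{M}\odot {}_{\pi_A}\star$, and hence $(\pi_A)_! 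M$, is right dualizable. This is exactly what we needed.

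I do not expect a genuine obstacle here: everything substantial has already been absorbed into \autoref{thm:bco-action} and \autoref{thm:bco-dual-indexed}. The only points requiring attention are bookkeeping ones --- checking that the handedness (``right'' versus ``left'') of the dualities is preserved under the isomorphism of \autoref{thm:bco-action}, and recalling the routine fact that composites of right duals in a bicategory are right duals. This is, incidentally, the same argument used by May and Sigurdsson for parametrized spectra.
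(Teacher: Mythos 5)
Your proof is correct and is essentially the same as the paper's (which the paper itself notes is taken from May and Sigurdsson). The only difference is presentational: where you invoke \autoref{thm:fiberwise-duality} with $A=\star$ to relate dualizability in $\sC^\star$ to right dualizability of 1-cells $\star\hto\star$, the paper instead directly observes that $\calBi CS(\star,\star)\eqv\sC^\star$ as \emph{monoidal} categories, which is rather more elementary than the general fiberwise-duality theorem (whose proof requires a lengthy string-diagram calculation) and avoids any appearance of dependency on a later-proved result; and where you write the base change object as ${}_{\pi_A}\star$ and cite \autoref{thm:bco-dual-indexed} for its dualizability, the paper identifies it concretely with $\widehat{I_A}$ and notes that $(\widehat{I_A},\widecheck{I_A})$ is a dual pair. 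These are cosmetic differences; the decomposition via \autoref{thm:bco-action} and the closure of right dualizability under composition is the heart of both arguments.
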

\begin{proof}
  By construction, the base change objects for $\pi_A\colon A\to \star$
  can be identified with $\widehat{I_A}$ and $\widecheck{I_A}$, and in
  particular we have a dual par $(\widehat{I_A},\widecheck{I_A})$.
  Therefore, by \autoref{thm:bco-action} we have $(\pi_A)_! M \cong
  \widecheck{M} \odot \widehat{I_A}$.  But since the composite of dual
  pairs is again a dual pair, if $\widecheck{M}$ is right dualizable
  then so must $(\pi_A)_! M$ be, as a 1-cell $\star\hto\star$.  However,
  it is easy to verify that $\calBi CS(\star,\star)\eqv \sC^\star$ as
  monoidal categories; thus $(\pi_A)_!M$ is dualizable in $\sC^\star$ as
  desired.
\end{proof}

In particular, for $M=I_A$ we have:

\begin{cor}
  If $I_A\in\sC^A$ is totally dualizable, then $\Sigma (A)$ is
  dualizable in $\sC^\star$.
\end{cor}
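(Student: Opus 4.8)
The plan is to obtain this as the immediate special case $M = I_A$ of \autoref{thm:total-duality}. First I would recall from \S\ref{sec:smc-traces} that the functor $\Sigma$ is \emph{defined} by $\Sigma(A) = (\pi_A)_! I_A$ (equivalently $(\pi_A)_!(\pi_A)^* I_\star$, via the isomorphism $I_A \cong \pi_A^* U$ of \eqref{eq:intunit}), where $\pi_A\colon A\to\star$ is the unique morphism to the terminal object. Thus the object whose dualizability we wish to establish is precisely $(\pi_A)_! M$ with $M = I_A$, in the notation of \autoref{thm:total-duality}.

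With this identification in hand there is essentially nothing left to do: by hypothesis $I_A$ is totally dualizable, so \autoref{thm:total-duality} tells us directly that $(\pi_A)_! I_A = \Sigma(A)$ is dualizable in $\sC^\star$. The only point needing care — transporting dualizability of $(\pi_A)_! I_A$ as a $1$-cell $\star\hto\star$ across the monoidal equivalence $\calBi CS(\star,\star)\eqv \sC^\star$ to genuine dualizability in $\sC^\star$ — is already handled inside the proof of \autoref{thm:total-duality} and need not be repeated.

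So I expect no real obstacle here: the corollary is a one-line consequence of the preceding theorem together with the definition of $\Sigma$ on objects. (Functoriality of $\Sigma$ on morphisms, spelled out in \eqref{eq:sigma-phi}, plays no role in the statement as given, which concerns only the object $\Sigma(A)$.)
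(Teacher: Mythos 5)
Your proposal is correct and matches the paper's reasoning: the paper introduces the corollary with the phrase ``In particular, for $M=I_A$ we have:'' immediately after \autoref{thm:total-duality}, so it is exactly the specialization $M=I_A$ using the definition $\Sigma(A)=(\pi_A)_!I_A$. Your remark about the monoidal equivalence $\calBi CS(\star,\star)\eqv \sC^\star$ being handled inside the theorem's proof is also accurate and appropriately noted.
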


This is in sharp contrast to fiberwise duality, since $I_A$ (being the
unit object of a symmetric monoidal category) is always fiberwise
dualizable.  However, we will see later that objects $M\in \sC^A$ other
than $I_A$ can be totally dualizable without implying any finiteness
conditions on $A$.

The relationship between bicategorical and symmetric monoidal traces for
totally dualizable objects is quite simple.

\begin{thm}\label{thm:totaltr1}
  If $M\in \sC^A$ is totally dualizable and $f\colon M\to M$ is an
  endomorphism in $\sC^A$, then the following triangle commutes.
  \[\xymatrix{ I_\star
    \ar[r]^-{\tr(\widecheck{f})}\ar[dr]_-{\tr((\pi_A)_! f)}
    & \sh{A} \ar[d]\\
    & I_\star
  }\]
  Here the right-hand vertical map is the augmentation of the shadow
  from \autoref{eg:shaug}.
\end{thm}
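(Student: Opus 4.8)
The plan is to rewrite both legs of the triangle as bicategorical traces in $\calBi CS$ and then to appeal to the compatibility of bicategorical trace with composition of $1$-cells. First recall, from the proof of \autoref{thm:total-duality}, that the base change objects of $\pi_A\colon A\to\star$ are the $1$-cells $\widehat{I_A}\colon A\hto\star$ and $\widecheck{I_A}\colon\star\hto A$, that $(\widehat{I_A},\widecheck{I_A})$ is a dual pair (by \autoref{thm:bco-dual-indexed}, or by \autoref{thm:fiberwise-duality} together with $\rdual{I_A}\iso I_A$), and that \autoref{thm:bco-action} provides an isomorphism $(\pi_A)_!M\iso\widecheck M\odot\widehat{I_A}$, natural in $M$. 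Since $M$ is totally dualizable, $\widecheck M$ is right dualizable, hence so is the composite $\widecheck M\odot\widehat{I_A}$ as a $1$-cell $\star\hto\star$; by naturality the isomorphism carries $(\pi_A)_!f$ to $\widecheck f\odot\id_{\widehat{I_A}}$, and under the monoidal equivalence $\calBi CS(\star,\star)\eqv\sC^\star$ (under which, since $\star$ is terminal, the shadow becomes the identity functor and $\theta$ the symmetry) the symmetric monoidal trace $\tr((\pi_A)_!f)$ is identified with the bicategorical trace $\tr(\widecheck f\odot\id_{\widehat{I_A}})$. On the other side, by \autoref{eg:shaug} the augmentation $\sh A\to I_\star$ is exactly the bicategorical trace $\tr(\id_{\widehat{I_A}})\colon\sh{U_A}\to\sh{U_\star}$ of the identity $2$-cell of $\widehat{I_A}$ (which is right dualizable with right dual $\widecheck{I_A}$ because $I_A$ is fiberwise dualizable), while $\tr(\widecheck f)\colon\sh{U_\star}\to\sh{U_A}$ is the bicategorical trace of the endomorphism $\widecheck f$ of $\widecheck M\colon\star\hto A$. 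Thus the theorem reduces to the identity
\[ \tr\bigl(\widecheck f\odot\id_{\widehat{I_A}}\bigr)\;=\;\tr\bigl(\id_{\widehat{I_A}}\bigr)\circ\tr\bigl(\widecheck f\bigr). \]

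This is an instance of the general fact that bicategorical trace respects composition of $1$-cells: for right dualizable $P\colon R\hto S$ and $Q\colon S\hto T$ and endo-$2$-cells $p\colon P\to P$, $q\colon Q\to Q$, the trace of the composite endomorphism $(\id_P\odot q)\circ(p\odot\id_Q)$ of $P\odot Q$ equals $\tr(q)\circ\tr(p)$. I would either invoke this from~\cite{PS2}, or prove it directly: the evaluation and coevaluation of the composite dual pair $(P\odot Q,\ \rdual Q\odot\rdual P)$ are built by splicing those of the two factors, so after unfolding the definition of $\tr\bigl((\id_P\odot q)\circ(p\odot\id_Q)\bigr)$ one uses the shadow coherence diagrams to break the single shadow isomorphism $\theta$ on $P\odot Q\odot\rdual Q\odot\rdual P$ into a composite of smaller instances of $\theta$, after which a triangle identity cancels a coevaluation/evaluation pair and splits the composite precisely into $\tr(p)$ followed by $\tr(q)$. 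Taking $P=\widecheck M$, $Q=\widehat{I_A}$, $p=\widecheck f$, $q=\id_{\widehat{I_A}}$ yields the displayed identity, and hence the theorem.

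The main obstacle is precisely this last step: the decomposition of $\theta$ together with the bookkeeping of the unitors that present $\widecheck f\odot\id_{\widehat{I_A}}$ as a genuine endomorphism of $\widecheck M\odot\widehat{I_A}$ and the composite $2$-cell in the form required by the trace definition. This is exactly the kind of computation that the string-diagram calculus of \S\S\ref{sec:string-diagrams}--\ref{sec:colored-strings} makes routine, so if one proves the composition property from scratch rather than quoting~\cite{PS2}, that portion should be deferred to \S\ref{sec:tdproofs}; everything else in the argument---the identifications via Theorems~\ref{thm:bco-action}, \ref{thm:bco-dual-indexed}, and \ref{thm:total-duality}, \autoref{thm:fiberwise-duality}, and the description of the augmentation in \autoref{eg:shaug}---is formal.
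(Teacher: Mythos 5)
Your proof is correct and is essentially the paper's own argument, just with the specialization made explicit rather than routed through a more general statement. The paper establishes \autoref{thm:totaltr2} by exactly the two observations you make: that the monoidal equivalence $\calBi CS(\star,\star)\eqv\sC^\star$ identifies the shadow on $\calBi CS(\star,\star)$ with the identity functor of $\sC^\star$, so symmetric monoidal and bicategorical traces coincide there; and that the resulting triangle is an instance of the functoriality (composition) property of bicategorical trace, \cite[Prop.~7.5]{PS2} — the same key fact you reduce to. The present statement is then the case $Q=R=I_\star$, $P=U_A$, $\xi$ the unitor, where $\tr(\xi)$ is the augmentation of \autoref{eg:shaug} and $\xi\circ f$ is $(\pi_A)_!f$ under $(\pi_A)_!M\iso\widecheck M\odot\widehat{I_A}$. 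You have carried out precisely this specialization, and your fallback plan of reproving the composition identity via string diagrams matches what the paper would do if it did not simply cite~\cite{PS2}.
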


We will prove a more general version of this theorem momentarily, but
first we consider some examples.

\begin{eg}\label{eg:total-duality-mat}
  We have seen in \autoref{eg:fib-duality-mat} that an $A$-indexed
  family of abelian groups $M=(M_a)_{a\in A}\in\Ab^A$ is fiberwise
  dualizable just when each $M_a$ is dualizable, which is to say,
  finitely generated and free.  On the other hand, if $M$ is totally
  dualizable, then \autoref{thm:total-duality} implies that $\bigoplus_a
  M_a$ must be dualizable, which implies that all but finitely many of
  the $M_a$ must be zero and the rest must be dualizable.

  In fact, this latter condition is also sufficient.  For total
  dualizability of $M$ means that there is an $A$-indexed family
  $(N_a)_{a\in A}$ and morphisms
  \begin{align*}
    \eta&\maps U_\star \too \widecheck{M}\odot \widehat{N}\\
    \ep &\maps \widehat{N} \odot \widecheck{M} \too U_A.
  \end{align*}
  satisfying appropriate triangle identities.  Under the assumption that
  $\bigoplus_a M_a$ is dualizable, we may take $N_a = \rdual{M_a}$ and
  define
  \[\eta\colon \mathbb{Z} \too \bigoplus_{a\in A} M_a\ten N_a,\]
  to be the sum of all the coevaluations of the $M_a$, and
  \[\ep\colon N_{a_1}\ten M_{a_2} \too
  \begin{cases}
    \mathbb{Z} & \text{if }a_1=a_2\\
    0 & \text{if }a_1\neq a_2.
  \end{cases}
  \]
  to be the evaluation of $M_a$ if $a_1=a_2$, and zero otherwise.  The
  same approach works for a finite family of dualizable objects in any
  \bC, as long as \bC\ is additive so that we can add up the
  coevaluations.

  Note that in particular, this means that the unit object $I_A$ is
  totally dualizable if and only if $A$ is finite.  On the other hand,
  there can be totally dualizable objects in $\bC^A$ for arbitrarily
  large $A$, as long as their ``support'' is a finite subset of $A$.

  Finally, if $M$ is totally dualizable as above, and $f\colon M\to M$
  is an endomorphism, then the trace of $\widecheck{f}$ is the induced
  map
  \[\bbZ \xto{\sum_a\tr(f_a)} \bigoplus_a \mathbb{Z}.
  \]
  which is the direct sum of all the traces of the components $f_a$.
  The trace of $(\pi_A)_! f$ is the numerical sum of all these traces
  (as an endomorphism of $\bbZ$), and the augmentation $\sh{A} =
  \bigoplus_a \bbZ \to \bbZ$ is ``addition''.  Thus, we can see that the
  bicategorical trace clearly carries more information than the
  symmetric monoidal one.
\end{eg}

\begin{eg}\label{eg:total-duality-sup}
  The finiteness restriction in the previous example arises because in
  an abelian group (or in a hom-set of any additive category) we can
  only add up finitely many elements.  An example of a situation in
  which we can ``add up'' infinitely many elements is the monoidal
  category $\bC=\mathbf{Sup}$ of \emph{suplattices}, i.e.\ partially
  ordered sets with arbitrary suprema, and suprema-preserving maps.  We
  think of the supremum of a subset of a suplattice as an infinitary
  version of the ``sum'' of that subset.

  $\mathbf{Sup}$ is a monoidal category, with a tensor product whose
  universal property says that it represents ``bilinear maps'', i.e.\
  functions that preserve suprema in each variable separately.  The unit
  object is the two-element lattice.  Many more suplattices than abelian
  groups are dualizable, precisely because we can ``add up'' infinitely
  many elements: for instance, any power set $\mathcal{P}(A)$ is
  dualizable as a suplattice.  However, traces in $\mathbf{Sup}$ carry
  correspondingly less information: there are only two maps $I\to I$, so
  a trace can only record the presence or absence of a ``fixed point''
  rather than any numerical information.

  The same sort of argument as above now shows that $M\in
  \mathbf{Sup}^A$ is totally dualizable just when each $M_a$ is a
  dualizable suplattice, irrespective of the cardinality of $A$.  (For
  the coevaluation, we take a pointwise supremum.)  In particular, $I_A$
  is totally dualizable for \emph{any} set $A$.

  If $f\colon M\to M$ is an endomorphism of such an $M$, then the trace
  of $\widecheck{f}$ is the map
  \[ I \too \coprod_a I \cong \mathcal{P}(A) \] which picks out the
  subset of those $a\in A$ for which the trace of $f_a$ is nonzero.  Its
  composite with the augmentation $\mathcal{P}(A) \to I$ remembers only
  whether the trace of \emph{any} $f_a$ was nonzero.
\end{eg}

\begin{eg}\label{eg:total-duality-prof}
  Now we consider the underived example of groupoids and \Ab-valued
  profunctors.  For simplicity, we also consider first the case of
  groups (one-object groupoids), in which case (as we remarked in
  \autoref{eg:monfib-prof}) profunctors can be identified with bimodules
  between group rings.  Thus, an object $M\in \Ab^G$ is totally
  dualizable just when it is right dualizable as a
  $\bbZ$-$\bbZ[G]$-bimodule, which is equivalent to its being finitely
  generated and projective as a $\bbZ[G]$-module.

  In particular, $\bbZ[G]$ itself is always totally dualizable,
  regardless of how large $G$ might be.  On the other hand, the trivial
  module $\bbZ$, which is the unit object of $\Ab^G$, is quite rarely
  totally dualizable, since it is quite rarely projective.  (But if we
  took $\bC=\mathbf{Vect}_{\mathbb{Q}}$ to be the category of rational
  vector spaces instead of \Ab, then by Maschke's theorem, the unit
  object $\mathbb{Q}$ over any finite group $G$ would be totally
  dualizable~\cite[4.2.1, 4.2.2]{weibel:homological}.)

  Just as shadows of profunctors in this case can be identified with
  shadows of bimodules, the resulting traces for totally dualizable
  $G$-modules can be identified with the Hattori-Stallings trace (the
  trace in the bicategory of bimodules; see \autoref{eg:bimodules}
  and~\cite{PS2}).  In this case, \autoref{thm:totaltr1} says that if
  $f\colon M\to M$ is an endomorphism with Hattori-Stallings trace
  $\tr(\widecheck{f}) \in \sh{\bbZ[G]}$, and $\epsilon\colon \bbZ[G] \to
  \bbZ$ is the canonical augmentation of the group ring, then
  \[ \epsilon(\tr(\widecheck{f})) = \tr(\epsilon_!(f)) \] (where
  $\epsilon_!$ denotes the ``extension of scalars'' functor along
  $\epsilon$).

  The case of arbitrary groupoids can be reduced to a combination of the
  case of groups, as just described, and the case of sets, as in
  \autoref{eg:total-duality-mat}.  Namely, for a groupoid $A$, an object
  $M\in \Ab^A$ consists essentially of one module over each $\bbZ[G]$,
  as $G$ runs through the isotropy groups of the connected components of
  $A$.  Such an $M$ is totally dualizable just when it is zero at all
  but finitely many connected components and totally dualizable at the
  others.  As we have seen, the shadow of a groupoid is the direct sum
  (coproduct) of the shadows of the group rings of each of its isotropy
  groups:
  \[ \sh{A} = \bigoplus_{x\in \pi_0(A)} \bbZ[\mathrm{Conj}(\hom_A(x,x))]
  \]
  Finally, the trace of an endomorphism of a totally dualizable $M$ is
  the direct sum of its traces at each connected component:
  \[ \tr(f) = \sum_{x\in\pi_0(A)} \Big(\tr(f_x)
  \in\bbZ[\mathrm{Conj}(\hom_A(x,x))]\Big)  \quad\in\sh{A}
  \]
  and \autoref{thm:totaltr1} says that the sum in \bbZ\ of the
  augmentations of these traces is equal to the trace of the sum of the
  augmentations of $f$ at each connected component:
  \[ \sum_{x\in\pi_0(A)} \epsilon\big(\tr(f_x)\big) =
  \tr\left(\sum_{x\in\pi_0(A)} \epsilon_!(f)\right) \quad\in\bbZ.\]
\end{eg}

\begin{eg}\label{eg:total-duality-hoprof}
  The $\mathbf{Gpd}$-indexed category of chain complexes up to homotopy
  is similar to that of $\mathbf{Gpd}$-indexed abelian groups, but with
  some important differences.  The principal one is that now a chain
  complex is dualizable if it is \emph{quasi-isomorphic} to a finitely
  generated complex of projective modules.  This makes for many more
  dualizable objects.

  For instance, the unit object over a group $G$ is totally dualizable
  just when $\bbZ$ admits a finite projective resolution as a
  $\bbZ[G]$-module.  In this case the group $G$ is said to be of
  \emph{type FP}; see e.g.~\cite[VIII.6]{brown:cog}.  Groups of type FP
  include finitely generated free groups, surface groups, and finitely
  generated free abelian groups.  All groups of type FP are torsion-free.

  We can analyze the case of a general groupoid as we did in the
  previous example, by decomposing a profunctor into a direct sum of
  modules over group rings.  However, for the purposes of computing an
  explicit example, it is instructive to proceed differently.
  Given any groupoid $A$ and an object $x\in A$, we write $\bbZ[A_x]$
  and $\bbZ[{}_xA]$ for the profunctors defined by
  \[ \bbZ[A_x](a) = \bbZ[\hom_A(x,a)] \qquad\text{and}\qquad
  \bbZ[{}_xA](a) = \bbZ[\hom_A(a,x)].
  \]
  These are the base change objects, as in \S\ref{sec:base-change},
  associated to the functor $x\colon \star \to A$ which picks out the
  object $x$, relative to the $\mathbf{Gpd}$-indexed symmetric monoidal
  category of families of abelian groups.  (The corresponding base
  change objects for chain complexes are the same profunctors, regarded
  as concentrated in degree 0.)

  In particular, therefore, $\bbZ[{}_xA]$ is right dualizable, with
  right dual $\bbZ[{A}_x]$.  We regard $\bbZ[{}_xA]$ as the counterpart
  for groupoids of a rank-one free module.  The main difference from the
  case of rings is that now, there may be more than one isomorphism
  class of such modules, if $A$ has more than one isomorphism class of
  objects.  Just as a map $R\to M$ from a rank-one free $R$-module to
  any $R$-module $M$ is determined uniquely by an element of $M$ (the
  image of $1\in R$), a map $\bbZ[{}_xA] \to M$ of $A$-modules is
  determined uniquely by an element of $M(x)$ (the image of $\id_x \in
  \hom_A(x,x) \subseteq \bbZ[{}_x A](x)$).  (This is essentially just
  the Yoneda Lemma.)

  Now suppose that $A$ is a \emph{finitely generated free groupoid}, as
  in \autoref{eg:gpd-smtrace}, with object set $A_0$ and generating set
  of morphisms $A_1$, with source- and target-assigning maps $s,t\colon
  A_1 \rightrightarrows A_0$.  Let $M\in \Ab^A$ be an $A$-indexed
  diagram of abelian groups such that each abelian group $M(x)$ is
  finitely generated and free.  Write $M_x$ for a basis of $M(x)$; then
  the action of any generator $\gamma\in A_1$ can be described by a
  ``matrix'' $\gamma_{M,p,q}$, with coefficients defined by
  \[ \gamma(p) = \sum_{q\in M_{t(\gamma)}} \gamma_{M,p,q} \cdot q
  \]
  for $p\in M_{s(\gamma)}$.  Then $M$ is quasi-isomorphic to the
  following chain complex of $A$-modules concentrated in degrees 1 and
  0:
  \begin{equation}\label{eq:module-2termres}
    \xymatrix{
      \bigoplus_{\gamma \in A_1} \bigoplus_{p\in M_{s(\gamma)}}
      \bbZ[{}_{t(\gamma)} A] \ar[r]^-{d} &
      \bigoplus_{x\in A_0} \bigoplus_{p\in M_x} \bbZ[{}_x A].}
  \end{equation}
  Here the differential sends the generator $(\gamma,p,\id_{t(\gamma)})$
  of the $(\gamma,p)$ summand to the difference
  \[ (s(\gamma), p, \gamma) -
  \sum_{q\in M_{t(\gamma)}} \gamma_{M,p,q} \cdot (t(\gamma), q, \id_{t(\gamma)}).
  \]
  In the case when $A_0$ has one object, so that $A$ is a finitely
  generated free group, and $M$ is the trivial module $\bbZ$, then the
  degree-0 part of this complex is just the group ring $\bbZ[G]$, and
  the degree-1 part is the \emph{augmentation ideal}.  In that case, at
  least, the fact that the augmentation ideal is also a finitely
  generated free module is well-known~\cite[6.1.5]{weibel:homological}.

  Since this complex consists of finite sums of shifts of the dualizable
  objects $\bbZ[{}_x A]$, it is dualizable.  Therefore, $M$ is totally
  dualizable in the derived context of chain complexes, when regarded as
  a chain complex concentrated in degree 0.  Its dual is the complex
  concentrated in degrees 0 and $-1$:
  \[\xymatrix{
  \bigoplus_{x\in A_0} \bigoplus_{p\in M_x} \bbZ[A_x] \ar[r]^-{\partial} &
  \bigoplus_{\gamma \in A_1} \bigoplus_{p\in M_{s(\gamma)}} \bbZ[A_{t(\gamma)}]}
  \]
  (we leave the computation of the differential to the reader).

  Now, if $f\colon M\to M$ is an endomorphism, with matrix elements
  $f_{p,q}$ defined by $f(p) = \sum_q f_{p,q} \cdot q$, then a
  corresponding endomorphism of~\eqref{eq:module-2termres} can be
  defined by
  \begin{align*}
    f(x,p,\id_x) &= \sum_q f_{p,q} \cdot (x,q,\id_x)\\
    f(\gamma,p,\id_{t(\gamma)}) &= \sum_q f_{p,q} \cdot (\gamma,q, \id_{t(\gamma)}).
  \end{align*}
  Therefore, noting that $\sum_{p\in M_x} f_{p,p}$ is the trace
  $\tr(f_x)$ of $f$ restricted to $M(x)$, we can calculate
  \[\tr(f) = \sum_{x\in A_0} \tr(f_x) \cdot [\id_x] -
  \sum_{\gamma\in A_1} \tr(f_{t(\gamma)}) \cdot [\id_{t(\gamma)}]
  \]
  where each $[\id_x]$ denotes the image of $\id_x$ in $\sh{A}$.  (As in
  \autoref{eg:gpd-smtrace}, the minus sign arises from the symmetry
  isomorphism for the tensor product of chain complexes.)
  
  Note that this result is invariant under equivalence of groupoids $A$,
  as it must be.  In fact, it can be rephrased as a sum over the
  connected components of $A$:
  \[ \tr(f) = \sum_{x\in \pi_0(A)} (1 - D_{A,x}) \tr(f_x)\cdot [\id_x]
  \]
  where $D_{A,x}$ denotes the number of generators of the isotropy group
  of $A$ at $x$ (which is finitely generated and free).

  Finally, since the augmentation $\sh{A} \to \bbZ$ sends each $[\id_x]$
  to $1$, \autoref{thm:totaltr1} says that the trace of $(\pi_A)_!(f)$
  must be
  \[ \sum_{x\in A_0} \tr(f_x) - \sum_{\gamma\in A_1} \tr(f_{t(\gamma)})
  = \sum_{x\in \pi_0(A)} (1 - D_{A,x}) \tr(f_x).
  \]
  This is easy to verify directly, using the fact that since the
  homotopy colimit $(\pi_A)_!(M)$ can be calculated using the projective
  resolution~\eqref{eq:module-2termres} to be
  \[\xymatrix{\bigoplus_{\gamma \in A_1} \bigoplus_{p\in M_{s(\gamma)}} \bbZ \ar[r]^-{d} &
  \bigoplus_{x\in A_0} \bigoplus_{p\in M_x} \bbZ.}
  \]
  Note that if $M$ is the constant functor at \bbZ, then this is exactly
  the chain complex representing $\Sigma(A)$ which we used in Examples
  \ref{eg:gpd-smtrace} and \ref{eg:gpd-smtransfer}.
\end{eg}

\begin{eg}\label{thm:total-cw}
  Total duality was first studied in~\cite{maysig:pht} for the derived
  bicategory of parametrized spectra, where it was called
  \emph{Costenoble-Waner duality}; see also \cite{PS2}.  In this
  bicategory, \autoref{thm:total-duality} implies that if the
  parametrized sphere spectrum $S_A$ over $A$ is Costenoble-Waner
  dualizable, then $\Sigma^\infty(A_+)$ is dualizable.  This is the
  original observation of~\cite{maysig:pht} that
  \autoref{thm:total-duality} generalizes.

  Concrete examples of Costenoble-Waner duality can also be found
  in~\cite{maysig:pht}.  For instance, the parametrized sphere spectrum
  $S_A$ over any closed smooth manifold $A$ is Costenoble-Waner
  dualizable~\cite[18.6.1]{maysig:pht}.

  For simplicity, we consider only the trace of its identity map.  The
  trace of the identity of $\Sigma^\infty(A_+)$ can be identified with
  the \emph{Euler characteristic} of $A$ (as an endomorphism of the
  sphere spectrum $S$).  Thus, the theorem says that this Euler
  characteristic factors as
  \[S \rightarrow \Sigma^\infty (L A_+)\rightarrow S.\]
  The first map turns out to be the composite
  \[ S \to \Sigma^\infty(A_+) \to \Sigma^\infty (LA_+)
  \]
  of the transfer of the identity map with the inclusion $A\rightarrow
  LA$ as constant paths.
\end{eg}

While \autoref{thm:totaltr1} is a direct analogue of
\autoref{thm:fibtrace4}, often we are interested in a different
situation.  As mentioned in the introduction, usually we are given a
morphism in the cartesian monoidal base category \bS, and we want to
compute a trace which gives us information about its fixed points.

Our final theorem, which solves this problem, will be a version of
\autoref{thm:totaltr1} which compares twisted traces.  Combined with the
base change objects from \S\ref{sec:base-change}, this will enable us to
compare symmetric monoidal and total duality traces involving an
endomorphism of the base object.

\begin{thm}\label{thm:totaltr2}
  Suppose $M\in\sC^A$ is totally dualizable, $Q\in \sC^\star$, $P\in
  \sC^{A\times A}$, and we have a morphism $f\colon Q\odot
  \widecheck{M}\rightarrow \widecheck{M}\odot P$.  Suppose furthermore
  that we are given $R\in \sC^\star$ and a morphism
  \[\xi\colon P \odot \widehat{I_A} =
  (\id\times\pi_A)_! P \too (\pi_A)^* R = \widehat{I_A} \odot R.\]
  Then the following triangle commutes:
  \begin{equation}
    \xymatrix{ Q\ar[r]^-{\tr(\widecheck{f})}
      \ar[dr]_-{\tr(\xi\circ f)}&\sh{P}\ar[d]^{\tr(\xi)}\\
      & R
    }\label{eq:totaltr2}
  \end{equation}
  Here $\xi\circ f$ denotes the following composite in $\sC^\star$:
  \[
  Q \otimes (\pi_A)_! M \cong
  Q\odot \widecheck{M} \odot \widehat{I_A} \xto{f\odot \id}
  \widecheck{M} \odot P \odot \widehat{I_A} \xto{\id\odot \xi}
  \widecheck{M} \odot \widehat{I_A} \odot R \cong
  (\pi_A)_! M \otimes R
  \]
\end{thm}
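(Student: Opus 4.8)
The plan is to read the triangle~\eqref{eq:totaltr2} as an instance of the compatibility of bicategorical trace with composition of dual pairs, carried out in $\calBi CS$ for the two right dualizable $1$-cells $\widecheck{M}\maps\star\hto A$ and $\widehat{I_A}\maps A\hto\star$.

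First I would assemble the dualities. By hypothesis $\widecheck{M}$ is right dualizable, and by \autoref{thm:bco-dual-indexed} applied to $\pi_A\maps A\to\star$ (equivalently, as in the proof of \autoref{thm:total-duality}) the base change objects of $\pi_A$ form a dual pair $(\widehat{I_A},\widecheck{I_A})$, so $\widehat{I_A}$ is right dualizable too. Hence the composite $\widecheck{M}\odot\widehat{I_A}\maps\star\hto\star$ is right dualizable, with evaluation and coevaluation the standard composites of those of $\widecheck{M}$ and $\widehat{I_A}$, and by \autoref{thm:bco-action} it is isomorphic to $(\pi_A)_!M$. Combined with the identifications already recorded in the statement ($P\odot\widehat{I_A}\cong(\id\times\pi_A)_!P$ and $\widehat{I_A}\odot R\cong(\pi_A)^*R$) and with $\sh Q\cong Q$, $\sh R\cong R$ (since $\star$ is terminal), this shows that $f$, $\xi$, and $\xi\circ f$ are precisely $2$-cells of the shape for which one takes bicategorical traces with respect to $\widecheck{M}$, to $\widehat{I_A}$, and to $\widecheck{M}\odot\widehat{I_A}$, and that $\tr(\widecheck{f})$, $\tr(\xi)$, $\tr(\xi\circ f)$ are exactly those traces.

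Next I would expand the three traces from the definition of bicategorical trace (a composite of $\sh{\id\odot\eta}$, $\sh{(\text{the }2\text{-cell})\odot\id}$, $\theta$, and $\sh{\epsilon\odot\id}$) and compare. Writing out $\tr(\xi\circ f)$ with the evaluation and coevaluation of $\widecheck{M}\odot\widehat{I_A}$ split into their factors yields a composite built from $\eta,\epsilon$ for $\widecheck{M}$, from $\eta,\epsilon$ for $\widehat{I_A}$, from $f$ and $\xi$, and from several copies of $\theta$ and the associativity/unit constraints; the target is the composite $Q\xto{\tr(\widecheck{f})}\sh P\xto{\tr(\xi)}R$. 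Rearranging one into the other uses only naturality of $\theta$, the hexagon and triangle coherence axioms for $\theta$, and the triangle identities for the two dual pairs --- that is, it is the general statement that bicategorical trace respects composition of dual pairs (cf.~\cite{PS2}). In any case the needed reorganization is a string diagram computation, which, following the pattern of this section, we postpone to \S\ref{sec:tdproofs}.

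The main obstacle is this last reorganization: checking that the evaluation and coevaluation of $\widecheck{M}\odot\widehat{I_A}$ genuinely split as claimed and that the accumulated copies of $\theta$ together with the coherence constraints cancel to leave precisely $\tr(\xi)\circ\tr(\widecheck{f})$. In elementary terms this is a long and not especially illuminating diagram chase, but in string diagram notation --- where each dual pair is a cup--cap that slides freely, $\theta$ transports a strand around the cylinder, and the shadow axioms become evident isotopies --- it collapses to an essentially topological identity, which is why we defer it. As a consistency check I would then specialize to $Q=I_\star$, $P=U_A$, $R=I_\star$, and $\xi=\id$: there $\tr(\xi)$ becomes the augmentation of \autoref{eg:shaug} and $\xi\circ f$ becomes $(\pi_A)_!f$, so one recovers \autoref{thm:totaltr1}.
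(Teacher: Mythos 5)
Your proposal is correct and takes essentially the same route as the paper: you correctly assemble the dual pair $(\widehat{I_A},\widecheck{I_A})$, identify $\widecheck{M}\odot\widehat{I_A}\cong(\pi_A)_!M$, observe that the shadow restricted to $\calBi CS(\star,\star)\simeq\sC^\star$ is the identity, and reduce the triangle to functoriality of bicategorical trace under composition of dual pairs. The only difference is that the paper simply cites~\cite[Prop.~7.5]{PS2} for that functoriality rather than redoing the string diagram verification you propose to postpone---so no computation needs to be deferred to \S\ref{sec:tdproofs} at all.
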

\begin{proof}
  Continuing from the proof of \autoref{thm:total-duality}, we observe
  that the equivalence $\calBi CS(\star,\star)\eqv \sC^\star$ identifies
  the shadow of $\calBi CS$ (restricted to $\calBi CS(\star,\star)$)
  with the identity functor of $\sC^\star$.  Thus, this equivalence
  respects traces as well, and so the symmetric monoidal trace of
  $\xi\circ f$ can be identified with its bicategorical trace
  (considered as a morphism in $\calBi CS(\star,\star)$).  But with this
  identification,~\eqref{eq:totaltr2} simply becomes an instance of the
  functoriality of bicategorical trace~\cite[Prop.~7.5]{PS2}.
\end{proof}

As in the case of untwisted traces, we see that for total duality,
bicategorical traces carry more information than the induced symmetric
monoidal ones.

Probably the most important case of \autoref{thm:totaltr2} is the
following.  Let $\phi\colon A\to A$ be an endomorphism of $A\in \bS$,
and let $f$ be a morphism $M \to \phi^* M$.  We think of such an $f$ as
a ``$\phi$-equivariant endomorphism of $M$''.  Moreover, by
\autoref{thm:bco-action} we can equivalently regard $f$ as a morphism
$\widecheck{M} \to \widecheck{M}\odot A_\phi$; thus its bicategorical
trace will be a map $I_\star \to \sh{A_\phi}$ in $\sC^\star$.

We now want to apply \autoref{thm:totaltr2} with $Q = R = I_\star$ and
$P=A_\phi$.  We choose $\xi$ to be the morphism
\[ A_\phi \odot \widehat{I_A} \xto{\cong}
\widehat{\phi_! I_A} \xto{\cong}
\widehat{\phi_! \phi^* I_A} \too \widehat{I_A}.
\]
The trace of this $\xi$ is an augmentation $\sh{A_\phi} \to I_\star$,
which reduces to the augmentation of $\sh{A}$ if $\phi$ is the identity.
\autoref{thm:totaltr2} then implies that $\tr(\widecheck{f})$ factors
$\tr(\xi\circ f)$ via this augmentation.

Specializing even further, suppose that $M = I_A$ is totally dualizable;
in this case we may say that $A$ itself is ``totally dualizable''.  Then
for any $\phi$, we can choose $f$ to be the isomorphism
\begin{equation}\label{eq:f-over-f}
  I_A \cong \phi^* I_A.
\end{equation}
The bicategorical trace of this $f$ is a morphism $I_\star \to
\sh{A_\phi}$, which we call the \textbf{total-duality trace of $\phi$}.
In \autoref{prop:xi_circ_f}, we will prove that for this $f$ and the
above $\xi$, we have $\xi \circ f = \Sigma (\phi)$.  Thus, we can deduce
\autoref{thm:intro-total}.

\begin{cor}\label{cor:totaltr3}
  For any totally dualizable $A\in\bS$ and any $\phi\colon A \to A$, the
  total-duality trace of $\phi$ is a morphism $I_\star \to \sh{A_\phi}$
  which factors the symmetric monoidal trace $\tr (\Sigma(\phi))\colon
  I_\star \to I_\star$ via an augmentation $\sh{A_\phi} \to I_\star$.
\end{cor}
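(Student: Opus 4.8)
The plan is to obtain this corollary by specializing \autoref{thm:totaltr2} to the choices spelled out in the two paragraphs preceding the statement. Concretely, take $M=I_A$ (which is totally dualizable, since ``$A$ is totally dualizable'' means exactly this), $Q=R=I_\star$, and $P=A_\phi$. For $f$ take the canonical isomorphism $I_A\cong\phi^*I_A$ of~\eqref{eq:f-over-f}; by \autoref{thm:bco-action} this corresponds to a $2$-cell $\widecheck{M}=\widecheck{I_A}\to\widecheck{I_A}\odot A_\phi=\widecheck{M}\odot P$, whose bicategorical trace $\tr(\widecheck{f})\colon I_\star\to\sh{A_\phi}$ is by definition the total-duality trace of $\phi$. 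For $\xi$ take the composite
\[ A_\phi \odot \widehat{I_A} \xto{\cong} \widehat{\phi_! I_A} \xto{\cong} \widehat{\phi_! \phi^* I_A} \too \widehat{I_A}, \]
where the first isomorphism is an instance of \autoref{thm:bco-action}, the second uses $I_A\cong\phi^*I_A$, and the last map is induced by the counit of $\phi_!\dashv\phi^*$ at $I_A$. This is visibly of the form $P\odot\widehat{I_A}=(\id\times\pi_A)_!P\to(\pi_A)^*R=\widehat{I_A}\odot R$ demanded by \autoref{thm:totaltr2}, and its trace $\tr(\xi)\colon\sh{A_\phi}\to I_\star$ is the desired augmentation (reducing, when $\phi=\id_A$, to the augmentation of $\sh{A}$ from \autoref{eg:shaug}).

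With these data in place, \autoref{thm:totaltr2} gives at once the commuting triangle
\[\xymatrix{ I_\star \ar[r]^-{\tr(\widecheck{f})} \ar[dr]_-{\tr(\xi\circ f)} & \sh{A_\phi} \ar[d]^{\tr(\xi)} \\ & I_\star }\]
so the total-duality trace of $\phi$ factors $\tr(\xi\circ f)$ through the augmentation $\tr(\xi)$. It therefore only remains to identify the composite $\xi\circ f$ — which, as a map $Q\otimes(\pi_A)_!M=(\pi_A)_!I_A\to(\pi_A)_!I_A$, is the morphism $I_\star\cong I_\star\odot\widecheck{I_A}\odot\widehat{I_A}\to\cdots\to(\pi_A)_!I_A$ defined in the statement of \autoref{thm:totaltr2} — with $\Sigma(\phi)\colon\Sigma(A)\to\Sigma(A)$ from~\eqref{eq:sigma-phi}. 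This equality $\xi\circ f=\Sigma(\phi)$ is precisely \autoref{prop:xi_circ_f}; granting it, we get $\tr(\xi\circ f)=\tr(\Sigma(\phi))$, and the triangle above becomes the asserted factorization of the symmetric monoidal trace $\tr(\Sigma(\phi))\colon I_\star\to I_\star$ through $\sh{A_\phi}\to I_\star$. That the last map is an ``augmentation'' in the intended sense is just the observation that it reduces to the shadow augmentation when $\phi$ is the identity.

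The one genuinely non-formal step is thus \autoref{prop:xi_circ_f}, the verification that $\xi\circ f=\Sigma(\phi)$. This is not automatic: one must unravel $\Sigma(\phi)$ as the zig-zag $(\pi_A)_!(\pi_A)^*I_\star\cong(\pi_A)_!\phi_!\phi^*(\pi_A)^*I_\star\to(\pi_A)_!(\pi_A)^*I_\star$, unravel $\xi\circ f$ through the repeated uses of \autoref{thm:bco-action} and the equivalence $\calBi CS(\star,\star)\eqv\sC^\star$ (under which the shadow of $\calBi CS$ restricts to the identity, so that bicategorical and symmetric monoidal traces agree there), and check that everything collapses — via the triangle identities for the base-change dual pair $({}_{\pi_A}\star,\star_{\pi_A})=(\widehat{I_A},\widecheck{I_A})$ and the counit of $\phi_!\dashv\phi^*$ — to the defining formula for $\Sigma(\phi)$. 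As the authors warn, this kind of computation is painful in algebraic notation but is a short isotopy in the string-diagram calculus of \S\ref{sec:string-diagrams}; accordingly I would postpone it to \S\ref{sec:tdproofs} and carry it out there by drawing the base-change strings for $\phi$ and sliding them.
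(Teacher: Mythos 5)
Your proposal follows exactly the paper's own route: specialize \autoref{thm:totaltr2} with $M=I_A$, $Q=R=I_\star$, $P=A_\phi$, the map $f$ given by $I_A\cong\phi^*I_A$ via \autoref{thm:bco-action}, and the $\xi$ defined by the composite through $\widehat{\phi_!I_A}\to\widehat{\phi_!\phi^*I_A}\to\widehat{I_A}$, then invoke \autoref{prop:xi_circ_f} to identify $\xi\circ f=\Sigma(\phi)$. The argument, including the observation that the nontrivial work is isolated in \autoref{prop:xi_circ_f} (proved by string-diagram manipulation in \S\ref{sec:tdproofs}), matches the paper's.
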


In order to see what this means in examples, we need a way to compute
the object $\sh{A_\phi}$.  We can do this with a generalization of the
technique used for $\sh{A}$ in \S\ref{sec:shadows} involving free loop
spaces.  Namely, suppose we have a homotopy pullback
\[ \xymatrix{ L_\phi A \ar[r]^-p \ar[d]_q &
  A \ar[d]^{(\id,\phi)} \\
  A \ar[r]_-{\Delta} & A\times A.}
\]
Then just as before, we have
\[\sh{A_\phi} = (\pi_A)_! (\Delta_A)^* (\id_A \times \phi)^* (\Delta_A)_! I_A \cong
(\pi_A)_! p_! q^* I_A \cong
(\pi_{L_\phi A})_! I_{L_\phi A} =
\Sigma(L_\phi A).
\]
The actual pullback of $\Delta$ and $(\id,\phi)$ is the equalizer of
$\phi$ and the identity.  Thus, in non-derived situations, $\sh{A_\phi}$
is $\Sigma$ applied to this equalizer.

\begin{eg}\label{thm:families-totaltr}
  Let $A$ be a finite set, so that the unit object $I_A$ in $\Ab^A$ is
  totally dualizable, and let $\phi\colon A\to A$ be an endomorphism.
  Then the equalizer of $\phi$ and $\id$ is the set $\mathrm{Fix}(\phi)$
  of fixed points of $\phi$, so that $\sh{A_\phi} =
  \bigoplus_{\phi(a)=a} \bbZ = \bbZ[\mathrm{Fix}(\phi)]$ is the free
  abelian group on this set.  The augmentation $\sh{A_\phi}\to\bbZ$
  takes each basis element to 1.  The total-duality trace of $\phi$ can
  then be computed as the homomorphism $\mathbb{Z} \to \sh{A_\phi}$
  which sends $1$ to $\sum_{\phi(a)=a} a$.  Thus, its image under the
  augmentation is the number of fixed points of $\phi$, which is exactly
  the trace of $\xi\circ f = \Sigma(\phi)$.
\end{eg}

In derived situations, however, $L_\phi A$ will generally be a ``twisted
free loop space'' whose points are ``paths'' from a point $x$ to
$\phi(x)$.

\begin{eg}\label{eg:gpd-totaltrace}
  For a groupoid $A$ with an endofunctor $\phi$, the pseudo-pullback of
  $(\id,\phi)$ and $\Delta$ is the groupoid $L_\phi A$ whose points are
  pairs $(x,\gamma)$ where $x\in A$ is an object and $\gamma\in
  \hom_A(x,\phi(x))$ is a morphism.  A morphism in $L_\phi A$ from
  $(x,\gamma)$ to $(y,\delta)$ is a morphism $\alpha\colon x\to y$ in
  $A$ such that an evident square commutes.  Then $\sh{A_\phi}$ is the
  colimit or homotopy quotient (depending on whether we are in the
  underived or derived setting) of the constant diagram on $L_\phi A$.

  In the underived case of \Ab-valued profunctors, this implies that
  $\sh{A_\phi}$ is the free abelian group on the set of connected
  components of $L_\phi A$.  Using terminology from fixed-point theory,
  we may call the connected components of $L_\phi A$ the
  \emph{fixed-point classes} of $\phi$.

  Similarly, in the derived case of $\bCh{\bbZ}$-valued profunctors,
  $\sh{A_\phi}$ is the complex of chains on the nerve of $L_\phi A$.  By
  a straightforward generalization of \autoref{eg:hochschild}, we can
  identify the nerve of $L_\phi A$ with the ``$\phi$-twisted cyclic
  nerve'' of $A$.  When $A$ is a group $G$, the chains on its twisted
  cyclic nerve form the Hochschild complex of $\bbZ[G]_\phi$ as a
  bimodule over $\bbZ[G]$.  As before, in the general case we obtain a
  direct sum over isotropy groups.  Also as before, the
  derived-underived difference is not very important here, since the 0th
  homology of the derived version gives us back the underived version.

  However, passing to the derived version \emph{is} important in order
  to have nontrivial totally dualizable objects.  As we saw in
  Examples~\ref{eg:total-duality-prof}
  and~\ref{eg:total-duality-hoprof}, the unit object is rarely totally
  dualizable as an \Ab-valued profunctor, but as a $\bCh{\bbZ}$-valued
  profunctor in the derived case, it can sometimes be---for instance,
  when our groupoid is a group of type FP.

  For a concrete example, let $A$ be a finitely generated free groupoid,
  and let $\phi\colon A\to A$ be an endofunctor.  As we saw in
  \autoref{eg:total-duality-hoprof}, the unit object $I_A$ (i.e.\ the
  constant functor at \bbZ) is equivalent to the 2-term chain complex
  \[\xymatrix{
  \bigoplus_{\gamma \in A_1} \bbZ[{}_{t(\gamma)} A] \ar[r]^d &
  \bigoplus_{x\in A_0} \bbZ[{}_x A].}
  \]
  Given $\phi\colon A\to A$, the twisted target $\phi^* I_A$
  of~\eqref{eq:f-over-f} is represented by the same complex, but with
  the action of $A$ twisted by $\phi$.  We can write this as
  \[\xymatrix{
  \bigoplus_{\gamma \in A_1} \bbZ[{}_{t(\gamma)} A_\phi] \ar[r]^d &
  \bigoplus_{x\in A_0} \bbZ[{}_x A_\phi].}
  \]
  where for any object $x$, the profunctor $\bbZ[{}_x A_\phi]$ is defined by
  \[ \bbZ[{}_x A_\phi] (y) = \bbZ[\hom_A(\phi(y),x)].\]
  Now the morphism~\eqref{eq:f-over-f} can be represented by the chain map
  \[\xymatrix{
  \bigoplus_{\gamma \in A_1} \bbZ[{}_{t(\gamma)} A]
  \ar[r]^d \ar[d] &
  \bigoplus_{x\in A_0} \bbZ[{}_x A] \ar[d]\\
  \bigoplus_{\gamma \in A_1} \bbZ[{}_{t(\gamma)} A_\phi]
  \ar[r]_d &
  \bigoplus_{x\in A_0} \bbZ[{}_x A_\phi]}
  \]
  defined as follows:
  \begin{itemize}
  \item Each degree-0 generator $(x,\id_x)$ goes to $(\phi(x),
    \id_{\phi(x)})$.
  \item Suppose that $\gamma$ is a generating morphism such that
    $\phi(\gamma)$ is written in terms of generators as
    \[ \phi(\gamma) = \alpha_n^{\epsilon_n} \cdots \alpha_1^{\epsilon_1},\]
    with each $\epsilon_i \in \{+1, -1\}$.
    Then the degree-1 generator $(\gamma,\id_{t(\gamma)})$ goes to the sum
    \[ \sum_{i=1}^n \epsilon_i \cdot \big(\alpha_i,\, \beta_i \big)
    \]
    where $\beta_i\in \hom_A(t(\gamma), t(\alpha_i))$ is defined by
    \[ \beta_i =
    \begin{cases}
      \alpha_{i+1}^{-\epsilon_{i+1}} \cdots \alpha_{n}^{-\epsilon_{n}}
      & \quad \epsilon_i = 1\\
      \alpha_i \alpha_{i+1}^{-\epsilon_{i+1}} \cdots \alpha_{n}^{-\epsilon_{n}}
      & \quad \epsilon_i = -1
    \end{cases}
    \]
  \end{itemize}
  By our calculation of $\sh{A_\phi}$, the total-duality trace of $\phi$
  will be a sum of conjugacy classes of morphisms $[\delta]$ such that
  $\delta\in \hom_A(\phi(x),x)$ for some $x$ (note $\delta$ is not
  necessarily a generator).  This can now be computed as a sum of the
  following terms:
  \begin{enumerate}
  \item For every object $x$, if $\phi(x) = x$, then we have a
    contribution of $[\id_x]$.
  \item For every generating morphism $\gamma \colon x\to y$, we have a
    contribution of
    \[
    - \sum_{\alpha_i = \gamma} \epsilon_i \cdot [\beta_i]
    \]
    with $\beta_i$ defined as above.
  \end{enumerate}
  The augmentation $\sh{A_\phi}\to \bbZ$ takes each generator $[\delta]$
  to 1, so applying it to this sum, we recover the trace as computed in
  \autoref{eg:gpd-smtrace}.
\end{eg}

\begin{eg}\label{eg:rtrace}
  Finally, this factorization is familiar in fixed point theory (the
  original context that we are working to generalize).  Let $B$ be a
  closed smooth manifold and $\phi\colon B\rightarrow B$ be an
  endomorphism.  Then working in the derived indexed symmetric monoidal
  category of parametrized spectra, we have computed that $\sh{B_\phi}$
  is the suspension spectrum of the twisted loop space $\Lambda^\phi
  B=\{\gamma\in B^I|\phi(\gamma(0))=\gamma(1)\}$.  The augmentation is
  $\Sigma^\infty_+$ applied to the map
  \[\Lambda^\phi B\rightarrow \ast.
  \] 
  As remarked in \autoref{thm:total-cw}, when $B$ is a closed smooth
  manifold, its parametrized sphere spectrum $S_B$ is totally
  dualizable.  As usual, $\phi$ then induces a fiberwise morphism
  \[f\colon S_B\rightarrow S_B\odot B_\phi,
  \]
  whose bicategorical trace is a map $S\rightarrow
  \Sigma^\infty((\Lambda^\phi B)_+)$, hence an element of the 0th stable
  homotopy group of $(\Lambda^\phi B)_+$.  However, this group is
  canonically isomorphic to $\mathbb{Z}[\pi_0(\Lambda^\phi B)]$, and
  $\pi_0(\Lambda^\phi B)$ is the set of \emph{fixed-point classes} of
  the map $\phi$.  Thus, the trace of $\phi$ is equivalently a formal
  integral combination of fixed-point classes.  Under this
  interpretation, it can be identified with the original definition of
  the \emph{Reidemeister trace} from \cite{b:lefschetz, j:nielsen},
  which assigns to each fixed-point class the sum of the indices of the
  fixed points in that class.

  Acting on 0th stable homotopy, the augmentation takes each fixed-point
  class in $\pi_0(\Lambda^\phi B)$ to $1$.  Therefore,
  \autoref{cor:totaltr3} reduces to the obvious fact that if we add up
  the Reidemeister coefficients over all fixed-point classes, we obtain
  the sum of the indices of \emph{all} fixed points, i.e.\ the trace of
  $\Sigma^\infty(\phi_+)$.
\end{eg}

Returning to the general situation, it is also natural to ask how the
total-duality trace of $\phi$ is related to the \emph{transfer} of
$\Sigma(\phi)$, as defined in \S\ref{sec:smc-traces}.  We can compare
these by applying \autoref{thm:totaltr2} with a different choice of $R$
and $\xi$.  We take $R=\Sigma(A)$, of course, and we take $\xi$ to be
the following composite:
\[ A_\phi \odot \widehat{I_A}
\xto{\cong} \phi_! I_A
\xto{\cong} \phi_! \phi^* I_A
\too I_A
\too (\pi_A)^* (\pi_A)_! I_A
\xto{\cong} \widehat{I_A} \odot \Sigma(A)
\]
which we denote by $\zeta$.  (Note that the previous $\xi$ is a factor
of this $\zeta$.)

The trace of $\zeta$ is a morphism $\tr(\zeta)\colon \sh{A_\phi} \to
\Sigma(A)$, which factors the augmentation $\sh{A_\phi}\to I_\star$
through the augmentation $\Sigma(A)\to I_\star$ (this follows from a
triangle identity for the adjunction $(\pi_A)_! \dashv (\pi_A)^*$).  If
$\phi$ is the identity, so that $\sh{A_\phi}=\sh{A}$, then $\tr(\zeta)$
is the retraction of the comparison map $\Sigma(A) \to \sh{A}$.  In
\autoref{thm:zeta-circ-f} we will prove that $\zeta\circ f =
\Sigma(\Delta \circ \phi)$.  Thus, we obtain \autoref{thm:intro-total2}.

\begin{cor}\label{thm:totaltr-transfer}
  For any totally dualizable $A\in\bS$ and any $\phi\colon A \to A$, the
  total-duality trace of $\phi$ factors the symmetric monoidal transfer
  $\tr (\Sigma(\Delta\circ \phi))\colon I_\star \to \Sigma(A)$ via the
  map $\tr(\zeta)\colon \sh{A_\phi} \to \Sigma(A)$.
\end{cor}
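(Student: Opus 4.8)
The plan is to deduce this as a direct corollary of \autoref{thm:totaltr2}, specialized to a suitable choice of twisting data, in exact parallel with the derivation of \autoref{cor:totaltr3}. First I would set up the data: take $M = I_A$ (totally dualizable by the hypothesis that $A$ itself is ``totally dualizable''), $Q = I_\star$, $P = A_\phi$, and let $f$ be the canonical isomorphism $I_A \cong \phi^* I_A$ of~\eqref{eq:f-over-f}, which by \autoref{thm:bco-action} we regard as a $2$-cell $\widecheck{I_A} \to \widecheck{I_A}\odot A_\phi$, i.e.\ a morphism $Q\odot \widecheck{M} \to \widecheck{M}\odot P$; by definition $\tr(\widecheck{f})\colon I_\star \to \sh{A_\phi}$ is then the total-duality trace of $\phi$. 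Next, take $R = \Sigma(A)$ and let $\xi = \zeta$ be the five-fold composite
\[ A_\phi \odot \widehat{I_A}
\xto{\cong} \phi_! I_A
\xto{\cong} \phi_! \phi^* I_A
\too I_A
\too (\pi_A)^* (\pi_A)_! I_A
\xto{\cong} \widehat{I_A} \odot \Sigma(A)
\]
displayed just before the statement. Since $P\odot \widehat{I_A} = (\id\times\pi_A)_! A_\phi$ and $(\pi_A)^* R = \widehat{I_A}\odot R$, and the target of $\zeta$ is by construction $(\pi_A)^*(\pi_A)_! I_A = (\pi_A)^*\Sigma(A)$, this $\zeta$ has exactly the form demanded by the hypothesis of \autoref{thm:totaltr2}.

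With these choices, \autoref{thm:totaltr2} immediately produces a commuting triangle~\eqref{eq:totaltr2} whose left edge $\tr(\widecheck{f})$ is the total-duality trace of $\phi$, whose right edge is $\tr(\zeta)\colon \sh{A_\phi}\to \Sigma(A)$, and whose hypotenuse is $\tr(\zeta\circ f)\colon I_\star \to \Sigma(A)$ --- the symmetric monoidal trace, taken with respect to the object $(\pi_A)_! I_A = \Sigma(A)$ (dualizable by \autoref{thm:total-duality}), of the morphism $\zeta\circ f\colon \Sigma(A)\to \Sigma(A)\ten \Sigma(A)$ defined in the statement of \autoref{thm:totaltr2}. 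So the only remaining task is to identify this hypotenuse with the symmetric monoidal transfer of $\phi$.

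For that identification I would invoke \autoref{thm:zeta-circ-f}, which asserts $\zeta\circ f = \Sigma(\Delta_A\circ \phi)$. Since $\Sigma$ is strong symmetric monoidal, $\Sigma(\Delta_A\circ\phi) = \Sigma(\Delta_A)\circ\Sigma(\phi)$ under the identification $\Sigma(A\times A)\cong \Sigma(A)\ten \Sigma(A)$, and hence $\tr(\zeta\circ f) = \tr\big(\Sigma(\Delta_A)\circ\Sigma(\phi)\big)$, which is precisely the transfer $\tr(\Sigma(\Delta\circ\phi))\colon I_\star \to \Sigma(A)$ as defined in \S\ref{sec:smc-traces}. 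Substituting this back into the triangle gives exactly the asserted factorization of the transfer through $\tr(\zeta)\colon \sh{A_\phi}\to\Sigma(A)$.

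The genuinely substantive step is not in this bookkeeping but inside \autoref{thm:zeta-circ-f}: establishing $\zeta\circ f = \Sigma(\Delta_A\circ\phi)$ requires expanding the definition of $\odot$, the base-change isomorphisms of \autoref{thm:bco-action}, and each stage of $\zeta$, then checking that they assemble into $\Sigma(\Delta_A\circ\phi)$ --- this is where the string diagram calculus of \S\ref{sec:string-diagrams} becomes essential, and it is carried out in \S\ref{sec:tdproofs}. A secondary, purely routine point is to confirm that the composite ``$\zeta\circ f$'' as it literally appears in \autoref{thm:totaltr2} (built from $\odot$ and the coherence isomorphisms $Q\ten(\pi_A)_! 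M \cong Q\odot\widecheck{M}\odot\widehat{I_A}$, etc.) agrees with the $\zeta$ and $f$ as described above; this follows directly from the definitions of the base change objects and the equivalences $\sC^A \simeq \calBi CS(A,\star)$, and presents no difficulty.
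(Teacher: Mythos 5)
Your proof is correct and follows the same route as the paper: apply \autoref{thm:totaltr2} with $M = I_A$, $Q = I_\star$, $P = A_\phi$, $R = \Sigma(A)$, $\xi = \zeta$, and $f$ the isomorphism $I_A \cong \phi^* I_A$, then invoke \autoref{thm:zeta-circ-f} to identify the hypotenuse $\tr(\zeta\circ f)$ with the transfer $\tr(\Sigma(\Delta\circ\phi))$. Your remarks verifying that $\zeta$ has the form required by \autoref{thm:totaltr2} and that strong monoidality of $\Sigma$ reconciles $\Sigma(\Delta_A\circ\phi)$ with $\Sigma(\Delta_A)\circ\Sigma(\phi)$ are exactly the bookkeeping the paper leaves implicit, and you correctly locate the real work in the string-diagram proof of \autoref{thm:zeta-circ-f}.
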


Thus, not only does the total-duality trace carry more information than
the symmetric monoidal trace, it also carries more information than the
symmetric monoidal transfer.

\begin{eg}
  As in \autoref{thm:families-totaltr}, let $\phi\colon A\to A$ be an
  endomorphism of a finite set.  Then $\Sigma(A) \cong \sh{A}$ is the
  free abelian group with basis $A$, and the morphism $\sh{A_\phi} \to
  \Sigma(A)$ maps the basis elements of $\sh{A_\phi}$ (the fixed points
  of $\phi$) to themselves.  Thus, the total-duality trace
  $\sum_{\phi(a)=a} a \in \sh{A_\phi}$ maps to the same sum
  $\sum_{\phi(a)=a} a \in \Sigma(A)$, which is the transfer $\tr
  (\Delta\circ\phi)$.
\end{eg}

\begin{eg}
  As in \autoref{eg:gpd-totaltrace}, let $\phi\colon A\to A$ be an
  endomorphism of a finitely generated free groupoid, and recall from
  \autoref{eg:gpd-smtransfer} that $\Sigma(A)$ is the free abelian group
  on the connected components of $A$.  The morphism $\sh{A_\phi} \to
  \Sigma(A)$ takes each conjugacy class $[\delta]$ to its connected
  component, and we see that the total-duality trace from
  \autoref{eg:gpd-totaltrace} maps to the transfer from
  \autoref{eg:gpd-smtransfer}.
\end{eg}

\begin{eg}
  As noted before, the transfer of an endomorphism $\phi\colon
  M\rightarrow M$ of a closed smooth manifold is an element of
  $\bbZ[\pi_0(M)]$.  The induced morphism $\pi_0(\Lambda^\phi
  M)\rightarrow \pi_0(M)$ associates a homotopy class of paths to the
  component containing it.  Thus, the image in $\bbZ[\pi_0(M)]$ of the
  Reidemeister trace has the coefficient of a component in $\pi_0(M)$
  being the sum of the fixed point indices of the fixed point classes
  lying in that component.
\end{eg}

\begin{rmk}
  The idea in the proof of \autoref{thm:totaltr2} can be applied to more
  general traces in the bicategory $\calBi CS$.  Namely, suppose $M\in
  \sC^{A\times B}$ is right dualizable, when regarded as a 1-cell $A\hto
  B$ in $\calBi CS$.  Let $Q\in \sC^{A\times A}$ and $P\in\sC^{B\times
    B}$, and let $f\colon Q\odot M \to M\odot P$ be a morphism, which
  therefore has a trace $\tr(f)\colon \sh{Q}\to\sh{P}$.  Suppose
  furthermore that as in \autoref{thm:totaltr2}, we have an
  $R\in\sC^\star$ and a morphism $\xi\colon P \odot \widehat{I_A} \to
  \widehat{I_A} \odot R$.  Then by composition of dual pairs, $M\odot
  \widehat{I_A} \cong \widehat{(\pi_B)_! M}$ is right dualizable, and by
  the functoriality of bicategorical trace, the following triangle
  commutes:
  \[ \xymatrix{ \sh{Q} \ar[r]^{\tr(f)} \ar[dr]_{\tr(\xi\circ f)} &
    \sh{P} \ar[d]^{\tr(\xi)} \\ & R. } \] However, by
  \autoref{thm:fiberwise-duality}, it follows that $(\pi_B)_! M$ is
  dualizable in the symmetric monoidal category $\sC^A$, and $\xi\circ
  f$ is a morphism
  \[Q\odot \widehat{(\pi_B)_! M} \to \widehat{(\pi_B)_! M}\odot R
  \]
  of the sort to which we can apply
  \autoref{thm:fibtrace}\ref{item:fibtrace1}.  Therefore, there is a
  morphism
  \[\overline{\xi\circ f}\colon (\Delta_A)^*Q \otimes_A (\pi_B)_! M
  \to (\pi_B)_! M \otimes_A (\pi_A)^* R
  \]
  whose symmetric monoidal trace in $\sC^A$ carries the same information
  as $\tr(\xi\circ f)$.

  Thus, although it may seem that we have only considered two very
  special cases of duality and trace in $\calBi CS$, the general case
  involves no new ideas, being essentially just a combination of these
  two.
\end{rmk}

This concludes our results on refinements of the symmetric monoidal
trace, and also the main part of the paper.  In the subsequent sections,
we introduce the string diagram calculus for indexed monoidal categories
and apply it to complete the postponed proofs from \S\ref{sec:fiberwise}
and \S\ref{sec:total}.

\section{String diagrams for objects}
\label{sec:string-diagrams}

Our string diagram calculus for indexed monoidal categories is inherited
from a similar calculus used by C. S. Peirce in his ``System Beta.''
This was given a categorical interpretation in terms of hyperdoctrines
(indexed monoidal posets such as Examples~\ref{eg:factsys}
and~\ref{eg:hyperdoctrine}) by~\cite{bt:peirce}.

We begin with the usual string diagram calculus for morphisms in the
base category \bS, drawn proceeding down the page, with the morphisms of
\bS\ contained in inverted triangles, as shown in
Figure~\ref{fig:cartmon}.  Since the diagonal and projection maps
$\Delta_A\colon A\to A\times A$ and $\pi_A\colon A \to \star$ play such
an essential role, to reduce clutter we represent them by empty
triangles, as in Figures~\ref{fig:diag} and~\ref{fig:proj}.

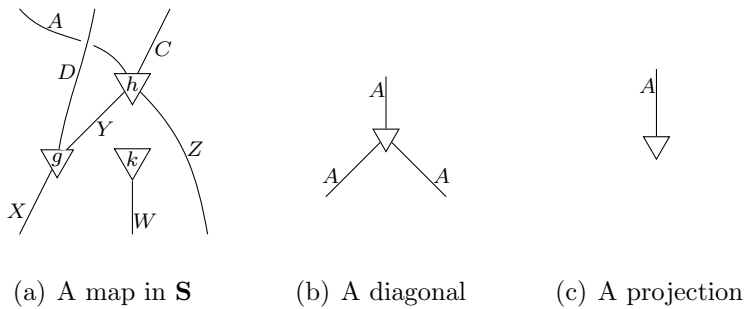
\begin{figure}
  \centering
  \subfigure[A map in \bS]{\label{fig:cartmon}
  \begin{tikzpicture}
    \node[pb] (g) at (1,1) {$g$};
    \node[pb] (f) at (2,2) {$h$};
    \draw (0.5,0) -- node[ed,near start] {$X$} (g)
    -- node[ed,swap] {$Y$} (f)
    to[out=120,in=-50] node[ed,swap,near end] {$A$} (.5,3);
    \draw (f) to[out=-45,in=100] node[ed] {$Z$} (3,0);
    \draw (f) -- node[ed,swap] {$C$} (2.5,3);
    \draw[white,line width=5pt] (g) to[out=80,in=-100] (1.5,3);
    \draw (g) to[out=80,in=-100] node[ed] {$D$} (1.5,3);
    \node[pb] (k) at (2,1) {$k$};
    \draw (k) -- node[ed,near end] {$W$} (2,0);
  \end{tikzpicture}}
\qquad
  \subfigure[A diagonal]{\label{fig:diag}
    \quad\raisebox{.5cm}{\begin{tikzpicture}[scale=.8]
    \node[pbe] (delta) at (1,1) {};
    \draw (delta) -- node[ed,swap,near end] {$A$} (0,0);
    \draw (delta) -- node[ed,near end] {$A$} (2,0);
    \draw (delta) -- node[ed,near end] {$A$} (1,2);
  \end{tikzpicture}}\quad}
\qquad
  \subfigure[A projection]{\label{fig:proj}
    \hspace{1cm}\raisebox{1cm}{\begin{tikzpicture}
      \node[pbe] (pi) at (1,0) {};
      \draw (pi) -- node [ed,near end] {$A$} ++(0,1);
    \end{tikzpicture}}\hspace{1cm}}
  \caption{String diagrams in a cartesian monoidal base category}
\end{figure}

To this string diagram calculus we now add a new type of vertex, which
we draw as a square box.  Such a vertex can only have strings coming in
the top, never out the bottom, and if the strings entering its top are
labeled by objects $A$, $B$, $C$ of \bS, then the box vertex must be
labeled by an object of the fiber category $\sC^{A\times B\times C}$.
Finally, we require that our diagrams have no strings coming out the
bottom either; all strings must end at a vertex of one type or the
other.  Thus we arrive at string diagrams such as in
Figure~\ref{fig:indmon-dgm}.

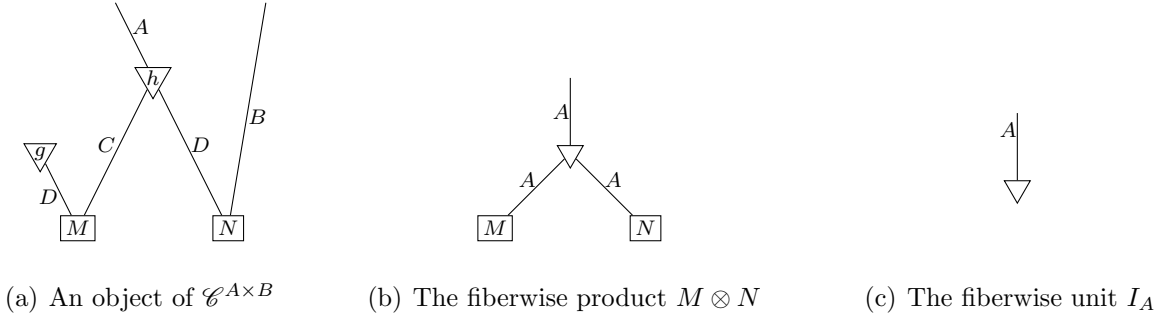
\begin{figure}
  \centering
  \subfigure[\mbox{An object of $\sC^{A\times B}$}]{\label{fig:indmon-dgm}
    \hspace{.3cm}\begin{tikzpicture}
      \node[fiber] (M) at (0,0) {$M$};
      \node[fiber] (N) at (2,0) {$N$};
      \node[pb] (g) at (-.5,1) {$g$};
      \node[pb] (h) at (1,2) {$h$};
      \draw (M) -- node[ed] {$D$} (g);
      \draw (M) -- node[ed] {$C$} (h) -- node[ed,swap] {$A$} ++(-.5,1);
      \draw (N) -- node[ed,swap] {$D$} (h);
      \draw (N) -- node[ed,swap] {$B$} ++(.5,3);
    \end{tikzpicture}\hspace{.3cm}}
  \qquad
  \subfigure[\mbox{The fiberwise product} \mbox{$M\otimes N$}]{\label{fig:intprod}
    \hspace{1.4cm}\begin{tikzpicture}
      \node[fiber] (M) at (0,0) {$M$};
      \node[fiber] (N) at (2,0) {$N$};
      \node[pbe] (delta) at (1,1) {};
      \draw (M) -- node[ed] {$A$} (delta) -- node[ed] {$A$} ++(0,1);
      \draw (N) -- node[ed,swap] {$A$} (delta);
    \end{tikzpicture}\hspace{1.4cm}}
  \qquad
  \subfigure[\mbox{The fiberwise unit $I_A$}]{\label{fig:intunit}
    \hspace{2cm}\raisebox{.5cm}{\begin{tikzpicture}
      \node[pbe] (pi) at (1,0) {};
      \draw (pi) -- node [ed,near end] {$A$} ++(0,1);
    \end{tikzpicture}}\hspace{2cm}}
  \caption{String diagrams in an indexed monoidal category}
  \label{fig:indmon}
\end{figure}

One way to define the \emph{value} of such a diagram is as follows:
first we take the external product of all the fiber objects appearing
(in box nodes), then we apply the reindexing functor $f^*$, where $f$ is
the composite of the \bS-portion of the diagram.  Thus, according to
this scheme, the diagram in Figure~\ref{fig:indmon-dgm} would have the
value
\[(g\times h\times \id_B)^*(M\boxtimes N).
\]
However, there are other natural ways to ``compose up'' the same
diagram, which for Figure~\ref{fig:indmon-dgm} give results such as
\begin{gather*}
  (h\times \id)^* (g\times \id)^*(M\boxtimes N) \mathrlap{\qquad\text{and}}\\
  (h\times \id)^*\Big(\big((g\times\id)^*M\big) \boxtimes N\Big).
\end{gather*}
The point of the string diagram notation is that all of these are
canonically isomorphic (using the coherence isomorphisms for the
monoidal structures and reindexing functors).  A proper proof of
validity for these string diagrams would make this precise, but we do
not have space to give such a proof here.  Thus, properly speaking our
string diagrams are only an informal guide to the necessary
calculations.

As a useful example, Figures~\ref{fig:intprod} and~\ref{fig:intunit}
show string diagrams for the expressions~\eqref{eq:intprod}
and~\eqref{eq:intunit}, giving the fiberwise monoidal product and unit
in terms of the external ones.

We now need a way to notate the adjoint functors $f_!$.  We do this by
introducing a third type of node, drawn with an upward-pointing
triangle, which is also labeled by a morphism of \bS\ but with the
codomain on top and the domain on the bottom.  For instance, the diagram
in Figure~\ref{fig:coprodeg} represents the object $(f_! \Delta^*
(M\boxtimes g_! N)) \boxtimes h_! P$.  The diagrams in
Figures~\ref{fig:bicatops} and \ref{fig:sigma} are examples we have used
earlier: Figure~\ref{fig:bicatops} is the bicategorical product, unit
and shadow from \autoref{thm:mf-bi}, while Figure~\ref{fig:sigma} is the
functor $\Sigma$ from \S\ref{sec:smc-traces}.
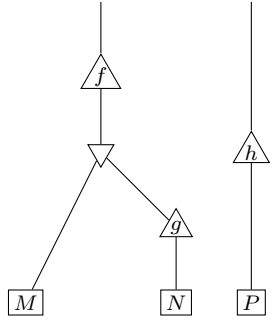
\begin{figure}
  \centering
  \begin{tikzpicture}
    \node[pfsm] (f) at (1,3) {$f$};
    \node[pbe] (d) at (1,2) {};
    \node[pf] (g) at (2,1) {$g$};
    \node[fiber] (m) at (0,0) {$M$};
    \node[fiber] (n) at (2,0) {$N$};
    \node[fiber] (p) at (3,0) {$P$};
    \node[pf] (h) at (3,2) {$h$};
    \draw (m) -- (d) -- (f) -- +(0,1);
    \draw (n) -- (g) -- (d);
    \draw (p) -- (h) -- (3,4);
  \end{tikzpicture}
  \caption{A string diagram involving indexed coproducts}
  \label{fig:coprodeg}
\end{figure}

\begin{figure}
  \centering
  \begin{tabular}{c@{\hspace{1cm}}c@{\hspace{1cm}}c}
    \subfigure[$M\odot N$]{\label{fig:bicatcomp}\begin{tikzpicture}
      \node[fiber] (M) at (1,0) {$M$};
      \node[fiber] (N) at (3,0) {$N$};
      \node[pbe] (del) at (2,1) {};
      \node[pfe] (pi) at (2,2) {};
      \draw (0,2) -- (M) -- (del) edge (pi) -- (N) -- ++(1,2);
    \end{tikzpicture}}
  &
  \subfigure[$U_A$]{\begin{tikzpicture}
    \node[pbe] (pi) at (0,0) {};
    \node[pfe] (delta) at (0,1) {};
    \draw (pi) -- (delta) edge (1,2) -- ++(-1,1);
  \end{tikzpicture}}
  &
  \subfigure[$\protect\sh{M}$]{\label{fig:bicatshadow}\quad\begin{tikzpicture}
      \node[fiber] (M) at (0,0) {$M$};
      \node[pbe] (delta) at (0,1) {};
      \node[pfe] (pi) at (0,2) {};
      \draw (M) to[out=45,in=-45] (delta);
      \draw (M) to[out=135,in=-135] (delta);
      \draw (delta) -- (pi);
  \end{tikzpicture}\quad}
  \end{tabular}
  \caption{The operations of the bicategory $\calBi CS$}
  \label{fig:bicatops}
\end{figure}

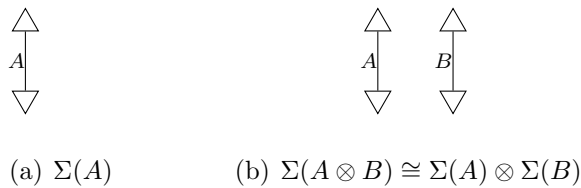
\begin{figure}
  \centering
  \subfigure[$\Sigma(A)$]{\hspace{1cm}\begin{tikzpicture}
    \node[pfe] (d1) at (0,1) {};
    \node[pbe] (d0) at (0,0) {};
    \draw (d0) -- node[ed,auto] {$A$} (d1);
  \end{tikzpicture}\hspace{2cm}}
  \subfigure[$\Sigma(A\otimes B) \cong \Sigma(A) \otimes \Sigma(B)$]{\hspace{2cm}
    \begin{tikzpicture}
    \node[pfe] (d1a) at (0,1) {};
    \node[pbe] (d0a) at (0,0) {};
    \draw (d0a) -- node[ed,auto] {$A$} (d1a);
    \node[pfe] (d1b) at (1,1) {};
    \node[pbe] (d0b) at (1,0) {};
    \draw (d0b) -- node[ed,auto] {$B$} (d1b);
  \end{tikzpicture}\hspace{2cm}}
  \caption{The functor $\Sigma$}
  \label{fig:sigma}
\end{figure}

Using this notation, the Beck-Chevalley conditions corresponding to the
four pullback diagrams in Figure~\ref{fig:pullbacks} on
page~\pageref{fig:pullbacks} are shown in Figure~\ref{fig:beckchev} as
isomorphisms between two (fragments of) string diagrams.  In all cases,
the natural morphism goes in the direction shown, and the content of the
Beck-Chevalley condition is that this map is an isomorphism.  The
assumption that squares satisfying the Beck-Chevalley condition are
closed under taking cartesian products with another fixed object implies
that these morphisms are still invertible when they occur as fragments
of larger diagrams.  The transposes of Figures~\ref{fig:frobpb}
and~\ref{fig:slidpb} are represented by the top-to-bottom reflections of
Figures~\ref{fig:frobbc} and~\ref{fig:slidbc}.
\begin{figure}
  \centering
  \begin{tabular}{c@{\hspace{1cm}}c}
    \subfigure[Commutativity with reindexing]{\label{fig:reindbc}
      \hspace{1cm}\begin{tikzpicture}[scale=.8]
      \node[pfsm] (f) at (0,2) {$f$};
      \node[pb] (g) at (1,1) {$g$};
      \draw (0,0) -- (f) -- (0,3);
      \draw (1,0) -- (g) -- (1,3);
      \node at (2,1.5) {$\xto{\cong}$};
      \begin{scope}[xshift=3cm]
        \node[pfsm] (f) at (0,1) {$f$};
        \node[pb] (g) at (1,2) {$g$};
        \draw (0,0) -- (f) -- (0,3);
        \draw (1,0) -- (g) -- (1,3);
      \end{scope}
    \end{tikzpicture}\hspace{1cm}}
  &
  \subfigure[The Frobenius axiom]{\label{fig:frobbc}
    \hspace{.5cm}\begin{tikzpicture}[scale=.8]
      \node[pfe] (del2) at (0,2) {};
      \node[pbe] (del1) at (0,1) {};
      \draw (-.5,0) -- (del1) -- (del2) -- (-.5,3);
      \draw (.5,0) -- (del1);
      \draw (del2) -- (.5,3);
      \node at (1.5,1.5) {$\xto{\cong}$};
      \node[pfe] (del3) at (3,1) {};
      \node[pbe] (del4) at (3.5,2) {};
      \draw (3,0) -- (del3) -- (del4) -- (3.5,3);
      \draw (4,0) -- (4,1) -- (del4);
      \draw (2.5,3) -- (2.5,2) -- (del3);
    \end{tikzpicture}\hspace{.5cm}}
  \\
  \subfigure[Sliding and splitting]{\label{fig:slidbc}
    \begin{tikzpicture}[scale=.6]
      \node[pbsm] (f1) at (0,1) {$f$};
      \node[pfe] (del1) at (0,2) {};
      \node[pfsm] (f2) at (1,3) {$f$};
      \draw (0,0) -- (f1) -- (del1) -- (f2) -- ++(0,1);
      \draw (del1) -- ++(-1,2);
      \node at (2,2) {$\xto{\cong}$};
      \node[pfe] (del2) at (4,1.5) {};
      \node[pbsm] (f3) at (3,3) {$f$};
      \draw (4,0) -- (del2) -- (f3) -- ++(0,1);
      \draw (del2) -- ++(1,2.5);
    \end{tikzpicture}}
  &
  \subfigure[Monic diagonals]{\label{fig:mdbc}\qquad
    \begin{tikzpicture}[scale=.8]
      \draw (0,0) -- (0,3);
      \node at (1,1.5) {$\xto{\cong}$};
      \node[pfe] (del1) at (2,1) {};
      \node[pbe] (del2) at (2,2) {};
      \draw (2,0) -- (del1) (del2) -- ++(0,1);
      \draw (del1) to[bend right] (del2);
      \draw (del1) to[bend left] (del2);
    \end{tikzpicture}\qquad}
  \end{tabular}
  \caption{Beck-Chevalley conditions}
  \label{fig:beckchev}
\end{figure}
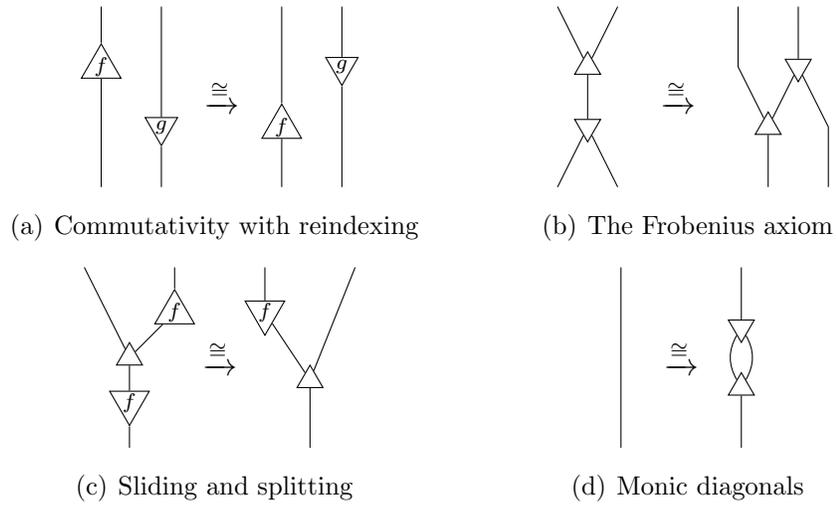

The isomorphism in Figure~\ref{fig:reindbc} is essential to
our ability to make deformation-invariant sense of string diagrams
involving $f_!$ nodes.  For this we also require that $\ten$ preserves
indexed coproducts, in order that the diagram in
Figure~\ref{fig:tenprescoprod} have an unambiguous meaning.

\begin{figure}
  \centering
  \begin{tikzpicture}
    \node[pfsm] (f) at (0,1) {$f$};
    \node[pf] (g) at (1,1) {$g$};
    \node[fiber] (M) at (0,0) {$M$};
    \node[fiber] (N) at (1,0) {$N$};
    \draw (M) -- (f) -- +(0,1);
    \draw (N) -- (g) -- +(0,1);
  \end{tikzpicture}
  \caption{$(f\times g)_!(M\boxtimes N) \cong f_! M \boxtimes g_! N$}
  \label{fig:tenprescoprod}
\end{figure}
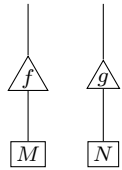

As examples of reasoning using these diagrams, the associativity and
unit isomorphisms of $\calBi CS$ are displayed graphically in
Figure~\ref{fig:bicat-constr}, while the shadow isomorphism $\sh{M\odot
  N}\cong \sh{N\odot M}$ is shown in Figure~\ref{fig:shadow}.  In
Figures~\ref{fig:bicatassoc} and~\ref{fig:shadow}, there are simple
deformations connecting the two sides (using the symmetry of \bS\ in the
case of the shadow), which involve implicit applications of the
``commutativity with reindexing'' Beck-Chevalley condition.  In
Figure~\ref{fig:bicatunit} we also need to use the ``Frobenius''
Beck-Chevalley condition.  The coherence of these isomorphisms would
follow from the ``general validity'' theorem for string diagrams which
we have omitted.

\begin{figure}
  \centering
  \subfigure[Associativity]{\label{fig:bicatassoc}
    \begin{tikzpicture}[scale=.5]
      \node[fiber] (m) at (1,0) {$M$};
      \node[fiber] (n) at (3,0) {$N$};
      \node[fiber] (p) at (5,0) {$P$};
      \node[pbe] (del1) at (2,1) {};
      \node[pbe] (del2) at (4,3) {};
      \node[pfe] (pi1) at (2,2) {};
      \node[pfe] (pi2) at (4,4) {};
      \draw (m) -- ++(-1,5);
      \draw (m) -- (del1) -- (n) -- (del2) -- (p) -- ++(1,5);
      \draw (del1) -- (pi1);
      \draw (del2) -- (pi2);
    \end{tikzpicture}
    \quad\raisebox{1.25cm}{$\cong$}\quad
    \begin{tikzpicture}[scale=.5]
      \node[fiber] (m) at (1,0) {$M$};
      \node[fiber] (n) at (3,0) {$N$};
      \node[fiber] (p) at (5,0) {$P$};
      \node[pbe] (del1) at (2,2) {};
      \node[pbe] (del2) at (4,2) {};
      \node[pfe] (pi1) at (2,3) {};
      \node[pfe] (pi2) at (4,3) {};
      \draw (m) -- ++(-1,5);
      \draw (m) -- (del1) -- (n) -- (del2) -- (p) -- ++(1,5);
      \draw (del1) -- (pi1);
      \draw (del2) -- (pi2);
    \end{tikzpicture}
    \quad\raisebox{1.25cm}{$\cong$}\quad
    \begin{tikzpicture}[scale=.5]
      \node[fiber] (m) at (1,0) {$M$};
      \node[fiber] (n) at (3,0) {$N$};
      \node[fiber] (p) at (5,0) {$P$};
      \node[pbe] (del1) at (2,3) {};
      \node[pbe] (del2) at (4,1) {};
      \node[pfe] (pi1) at (2,4) {};
      \node[pfe] (pi2) at (4,2) {};
      \draw (m) -- ++(-1,5);
      \draw (m) -- (del1) -- (n) -- (del2) -- (p) -- ++(1,5);
      \draw (del1) -- (pi1);
      \draw (del2) -- (pi2);
    \end{tikzpicture}}
  \\
  \subfigure[Unitality]{\label{fig:bicatunit}
    \begin{tikzpicture}[scale=.5]
      \node[fiber] (m) at (1,0) {$M$};
      \node[pbe] (del1) at (2,3) {};
      \node[pfe] (del2) at (3,2) {};
      \node[pfe] (pi1) at (2,4) {};
      \node[pbe] (pi2) at (3,1) {};
      \draw (m) -- ++(-1,5);
      \draw (m) -- (del1) -- (del2) -- (pi2);
      \draw (del2) -- ++(1,3);
      \draw (del1) -- (pi1);
    \end{tikzpicture}
    \quad\raisebox{1.25cm}{$\cong$}\quad
    \begin{tikzpicture}[scale=.5]
      \node[fiber] (m) at (1,0) {$M$};
      \node[pbe] (del1) at (3,2) {};
      \node[pfe] (del2) at (3,3) {};
      \node[pfe] (pi1) at (2,4) {};
      \node[pbe] (pi2) at (4,1) {};
      \draw (m) -- ++(-1,5);
      \draw (m) -- (del1) -- (del2) -- (pi1);
      \draw (del2) -- ++(1,2);
      \draw (del1) -- (pi2);
    \end{tikzpicture}
    \quad\raisebox{1.25cm}{$\cong$}\quad
    \begin{tikzpicture}[scale=.5]
      \node[fiber] (m) at (1,0) {$M$};
      \draw (m) -- ++(-1,5);
      \draw (m) -- ++(1,5);
    \end{tikzpicture}}
  \caption{The constraints of the bicategory $\calBi CS$}
  \label{fig:bicat-constr}
\end{figure}

\begin{figure}
  \centering
  \begin{tikzpicture}[scale=.7]
    \node[fiber] (m) at (1,0) {$M$};
    \node[fiber] (n) at (3,0) {$N$};
    \node[pbe] (del1) at (2,1) {};
    \node[pfe] (pi1) at (2,2) {};
    \node[pbe] (del2) at (2,3) {};
    \node[pfe] (pi2) at (2,4) {};
    \draw (m) -- node[ed] {$A$} (del1) -- node[ed] {$A$} (n);
    \draw (del1) -- (pi1);
    \draw (m) to[out=120,in=-150] node[ed] {$B$} (del2);
    \draw (n) to[out=60,in=-30] node[ed,swap] {$B$} (del2);
    \draw (del2) -- (pi2);
  \end{tikzpicture}
  \quad\raisebox{1.25cm}{$\cong$}\quad
  \begin{tikzpicture}[scale=.7]
    \node[fiber] (m) at (1,0) {$N$};
    \node[fiber] (n) at (3,0) {$M$};
    \node[pbe] (del1) at (2,1) {};
    \node[pfe] (pi1) at (2,2) {};
    \node[pbe] (del2) at (2,3) {};
    \node[pfe] (pi2) at (2,4) {};
    \draw (m) -- node[ed] {$B$} (del1) -- node[ed] {$B$} (n);
    \draw (del1) -- (pi1);
    \draw (m) to[out=120,in=-150] node[ed] {$A$} (del2);
    \draw (n) to[out=60,in=-30] node[ed,swap] {$A$} (del2);
    \draw (del2) -- (pi2);
  \end{tikzpicture}
  \caption{The shadow constraint of $\calBi CS$}
  \label{fig:shadow}
\end{figure}

Another important example is provided by base change objects.
Figure~\ref{fig:bco} shows the string diagram definition of a base
change object, while Figure~\ref{fig:bco-action} gives a graphical proof
of \autoref{thm:bco-action}, the interaction between base change object
and the bicategory composition.
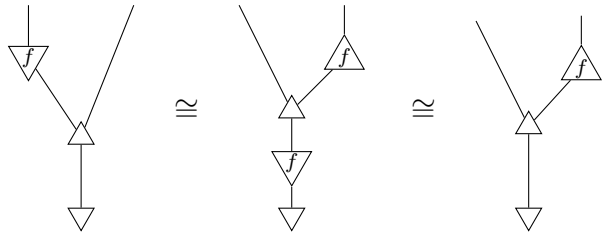
\begin{figure}
  \centering
  \begin{tikzpicture}[scale=.7]
    \node[pbe] (pi2) at (1,0) {};
    \node[pfe] (del2) at (1,1.5) {};
    \node[pbsm] (f3) at (0,3) {$f$};
    \draw (pi2) -- (del2) -- (f3) -- ++(0,1);
    \draw (del2) -- ++(1,2.5);
    \node at (3,2) {$\cong$};
    \begin{scope}[xshift=5cm]
    \node[pbe] (pi1) at (0,0) {};
    \node[pbsm] (f1) at (0,1) {$f$};
    \node[pfe] (del1) at (0,2) {};
    \node[pfsm] (f2) at (1,3) {$f$};
    \draw (pi1) -- (f1) -- (del1) -- (f2) -- ++(0,1);
    \draw (del1) -- ++(-1,2);
    \node at (2.5,2) {$\cong$};
  \end{scope}
  \begin{scope}[xshift=9.5cm]
    \node[pbe] (pi1) at (0,0) {};
    \node[pfe] (del1) at (0,1.7) {};
    \node[pfsm] (f2) at (1,2.8) {$f$};
    \draw (pi1) -- (del1) -- (f2) -- ++(0,1);
    \draw (del1) -- ++(-1,2);
  \end{scope}
  \end{tikzpicture}
  \caption{The base change object $_fB$}
  \label{fig:bco}
\end{figure}

\begin{figure}
  \centering
  \begin{tikzpicture}[scale=.8]
    \node[pbe] (pi2) at (1,.5) {};
    \node[pfe] (del2) at (1,1.5) {};
    \node[pbsm] (f3) at (0,2.5) {$f$};
    \draw (pi2) -- (del2) -- (f3) -- ++(0,2);
    \node[pbe] (del1) at (2,2.5) {};
    \draw (del2) -- (del1);
    \node[pfe] (pi1) at (2,3.5) {};
    \draw (del1) -- (pi1);
    \node[fiber] (m) at (3,.5) {$M$};
    \draw (del1) -- (m);
  \end{tikzpicture}
  \qquad\raisebox{1.5cm}{$\cong$}\qquad
  \begin{tikzpicture}[scale=.8]
    \node[pbe] (pi2) at (0,.5) {};
    \node[pbe] (del2) at (1,1.5) {};
    \node[pbsm] (f3) at (0,3.5) {$f$};
    \draw (pi2) -- (del2);
    \node[pfe] (del1) at (1,2.5) {};
    \draw (del2) -- (del1);
    \draw (del1) -- (f3) -- ++(0,1);
    \node[pfe] (pi1) at (2,3.5) {};
    \draw (del1) -- (pi1);
    \node[fiber] (m) at (2,.5) {$M$};
    \draw (del2) -- (m);
  \end{tikzpicture}
  \qquad\raisebox{1.5cm}{$\cong$}\qquad
  \begin{tikzpicture}[scale=.8]
    \node[fiber] (m) at (0,.5) {$M$};
    \node[pbsm] (f) at (0,2.5) {$f$};
    \draw (m) -- (f) -- (0,4.5);
  \end{tikzpicture}
  \caption{Proof of \autoref{thm:bco-action}}
  \label{fig:bco-action}
\end{figure}

\section{String diagrams for morphisms}
\label{sec:colored-strings}

There is still something important missing from our string diagrams:
morphisms in the fiber categories $\sC^A$.  (Here we go
beyond~\cite{bt:peirce}, since in their posetal setting there were no
morphisms to keep track of, only inequalities between objects.)  Since
we are representing the objects of $\sC^A$ by two-dimensional diagrams,
we need three dimensions for morphisms between them.

We begin by rotating our string diagrams from the previous section to
become horizontal slices, so that we can connect them with vertically
drawn strings.  For example, the string diagrams in
Figures~\ref{fig:string_diagram_switch1} and
\ref{fig:string_diagram_switch2} represent the same object.
Figure~\ref{fig:string_diagram_switch1} gives a representation as in the
previous section, while Figure~\ref{fig:string_diagram_switch2} is the
version we will use from now on.
\begin{figure}\label{fig:string_diagram_switch}
  \centering
	\subfigure[]{
  \begin{tikzpicture}[scale=.8]\label{fig:string_diagram_switch1}
    \node[pbe] (pi2) at (1,.5) {};
    \node[pfe] (del2) at (1,1.5) {};
    \node[pbsm] (f3) at (0,2.5) {$f$};
    \draw (pi2) -- (del2) -- (f3) -- ++(0,2);
    \node[pbe] (del1) at (2,2.5) {};
    \draw (del2) -- (del1);
    \node[pfe] (pi1) at (2,3.5) {};
    \draw (del1) -- (pi1);
    \node[fiber] (m) at (3,.5) {$M$};
    \draw (del1) -- (m);
  \end{tikzpicture}}
\hspace{1cm}
\subfigure[]{\label{fig:string_diagram_switch2}
  \begin{tikzpicture}[scale=.8]
   \node[pbeflat] (pi2) at (-.5,1.5) {};
    \node[pfeflat] (del2) at (-1.5,1.5) {};
    \node[pbflat] (f3) at (-3,1) {$f$};
    \node (end) at (-4.5,1){};
    \draw (pi2) -- (del2) -- (f3) -- (end);
    \node[pbeflat] (del1) at (-2.5,2) {};
    \draw (del2) -- (del1);
    \node[pfeflat] (pi1) at (-3.5,2) {};
    \draw (del1) -- (pi1);
    \node[fiber] (m) at (-0,2.5) {$M$};
    \draw (del1) -- (m);
  \end{tikzpicture}}
\caption{Transition to ``slice diagrams"}
\end{figure}
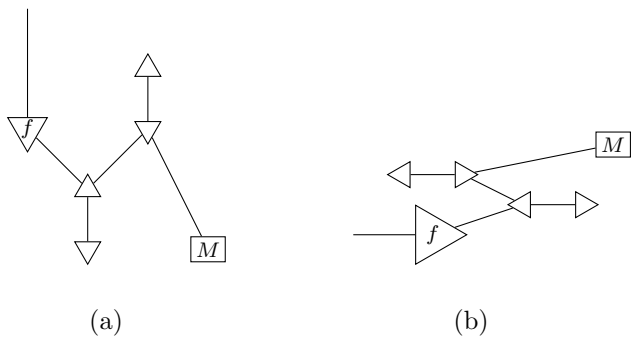
In these ``slice'' diagrams, the triangles for $f_!$ functors point to
the left, in contrast with those for $f^*$ functors which point to the
right.

The morphisms will be represented by composites of ``basic'' morphisms,
each of which is drawn as a node lying in between the corresponding
slices, connected by strings to the nodes above and below which are its
``direct'' input and output.  For other nodes which ``do not
participate'' in the morphism, and thus appear identically above and
below, we connect their incarnations in the upper and lower slices by a
direct string.  Just as with ordinary string diagrams, this allows us to
see visually when two basic morphisms ``do not interact'' at all, and
thus can be ``slid past each other'' by using a naturality property.

As an example, suppose we have morphisms $A\xto{f} B \xto{g} C$ in \bS,
along with a morphism $\phi\colon M\to N$ in $\sC^D$.  Then the
following square commutes, by naturality of the pseudofunctor
isomorphism $f^* g^* \cong (gf)^*$:
\begin{equation}\label{eq:egslice}
  \xymatrix{ f^* g^* M \ar[r]^{\cong} \ar[d]_{f^*g^* \phi} & (gf)^* M \ar[d]^{(gf)^*\phi} \\
    f^* g^* N \ar[r]_{\cong} & (gf)^* N. }
\end{equation}

\begin{figure}
  \centering
  \begin{tikzpicture}[strings]
    \node[fiber] (m0) at (0,0,0) {$M$};
    \node[pbflat] (g10) at (1,0,0) {$g$};
    \node[pbsmflat] (f20) at (2,0,0) {$f$};
    \draw (m0) -- (g10) -- (f20) -- (3,0,0);

    \node[fiber] (n1) at (0,1,0) {$N$};
    \node[pbflat] (g11) at (1,1,0) {$g$};
    \node[pbsmflat] (f21) at (2,1,0) {$f$};
    \draw (n1) -- (g11) -- (f21) -- (3,1,0);

    \draw[inner] (g10) -- (g11);
    \draw[inner] (f20) -- (f21);
    \draw[outer] (m0) -- node[outervert] {$\phi$} (n1);

    \node[fiber] (n2) at (0,2,0) {$N$};
    \node[pbsmflat] (gf12) at (1.5,2,0) {$gf$};
    \draw (n2) -- (gf12) -- (3,2,0);

    \draw[outer] (n1) -- (n2);
    \node[inneriso] (iso) at (1.5,1.5,0) {};
    \draw[inner] (g11) -- (iso);
    \draw[inner] (f21) -- (iso);
    \draw[inner] (iso) -- (gf12);
  \end{tikzpicture}
  \quad\raisebox{2cm}{$=$}\quad
  \begin{tikzpicture}[strings]
    \node[fiber] (m0) at (0,0,0) {$M$};
    \node[pbflat] (g10) at (1,0,0) {$g$};
    \node[pbsmflat] (f20) at (2,0,0) {$f$};
    \draw (m0) -- (g10) -- (f20) -- (3,0,0);

    \node[fiber] (m1) at (0,1,0) {$M$};
    \node[pbsmflat] (gf11) at (1.5,1,0) {$gf$};
    \draw (n1) -- (gf11) -- (3,1,0);

    \draw[outer] (m0) -- (m1);
    \node[inneriso] (iso) at (1.5,.5,0) {};
    \draw[inner] (g10) -- (iso);
    \draw[inner] (f20) -- (iso);
    \draw[inner] (iso) -- (gf11);

    \node[fiber] (n2) at (0,2,0) {$N$};
    \node[pbsmflat] (gf12) at (1.5,2,0) {$gf$};
    \draw (n2) -- (gf12) -- (3,2,0);

    \draw[inner] (gf11) -- (gf12);
    \draw[outer] (m1) -- node[outervert] {$\phi$} (n2);
  \end{tikzpicture}
  \caption{A slice diagram}
  \label{fig:egslice}
\end{figure}
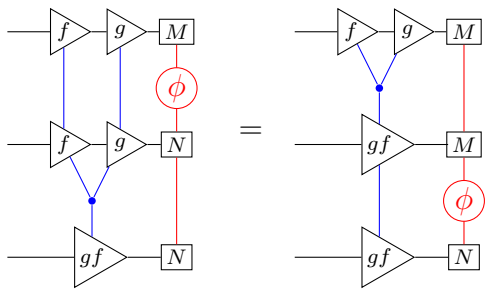

The equality of diagrams representing this commutative square is shown
in Figure~\ref{fig:egslice}.  Hopefully the reader can see the advantage
of this over~\eqref{eq:egslice}.  To distinguish the strings \emph{in}
the slices from the strings that represent slice transitions, we draw
the former with
\begin{bwcompromise}solid black\end{bwcompromise}
\begin{bwonly}solid\end{bwonly}
\begin{coloronly}black\end{coloronly}
lines and the latter with
\begin{bwcompromise}dashed or dotted colored\end{bwcompromise}
\begin{bwonly}dashed or dotted\end{bwonly}
\begin{coloronly}colored\end{coloronly}
lines.  For additional clarity, we further distinguish two different
types of transition strings: those which connect to triangle vertices
(which represent morphisms of \bS) and those which connect to box
vertices (which represent objects in fiber categories).  We draw the
former with
\begin{bwcompromise}dotted blue\end{bwcompromise}
\begin{bwonly}dotted\end{bwonly}
\begin{coloronly}blue\end{coloronly}
lines and the latter with
\begin{bwcompromise}dashed red\end{bwcompromise}
\begin{bwonly}dashed\end{bwonly}
\begin{coloronly}red\end{coloronly}
lines.

The units and counits of the adjunctions $f_!\dashv f^*$ will frequently
occur as basic morphisms in slice diagrams; we notate these with small
\innertext triangles.  As usual in string diagram notations, the
triangle identities for these adjunctions look like simple topological
deformations; some examples are shown in Figure~\ref{fig:surface-adj}.

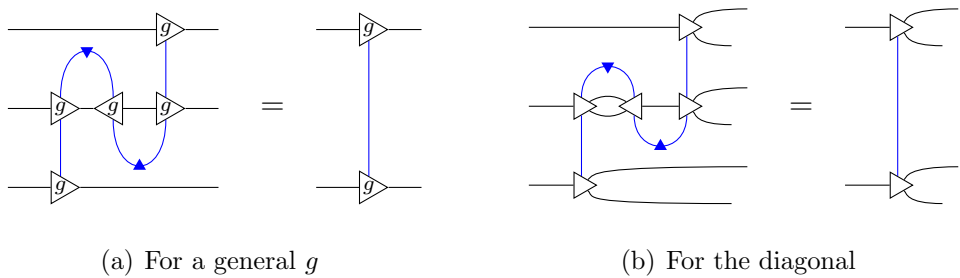
\begin{figure}
  \centering
  \subfigure[For a general $g$]{
    \begin{tikzpicture}[backwards, scale=.7]
      \node[pbsmflat] (0f3) at (1,0,0) {$g$};
      \draw (0,0,0) -- (0f3) -- (4,0,0);
      \node[pbsmflat] (1f1) at (1,1,0) {$g$};
      \node[pfsmflat] (1f2) at (2,1,0) {$g$};
      \node[pbsmflat] (1f3) at (3,1,0) {$g$};
      \draw (0,1,0) -- (1f1) -- (1f2) -- (1f3) -- (4,1,0);
      \node[pbsmflat] (2f1) at (3,2,0) {$g$};
      \draw (0,2,0) -- (2f1) -- (4,2,0);
      \draw[inner] (0f3) -- (1f1) to[out=-90,in=-90,looseness=3]
      node[innercounit] {} (1f2);
      \draw[inner] (1f2) to[out=90,in=90,looseness=3]
      node[innerunit] {} (1f3);
      \draw[inner] (1f3) -- (2f1);
    \end{tikzpicture}
    \quad\raisebox{1.2cm}{$=$}\quad
    \begin{tikzpicture}[backwards,scale=.7]
      \node[pbsmflat] (0f1) at (1,0,0) {$g$};
      \draw (0,0,0) -- (0f1) -- (2,0,0);
      \node[pbsmflat] (2f1) at (1,2,0) {$g$};
      \draw (0,2,0) -- (2f1) -- (2,2,0);
      \draw[inner] (0f1) -- (2f1);
    \end{tikzpicture}}
  \hspace{1cm}
  \subfigure[For the diagonal]{\label{fig:surface-adj-diag}
    \begin{tikzpicture}[backwards,scale=.7]
      \node[pbeflat] (0del1) at (1,0,0) {};
      \draw (0,0,.5) .. controls +(0:.7) and +(150:.2) .. (0del1);
      \draw (0,0,-.5) .. controls +(0:1) and +(210:.2) .. (0del1);
      \draw (0del1) -- (4,0,0);
      \node[pbeflat] (1del1) at (1,1,0) {};
      \draw (0,1,.5) .. controls +(0:.7) and +(150:.2) .. (1del1);
      \draw (0,1,-.5) .. controls +(0:1) and +(210:.2) .. (1del1);
      \node[pfeflat] (1del2) at (2,1,0) {};
      \node[pbeflat] (1del3) at (3,1,0) {};
      \draw (1del1) -- (1del2);
      \draw (1del2) to[bend right] (1del3);
      \draw (1del2) to[bend left] (1del3);
      \draw (1del3) -- (4,1,0);
      \node[pbeflat] (2del3) at (3,2,0) {};
      \draw (0,2,.5) .. controls +(0:3) and +(150:.3) .. (2del3);
      \draw (0,2,-.5) .. controls +(0:3) and +(210:.3) .. (2del3);
      \draw (2del3) -- (4,2,0);
        \draw[inner] (0del1) -- (1del1);
        \draw[inner]  (1del1) to[out=-90,in=-90,looseness=2]
        node[innercounit] {} (1del2);
        \draw[inner]  (1del2) to[out=90,in=90,looseness=2]
        node[innerunit] {} (1del3);
        \draw[inner]  (1del3) -- (2del3);
    \end{tikzpicture}
    \quad\raisebox{1.2cm}{$=$}\quad
    \begin{tikzpicture}[backwards,scale=.7]
      \node[pbeflat] (0del1) at (1,0,0) {};
      \draw (0,0,.5) .. controls +(0:.7) and +(150:.2) .. (0del1);
      \draw (0,0,-.5) .. controls +(0:1) and +(210:.2) .. (0del1);
      \draw (0del1) -- (2,0,0);
      \node[pbeflat] (2del1) at (1,2,0) {};
      \draw (0,2,.5) .. controls +(0:.7) and +(150:.2) .. (2del1);
      \draw (0,2,-.5) .. controls +(0:1) and +(210:.2) .. (2del1);
      \draw (2del1) -- (2,2,0);
      \draw[inner] (0del1) -- (2del1);
    \end{tikzpicture}}
  \caption{Triangle identities for the adjunctions $g_! \dashv g^*$}
  \label{fig:surface-adj}
\end{figure}

In Figure~\ref{fig:dual-fiber} we further illustrate this notation by
drawing the evaluation, coevaluation, and one triangle identity for a
dual pair in a fiber category.  (Recall that unlabeled triangles
represent diagonal and projection morphisms.)  We use small \innertext
dots to represent the pseudofunctoriality isomorphisms of the indexed
category; in Figure~\ref{fig:dual-fiber-triangle} the \innertext dots
represent the isomorphisms
\begin{align*}
  \Delta^*(\pi\times\id)^* &\cong \Id\\
  \Delta^*(\Delta\times \id)^* &\cong\Delta^*(\id\times \Delta)^*\\
  \Delta^*(\id\times\pi)^* &\cong \Id.
\end{align*}
Observe that the \outertext strings connecting the box nodes in
Figure~\ref{fig:dual-fiber-triangle} display the same shape as the usual
representation of a triangle identity in a monoidal category.
\begin{figure}
  \centering
  \subfigure[The evaluation]{\label{fig:dual-fiber-eval}
    \begin{tikzpicture}[x={(-1,0)},scale=.6]
      \node[pbeflat] (1pi) at (0,-1) {};
      \draw (1pi) -- ++(4,0);
      \node[fiber] (2m) at (.5,2) {$M$};
      \node[fiber] (2md) at (-1,3) {$\rdual{M}$};
      \node[pbeflat] (2del) at (2,2.5) {};
      \draw (2m) -- (2del);
      \draw (2md) -- (2del);
      \draw (2del) -- ++(2,0);
        \node[outervert] (ep) at (0,.3) {$\varepsilon$};
        \draw[inner] (1pi) -- (ep);
        \draw[outer] (ep) to[out=45,in=-90] (2md);
        \draw[outer] (ep) to[out=120,in=-90] (2m);
        \draw[inner] (ep) to[out=150,in=-90] (2del);
    \end{tikzpicture}}
  \hspace{2cm}
  \subfigure[The coevaluation]{\label{fig:dual-fiber-coeval}
    \begin{tikzpicture}[x={(-1,0)},y={(0,-1)},scale=.6]
      \node[pbeflat] (1pi) at (0,-1) {};
      \draw (1pi) -- ++(4,0);
      \node[fiber] (2m) at (.5,3) {$M$};
      \node[fiber] (2md) at (-1,2) {$\rdual{M}$};
      \node[pbeflat] (2del) at (2,2.5) {};
      \draw (2m) -- (2del);
      \draw (2md) -- (2del);
      \draw (2del) -- ++(2,0);
        \node[outervert] (eta) at (0,.3) {$\eta$};
        \draw[inner] (1pi) -- (eta);
        \draw[outer] (eta) to[out=-45,in=90] (2md);
        \draw[outer] (eta) to[out=-120,in=90] (2m);
        \draw[inner] (eta) to[out=-150,in=90] (2del);
    \end{tikzpicture}}
  \\\vspace{1cm}
  \subfigure[A triangle identity]{\label{fig:dual-fiber-triangle}
    \begin{tikzpicture}[strings,y={(0,1.3))}]
      \def\eqx{-7}
      \def\yy{0}
      \node[fiber] (m\yy) at (-2,\yy,-1) {$M$};
      \draw (m\yy) -- (3,\yy,-1);
      \node (eq\yy) at (\eqx,\yy,-1) {$M$};

      \def\yy{1}
      \node[fiber] (m\yy) at (-2,\yy,-1) {$M$};
      \node[pbeflat] (d1\yy) at (1,\yy,0) {};
      \node[pbeflat] (d2\yy) at (2,\yy,-1) {};
      \draw (m\yy) -- (d2\yy) -- (3,\yy,-1);
      \draw (d1\yy) -- (d2\yy);
      \node (eq\yy) at (\eqx,\yy,-1) {$\Delta^*(\pi^* U \boxtimes M)$};
      \draw[->] (eq0) -- node[ed,right=2pt] {$\cong$} (eq1);

      \def\yy{2}
      \node[fiber] (m\yy) at (-2,\yy,-1) {$M$};
      \node[fiber] (md\yy) at (-1,\yy,0) {$\rdual{M}$};
      \node[fiber] (m'\yy) at (0,\yy,1) {$M$};
      \node[pbeflat] (d1\yy) at (1,\yy,0) {};
      \node[pbeflat] (d2\yy) at (2,\yy,-1) {};
      \draw (m\yy) -- (d2\yy) -- (3,\yy,-1);
      \draw (d1\yy) -- (d2\yy);
      \draw (md\yy) -- (d1\yy);
      \draw (m'\yy) -- (d1\yy);
      \node (eq\yy) at (\eqx,\yy,0)
      {$\Delta^*(\Delta^*(M\boxtimes \rdual{M})\boxtimes M)$};
      \draw[->] (eq1) -- node[ed,right=2pt]
      {$\Delta^*(\eta\boxtimes\id)$} (eq2);

      \def\yy{3}
      \node[fiber] (m\yy) at (-2,\yy,-1) {$M$};
      \node[fiber] (md\yy) at (-1,\yy,0) {$\rdual{M}$};
      \node[fiber] (m'\yy) at (0,\yy,1) {$M$};
      \node[pbeflat] (d1\yy) at (1,\yy,-1) {};
      \node[pbeflat] (d2\yy) at (2,\yy,0) {};
      \draw (m\yy) -- (d1\yy) -- (d2\yy) -- (3,\yy,0);
      \draw (md\yy) -- (d1\yy);
      \draw (m'\yy) -- (d2\yy);
      \node (eq\yy) at (\eqx,\yy,0)
      {$\Delta^*(M\boxtimes \Delta^*(\rdual{M}\boxtimes M))$};
      \draw[->] (eq2) -- node[ed,right=2pt] {$\cong$} (eq3);

      \def\yy{4}
      \node[fiber] (m'\yy) at (0,\yy,1) {$M$};
      \node[pbeflat] (d1\yy) at (1,\yy,-1) {};
      \node[pbeflat] (d2\yy) at (2,\yy,0) {};
      \draw (d1\yy) -- (d2\yy) -- (3,\yy,0);
      \draw (m'\yy) -- (d2\yy);
      \node (eq\yy) at (\eqx,\yy,1) {$\Delta^*(M\boxtimes \pi^*U)$};
      \draw[->] (eq3) -- node[ed,right=2pt]
      {$\Delta^*(\id\boxtimes\,\varepsilon)$} (eq4);

      \def\yy{5}
      \node[fiber] (m'\yy) at (0,\yy,1) {$M$};
      \draw (m'\yy) -- (3,\yy,1);
      \node (eq\yy) at (\eqx,\yy,1) {$M$};
      \draw[->] (eq4) -- node[ed,right=2pt] {$\iso$} (eq5);

      \node[outervert] (eta) at (-.5,1.5,0) {$\eta$};
      \node[outervert] (ep) at (-1.5,3.5,0) {$\ep$};

      \begin{scope}[outer]
        \draw (m0) -- (m1) -- (m2) -- (m3);
        \draw (m'2) -- (m'3) -- (m'4) -- (m'5);
        \draw (md2) -- (md3);
        \draw (eta) to[out=-45,in=90] (md2);
        \draw (eta) to[out=-135,in=90] (m'2);
        \draw (ep) to[out=45,in=-90] (m3);
        \draw (ep) to[out=135,in=-90] (md3);
      \end{scope}
      \begin{scope}[inner]
        \draw (d11) to[out=90,in=90,looseness=3] node[inneriso] {} (d21);
        \draw (d21) -- (d22) -- (d13);
        \draw (d14) to[out=-90,in=-90,looseness=3] node[inneriso] {} (d24);
        \draw (d12) -- (d23) -- (d24);
        \node[inneriso] at (intersection of d22--d13 and d12--d23) {};
        \draw (d11) to[out=-90,in=165] (eta);
        \draw (eta) to[out=-165,in=90] (d12);
        \draw (d13) to[out=-90,in=165] (ep);
        \draw (ep) to[out=180,in=90] (d14);
      \end{scope}
      \node at (-3.5,2.5) {$=$};
      \node at (-11,2.5) {$=\; \id_M$};
    \end{tikzpicture}}
  \caption{A dual pair in a fiber category}
  \label{fig:dual-fiber}
\end{figure}

The Beck-Chevalley conditions corresponding to the four pullback
diagrams in Figure~\ref{fig:pullbacks}, and which were shown in
Figure~\ref{fig:beckchev} as isomorphisms between two (fragments of)
string diagrams, are displayed in Figure~\ref{fig:surface-bc} as
morphisms between slices.  The transposes of Figures~\ref{fig:frobpb}
and~\ref{fig:slidpb} are represented by the left-to-right reflections of
Figures~\ref{fig:frobbc-surf} and~\ref{fig:slidbc-surf}, respectively.
\begin{figure}
  \centering
  \begin{tabular}{c@{\hspace{1cm}}c}
  \subfigure[Commutativity with reindexing]{\label{fig:reindbc-surf}\qquad
    \begin{tikzpicture}[x={(-1,0)},y={(0,-1)}]
      \coordinate (1m) at (1,.5);
      \node[pfsmflat] (1f) at (3,.5) {$f$};
      \draw (1m) -- (1f) -- (4.5,.5);
      \coordinate (1n) at (0.5,0);
      \node[pbflat] (1g) at (2,0) {$g$};
      \draw (1n) -- (1g) -- (4,0);
      %
      \coordinate (2m) at (1,2.5);
      \node[pfsmflat] (2f) at (2,2.5) {$f$};
      \draw (2m) -- (2f) -- (4.5,2.5);
      \coordinate (2n) at (0.5,2);
      \node[pbflat] (2g) at (3,2) {$g$};
      \draw (2n) -- (2g) -- (4,2);
      %
        \draw[inner] (1g) -- (2g);
        \draw[line width=4pt,white] (1f) -- (2f);
        \draw[inner] (1f) -- (2f);
    \end{tikzpicture}\qquad}
  &
  \subfigure[The Frobenius axiom]{\label{fig:frobbc-surf}
    \hspace{.4cm}\begin{tikzpicture}[backwards]
      \node[pbeflat] (1del1) at (1,0,.5) {};
      \node[pfeflat] (1del2) at (2,0,.5) {};
      \draw (0,0,0) -- (1del1) -- (1del2) -- (3,0,0);
      \draw (0,0,1) -- (1del1) (1del2) -- (3,0,1);
      \node[pfeflat] (2del1) at (.8,1,1) {};
      \node[pbeflat] (2del2) at (2.2,1,.5) {};
      \draw (0,1,1) -- (2del1) -- (2del2) -- (3,1,.5);
      \draw (0,1,0) -- (1,1,0) -- (2del2);
      \draw (2del1) -- (2,1,1.5) -- (3,1,1.5);
      %
        \draw[inner] (1del1) -- (2del2);
        \draw[inner] (1del2) -- (2del1);
        \node[innerbc, inner] at (intersection of 1del1--2del2 and 1del2--2del1) {};
      \end{tikzpicture}\hspace{.4cm}}
    \\
    \subfigure[Sliding and splitting]{\label{fig:slidbc-surf}%
      \begin{tikzpicture}[z={(0,-.3)},x={(-1,0)},y={(0,-1.9)}]
        \node[pbsmflat] (f10) at (1,0,0) {$f$};
        \node[pfeflat] (del20) at (2,0,0) {};
        \node[pfsmflat] (f30) at (3,0,-1) {$f$};
        \draw (0,0,0) -- (f10) -- (del20) -- (f30) -- +(1,0,0);
        \draw (del20) -- ++(1,0,1) -- +(1,0,0);
        \node[pfeflat] (del21) at (2,1,0) {};
        \node[pfsmflat] (f31) at (3,1,1) {$f$};
        \draw (0,1,0) -- (del21) -- (f31) -- +(1,0,0);
        \draw (del21) -- ++(1,0,-1) -- +(1,0,0);
        \node[innerbc] (ss) at (2,.5,0) {};
        \draw[inner] (del20) -- (ss) -- (del21);
        \draw[inner] (f10) -- (ss);
        \draw[inner] (f30) -- (ss) -- (f31);
      \end{tikzpicture}}
    &
    \subfigure[Monic diagonals]{\label{fig:mdbc-surf}%
      \hspace{.4cm}\begin{tikzpicture}[z={(0,-.3)},x={(-1,0)},y={(0,-1.9)}]
        \draw (0,0,0) -- (3,0,0);
        \node[pfeflat] (del11) at (1,1,0) {};
        \node[pbeflat] (del21) at (2,1,0) {};
        \draw (0,1,0) -- (del11);
        \draw (del21) -- (3,1,0);
        \draw (del11) to[bend right] (del21);
        \draw (del11) to[bend left] (del21);
        \draw[inner] (del11) to[out=90,in=90,looseness=3]
        node [innerunit] {} (del21);
      \end{tikzpicture}\hspace{.4cm}}
    \end{tabular}
  \caption{Surfaces for Beck-Chevalley conditions}
  \label{fig:surface-bc}
\end{figure}
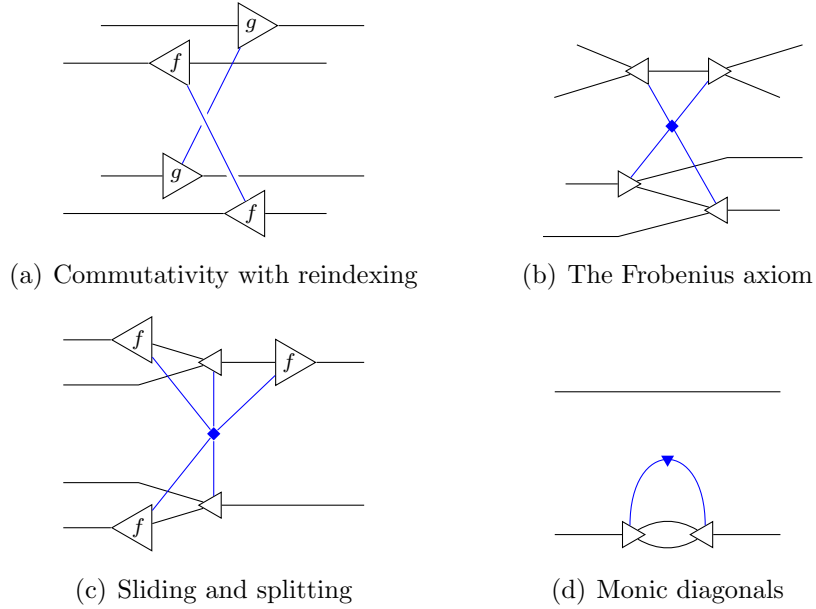

In Figure~\ref{fig:reindbc-surf} we draw the Beck-Chevalley morphism
simply as one \innertext string crossing in front of another, since it
represents merely a deformation of the slice diagrams. In
Figure~\ref{fig:mdbc-surf} we notate the Beck-Chevalley morphism with a
triangle, since it can be identified with the unit of the adjunction
$\Delta_! \dashv \Delta^*$.  Finally, for the Beck-Chevalley morphisms
in Figures~\ref{fig:frobbc-surf} and~\ref{fig:slidbc-surf}, and their
inverses when these occur, we use a small \innertext diamond.  The
isomorphisms in Figures~\ref{fig:frobbc-surf}, \ref{fig:slidbc-surf},
and~\ref{fig:mdbc-surf}, however, are less ``topological'' than that in
Figure~\ref{fig:reindbc-surf} (although Figure~\ref{fig:frobbc-surf}
becomes a topological deformation if we replace our strings by
\emph{tubes}, as is well-known in the context of Topological Quantum
Field Theory).

We end this section with an important computation that will be used many
times in the following sections.  First of all, since the composite $A
\xto{\Delta_A} A\times A \xto{\pi_A\times \id_A} A$ is the identity
$\id_A$, we have
\begin{equation}
  (\Delta_A)^* (\pi_A\times\id_A)^* \cong \Id
  \qquad\text{and}\qquad
  (\pi_A\times\id_A)_! (\Delta_A)_! \cong \Id.\label{eq:delpiid}
\end{equation}
These isomorphisms already appeared in
Figure~\ref{fig:dual-fiber-triangle}; in Figure~\ref{fig:delpiid} we
isolate them.

Secondly, we can say more: the adjunction $(\pi_A\times\id_A)_!
(\Delta_A)_! \dashv (\Delta_A)^* (\pi_A\times\id_A)^*$ is isomorphic to
the identity adjunction.  In particular, its unit
\[ \Id \longrightarrow (\Delta_A)^*(\Delta_A)_!
\longrightarrow (\Delta_A)^* (\pi_A\times\id_A)^* (\pi_A\times\id_A)_! (\Delta_A)_!
\]
is equal to the composite of the inverses of the
isomorphisms~\eqref{eq:delpiid}.  Graphically, this implies that the
``blob'' in Figure~\ref{fig:blob} is equal to the identity.  We obtain
Figure~\ref{fig:curlycue}, which we call a \emph{curlycue identity}, by
moving one of the isomorphisms~\eqref{eq:delpiid} to the other side.

Finally, in Figure~\ref{fig:curlycue-mate}, we go from the first diagram
to the second by adding a triangle identity for the adjunction $\Delta_!
\dashv \Delta^*$ at the top and sliding the counit down to the bottom,
then to the third diagram by applying the curlycue identity.  We will
refer to the equality of the first and third diagrams in
Figure~\ref{fig:curlycue-mate} as a \emph{broken zigzag identity}.
\begin{figure}
  \centering
  \subfigure[The isomorphisms~\eqref{eq:delpiid}]{\label{fig:delpiid}%
    \raisebox{1.5cm}{%
  \begin{tikzpicture}[scale=.75, z={(0,-.3)},x={(-1,0)},y={(0,1.9)}]
    \draw (0,0,0) -- (3,0,0);
    \node[pbeflat] (del11) at (2,1,0) {};
    \node[pbeflat] (pi11) at (1,1,1) {};
    \draw (0,1,-1) -- (del11) -- (3,1,0);
    \draw (pi11) -- (del11);
    \draw[inner] (del11) to[out=-90,in=-90,looseness=3] node[inneriso] {} (pi11);
  \end{tikzpicture}
  \qquad
  \begin{tikzpicture}[scale=.75, z={(0,-.3)},x={(1,0)},y={(0,1.9)}]
    \draw (0,0,0) -- (3,0,0);
    \node[pfeflat] (del11) at (2,1,0) {};
    \node[pfeflat] (pi11) at (1,1,1) {};
    \draw (0,1,-1) -- (del11) -- (3,1,0);
    \draw (pi11) -- (del11);
    \draw[inner] (del11) to[out=-90,in=-90,looseness=3] node[inneriso] {} (pi11);
  \end{tikzpicture}}}
  \hspace{2cm}
  \subfigure[The blob]{\label{fig:blob}%
    \begin{tikzpicture}[scale=.75, z={(0,-.3)},x={(-1,0)},y={(0,-1.9)}]
    \draw (0,0,0) -- (5,0,0);
    \node[pfeflat] (del11) at (1,1,0) {};
    \node[pbeflat] (del14) at (4,1,0) {};
    \draw (0,1,0) -- (del11);
    \draw (del14) -- (5,1,0);
    \draw (del11) to[bend right] (del14);
    \draw (del11) to[bend left] (del14);
    \draw[inner] (del11) to[out=90,in=90,looseness=1] node[innerunit] {} (del14);
    \node[pfeflat] (del21) at (1,2,0) {};
    \node[pbeflat] (del24) at (4,2,0) {};
    \draw (0,2,0) -- (del21);
    \draw (del24) -- (5,2,0);
    \node[pfeflat] (pi22) at (2,2,1) {};
    \node[pbeflat] (pi23) at (3,2,1) {};
    \draw (del21) -- (pi22);
    \draw (del24) -- (pi23);
    \draw (del21) to[bend right] (del24);
    \draw[inner] (pi22) to[out=90,in=90,looseness=3.5] node[innerunit] {} (pi23);
    \draw[inner] (del11) -- (del21);
    \draw[inner] (del14) -- (del24);
    \draw (0,3,0) -- (5,3,0);
    \draw[inner] (del21) to[out=-90,in=-90,looseness=3] node[inneriso] {} (pi22);
    \draw[inner] (del24) to[out=-90,in=-90,looseness=3] node[inneriso] {} (pi23);
  \end{tikzpicture}}\\
\subfigure[Straightening out a curlycue]{\label{fig:curlycue}%
  \begin{tikzpicture}[scale=.75, z={(0,-.3)},x={(-1,0)},y={(0,-1.9)}]
    \draw (0,0,0) -- (5,0,0);
    \node[pfeflat] (del11) at (1,1,0) {};
    \node[pbeflat] (del14) at (4,1,0) {};
    \draw (0,1,0) -- (del11);
    \draw (del14) -- (5,1,0);
    \draw (del11) to[bend right] (del14);
    \draw (del11) to[bend left] (del14);
    \draw[inner] (del11) to[out=90,in=90,looseness=1] node[innerunit] {} (del14);
    \node[pfeflat] (del21) at (1,2,0) {};
    \node[pbeflat] (del24) at (4,2,0) {};
    \draw (0,2,0) -- (del21);
    \draw (del24) -- (5,2,0);
    \node[pfeflat] (pi22) at (2,2,1) {};
    \node[pbeflat] (pi23) at (3,2,1) {};
    \draw (del21) -- (pi22);
    \draw (del24) -- (pi23);
    \draw (del21) to[bend right] (del24);
    \draw[inner] (pi22) to[out=90,in=90,looseness=3.5] node[innerunit] {} (pi23);
    \draw[inner] (del11) -- (del21);
    \draw[inner] (del14) -- (del24);
    \node[pfeflat] (del31) at (1,3,0) {};
    \node[pfeflat] (pi32) at (2,3,1) {};
    \draw (0,3,0) -- (del31);
    \draw (del31) -- (pi32);
    \draw (del31) -- (5,3,-1);
    \draw[inner] (del24) to[out=-90,in=-90,looseness=3] node[inneriso] {} (pi23);
    \draw[inner] (del21) -- (del31);
    \draw[inner] (pi22) -- (pi32);
  \end{tikzpicture}
  \qquad \raisebox{2.5cm}{=} \qquad
  \raisebox{1.5cm}{\begin{tikzpicture}[scale=.75, z={(0,-.3)},x={(1,0)},y={(0,-1.9)}]
    \draw (0,0,0) -- (3,0,0);
    \node[pfeflat] (del11) at (2,1,0) {};
    \node[pfeflat] (pi11) at (1,1,1) {};
    \draw (0,1,-1) -- (del11) -- (3,1,0);
    \draw (pi11) -- (del11);
    \draw[inner] (del11) to[out=90,in=90,looseness=3.5] node[inneriso] {} (pi11);
  \end{tikzpicture}}}\\
\subfigure[The broken zigzag]{\label{fig:curlycue-mate}
  \raisebox{.7cm}{\begin{tikzpicture}[strings]
    \def\yy{0}
    \node[pbeflat] (d3\yy) at (3,\yy,0) {};
    \draw (0,\yy,-1) -- (d3\yy);
    \draw (0,\yy,1) -- (d3\yy) -- (4,\yy,0);

    \def\yy{1}
    \node[pbeflat] (d3\yy) at (3,\yy,0) {};
    \node[pbeflat] (d2\yy) at (2,\yy,1) {};
    \node[pfeflat] (d1\yy) at (1,\yy,1) {};
    \draw (0,\yy,-1) -- (d3\yy);
    \draw (0,\yy,1) -- (d1\yy);
    \draw (d2\yy) -- (d3\yy) -- (4,\yy,0);

    \def\yy{2}
    \node[pfeflat] (d1\yy) at (1,\yy,1) {};
    \draw (0,\yy,1) -- (d1\yy);
    \draw (0,\yy,0) -- (4,\yy,0);
    \begin{scope}[inner]
      \draw (d30) -- (d31);
      \draw (d31) to[out=-90,in=-90,looseness=3] node[inneriso] {} (d21);
      \draw (d21) to[out=90,in=90,looseness=3] node[innerunit] {} (d11);
      \draw (d11) -- (d12);
    \end{scope}
  \end{tikzpicture}}
  \quad \raisebox{2.5cm}{=} \quad
  \begin{tikzpicture}[scale=.6, z={(0,-.3)},x={(-1,0)},y={(0,-1.9)}]
    \node[pbeflat] (del00) at (0,0,0) {};
    \draw (del00) -- (5,0,0);
    \draw (-1,0,-1) -- (del00);
    \draw (-1,0,1) -- (del00);
    \node[pbeflat] (del10) at (0,1,0) {};
    \draw (-1,1,-1) -- (del10);
    \draw (-1,1,1) -- (del10);
    \node[pfeflat] (del11) at (1,1,0) {};
    \node[pbeflat] (del14) at (4,1,0) {};
    \draw (del10) -- (del11);
    \draw (del14) -- (5,1,0);
    \draw (del11) to[bend right] (del14);
    \draw (del11) to[bend left] (del14);
    \draw[inner] (del11) to[out=90,in=90,looseness=1] node[innerunit] {} (del14);
    \node[pbeflat] (del20) at (0,2,0) {};
    \draw (-1,2,-1) -- (del20);
    \draw (-1,2,1) -- (del20);
    \node[pfeflat] (del21) at (1,2,0) {};
    \node[pbeflat] (del24) at (4,2,0) {};
    \draw (del20) -- (del21);
    \draw (del24) -- (5,2,0);
    \node[pfeflat] (pi22) at (2,2,1) {};
    \node[pbeflat] (pi23) at (3,2,1) {};
    \draw (del21) -- (pi22);
    \draw (del24) -- (pi23);
    \draw (del21) to[bend right] (del24);
    \draw[inner] (pi22) to[out=90,in=90,looseness=3.5] node[innerunit] {} (pi23);
    \draw[inner] (del11) -- (del21);
    \draw[inner] (del14) -- (del24);
    \node[pbeflat] (del30) at (0,3,0) {};
    \draw (-1,3,-1) -- (del30);
    \draw (-1,3,1) -- (del30);
    \node[pfeflat] (del31) at (1,3,0) {};
    \node[pfeflat] (pi32) at (2,3,1) {};
    \draw (del30) -- (del31);
    \draw (del31) -- (pi32);
    \draw (del31) -- (5,3,-1);
    \draw[inner] (del24) to[out=-90,in=-90,looseness=3] node[inneriso] {} (pi23);
    \draw[inner] (del21) -- (del31);
    \draw[inner] (pi22) -- (pi32);
    \draw (-1,4,-1) -- (5,4,-1);
    \node[pfeflat] (pi42) at (2,4,1) {};
    \draw (-1,4,1) -- (pi42);
    \draw[inner] (del00) -- (del10) -- (del20) -- (del30);
    \draw[inner] (del30) to[out=-90,in=-90,looseness=3] node[innercounit] {} (del31);
    \draw[inner] (pi32) -- (pi42);
  \end{tikzpicture}
  \quad \raisebox{2.5cm}{=} \quad
  \raisebox{.7cm}{\begin{tikzpicture}[strings]
    \def\yy{0}
    \node[pbeflat] (d1\yy) at (1,\yy,0) {};
    \draw (0,\yy,-1) -- (d1\yy);
    \draw (0,\yy,1) -- (d1\yy) -- (4,\yy,0);

    \def\yy{1}
    \node[pfeflat] (d3\yy) at (3,\yy,1) {};
    \node[pfeflat] (d2\yy) at (2,\yy,0) {};
    \node[pbeflat] (d1\yy) at (1,\yy,0) {};
    \draw (0,\yy,-1) -- (d1\yy);
    \draw (0,\yy,1) -- (d1\yy);
    \draw (d1\yy) -- (d2\yy) -- (d3\yy);
    \draw (d2\yy) -- (4,\yy,-1);

    \def\yy{2}
    \node[pfeflat] (d3\yy) at (3,\yy,1) {};
    \draw (0,\yy,0) -- (4,\yy,-0);
    \draw (0,\yy,1) -- (d3\yy);

    \begin{scope}[inner]
      \draw (d10) -- (d11);
      \draw (d11) to[out=-90,in=-90,looseness=3] node[innercounit] {} (d21);
      \draw (d21) to[out=90,in=90,looseness=3] node[inneriso] {} (d31);
      \draw (d31) -- (d32);
    \end{scope}
  \end{tikzpicture}}}
\caption{Blobs, curlycues, and broken zigzags}
\label{fig:curl}
\end{figure}
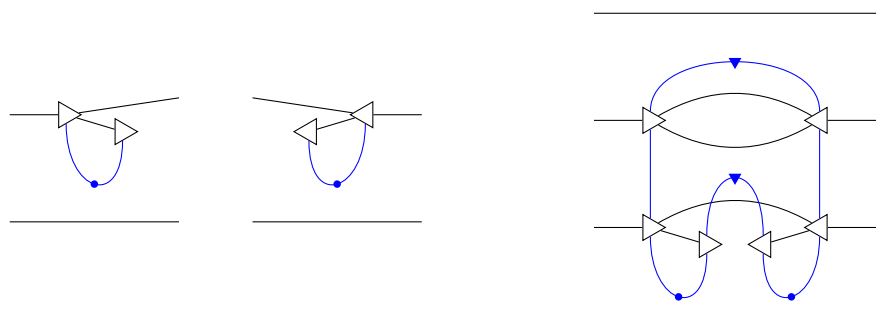
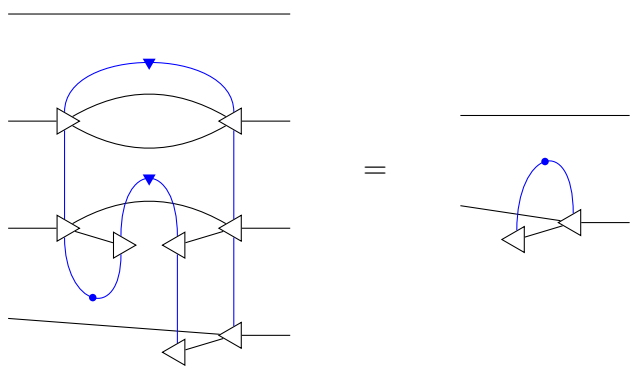
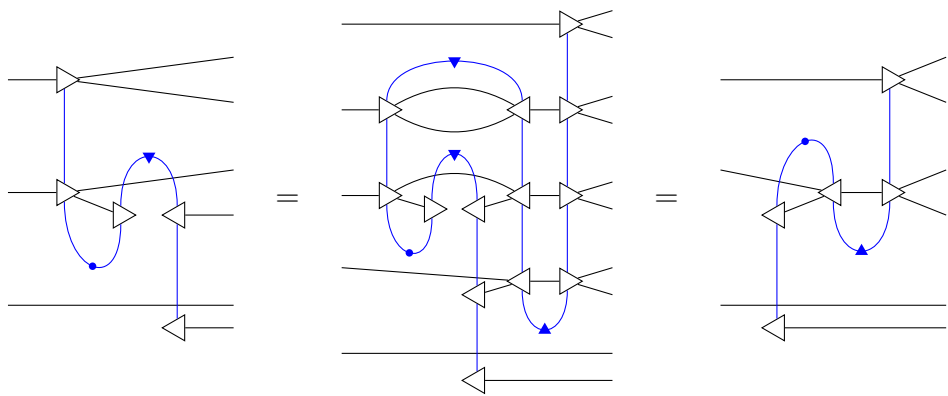

\begin{rmk}
  We have remarked that the Beck-Chevalley morphism for the
  non-homotopy-pullback square in Figure~\ref{fig:mdpb} is simply the
  unit $\Id \to (\Delta_A)^*(\Delta_A)_!$ of the adjunction
  $(\Delta_A)_! \dashv (\Delta_A)^*$.  Thus, the fact that
  Figure~\ref{fig:blob} is the identity means that regardless of whether
  the Beck-Chevalley condition holds, this morphism is always split
  monic; the lower 2/3 of Figure~\ref{fig:blob} supplies a retraction.
  (We would obtain a different retraction, however, by using
  $\id\times\pi$ instead of $\pi\times\id$.)
\end{rmk}

\begin{rmk}\label{rmk:surfaces}
  There are other ways to depict the adding of a dimension to string
  diagrams.  Rather than using a new type of string as we have done, a
  more usual approach would be to keep the codimension of all elements
  constant as we move up in dimension.  Thus, we would replace the
  strings and nodes occurring in the two-dimensional slice diagrams by
  surfaces and strings, respectively, and then use 0-dimensional nodes
  for the morphisms in the fiber categories.

  Arguably, using surfaces is the ``correct'' representation, and our
  ``slice'' diagrams can in fact be viewed as slices \emph{of} surfaces.
  The strings and nodes in the slices are slices of surfaces and
  strings, while the vertically drawn strings are actual strings in the
  surface diagram (singular junctions of surfaces) and the nodes along
  them are actual nodes.  The reader is welcome to interpret our
  diagrams in this way.  One advantage of this viewpoint is that then
  pictures such as those in Figure~\ref{fig:surface-adj-diag} are
  related by a simple topological deformation.  Moreover, if we view the
  an indexed monoidal category as sitting inside the bicategory
  constructed from it, as suggested in \autoref{rmk:bicat-ismc-surface},
  then such surface diagrams can be regarded as a fragment of the more
  traditional surface diagrams for monoidal bicategories.  (Our string
  diagrams are in fact an adaptation of a ``schematic'' or ``hybrid''
  sort of surface diagram for monoidal bicategories that was suggested
  to us by Daniel Sch\"appi.)

  However, the authors find it quite difficult to draw and visualize
  even moderately complicated surface diagrams---whereas we can easily
  manipulate our ``slice-transition'' diagrams schematically, without
  attempting to figure out what sort of ``surfaces'' they represent.
\end{rmk}

\section{Proofs for fiberwise duality and trace}
\label{sec:fiberwise-proofs}

In this section we will prove Theorems \ref{thm:fiberwise-duality} and
\ref{thm:fibtrace}, using string diagram calculations.

\begin{proof}[of \autoref{thm:fiberwise-duality}]
  For $\widehat{M}$ to be right dualizable, we require a 1-cell
  $\rdual{(\smash{\widehat{M}})}\maps 1\hto A$, which of course is of the
  form $\widecheck{N}$ for some $N\in\sC^A$, and morphisms
  \begin{align*}
    \overline{\eta}&\maps U_A \too \widehat{M} \odot \widecheck{N}\\
    \overline{\ep} &\maps \widecheck{N}\odot \widehat{M} \too U_\star
  \end{align*}
  satisfying the triangle identities.  Substituting in the definitions
  of the structure of $\calBi CS$, we can rewrite such maps as
  \begin{align*}
    \overline{\eta}&\maps (\Delta_A)_! I_A \too \pi_1^* M \ten \pi_2^*N\\
    \overline{\ep} &\maps (\pi_A)!(M\ten N) \too I_\star.
  \end{align*}
  These morphisms are depicted in Figures~\ref{fig:bicat-eval}
  and~\ref{fig:bicat-coeval}.

  However, giving such maps is equivalent to giving their adjuncts
  \begin{align*}
    \eta&\maps I_A \too \Delta_A^* (\pi_1^* M \ten \pi_2^*N) \too[\iso]
    \Delta_A^*\pi_1^* M \ten \Delta_A^*\pi_2^*N \too[\iso] M\ten N\\
    \ep &\maps (M\ten N) \too (\pi_A)^*I_\star \too[\iso] I_A
  \end{align*}
  and these are exactly the maps required to make $N$ into a dual of $M$
  in $\sC^A$, as shown in Figures~\ref{fig:dual-fiber-eval}
  and~\ref{fig:dual-fiber-coeval}.  This gives us a bijection between
  putative evaluation-coevaluation morphisms for dual pairs $(M,N)$ and
  $(\widehat{M},\widecheck{N})$.  In
  Figures~\ref{fig:bicat-to-smc-eval}, \ref{fig:bicat-to-smc-coeval},
  \ref{fig:smc-to-bicat-eval}, and~\ref{fig:smc-to-bicat-coeval}, we
  show explicitly how this bijection works.
  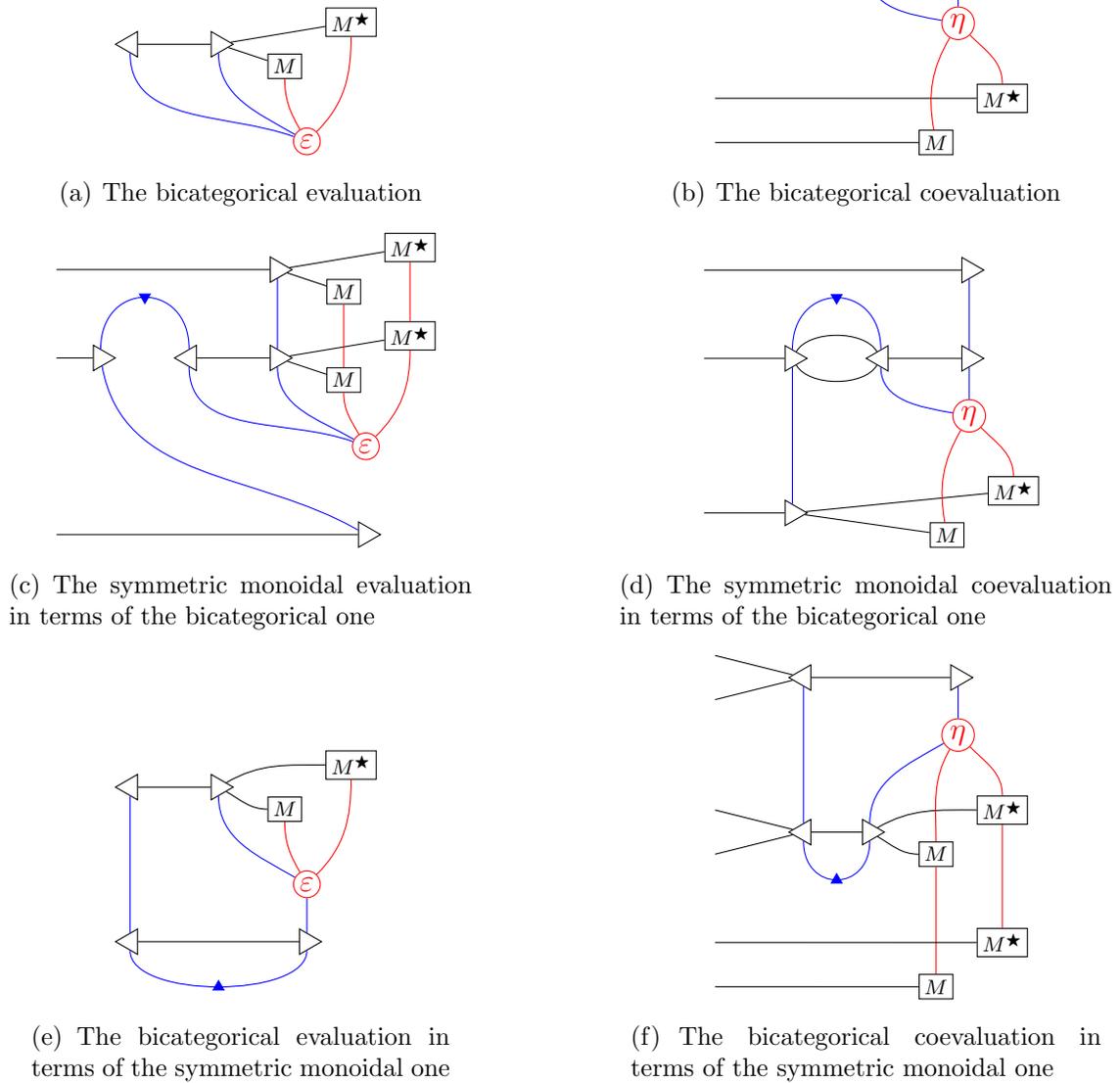
\begin{figure}
  \centering
  \begin{tabular}{c@{\hspace{2cm}}c}
  \subfigure[The bicategorical evaluation]{\label{fig:bicat-eval}
    \hspace{1cm}\begin{tikzpicture}[x={(-1,0)},scale=.6]
      \node[fiber] (2m) at (.5,2) {$M$};
      \node[fiber] (2md) at (-1,3) {$\rdual{M}$};
      \node[pbeflat] (2del) at (2,2.5) {};
      \node[pfeflat] (2del2) at (4,2.5) {};
      \draw (2m) -- (2del);
      \draw (2md) -- (2del);
      \draw (2del) -- (2del2);
      \node[outervert] (ep) at (0,.3) {$\varepsilon$};
      \draw[inner] (2del2) to [out=-90, in =160] (ep);
      \draw[outer] (ep) to[out=45,in=-90] (2md);
      \draw[outer] (ep) to[out=120,in=-90] (2m);
      \draw[inner] (ep) to[out=150,in=-90] (2del);
    \end{tikzpicture}\hspace{1cm}}
  &
  \subfigure[The bicategorical coevaluation]{\label{fig:bicat-coeval}
    \hspace{1cm}\begin{tikzpicture}[x={(-1,0)},y={(0,-1)},scale=.6]
      \node[pbeflat] (1pi) at (0,-1) {};
      \node[pfeflat](1del2) at (2, -1){};
      \draw (1pi) -- (1del2);
      \draw (1del2) -- (5.5, -.5);
      \draw (1del2) -- (5.5, -1.5);  
      \node[fiber] (2m) at (.5,3) {$M$};
      \node[fiber] (2md) at (-1,2) {$\rdual{M}$};
      \draw (2m) -- (5.5, 3);
      \draw (2md) -- (5.5, 2);  
      \node[outervert] (eta) at (0,.3) {$\eta$};
      \draw[inner] (1pi) -- (eta);
      \draw[outer] (eta) to[out=-45,in=90] (2md);
      \draw[outer] (eta) to[out=-120,in=100] (2m);
      \draw[inner] (eta) to[out=170,in=-90] (1del2);
    \end{tikzpicture}\hspace{1cm}}\\
  \subfigure[The symmetric monoidal evaluation in terms of the bicategorical one]{\label{fig:bicat-to-smc-eval}
    \hspace{.5cm}\begin{tikzpicture}[x={(-1,0)},y={(0,-1)},scale=.6]
      \node[fiber] (1m) at (.5,1) {$M$};
      \node[fiber] (1md) at (-1,0) {$\rdual{M}$};
      \node[pbeflat] (1del) at (2,.5) {};
      \draw (1m)-- (1del);
      \draw (1md)-- (1del);
      \draw (1del) -- (7, .5);
      \node[fiber] (2m) at (.5,3) {$M$};
      \node[fiber] (2md) at (-1,2) {$\rdual{M}$};
      \node[pbeflat] (2del) at (2,2.5) {};
      \node[pfeflat] (2del2) at (4,2.5) {};
      \node[pbeflat] (2del3) at (6,2.5) {};
      \draw (2m) -- (2del);
      \draw (2md) -- (2del);
      \draw (2del) -- (2del2);
      \draw (2del3) -- (7, 2.5);
      \draw [inner] (2del3) to [out =90, in =90, looseness=2] node[innerunit] {} (2del2);
      \draw [outer] (1m) --(2m);
      \draw [outer] (1md) -- (2md);
      \draw [inner] (1del) -- (2del);
      \node[outervert] (ep) at (0,4.5) {$\varepsilon$};
      \draw[inner] (2del2) to [out=-90, in =160] (ep);
      \draw[outer] (ep) to[out=45,in=-90] (2md);
      \draw[outer] (ep) to[out=120,in=-90] (2m);
      \draw[inner] (ep) to[out=150,in=-90] (2del);
      \node[pbeflat] (3del) at (0,6.5) {};
      \draw [inner] (2del3) to[out=-80, in= 150] (3del);
     \draw (3del) -- (7, 6.5);
    \end{tikzpicture}\hspace{.5cm}}
  &
  \subfigure[The symmetric monoidal coevaluation in terms of the bicategorical one]{\label{fig:bicat-to-smc-coeval}
    \hspace{1cm}\begin{tikzpicture}[x={(-1,0)},y={(0,-1)},scale=.6]
      \node[pbeflat] (0pi) at (0,-3) {};
      \draw (0pi) -- (6, -3);
      \node[pbeflat] (1pi) at (0,-1) {};
      \node[pfeflat](1del2) at (2, -1){};
      \node[pbeflat] (1del3) at (4,-1) {};
      \draw (1pi) -- (1del2);
      \draw (1del2) to[out = 120,  in = 60, looseness=.8] (1del3);
      \draw (1del2) to[out = -120,  in = -60,  looseness=.8] (1del3);
      \draw[inner] (1del2) to[out = 90,  in = 90, looseness =2] node[innerunit] {} (1del3);
      \draw (1del3) -- (6, -1);
      \node[fiber] (2m) at (.5,3) {$M$};
      \node[fiber] (2md) at (-1,2) {$\rdual{M}$};
      \node[pbeflat] (2del3) at (4,2.5) {};
      \draw (2m) -- (2del3);
      \draw (2md) -- (2del3); 
      \draw (2del3) -- (6, 2.5); 
      \draw[inner](2del3)--(1del3);
      \node[outervert] (eta) at (0,.3) {$\eta$};
      \draw[inner] (0pi) -- (1pi) -- (eta);
      \draw[outer] (eta) to[out=-45,in=90] (2md);
      \draw[outer] (eta) to[out=-120,in=100] (2m);
      \draw[inner] (eta) to[out=170,in=-90] (1del2);
    \end{tikzpicture}\hspace{1cm}}\\
  \subfigure[The bicategorical evaluation in terms of the symmetric monoidal one]{\label{fig:smc-to-bicat-eval}
    \hspace{1cm}\begin{tikzpicture}[x={(-1,0)},scale=.6]
      \node[pbeflat] (1pi) at (0,-1) {};
      \node[pfeflat] (1del2) at (4,-1) {};
      \draw (1pi) -- (1del2);
      \node[fiber] (2m) at (.5,2) {$M$};
      \node[fiber] (2md) at (-1,3) {$\rdual{M}$};
      \node[pbeflat] (2del) at (2,2.5) {};
      \node[pfeflat] (2del2) at (4,2.5) {};
      \draw (2m) to[out=180,in=-30] (2del);
      \draw (2md) to[out=180,in=30] (2del);
      \draw (2del) -- (2del2);
      \draw[inner] (1del2) -- (2del2);
      \node[outervert] (ep) at (0,.3) {$\varepsilon$};
      \draw[inner] (1pi) -- (ep);
      \draw[outer] (ep) to[out=45,in=-90] (2md);
      \draw[outer] (ep) to[out=120,in=-90] (2m);
      \draw[inner] (ep) to[out=150,in=-90] (2del);
      \draw [inner] (1del2) to [out=-90, in =-90, looseness =.7] node[innercounit] {} (1pi);
    \end{tikzpicture}\hspace{1cm}}
  &
  \subfigure[The bicategorical coevaluation in terms of the symmetric monoidal one]{\label{fig:smc-to-bicat-coeval}
    \hspace{1cm}\begin{tikzpicture}[x={(-1,0)},y={(0,-1)},scale=.6]
      \node[pbeflat] (1pi) at (0,-1) {};
      \node[pfeflat](1del2) at (3.5, -1){};
      \draw (1pi) -- (1del2);
      \draw (1del2) -- (5.5, -.5);
      \draw (1del2) -- (5.5, -1.5);  
      \node[fiber] (2m) at (.5,3) {$M$};
      \node[fiber] (2md) at (-1,2) {$\rdual{M}$};
      \node[pbeflat] (2del) at (2,2.5) {};
      \node[pfeflat](2del2) at (3.5, 2.5){};
      \draw (2m) to[out=180,in=-30] (2del);
      \draw (2md) to[out=180,in=30] (2del);
      \draw (2del) -- (2del2);
      \draw (2del2) -- (5.5, 3);
      \draw (2del2) -- (5.5, 2);  
      \draw[inner] (1del2) -- (2del2);
      \draw[inner] (2del) to [out=-90, in =-90, looseness =2] node[innercounit] {} (2del2);
      \node[outervert] (eta) at (0,.3) {$\eta$};
      \draw[inner] (1pi) -- (eta);
      \draw[outer] (eta) to[out=-45,in=90] (2md);
      \draw[outer] (eta) to[out=-120,in=90] (2m);
      \draw[inner] (eta) to[out=-150,in=90] (2del);
      \node[fiber] (3m) at (.5,6) {$M$};
      \node[fiber] (3md) at (-1,5) {$\rdual{M}$};
      \draw[outer] (2m)-- (3m);
      \draw[outer] (2md) -- (3md);
      \draw (3m) -- (5.5, 6);
      \draw (3md) -- (5.5, 5);
    \end{tikzpicture}\hspace{1cm}}
  \end{tabular}
  \caption{The correspondence between symmetric monoidal and
    bicategorical evaluation and coevaluation}
\label{fig:eta-ep-fib-to-smc}
\end{figure}

Thus, it remains only to verify that $\overline{\eta}$ and
$\overline{\ep}$ satisfy the appropriate triangle identities if and only
if $\eta$ and $\ep$ satisfy \emph{their} triangle identities.  We will
verify this for one triangle identity; the other is similar and left to
the reader.  Refer to Figures~\ref{fig:fib-duals}
and~\ref{fig:fib-duals-2}.

In Figure~\ref{fig:fib-duals-step1} we show the composite which the
bicategorical triangle identity asserts to be equal to an identity.  The
morphisms occuring at the top prior to the coevaluation
$\overline{\eta}$ comprise the isomorphism from
Figure~\ref{fig:bicatunit} (spelled out in more detail), using the
definition of the Beck-Chevalley morphism from Figure~\ref{fig:mdbc}.
In Figure~\ref{fig:fib-duals-step2} we have replaced $\overline{\eta}$
and $\overline{\ep}$ with their equivalents in terms of the symmetric
monoidal $\eta$ and $\ep$.  Then in Figure~\ref{fig:fib-duals-step3}, we
slide a number of morphisms past each other, in the way which string
diagram notation makes easy to see.  To get to
Figure~\ref{fig:fib-duals-step4}, we apply a triangle identity for the
adjunction $\Delta_! \dashv \Delta^*$, as in
Figure~\ref{fig:surface-adj-diag}.  In Figure~\ref{fig:fib-duals-step5}
we do some more sliding, and then finally in
Figure~\ref{fig:fib-duals-step6} we apply (the dual of) the curlycue
identity from Figure~\ref{fig:curlycue}.  But
Figure~\ref{fig:fib-duals-step6} is exactly the composite which the
symmetric monoidal triangle identity asserts equal to an identity.
Thus, one triangle identity holds if and only if the other does.
\end{proof}
\afterpage{\begin{landscape}
\begin{figure}
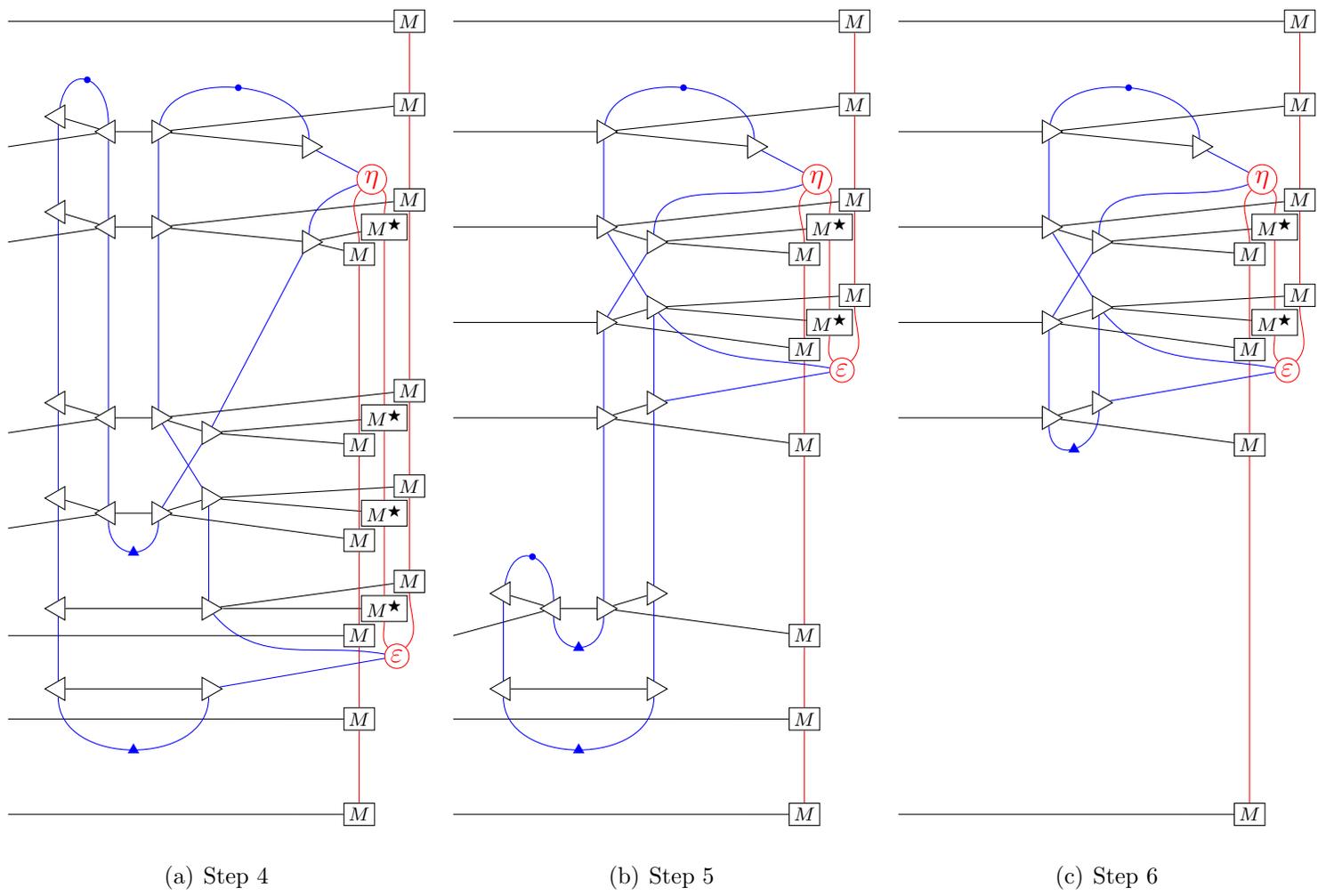

  \centering
  \subfigure[Step 1]{\label{fig:fib-duals-step1}

}
\caption{Comparison of fiberwise duals, steps 4--6}
\label{fig:fib-duals-2}
\end{figure}
\end{landscape}
}

\begin{proof}[of {\normalfont\autoref{thm:fibtrace}\ref{item:fibtrace1}}]
  The definition of $\overline{g}$ is shown in
  Figure~\ref{fig:fibtrace1-gbar}.  We must show that the bicategorical
  trace
  \[ \tr(g) \colon (\pi_A)_! (\Delta_A)^* Q = \sh{Q}\too \sh{P} = P \]
  is equal to the composite
  \begin{equation}
    (\pi_A)_! (\Delta_A)^* Q \xto{(\pi_A)_!
      \tr(\overline{g})} (\pi_A)_!\pi_A^* P \too P.\label{eq:other-trg}
  \end{equation}
  In Figure~\ref{fig:fibtrace1-bicat} we show $\tr(g)$, while in
  Figure~\ref{fig:fibtrace1-symmon} we show~\eqref{eq:other-trg} with
  the definition of $\overline{g}$ substituted.  We can first of all
  consider the parts of these diagrams occurring above $g$ separately
  from those occurring below $g$; it will clearly suffice to show that
  each of these pairs are equal.
  \afterpage{\begin{landscape}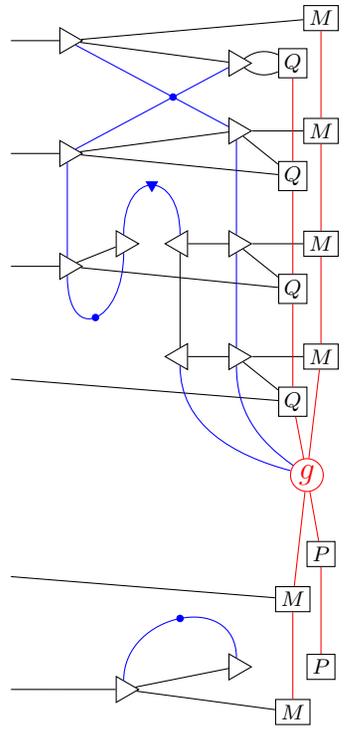
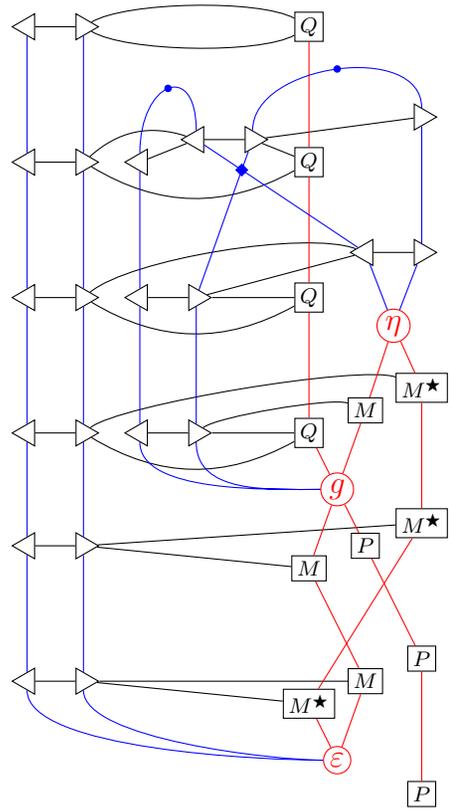
\begin{figure} \centering \subfigure[The
        map $\overline{g}$ in
        \autoref{thm:fibtrace}\ref{item:fibtrace1}]{\label{fig:fibtrace1-gbar}
    \hspace{1cm}\begin{tikzpicture}[strings]
    \def\yy{0}
    \node[fiber] (m\yy) at (-.5,\yy,0) {$M$};
    \node[fiber] (q\yy) at (0,\yy,2) {$Q$};
    \node[pbeflat] (d1\yy) at (1,\yy,2) {};
    \node[pbeflat] (d4\yy) at (4,\yy,1) {};
    \draw (m\yy) -- (d4\yy) -- (5,\yy,1);
    \draw (d1\yy) -- (d4\yy);
    \draw (q\yy) to[bend right] (d1\yy);
    \draw (q\yy) to[bend left] (d1\yy);

    \def\yy{1}
    \node[fiber] (m\yy) at (-.5,\yy,0) {$M$};
    \node[fiber] (q\yy) at (0,\yy,2) {$Q$};
    \node[pbeflat] (d1\yy) at (1,\yy,0) {};
    \node[pbeflat] (d4\yy) at (4,\yy,1) {};
    \draw (m\yy) -- (d1\yy) -- (d4\yy) -- (5,\yy,1);
    \draw (q\yy) -- (d1\yy);
    \draw (q\yy) -- (d4\yy);

    \def\yy{2}
    \node[fiber] (m\yy) at (-.5,\yy,0) {$M$};
    \node[fiber] (q\yy) at (0,\yy,2) {$Q$};
    \node[pbeflat] (d1\yy) at (1,\yy,0) {};
    \node[pfeflat] (d2\yy) at (2,\yy,0) {};
    \node[pbeflat] (d3\yy) at (3,\yy,0) {};
    \node[pbeflat] (d4\yy) at (4,\yy,1) {};
    \draw (m\yy) -- (d1\yy) -- (d2\yy);
    \draw (d3\yy) -- (d4\yy) -- (5,\yy,1);
    \draw (q\yy) -- (d1\yy);
    \draw (q\yy) -- (d4\yy);

    \def\yy{3}
    \node[fiber] (m\yy) at (-.5,\yy,0) {$M$};
    \node[fiber] (q\yy) at (0,\yy,2) {$Q$};
    \node[pbeflat] (d1\yy) at (1,\yy,0) {};
    \node[pfeflat] (d2\yy) at (2,\yy,0) {};
    \draw (m\yy) -- (d1\yy) -- (d2\yy);
    \draw (q\yy) -- (d1\yy);
    \draw (q\yy) -- (5,\yy,1);

    \draw[inner] (d10) -- (d41) -- (d42)
    to[out=-90,in=-90,looseness=3] node[inneriso] {} (d32);
    \draw[inner] (d32) to[out=90,in=90,looseness=3] node[innerunit] {} (d22);
    \draw (d22) -- (d23);
    \draw[inner] (d40) -- (d11) -- (d12) -- (d13);

    \draw[outer] (m0) -- (m1) -- (m2) -- (m3);
    \draw[outer] (q0) -- (q1) -- (q2) -- (q3);

    \begin{scope}[yshift=.5cm]
    \def\yy{5}
    \node[fiber] (p\yy) at (-.5,\yy,0) {$P$};
    \node[fiber] (m\yy) at (0,\yy,2) {$M$};
    \draw (m\yy) -- (5,\yy,1);

    \node[outervert] (g) at (-.25,4.1,1) {$g$};
    \draw[outer] (m3) -- (g);
    \draw[outer] (q3) -- (g);
    \draw[outer] (g) -- (p5);
    \draw[outer] (g) -- (m5);
    \draw[inner] (d13) to[out=-90,in=145] (g);
    \draw[inner] (d23) to[out=-90,in=165] (g);

    \def\yy{6}
    \node[fiber] (p\yy) at (-.5,\yy,0) {$P$};
    \node[fiber] (m\yy) at (0,\yy,2) {$M$};
    \node[pbeflat] (d1\yy) at (1,\yy,0) {};
    \node[pbeflat] (d3\yy) at (3,\yy,1) {};
    \draw (d1\yy) -- (d3\yy);
    \draw (m\yy) -- (d3\yy) -- (5,\yy,1);

    \draw[inner] (d16) to[out=90,in=90,looseness=1.5] node[inneriso] {} (d36);

    \draw[outer] (m5) -- (m6);
    \draw[outer] (p5) -- (p6);
    \node[inneriso] at (intersection of d10--d41 and d40--d11) {};
    \end{scope}
  \end{tikzpicture}\hspace{1cm}}
\hspace{1.5cm}
  \subfigure[The bicategorical $\tr(g)$]{\label{fig:fibtrace1-bicat}
    \begin{tikzpicture}[strings,y={(0,1.2)}]
    \def\yy{0}
    \node[fiber] (q\yy) at (0,\yy,2) {$Q$};
    \node[pbeflat] (d4\yy) at (4,\yy,2) {};
    \node[pfeflat] (d5\yy) at (5,\yy,2) {};
    \draw (q\yy) to[bend right,looseness=.5] (d4\yy);
    \draw (q\yy) to[bend left,looseness=.5] (d4\yy);
    \draw (d4\yy) -- (d5\yy);

    \def\yy{1}
    \node[fiber] (q\yy) at (0,\yy,2) {$Q$};
    \node[pbeflat] (d4\yy) at (4,\yy,2) {};
    \node[pfeflat] (d5\yy) at (5,\yy,2) {};
    \node[pbeflat] (d-2\yy) at (-2,\yy,0) {};
    \node[pbeflat] (d1\yy) at (1,\yy,1) {};
    \node[pfeflat] (d2\yy) at (2,\yy,1) {};
    \node[pfeflat] (d3\yy) at (3,\yy,2) {};
    \draw (q\yy) to[bend left,looseness=1] (d4\yy);
    \draw (d4\yy) -- (d5\yy);
    \draw (q\yy) -- (d1\yy) -- (d2\yy) to[bend right] (d4\yy);
    \draw (d2\yy) -- (d3\yy);
    \draw (d-2\yy) -- (d1\yy);

    \def\yy{2}
    \node[fiber] (q\yy) at (0,\yy,2) {$Q$};
    \node[pbeflat] (d4\yy) at (4,\yy,2) {};
    \node[pfeflat] (d5\yy) at (5,\yy,2) {};
    \node[pbeflat] (d-2\yy) at (-2,\yy,0) {};
    \node[pfeflat] (d-1\yy) at (-1,\yy,0) {};
    \node[pbeflat] (d2\yy) at (2,\yy,2) {};
    \node[pfeflat] (d3\yy) at (3,\yy,2) {};
    \draw (q\yy) to[bend left,looseness=1] (d4\yy);
    \draw (d4\yy) -- (d5\yy);
    \draw (q\yy) -- (d2\yy) -- (d3\yy);
    \draw (d-2\yy) -- (d-1\yy) -- (d2\yy);
    \draw (d-1\yy) to[bend right,looseness=.5] (d4\yy);

    \def\yy{3}
    \node[fiber] (q\yy) at (0,\yy,2) {$Q$};
    \node[pbeflat] (d4\yy) at (4,\yy,2) {};
    \node[pfeflat] (d5\yy) at (5,\yy,2) {};
    \node[fiber] (m\yy) at (-1,\yy,1) {$M$};
    \node[fiber] (md\yy) at (-2,\yy,0) {$\rdual{M}$};
    \node[pbeflat] (d2\yy) at (2,\yy,2) {};
    \node[pfeflat] (d3\yy) at (3,\yy,2) {};
    \draw (q\yy) to[bend left,looseness=1] (d4\yy);
    \draw (d4\yy) -- (d5\yy);
    \draw (q\yy) -- (d2\yy) -- (d3\yy);
    \draw (m\yy) to[bend right,looseness=.3] (d2\yy);
    \draw (md\yy) to[bend right,looseness=.3] (d4\yy);

    \def\yy{4}
    \node[fiber] (m\yy) at (0,\yy,2) {$M$};
    \node[fiber] (p\yy) at (-1,\yy,1) {$P$};
    \node[fiber] (md\yy) at (-2,\yy,0) {$\rdual{M}$};
    \node[pbeflat] (d4\yy) at (4,\yy,1) {};
    \node[pfeflat] (d5\yy) at (5,\yy,1) {};
    \draw (m\yy) -- (d4\yy) -- (d5\yy);
    \draw (md\yy) -- (d4\yy);

    \def\yy{5}
    \node[fiber] (m\yy) at (-1,\yy,1) {$M$};
    \node[fiber] (p\yy) at (-2,\yy,0) {$P$};
    \node[fiber] (md\yy) at (0,\yy,2) {$\rdual{M}$};
    \node[pbeflat] (d4\yy) at (4,\yy,1) {};
    \node[pfeflat] (d5\yy) at (5,\yy,1) {};
    \draw (m\yy) -- (d4\yy) -- (d5\yy);
    \draw (md\yy) -- (d4\yy);

    \def\yy{6}
    \node[fiber] (p\yy) at (-2,\yy,0) {$P$};

    \node[outervert] (eta) at (-1.5,2.5,.25) {$\eta$};
    \node[outervert] (g) at (-.5,3.5,1.5) {$g$};
    \node[outervert] (ep) at (-.5,5.5,1.5) {$\ep$};
    \draw[outer] (q0) -- (q1) -- (q2) -- (q3) --
      (g) -- (p4) -- (p5) -- (p6);
    \draw[outer] (eta) -- (m3) -- (g) -- (m4) --
      (m5) -- (ep) -- (md5) -- (md4) -- (md3) -- (eta);

    \draw[inner] (d50) -- (d51) -- (d52) -- (d53) --
      (d54) -- (d55) to[out=-90,in=180,looseness=.5] (ep);
    \draw[inner] (d40) -- (d41) -- (d42) -- (d43) --
      (d44) -- (d45) to[out=-90,in=180,looseness=.5] (ep);
    \draw[inner] (d31) -- (d32) -- (d33)
    to[out=-90,in=180,looseness=.7] (g)
    to[out=180,in=-90] (d23);
    \draw[inner] (d23) -- (d22) -- (d11)
    to[out=90,in=90,looseness=1] node[inneriso] {} (d-21);
    \draw[inner] (d-21) -- (d-22) -- (eta) -- (d-12) -- (d21)
    to[out=90,in=90,looseness=3] node[inneriso] {} (d31);
    \node[innerbc,inner] at (intersection of d22--d11 and d21--d-12) {};
  \end{tikzpicture}}
\caption{Two morphisms from the proof of \autoref{thm:fibtrace}\ref{item:fibtrace1}}
\end{figure}
\end{landscape}
\begin{figure}
  \centering
  \begin{tikzpicture}[strings]
    \def\yy{-3}
    \node[fiber] (q\yy) at (0,\yy,0) {$Q$};
    \node[pbeflat] (d1\yy) at (1,\yy,0) {};
    \node[pfeflat] (d6\yy) at (6,\yy,0) {};
    \draw (d1\yy) -- (d6\yy);
    \draw (q\yy) to[bend right] (d1\yy);
    \draw (q\yy) to[bend left] (d1\yy);

    \def\yy{-2}
    \node[fiber] (q\yy) at (0,\yy,1) {$Q$};
    \node[pbeflat] (d1\yy) at (1,\yy,1) {};
    \node[pbeflat] (d-1\yy) at (-1.5,\yy,0) {};
    \node[pbeflat] (d5\yy) at (5,\yy,0) {};
    \node[pfeflat] (d6\yy) at (6,\yy,0) {};
    \draw (d-1\yy) -- (d5\yy) -- (d6\yy);
    \draw (d1\yy) to[bend left,looseness=.1] (d5\yy);
    \draw (q\yy) to[bend right] (d1\yy);
    \draw (q\yy) to[bend left] (d1\yy);

    \def\yy{-1}
    \node[fiber] (md\yy) at (-2,\yy,0) {$\rdual{M}$};
    \node[fiber] (m\yy) at (-1,\yy,1) {$M$};
    \node[fiber] (q\yy) at (0,\yy,2) {$Q$};
    \node[pbeflat] (d1\yy) at (1,\yy,2) {};
    \node[pbeflat] (d4\yy) at (4,\yy,0) {};
    \node[pbeflat] (d5\yy) at (5,\yy,0) {};
    \node[pfeflat] (d6\yy) at (6,\yy,0) {};
    \draw (m\yy) -- (d4\yy) -- (d5\yy) -- (d6\yy);
    \draw (d1\yy) to[bend left,looseness=.1] (d5\yy);
    \draw (q\yy) to[bend right] (d1\yy);
    \draw (q\yy) to[bend left] (d1\yy);
    \draw (md\yy) -- (d4\yy);

    \def\yy{0}
    \node[fiber] (md\yy) at (-2,\yy,0) {$\rdual{M}$};
    \node[fiber] (m\yy) at (-1,\yy,1) {$M$};
    \node[fiber] (q\yy) at (0,\yy,2) {$Q$};
    \node[pbeflat] (d1\yy) at (1,\yy,2) {};
    \node[pbeflat] (d4\yy) at (4,\yy,1) {};
    \node[pbeflat] (d5\yy) at (5,\yy,0) {};
    \node[pfeflat] (d6\yy) at (6,\yy,0) {};
    \draw (m\yy) -- (d4\yy) -- (d5\yy);
    \draw (d1\yy) -- (d4\yy);
    \draw (q\yy) to[bend right] (d1\yy);
    \draw (q\yy) to[bend left] (d1\yy);
    \draw (md\yy) -- (d5\yy) -- (d6\yy);

    \def\yy{1}
    \node[fiber] (md\yy) at (-2,\yy,0) {$\rdual{M}$};
    \node[fiber] (m\yy) at (-1,\yy,1) {$M$};
    \node[fiber] (q\yy) at (0,\yy,2) {$Q$};
    \node[pbeflat] (d1\yy) at (1,\yy,1) {};
    \node[pbeflat] (d4\yy) at (4,\yy,1) {};
    \node[pbeflat] (d5\yy) at (5,\yy,0) {};
    \node[pfeflat] (d6\yy) at (6,\yy,0) {};
    \draw (m\yy) -- (d1\yy) -- (d4\yy) -- (d5\yy);
    \draw (q\yy) -- (d1\yy);
    \draw (q\yy) to[bend left,looseness=.1] (d4\yy);
    \draw (md\yy) -- (d5\yy) -- (d6\yy);

    \def\yy{2}
    \node[fiber] (md\yy) at (-2,\yy,0) {$\rdual{M}$};
    \node[fiber] (m\yy) at (-1,\yy,1) {$M$};
    \node[fiber] (q\yy) at (0,\yy,2) {$Q$};
    \node[pbeflat] (d1\yy) at (1,\yy,1) {};
    \node[pfeflat] (d2\yy) at (2,\yy,1) {};
    \node[pbeflat] (d3\yy) at (3,\yy,1) {};
    \node[pbeflat] (d4\yy) at (4,\yy,1) {};
    \node[pbeflat] (d5\yy) at (5,\yy,0) {};
    \node[pfeflat] (d6\yy) at (6,\yy,0) {};
    \draw (m\yy) -- (d1\yy) -- (d2\yy);
    \draw (d3\yy) -- (d4\yy) -- (d5\yy);
    \draw (q\yy) -- (d1\yy);
    \draw (q\yy) to[bend left, looseness=.2] (d4\yy);
    \draw (md\yy) -- (d5\yy) -- (d6\yy);

    \def\yy{3}
    \node[fiber] (md\yy) at (-2,\yy,0) {$\rdual{M}$};
    \node[fiber] (m\yy) at (-1,\yy,1) {$M$};
    \node[fiber] (q\yy) at (0,\yy,2) {$Q$};
    \node[pbeflat] (d1\yy) at (1,\yy,1) {};
    \node[pfeflat] (d2\yy) at (2,\yy,1) {};
    \node[pbeflat] (d5\yy) at (5,\yy,0) {};
    \node[pfeflat] (d6\yy) at (6,\yy,0) {};
    \draw (m\yy) -- (d1\yy) -- (d2\yy);
    \draw (q\yy) -- (d1\yy);
    \draw (q\yy) to[bend left,looseness=.2] (d5\yy);
    \draw (md\yy) -- (d5\yy) -- (d6\yy);

    \def\yy{4}
    \node[fiber] (md\yy) at (-2,\yy,0) {$\rdual{M}$};
    \node[fiber] (p\yy) at (-1,\yy,1) {$P$};
    \node[fiber] (m\yy) at (0,\yy,2) {$M$};
    \node[pbeflat] (d5\yy) at (5,\yy,0) {};
    \node[pfeflat] (d6\yy) at (6,\yy,0) {};
    \draw (m\yy) -- (d5\yy);
    \draw (md\yy) -- (d5\yy) -- (d6\yy);

    \def\yy{5}
    \node[fiber] (md\yy) at (-2,\yy,0) {$\rdual{M}$};
    \node[fiber] (p\yy) at (-1,\yy,1) {$P$};
    \node[fiber] (m\yy) at (0,\yy,2) {$M$};
    \node[pbeflat] (d1\yy) at (1,\yy,1) {};
    \node[pbeflat] (d3\yy) at (3,\yy,1) {};
    \node[pbeflat] (d5\yy) at (5,\yy,0) {};
    \node[pfeflat] (d6\yy) at (6,\yy,0) {};
    \draw (d1\yy) -- (d3\yy);
    \draw (m\yy) to[bend left,looseness=.1] (d3\yy);
    \draw (d3\yy) -- (d5\yy);
    \draw (md\yy) -- (d5\yy) -- (d6\yy);

    \def\yy{6}
    \node[fiber] (md\yy) at (0,\yy,2) {$\rdual{M}$};
    \node[fiber] (p\yy) at (-2,\yy,0) {$P$};
    \node[fiber] (m\yy) at (-1,\yy,1) {$M$};
    \node[pbeflat] (d1\yy) at (1,\yy,0) {};
    \node[pbeflat] (d3\yy) at (3,\yy,0) {};
    \node[pbeflat] (d5\yy) at (5,\yy,1) {};
    \node[pfeflat] (d6\yy) at (6,\yy,1) {};
    \draw (d1\yy) -- (d3\yy);
    \draw (m\yy) to[out=180,in=-30,looseness=.2] (d3\yy);
    \draw (d3\yy) -- (d5\yy);
    \draw (md\yy) -- (d5\yy) -- (d6\yy);

    \def\yy{7}
    \node[fiber] (md\yy) at (0,\yy,2) {$\rdual{M}$};
    \node[fiber] (p\yy) at (-2,\yy,0) {$P$};
    \node[fiber] (m\yy) at (-1,\yy,1) {$M$};
    \node[pbeflat] (d1\yy) at (1,\yy,0) {};
    \node[pbeflat] (d3\yy) at (3,\yy,1) {};
    \node[pbeflat] (d5\yy) at (5,\yy,1) {};
    \node[pfeflat] (d6\yy) at (6,\yy,1) {};
    \draw (d1\yy) to[bend right,looseness=.1] (d5\yy);
    \draw (m\yy) -- (d3\yy);
    \draw (md\yy) -- (d3\yy) -- (d5\yy) -- (d6\yy);

    \def\yy{8}
    \node[fiber] (p\yy) at (-2,\yy,0) {$P$};
    \node[pbeflat] (d0\yy) at (0,\yy,2) {};
    \node[pbeflat] (d1\yy) at (1,\yy,0) {};
    \node[pbeflat] (d5\yy) at (5,\yy,1) {};
    \node[pfeflat] (d6\yy) at (6,\yy,1) {};
    \draw (d1\yy) -- (d5\yy);
    \draw (d0\yy) -- (d5\yy) -- (d6\yy);

    \def\yy{9}
    \node[fiber] (p\yy) at (-2,\yy,0) {$P$};
    \node[pbeflat] (d1\yy) at (1,\yy,0) {};
    \node[pfeflat] (d6\yy) at (6,\yy,1) {};
    \draw (d1\yy) -- (d6\yy);

    \def\yy{10}
    \node[fiber] (p\yy) at (-2,\yy,0) {$P$};

    \node[outervert] (eta) at (-1.5,-1.5,.5) {$\eta$};
    \node[outervert] (g) at (-.5,3.5,1.3) {$g$};
    \node[outervert] (ep) at (-.5,7.5,1.5) {$\ep$};

    \begin{scope}[inner]
      \draw (d1-3) -- (d1-2) -- (d1-1) -- (d10) -- (d41) -- (d42)
      to[out=-90,in=-90,looseness=3] node[inneriso] {} (d32);
      \draw (d32) to[out=90,in=90,looseness=3] node[innerunit] {} (d22);
      \draw (d22) -- (d23);
      \draw (d5-2) -- (d5-1) -- (d40) -- (d11) -- (d12) -- (d13);
      \draw (d13) to[out=-90,in=145] (g);
      \draw (d23) to[out=-90,in=165] (g);
      \draw (d15) to[out=90,in=90,looseness=1.5] node[inneriso] {} (d35);
      \draw (d4-1) -- (d50) -- (d51) -- (d52) -- (d53) --
      (d54) -- (d55) -- (d56) -- (d37);
      \draw (d6-3) -- (d6-2) -- (d6-1) -- (d60) -- (d61) -- (d62) --
      (d63) -- (d64) -- (d65) -- (d66) -- (d67) -- (d68) -- (d69);
      \draw (d5-2) to[out=90,in=90,looseness=.5] node[inneriso] {} (d-1-2);
      \draw (d-1-2) -- (eta);
      \draw (eta) to[bend right,looseness=.5] (d4-1);
      \draw (d58) to[out=-90,in=-90,looseness=.4] node[inneriso] {} (d08);
      \draw (d69) to[out=-90,in=-90,looseness=.5] node[innercounit] {} (d19);
      \draw (d19) -- (d18) -- (d17) -- (d16) -- (d15);
      \draw (d35) -- (d36) -- (d57) -- (d58);
      \draw (d37) to[bend right,looseness=.5] (ep);
      \draw (ep) -- (d08);
    \end{scope}
    \begin{scope}[outer]
      \draw (m-1) -- (m0) -- (m1) -- (m2) -- (m3) -- (g) --
      (p4) -- (p5) -- (p6) -- (p7) -- (p8) -- (p9) -- (p10);
      \draw (md-1) -- (md0) -- (md1) -- (md2) -- (md3) --
      (md4) -- (md5) -- (md6) -- (md7);
      \draw (q-3) -- (q-2) -- (q-1) -- (q0) -- (q1) -- (q2) --
      (q3) -- (g) -- (m4) -- (m5) -- (m6) -- (m7);
      \draw (eta) to[bend right] (m-1);
      \draw (eta) to[bend left] (md-1);
      \draw (md7) to[bend right] (ep);
      \draw (m7) to[bend left] (ep);
    \end{scope}
  \end{tikzpicture}
  \caption{The symmetric monoidal \mbox{$\epsilon \circ (\pi_!)_!\tr(\overline{g})$}}
  \label{fig:fibtrace1-symmon}
\end{figure}}

In Figure~\ref{fig:fibtrace1-lower} we treat the lower parts.
Figure~\ref{fig:fibtrace1-lower-a} is the lower part of
Figure~\ref{fig:fibtrace1-symmon}.  To obtain
Figure~\ref{fig:fibtrace1-lower-b}, we change the isomorphism
\[ \Delta^* (\id\times\pi^*)\pi^* \cong \pi^* \] from one which
contracts $\Delta$ with the second $\pi$ to one which contracts it with
the first.  The two are equal by pseudofunctor coherence, since both are
induced by the equality $\pi (\id\times \pi)\Delta = \pi$.

Then in Figure~\ref{fig:fibtrace1-lower-c} we slide $\ep$ and the
symmetry down to the bottom.  This yields the lower part of
Figure~\ref{fig:fibtrace1-bicat} (recalling the relationship of the
symmetric monoidal and bicategorical evaluations) together with a
composite of pseudofunctoriality isomorphisms on top.  But by
pseudofunctor coherence, this composite is equal to the identity.
\afterpage{\begin{landscape}
\begin{figure}
  \centering
  \subfigure[Step 1]{\label{fig:fibtrace1-lower-a}
}
  \caption{Upper part IV, from the proof of \autoref{thm:fibtrace}\ref{item:fibtrace1}}
\end{figure}
\end{landscape}
}

Next, in Figure~\ref{fig:fibtrace1-upperI} we begin considering the
upper parts.  In Figure~\ref{fig:fibtrace1-upper-a} we show the upper
part of Figure~\ref{fig:fibtrace1-symmon}, with the symmetric monoidal
coevaluation written in terms of the bicategorical one.  Then in
Figure~\ref{fig:fibtrace1-upper-b} we slide the bicategorical $\eta$ all
the way to the bottom (directly above $g$, which divides the upper from
the lower parts).
  
Since $\eta$ also occurs directly above $g$ in
Figure~\ref{fig:fibtrace1-bicat}, it suffices to compare the parts of
Figures~\ref{fig:fibtrace1-bicat} and~\ref{fig:fibtrace1-upper-b}
occurring above $\eta$.  Moreover, we may also ignore the $Q$ node and
the $(\pi_A)_!$ node on the far left, since neither plays any role in
either of these diagrams.

Now, in Figure~\ref{fig:fibtrace1-usub-1} we have copied the part of
Figure~\ref{fig:fibtrace1-bicat} above $\eta$, with $Q$ and $(\pi_A)_!$
omitted and the definition of the Frobenius morphism substituted.  In
Figure~\ref{fig:fibtrace1-usub-2} we simply slide, and then to get to
Figure~\ref{fig:fibtrace1-usub-3} we apply the broken zigzag identity
(Figure~\ref{fig:curlycue-mate}).

In Figure~\ref{fig:fibtrace1-usub-4} we replace the isomorphism at the
very top by a composite of two other isomorphisms, which is equal to it
by pseudofunctor coherence.  Figure~\ref{fig:fibtrace1-usub-5} is a
simple slide, and then in Figure~\ref{fig:fibtrace1-usub-6} we replace
the composite of two associativity isomorphisms by the composite of
three (the ``pentagon identity,'' which also holds by pseudofunctor
coherence).

In Figure~\ref{fig:fibtrace1-usub-7} we slide the lower of these
associativities past a unit, and finally in
Figure~\ref{fig:fibtrace1-usub-8} we compose two isomorphisms at the
bottom to obtain a different one (again by pseudofunctor coherence).
The result is exactly the part of Figure~\ref{fig:fibtrace1-upper-b}
above $\eta$, with $Q$ and $(\pi_A)_!$ omitted.
\end{proof}

\begin{proof}[of {\normalfont\autoref{thm:fibtrace}\ref{item:fibtrace2}}]
  We define $\widetilde{f}$ to be the composite
  \[ (\Delta_A)_! Q \odot \widehat{M} \xto{\cong}
  Q \otimes M \xto{f}
  M \otimes P \to
  M \otimes (\pi_A)^* (\pi_A)_! P \xto{\cong}
  \widehat{M} \odot (\pi_A)_! P.
  \]
  This is pictured in Figure~\ref{fig:ftilde}.
  We now claim that the following square commutes.
  \begin{equation}
    \xymatrix{ Q\otimes M \ar[r] \ar[d]_f &
      (\Delta_A)^* (\Delta_A)_! Q \otimes M\ar[d]^{\overline{\widetilde{f}}}\\
      M\otimes P \ar[r] & M\otimes (\pi_A)^* (\pi_A)_! P}\label{eq:ftilde-square}
  \end{equation}
  To show this, we substitute $\widetilde{f}$ from
  Figure~\ref{fig:ftilde} as $g$ in Figure~\ref{fig:fibtrace1-gbar}
  (replacing $Q$ by $\Delta_!Q$ and $P$ by $\pi_! P$).  Canceling the
  isomorphism $\Delta^* (\id\times \pi)^* \cong \Id$ with its inverse,
  we see that what is left at the bottom of the resulting diagram is
  exactly the left-bottom composite of~\eqref{eq:ftilde-square}.
  Therefore, it suffices to show that what remains above this becomes
  the identity when composed with the top arrow
  in~\eqref{eq:ftilde-square}.  By inverting the first two transitions
  in Figure~\ref{fig:ftilde}, this is equivalent to the equality shown
  in Figure~\ref{fig:ftilde2}.  However, this follows by a simple
  sliding and the broken zigzag identity from
  Figure~\ref{fig:curlycue-mate}.
  \begin{figure}
  \centering
  \begin{tikzpicture}[strings]
    \def\yy{-1}
    \node[fiber] (m\yy) at (-.5,\yy,0) {$M$};
    \node[fiber] (q\yy) at (0,\yy,2) {$Q$};
    \node[pfeflat] (d3\yy) at (4,\yy,2) {};
    \node[pbeflat] (d4\yy) at (5,\yy,1) {};
    \node[pfeflat] (d5\yy) at (6,\yy, 1){};
    \draw (m\yy) -- (d4\yy) -- (d5\yy);
    \draw (q\yy)--(d3\yy) -- (d4\yy);
     \draw (d3\yy)--(7, \yy,3);

    \def\yy{0}
    \node[fiber] (m\yy) at (-.5,\yy,0) {$M$};
    \node[fiber] (q\yy) at (0,\yy,2) {$Q$};
    \node[pbeflat] (d3\yy) at (4,\yy,1.5) {};
    \node[pfeflat] (d4\yy) at (5,\yy,1.5) {};
     \node[pfeflat] (d5\yy) at (6,\yy, 1){};
     \draw (m\yy) -- (d3\yy);
     \draw (d4\yy)  -- (d5\yy);
    \draw (q\yy)--(d3\yy) -- (d4\yy) --(7, \yy,3);

    \def\yy{1}
    \node[fiber] (m\yy) at (-.5,\yy,0) {$M$};
    \node[fiber] (q\yy) at (0,\yy,2) {$Q$};
    \node[pbeflat] (d4\yy) at (4,\yy,1) {};
    \draw (m\yy) -- (d4\yy) --(7, \yy,1);
    \draw (q\yy) -- (d4\yy);

    \def\yy{3}
    \node[fiber] (p\yy) at (-.5,\yy,0) {$P$};
    \node[fiber] (m\yy) at (0,\yy,2) {$M$};
    \node[pbeflat] (d4\yy) at (4,\yy,1) {};
    \draw (m\yy) -- (d4\yy) --(7, \yy,1);
    \draw (p\yy) -- (d4\yy);

    \node[outervert] (f) at (-.25,2.1,1) {$f$};
    \draw[outer] (m1) -- (f);
    \draw[outer] (q1) -- (f);
    \draw[outer] (f) -- (p3);
    \draw[outer] (f) -- (m3);

    \def\yy{4}
    \node[fiber] (p\yy) at (-.5,\yy,0) {$P$};
    \node[fiber] (m\yy) at (0,\yy,2) {$M$};
    \node[pfeflat] (d2\yy) at (2,\yy,0) {};
    \node[pbeflat] (d3\yy) at (3,\yy,0) {};
    \node[pbeflat] (d4\yy) at (4,\yy,1) {};
    \draw (m\yy) -- (d4\yy) --(7, \yy,1);
    \draw (p\yy) -- (d2\yy);
     \draw(d3\yy)--(d4\yy);

    \draw[outer] (q-1) -- (q0) -- (q1);

    \begin{scope}[yshift=.5cm]

    \def\yy{5}
    \node[fiber] (p\yy) at (-.5,\yy,0) {$P$};
    \node[fiber] (m\yy) at (0,\yy,2) {$M$};
    \node[pfeflat] (d2\yy) at (2,\yy,0) {};

    \draw (m\yy) -- (7, \yy,2);
    \draw (p\yy) -- (d2\yy);

    \draw[inner] (d30) -- (d41)-- (f) -- (d43)--(d44) to[out=-90,in=-90,looseness=2] node[inneriso] {} (d34);
    \draw[inner] (d5-1) -- (d50) to[out=-90,in=-90,looseness=2] node[inneriso] {} (d40);
    \draw (d40) -- (d3-1);
    \draw[inner] (d4-1) -- (d30);
    \node[innerbc] at (intersection of d3-1--d40 and d4-1--d30) {};
    \draw[inner](d24)--(d25);
    \draw[outer] (m-1) -- (m0) -- (m1);
    \draw[outer] (m3) -- (m4)--(m5);
    \draw[outer] (p3)--(p4)--(p5);

    \draw[inner] (d34) to[out=90,in=90,looseness=1.5] node[innerunit] {} (d24);

    \end{scope}
  \end{tikzpicture}
  \caption{The map $\tilde{f}$ in \autoref{thm:fibtrace}\ref{item:fibtrace2}}
  \label{fig:ftilde}
\end{figure}
\begin{figure}
  \centering
  \begin{tikzpicture}[strings]
    \def\yy{0}
    \node[fiber] (m\yy) at (-.5,\yy,0) {$M$};
    \node[fiber] (q\yy) at (0,\yy,2) {$Q$};
    \node[pbeflat] (d3\yy) at (3,\yy,1) {};
    \draw (m\yy) -- (d3\yy) -- (6,\yy,1);
    \draw (q\yy) -- (d3\yy);

    \def\yy{1}
    \node[fiber] (m\yy) at (-.5,\yy,0) {$M$};
    \node[fiber] (q\yy) at (0,\yy,2) {$Q$};
    \node[pbeflat] (d3\yy) at (3,\yy,1) {};
    \node[pfeflat] (d4\yy) at (4,\yy,1) {};
    \node[pfeflat] (d5\yy) at (5,\yy,0) {};
    \draw (m\yy) -- (d3\yy) -- (d4\yy) -- (6,\yy,1);
    \draw (q\yy) -- (d3\yy);
    \draw (d4\yy) -- (d5\yy);

    \def\yy{2}
    \node[fiber] (m\yy) at (-.5,\yy,0) {$M$};
    \node[fiber] (q\yy) at (0,\yy,2) {$Q$};
    \node[pfeflat] (d1\yy) at (1,\yy,2) {};
    \node[pbeflat] (d2\yy) at (2,\yy,2) {};
    \node[pbeflat] (d3\yy) at (3,\yy,1) {};
    \node[pfeflat] (d4\yy) at (4,\yy,1) {};
    \node[pfeflat] (d5\yy) at (5,\yy,0) {};
    \draw (m\yy) -- (d3\yy) -- (d4\yy) -- (6,\yy,1);
    \draw (q\yy) -- (d1\yy);
    \draw (d2\yy) -- (d3\yy);
    \draw (d1\yy) to[bend right] (d2\yy);
    \draw (d1\yy) to[bend left] (d2\yy);
    \draw (d4\yy) -- (d5\yy);

    \def\yy{3}
    \node[fiber] (m\yy) at (-.5,\yy,0) {$M$};
    \node[fiber] (q\yy) at (0,\yy,2) {$Q$};
    \node[pfeflat] (d1\yy) at (1,\yy,2) {};
    \node[pbeflat] (d2\yy) at (2,\yy,0) {};
    \node[pbeflat] (d3\yy) at (3,\yy,1) {};
    \node[pfeflat] (d4\yy) at (4,\yy,1) {};
    \node[pfeflat] (d5\yy) at (5,\yy,0) {};
    \draw (m\yy) -- (d2\yy);
    \draw (q\yy) -- (d1\yy);
    \draw (d2\yy) -- (d3\yy) -- (d4\yy) -- (6,\yy,1);
    \draw (d1\yy) -- (d2\yy);
    \draw (d1\yy) -- (d3\yy);
    \draw (d4\yy) -- (d5\yy);

    \def\yy{4}
    \node[fiber] (m\yy) at (-.5,\yy,0) {$M$};
    \node[fiber] (q\yy) at (0,\yy,2) {$Q$};
    \node[pfeflat] (d1\yy) at (1,\yy,2) {};
    \node[pbeflat] (d2\yy) at (2,\yy,0) {};
    \node[pfeflat] (d5\yy) at (5,\yy,0) {};
    \draw (m\yy) -- (d2\yy) -- (d5\yy);
    \draw (q\yy) -- (d1\yy) -- (6,\yy,2);
    
    \begin{scope}[inner]
      \draw (d30) -- (d31) -- (d32) -- (d23) -- (d24);
      \draw (d33) -- (d22);
      \draw (d22) to[out=90,in=90,looseness=3] node[innerunit] {} (d12);
      \draw (d12) -- (d13) -- (d14);
      \draw (d33) to[out=-90,in=-90,looseness=2] node[innercounit] {} (d43);
      \draw (d43) -- (d42) -- (d41);
      \draw (d41) to[out=90,in=90,looseness=3] node[inneriso] {} (d51);
      \draw (d51) -- (d52) -- (d53) -- (d54);
      \node[inneriso] at (intersection of d22--d33 and d32--d23) {};
    \end{scope}
    \begin{scope}[outer]
      \draw (m0) -- (m1) -- (m2) -- (m3) -- (m4);
      \draw (q0) -- (q1) -- (q2) -- (q3) -- (q4);
    \end{scope}
  \end{tikzpicture}
  \qquad\raisebox{3.5cm}{$=$}\qquad
  \begin{tikzpicture}[strings]
    \def\yy{0}
    \node[fiber] (m\yy) at (-.5,\yy,0) {$M$};
    \node[fiber] (q\yy) at (0,\yy,2) {$Q$};
    \node[pbeflat] (d5\yy) at (5,\yy,1) {};
    \draw (m\yy) -- (d5\yy) -- (6,\yy,1);
    \draw (q\yy) -- (d5\yy);

    \def\yy{1}
    \node[fiber] (m\yy) at (-.5,\yy,0) {$M$};
    \node[fiber] (q\yy) at (0,\yy,2) {$Q$};
    \node[pfeflat] (d1\yy) at (1,\yy,2) {};
    \node[pbeflat] (d2\yy) at (2,\yy,2) {};
    \node[pbeflat] (d5\yy) at (5,\yy,1) {};
    \draw (m\yy) -- (d5\yy) -- (6,\yy,1);
    \draw (d1\yy) to[bend right] (d2\yy);
    \draw (d1\yy) to[bend left] (d2\yy);
    \draw (q\yy) -- (d1\yy);
    \draw (d2\yy) -- (d5\yy);

    \def\yy{2}
    \node[fiber] (m\yy) at (-.5,\yy,0) {$M$};
    \node[fiber] (q\yy) at (0,\yy,2) {$Q$};
    \node[pfeflat] (d1\yy) at (1,\yy,2) {};
    \node[pbeflat] (d2\yy) at (2,\yy,0) {};
    \node[pbeflat] (d5\yy) at (5,\yy,1) {};
    \draw (m\yy) -- (d2\yy) -- (d5\yy) -- (6,\yy,1);
    \draw (d1\yy) -- (d2\yy);
    \draw (q\yy) -- (d1\yy);
    \draw (d1\yy) -- (d5\yy);

    \def\yy{3}
    \node[fiber] (m\yy) at (-.5,\yy,0) {$M$};
    \node[fiber] (q\yy) at (0,\yy,2) {$Q$};
    \node[pfeflat] (d1\yy) at (1,\yy,2) {};
    \node[pbeflat] (d2\yy) at (2,\yy,0) {};
    \node[pfeflat] (d3\yy) at (3,\yy,0) {};
    \node[pbeflat] (d4\yy) at (4,\yy,0) {};
    \node[pbeflat] (d5\yy) at (5,\yy,1) {};
    \draw (m\yy) -- (d2\yy) -- (d3\yy);
    \draw (d4\yy) -- (d5\yy);
    \draw (d1\yy) -- (d2\yy);
    \draw (q\yy) -- (d1\yy);
    \draw (d1\yy) -- (d5\yy) -- (6,\yy,1);

    \def\yy{4}
    \node[fiber] (m\yy) at (-.5,\yy,0) {$M$};
    \node[fiber] (q\yy) at (0,\yy,2) {$Q$};
    \node[pfeflat] (d1\yy) at (1,\yy,2) {};
    \node[pbeflat] (d2\yy) at (2,\yy,0) {};
    \node[pfeflat] (d3\yy) at (3,\yy,0) {};
    \draw (m\yy) -- (d2\yy) -- (d3\yy);
    \draw (d1\yy) -- (d2\yy);
    \draw (q\yy) -- (d1\yy);
    \draw (d1\yy) -- (6,\yy,1);

    \begin{scope}[inner]
      \draw (d50) -- (d51) -- (d22);
      \draw (d21) -- (d52) -- (d53);
      \node[inneriso] at (intersection of d22--d51 and d21--d52) {};
      \draw (d53) to[out=-90,in=-90,looseness=2] node[inneriso] {} (d43);
      \draw (d43) to[out=90,in=90,looseness=2] node[innerunit] {} (d33);
      \draw (d33) -- (d43);
      \draw (d21) to[out=90,in=90,looseness=3] node[innerunit] {} (d11);
      \draw (d11) -- (d12) -- (d13) -- (d14);
      \draw (d22) -- (d23) -- (d24);
      \draw (d33) -- (d34);
    \end{scope}
    \begin{scope}[outer]
      \draw (m0) -- (m1) -- (m2) -- (m3) -- (m4);
      \draw (q0) -- (q1) -- (q2) -- (q3) -- (q4);
    \end{scope}
  \end{tikzpicture}
  \caption{The sufficient condition for \autoref{thm:fibtrace}\ref{item:fibtrace2}}
  \label{fig:ftilde2}
\end{figure}

Now, from the conclusion of~\ref{item:fibtrace1} and the naturality of
symmetric monoidal traces, we can conclude that the following diagram
commutes.
  \[\xymatrix{ (\pi_A)_!Q \ar[r] \ar[d]_{(\pi_A)_! \tr(f)} &
    (\pi_A)_!(\Delta_A)^* (\Delta_A)_! Q \ar[d]_{(\pi_A)_!\tr(\overline{\widetilde{f}})}
    \ar[dr]^{\tr(\widetilde{f})}
    \\
    (\pi_A)_! P \ar[r] & (\pi_A)_! (\pi_A)^* (\pi_A)_! P \ar[r] & (\pi_A)_! P }
  \]
  But the composite along the bottom of this diagram is the identity, by
  a triangle law for the adjunction $(\pi_A)_! \dashv (\pi_A)^*$.  This
  yields the desired result.
\end{proof}

\begin{proof}[of {\normalfont\autoref{thm:fibtrace}\ref{item:fibtrace3}}]
  The bijection between the two types of morphism is immediate, since
  the respective domains and codomains are isomorphic.  For a morphism
  $f\colon Q\otimes M \to M\otimes (\pi_A)^*P$, the corresponding
  $\widehat{f}\colon (\Delta_A)_! Q \odot \widehat{M} \to \widehat{M}
  \odot P$ is shown on the left side of Figure~\ref{fig:fhat}.
  Moreover, given $f\colon Q\otimes M \to M\otimes (\pi_A)^*P$, we can
  also construct $\widetilde{f}$ as in part~\ref{item:fibtrace2}
  (replacing $P$ by $(\pi_A)^*P$), and we claim that $\widehat{f}$ is
  the composite
  \begin{equation}
    \xymatrix{(\Delta_A)_! Q \odot \widehat{M} \ar[r]^-{\widetilde{f}} &
      \widehat{M} \odot (\pi_A)_!(\pi_A)^* P \ar[r]^-{\id\odot \ep} &
      \widehat{M} \odot P.}\label{eq:fhat-ftilde}
  \end{equation}
  This composite is shown on the right side of Figure~\ref{fig:fhat}; a
  slide and a triangle identity suffice to prove the equality.  Now the
  desired result follows from the conclusion of
  part~\ref{item:fibtrace2} together with the naturality of
  bicategorical traces~\cite[Prop.~7.1]{PS2}.
\end{proof}
\begin{figure}
  \centering
  \begin{tikzpicture}[strings]
    \def\yy{0}
    \node[fiber] (m\yy) at (-1,\yy,0) {$M$};
    \node[fiber] (q\yy) at (0,\yy,2) {$Q$};
    \node[pfeflat] (d1\yy) at (1,\yy,2) {};
    \node[pbeflat] (d2\yy) at (2,\yy,1) {};
    \node[pfeflat] (d3\yy) at (3,\yy,1) {};
    \draw (m\yy) -- (d2\yy) -- (d3\yy);
    \draw (q\yy) -- (d1\yy) -- (d2\yy);
    \draw (d1\yy) -- (4,\yy,2);

    \def\yy{1}
    \node[fiber] (m\yy) at (-1,\yy,0) {$M$};
    \node[fiber] (q\yy) at (0,\yy,2) {$Q$};
    \node[pbeflat] (d1\yy) at (1,\yy,1) {};
    \node[pfeflat] (d2\yy) at (2,\yy,1) {};
    \node[pfeflat] (d3\yy) at (3,\yy,0) {};
    \draw (m\yy) -- (d1\yy) -- (d2\yy) -- (d3\yy);
    \draw (q\yy) -- (d1\yy);
    \draw (d2\yy) -- (4,\yy,1);

    \def\yy{2}
    \node[fiber] (m\yy) at (-1,\yy,0) {$M$};
    \node[fiber] (q\yy) at (0,\yy,2) {$Q$};
    \node[pbeflat] (d1\yy) at (1,\yy,1) {};
    \draw (m\yy) -- (d1\yy) -- (4,\yy,1);
    \draw (q\yy) -- (d1\yy);

    \def\yy{4}
    \node[fiber] (p\yy) at (-1,\yy,0) {$P$};
    \node[fiber] (m\yy) at (0,\yy,2) {$M$};
    \node[pbeflat] (d1\yy) at (1,\yy,0) {};
    \node[pbeflat] (d2\yy) at (2,\yy,1) {};
    \draw (d1\yy) -- (d2\yy) -- (4,\yy,1);
    \draw (m\yy) -- (d2\yy);

    \def\yy{5}
    \node[fiber] (p\yy) at (-1,\yy,0) {$P$};
    \node[fiber] (m\yy) at (0,\yy,2) {$M$};
    \draw (m\yy) -- (4,\yy,2);

    \begin{scope}[outer]
      \node[outervert] (f) at (-.3,3,1) {$f$};
      \draw (m0) -- (m1) -- (m2) -- (f) -- (m4) -- (m5);
      \draw (q0) -- (q1) -- (q2) -- (f) -- (p4) -- (p5);
    \end{scope}
    \begin{scope}[inner]
      \draw (d20) -- (d11) -- (d12) -- (f);
      \draw (d10) -- (d21);
      \node[innerbc] at (intersection of d10--d21 and d20--d11) {};
      \draw (d21) to[out=-90,in=-90,looseness=3] node[inneriso] {} (d31);
      \draw (d30) -- (d31);
      \draw (f) -- (d14);
      \draw (f) -- (d24);
      \draw (d14) to[out=-90,in=-90,looseness=3] node[inneriso] {} (d24);
    \end{scope}
    \node[white] at (0,7,0) {$M$};
  \end{tikzpicture}
  \qquad\raisebox{6cm}{$=$}\qquad
  \begin{tikzpicture}[strings]
    \def\yy{-1}
    \node[fiber] (m\yy) at (-.5,\yy,0) {$M$};
    \node[fiber] (q\yy) at (0,\yy,2) {$Q$};
    \node[pfeflat] (d3\yy) at (4,\yy,2) {};
    \node[pbeflat] (d4\yy) at (5,\yy,1) {};
    \node[pfeflat] (d5\yy) at (6,\yy, 1){};
    \draw (m\yy) -- (d4\yy) -- (d5\yy);
    \draw (q\yy)--(d3\yy) -- (d4\yy);
     \draw (d3\yy)--(7, \yy,3);

    \def\yy{0}
    \node[fiber] (m\yy) at (-.5,\yy,0) {$M$};
    \node[fiber] (q\yy) at (0,\yy,2) {$Q$};
    \node[pbeflat] (d3\yy) at (4,\yy,1.5) {};
    \node[pfeflat] (d4\yy) at (5,\yy,1.5) {};
     \node[pfeflat] (d5\yy) at (6,\yy, 1){};
     \draw (m\yy) -- (d3\yy);
     \draw (d4\yy)  -- (d5\yy);
    \draw (q\yy)--(d3\yy) -- (d4\yy) --(7, \yy,3);

    \def\yy{1}
    \node[fiber] (m\yy) at (-.5,\yy,0) {$M$};
    \node[fiber] (q\yy) at (0,\yy,2) {$Q$};
    \node[pbeflat] (d4\yy) at (4,\yy,1) {};
    \draw (m\yy) -- (d4\yy) --(7, \yy,1);
    \draw (q\yy) -- (d4\yy);

    \def\yy{3}
    \node[fiber] (p\yy) at (-.5,\yy,0) {$P$};
    \node[pbeflat] (d1\yy) at (1,\yy,0) {};
    \node[fiber] (m\yy) at (0,\yy,2) {$M$};
    \node[pbeflat] (d4\yy) at (4,\yy,1) {};
    \draw (m\yy) -- (d4\yy) --(7, \yy,1);
    \draw (d1\yy) -- (d4\yy);

    \node[outervert] (f) at (-.25,2.1,1) {$f$};
    \draw[outer] (m1) -- (f);
    \draw[outer] (q1) -- (f);
    \draw[inner] (f) -- (d13);
    \draw[outer] (f) -- (m3);

    \def\yy{4}
    \node[fiber] (p\yy) at (-.5,\yy,0) {$P$};
    \node[pbeflat] (d1\yy) at (1,\yy,0) {};
    \node[fiber] (m\yy) at (0,\yy,1.5) {$M$};
    \node[pfeflat] (d2\yy) at (2,\yy,0) {};
    \node[pbeflat] (d3\yy) at (3,\yy,0) {};
    \node[pbeflat] (d4\yy) at (4,\yy,1) {};
    \draw (m\yy) -- (d4\yy) --(7, \yy,1);
    \draw (d1\yy) -- (d2\yy);
     \draw(d3\yy)--(d4\yy);

    \draw[outer] (q-1) -- (q0) -- (q1);

    \begin{scope}[yshift=.5cm]

    \def\yy{5}
    \node[fiber] (p\yy) at (-.5,\yy,0.5) {$P$};
    \node[pbeflat] (d1\yy) at (1,\yy,0.5) {};
    \node[fiber] (m\yy) at (0,\yy,2) {$M$};
    \node[pfeflat] (d2\yy) at (2,\yy,0.5) {};

    \draw (m\yy) -- (7, \yy,2);
    \draw (d1\yy) -- (d2\yy);

    \def\yy{6}
    \node[fiber] (p\yy) at (-.5,\yy,0.5) {$P$};
    \node[fiber] (m\yy) at (0,\yy,2) {$M$};
    \draw (m\yy) -- (7, \yy,2);

    \draw[inner] (d30) -- (d41)-- (f) -- (d43)--(d44)
    to[out=-90,in=-90,looseness=2] node[inneriso] {} (d34);
    \draw[inner] (d5-1) -- (d50) to[out=-90,in=-90,looseness=2] node[inneriso] {} (d40);
    \draw (d40) -- (d3-1);
    \draw[inner] (d4-1) -- (d30);
    \node[innerbc] at (intersection of d3-1--d40 and d4-1--d30) {};
    \draw[inner](d24)--(d25);
    \draw[outer] (m-1) -- (m0) -- (m1);
    \draw[outer] (m3) -- (m4)--(m5) -- (m6);
    \draw[inner] (d13)--(d14)--(d15);
    \draw[outer](f) -- (p3)--(p4)--(p5) -- (p6);
    \draw[inner] (d15) to[out=-90,in=-90,looseness=4] node[innercounit] {} (d25);

    \draw[inner] (d34) to[out=90,in=90,looseness=1.5] node[innerunit] {} (d24);

    \end{scope}
  \end{tikzpicture}

  \caption{The map $\widehat{f}$ in \autoref{thm:fibtrace}\ref{item:fibtrace3}}
  \label{fig:fhat}
\end{figure}

\section{Proofs for total duality and trace}
\label{sec:tdproofs}

In contrast to the fiberwise case, our general comparison theorems for
total duality traces were completely formal, not requiring any string
diagram calculations.  However, the two most interesting applications,
namely Corollaries \ref{cor:totaltr3} and \ref{thm:totaltr-transfer},
required identifying the composites $\xi\circ f$ and $\zeta\circ f$ as
$\Sigma(f)$ and $\Sigma(\Delta_A\circ f)$, respectively, and for this we
do need some calculation.

Recall that we have an endomorphism $\phi\colon A\to A$ of $A\in \bS$
and that we defined $f$ to be the isomorphism
\begin{equation}\label{eq:f-over-f1}
  I_A \cong \phi^* I_A
\end{equation}
regarded as a morphism $\widecheck{I_A} \to \widecheck{I_A}\odot A_\phi$.
We defined $\xi$ be the morphism
\[ A_\phi \odot \widehat{I_A} \xto{\cong} \widehat{\phi_! I_A}
\xto{\cong} \widehat{\phi_! \phi^* I_A} \too \widehat{I_A}.
\]
By definition, then, the composite $\xi\circ f$ becomes
\begin{multline}
(\pi_A)_! I_A \xto{\cong} \widecheck{I_A} \odot \widehat{I_A}
\xto{\cong}
\widecheck{\phi^* I_A} \odot \widehat{I_A}
\xto{\cong} \widecheck{I_A} \odot A_\phi \odot \widehat{I_A}
\xto{\cong} \widecheck{I_A} \odot \widehat{\phi_! I_A}\\
\xto{\cong} \widecheck{I_A} \odot \widehat{\phi_! \phi^* I_A}
\too \widecheck{I_A} \odot \widehat{I_A}
\xto{\cong} (\pi_A)_! I_A.\label{eq:bco-sliding-0}
\end{multline}

To give a simpler description of this composite we start by considering
the composite of only the third and fourth morphisms (the two on the
second line of~\eqref{eq:bco-sliding-0}).  These are instances of
\autoref{thm:bco-action}.  Tracing through their definitions yields a
fairly long composite of isomorphisms, but fortunately it is equal to
something simpler, as an instance of the following lemma.

\begin{lem}\label{thm:bco-sliding}
  Let $M\in\sC^B$, $N\in\sC^A$, and let $\phi\colon A\to B$ be a morphism in \bS.
  Then the composite
  \begin{equation}
    \widecheck{\phi^* M} \odot \widehat{N} \xto{\cong}
    \widecheck{M} \odot B_\phi \odot \widehat{N} \xto{\cong}
    \widecheck{M} \odot \widehat{\phi_! N}.\label{eq:bco-sliding}
  \end{equation}
  is equal to
   \begin{align*}
     \widecheck{\phi^* M} \odot \widehat{N}
     &= (\pi_A)_! (\Delta_A)^* (\phi^* M \boxtimes N)\\
     & \cong (\pi_B)_! \phi_! (\Delta_A)^* (\phi^* M \boxtimes N)\\
     & \cong (\pi_B)_! (\Delta_B)^* (M \boxtimes \phi_! N)\\
     & = \widecheck{M} \odot \widehat{\phi_! N}.
   \end{align*}
\end{lem}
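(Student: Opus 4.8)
The plan is to expand both composites into explicit strings of reindexing and indexed-coproduct functors and their coherence isomorphisms, and then verify equality by tracking naturality squares and pseudofunctoriality coherences — or, more efficiently, to translate everything into the string-diagram (``slice'') calculus of \S\ref{sec:colored-strings} where the claim becomes a visual deformation. Concretely, I would first unwind the maps \(\widecheck{\phi^* M}\odot\widehat N \cong \widecheck M\odot B_\phi\odot\widehat N\) and \(\widecheck M\odot B_\phi\odot\widehat N \cong \widecheck M\odot\widehat{\phi_! N}\) using \autoref{thm:bco-action} (applied to \(M\) and to \(N\) respectively) together with the definition \(B_\phi = (\phi\times\id)_! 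U_A\) and the composition formula \(P\odot Q = (\pi)_!(\Delta)^*(P\boxtimes Q)\). This produces on the left-hand side a long zigzag built from the Frobenius and ``sliding and splitting'' Beck--Chevalley isomorphisms plus pseudofunctoriality isomorphisms for the identities \((\pi\times\id)\Delta=\id\) and \(\pi\phi = \pi\).

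Next I would display the right-hand side. Its first isomorphism is just pseudofunctoriality \((\pi_A)_! = (\pi_B)_!\phi_!\) (from \(\pi_A = \pi_B\phi\)); the crucial middle isomorphism \((\pi_B)_!\phi_!(\Delta_A)^*(\phi^*M\boxtimes N)\cong(\pi_B)_!(\Delta_B)^*(M\boxtimes\phi_!N)\) is obtained by combining the ``commutativity with reindexing'' Beck--Chevalley condition (to commute \(\phi_!\) past the relevant reindexing) with the projection formula / preservation of indexed coproducts by \(\ten\) in the form \((f\times g)_!(M\boxtimes N)\cong f_!M\boxtimes g_!N\) from the Lemma in \S\ref{sec:indexed-coproducts}, applied to \(\phi_!N\), and the isomorphism \eqref{eq:extmon}. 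The key geometric fact underlying the match is that the pullback square relating \(\Delta_A\), \(\phi\), and \(\Delta_B\) — namely \(\phi\Delta_A = (\phi\times\phi)\Delta_A\) sitting inside the square with \(\Delta_B\) and \(\phi\times\id\) — is exactly the ``sliding and splitting'' square of Figure~\ref{fig:slidpb} (or a product of it with a fixed object), so the Beck--Chevalley isomorphism appearing in the unwound \autoref{thm:bco-action} composite is literally the same one reorganized.

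Having both composites written as zigzags of the same generating isomorphisms, I would then show they agree. The efficient route is to draw each as a slice diagram: \(\widecheck{\phi^*M}\odot\widehat N\) and \(\widecheck M\odot\widehat{\phi_!N}\) are box-and-triangle diagrams of the type in Figure~\ref{fig:bicatops}, the map \eqref{eq:bco-sliding} is the graphical \autoref{thm:bco-action} calculation of Figure~\ref{fig:bco-action} (performed twice, once absorbing \(\phi\) into the \(N\)-side), and the three-step right-hand composite is a short sequence of slides past each other plus one application of the ``sliding and splitting'' Beck--Chevalley diamond. In the slice calculus, commuting a \(\phi_!\) triangle past an unrelated \(\Delta\)-node or \(\pi\)-node is a topological move (commutativity with reindexing, drawn as strings crossing), and the one non-topological step on each side is the same ``sliding and splitting'' isomorphism; the remaining discrepancies are composites of pseudofunctoriality and associativity isomorphisms which are identities by coherence. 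Thus the two diagrams are equal.

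The main obstacle I expect is bookkeeping, not conceptual difficulty: the unwound form of \autoref{thm:bco-action} as used here involves a composite of five or six isomorphisms (Frobenius, two pseudofunctoriality steps for \((\pi\times\id)\Delta=\id\), and the reshuffling of \(\boxtimes\)), and one must be careful that after absorbing \(\phi_!\) into the \(N\)-leg the leftover coherence cells really do cancel rather than contributing a nontrivial automorphism. Verifying this cleftover-is-identity claim rigorously requires either a diagram chase invoking the pseudofunctor coherence axioms (pentagon and triangle for \(f^*\) and for \(f_!\), plus their compatibility with the \(\boxtimes\) constraints), or an appeal to a general ``validity of string diagrams'' principle. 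Since the paper elsewhere (e.g.\ in the proof of \autoref{thm:fibtrace}) repeatedly defers such cancellations to ``pseudofunctor coherence,'' I would do the same here, presenting the two slice diagrams side by side, indicating the handful of slides and the single ``sliding and splitting'' application that transform one into the other, and noting that the residual isomorphism is the identity by coherence.
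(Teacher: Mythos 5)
Your proposal matches the paper's approach: the paper's own proof simply declares the result ``a straightforward, though tedious, verification, using the coherence of Beck--Chevalley morphisms (the techniques for computation with mates described in~\cite{ks:r2cats} are useful),'' and presents the displayed composite graphically in Figure~\ref{fig:bco-sliding} without spelling out the chase. Your identification of the shared ``sliding and splitting'' Beck--Chevalley isomorphism as the single non-coherence step, with the remaining discrepancies absorbed by pseudofunctor coherence (equivalently, slides in the slice calculus), is exactly the content the paper is gesturing at.
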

\begin{proof}
  A straightforward, though tedious, verification, using the coherence
  of Beck-Chevalley morphisms (the techniques for computation with mates
  described in~\cite{ks:r2cats} are useful).
\end{proof}

The composite displayed in \autoref{thm:bco-sliding} is shown
graphically in Figure~\ref{fig:bco-sliding}.  Note the occurrence of the
``sliding and splitting'' Beck-Chevalley isomorphism (recall
Figure~\ref{fig:slidpb}).
\begin{figure}
  \centering
  \begin{tikzpicture}[strings,y={(0,1.3)}]
    \def\yy{0}
    \node[fiber] (m\yy) at (0,\yy,1) {$M$};
    \node[fiber] (n\yy) at (-1,\yy,-1) {$N$};
    \node[pbsmflat] (phi1\yy) at (1,\yy,1) {$\phi$};
    \node[pbeflat] (d2\yy) at (2,\yy,0) {};
    \node[pfeflat] (d4\yy) at (4,\yy,0) {};
    \draw (m\yy) -- (phi1\yy) -- (d2\yy) -- (d4\yy);
    \draw (n\yy) -- (d2\yy);
    
    \def\yy{1}
    \node[fiber] (m\yy) at (0,\yy,1) {$M$};
    \node[fiber] (n\yy) at (-1,\yy,-1) {$N$};
    \node[pbsmflat] (phi1\yy) at (1,\yy,1) {$\phi$};
    \node[pbeflat] (d2\yy) at (2,\yy,0) {};
    \node[pfsmflat] (phi3\yy) at (3,\yy,0) {$\phi$};
    \node[pfeflat] (d4\yy) at (4,\yy,0) {};
    \draw (m\yy) -- (phi1\yy) -- (d2\yy) -- (phi3\yy) -- (d4\yy);
    \draw (n\yy) -- (d2\yy);

    \def\yy{2}
    \node[fiber] (m\yy) at (0,\yy,1) {$M$};
    \node[fiber] (n\yy) at (-1,\yy,-1) {$N$};
    \node[pfsmflat] (phi1\yy) at (1,\yy,-1) {$\phi$};
    \node[pbeflat] (d2\yy) at (2,\yy,0) {};
    \node[pfeflat] (d4\yy) at (4,\yy,0) {};
    \draw (m\yy) -- (d2\yy) -- (d4\yy);
    \draw (n\yy) -- (phi1\yy) -- (d2\yy);

    \node[inneriso] (ps) at (3.5,.5,0) {};
    \node[innerbc] (bc) at (2,1.5,0) {};
    \begin{scope}[inner]
      \draw (d40) -- (ps) -- (d41) -- (d42);
      \draw (ps) -- (phi31) -- (bc);
      \draw (d20) -- (d21) -- (bc) -- (d22);
      \draw (phi10) -- (phi11) -- (bc) -- (phi12);
    \end{scope}
    \begin{scope}[outer]
      \draw (m0) -- (m1) -- (m2);
      \draw (n0) -- (n1) -- (n2);
    \end{scope}
  \end{tikzpicture}
  \caption{The simplified version of~\eqref{eq:bco-sliding}}
  \label{fig:bco-sliding}
\end{figure}
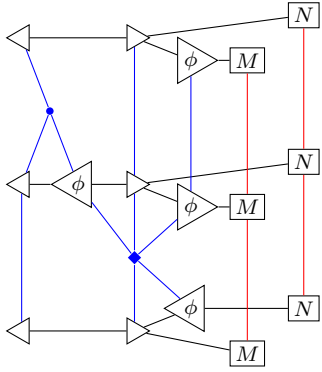

\begin{prop}\label{prop:xi_circ_f}
The composite $\xi\circ f$ is $\Sigma(\phi)$.
\end{prop}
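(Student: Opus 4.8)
The plan is to unwind $\xi\circ f$ into the explicit composite already recorded in \eqref{eq:bco-sliding-0}, collapse its middle portion by \autoref{thm:bco-sliding}, and compare the result with the definition \eqref{eq:sigma-phi} of $\Sigma(\phi)$, specialised to the endomorphism $\phi\colon A\to A$ and read through the isomorphism $I_A\iso(\pi_A)^* I_\star$ of \eqref{eq:intunit} (recall $U=I_\star$).

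First I would expand the definitions. Writing $f$ via \autoref{thm:bco-action} as $\widecheck{I_A}\xrightarrow{\widecheck f}\widecheck{\phi^* I_A}\xrightarrow{\iso}\widecheck{I_A}\odot A_\phi$ and $\xi$ as $A_\phi\odot\widehat{I_A}\xrightarrow{\iso}\widehat{\phi_! I_A}\xrightarrow{\widehat{\phi_!(f)}}\widehat{\phi_!\phi^* I_A}\to\widehat{I_A}$, the composite $\xi\circ f$ becomes exactly \eqref{eq:bco-sliding-0}. Its third and fourth morphisms form an instance of the composite \eqref{eq:bco-sliding} (with the lemma's $M$ and $N$ both equal to $I_A$ and its $B$ equal to $A$), so \autoref{thm:bco-sliding} lets me replace them by
\[
\widecheck{\phi^* I_A}\odot\widehat{I_A} = (\pi_A)_!(\Delta_A)^*(\phi^* I_A\boxtimes I_A) \iso (\pi_A)_!\phi_!(\Delta_A)^*(\phi^* I_A\boxtimes I_A) \iso (\pi_A)_!(\Delta_A)^*(I_A\boxtimes\phi_! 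I_A) = \widecheck{I_A}\odot\widehat{\phi_! I_A}.
\]
After this substitution every $\odot$ disappears and $\xi\circ f$ is exhibited as a composite of morphisms of $\sC^\star$ assembled from $(\pi_A)_!$, $(\Delta_A)^*$, $\boxtimes$, the strong-monoidal coherence isomorphism $f\colon I_A\iso\phi^* I_A$, the counit $\epsilon\colon\phi_!\phi^*\to\mathrm{id}$, the ``sliding and splitting'' Beck-Chevalley isomorphism (as in Figure~\ref{fig:bco-sliding}), and pseudofunctoriality and unit constraints. I would then draw the corresponding slice diagram.

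The next step is to cut this composite in two and identify each half. Using the naturality of $\widecheck{I_A}\odot\widehat{(-)}\iso(\pi_A)_!(-)$ from \autoref{thm:bco-action}, the last three morphisms of \eqref{eq:bco-sliding-0} (the image of $\widehat{\phi_!(f)}$, the image of $\widehat\epsilon$, and the closing equivalence) are precisely $(\pi_A)_!$ applied to the composite $\phi_! I_A\xrightarrow{\phi_!(f)}\phi_!\phi^* I_A\xrightarrow{\epsilon_{I_A}} I_A$. On the other side, unwinding \eqref{eq:sigma-phi} with $B=A$ and $I_A\iso(\pi_A)^* I_\star$, the morphism $\Sigma(\phi)$ is the composite of the pseudofunctoriality isomorphism $(\pi_A)_! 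I_A\iso(\pi_A)_!\phi_! I_A$ (coming from $\pi_A=\pi_A\circ\phi$) with $(\pi_A)_!$ of that same composite $\phi_! I_A\to I_A$. Hence the two pieces carrying $\phi_!(f)$ and $\epsilon$ match verbatim, and the proposition reduces to the assertion that the remaining isomorphism
\[
(\pi_A)_! I_A \xrightarrow{\iso} (\pi_A)_!(\Delta_A)^*(I_A\boxtimes I_A) \xrightarrow{(\pi_A)_!(\Delta_A)^*(f\boxtimes\mathrm{id})} (\pi_A)_!(\Delta_A)^*(\phi^* I_A\boxtimes I_A) \xrightarrow{\iso} (\pi_A)_!\phi_! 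I_A,
\]
whose last arrow is the two isomorphisms from \autoref{thm:bco-sliding} followed by the unit law, coincides with the canonical pseudofunctoriality isomorphism $(\pi_A)_!\iso(\pi_A)_!\phi_!$.

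This last identity is where the real work is, and I expect it to be the main obstacle: it is a pure coherence statement with no adjunction triangle identity available in the residual part, so one must verify that the isomorphism produced by the ``sliding and splitting'' Beck-Chevalley condition, composed with the unit-coherence isomorphism $\phi^* I_A\iso I_A$ and the constraints relating $(\pi_A)_!$, $\phi_!$, and $(\pi_A)_!\phi_!$, collapses to a single constraint rather than something off by an automorphism. I would carry this out in the slice-diagram calculus: substitute the explicit form of the ``sliding and splitting'' morphism from Figure~\ref{fig:bco-sliding}, slide the $f$-node and the pseudofunctoriality nodes past one another, and apply the coherence of the pseudofunctor $\sC$ to the equality $\pi_A\circ\phi=\pi_A$ and the attendant factorisations through $\Delta_A$, together with a curlycue or broken-zigzag identity from Figure~\ref{fig:curl} if a ``blob'' remains to be removed. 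Once the residual isomorphism is recognised as the canonical constraint, the proposition follows.
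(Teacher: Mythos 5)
Your overall strategy coincides with the paper's: unwind $\xi\circ f$ into \eqref{eq:bco-sliding-0}, replace the middle two arrows via \autoref{thm:bco-sliding}, and then compare the result with the definition \eqref{eq:sigma-phi} of $\Sigma(\phi)$. Your claim that the last three arrows of \eqref{eq:bco-sliding-0} match $(\pi_A)_!(\epsilon\circ\phi_!f)$ by naturality of the equivalence $\widecheck{I_A}\odot\widehat{(-)}\cong(\pi_A)_!(-)$ is fine, and the modular strategy of cutting the composite in two is legitimate.

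The place where you go slightly astray is the characterization of the remaining isomorphism as ``a pure coherence statement with no adjunction triangle identity available.'' That is not how the paper proceeds, and it would not be how your residual identity resolves either. Once you substitute the definition of the sliding-and-splitting Beck--Chevalley morphism into the residual, it contributes an internal unit and counit, and the paper's computation (Figures~\ref{fig:xif_1}--\ref{fig:xif_2}) explicitly ``cancels the internal unit and counit using a triangle identity'' before appealing to pseudofunctor coherence. That cancellation lives entirely inside your residual piece (the surviving counit $\phi_!\phi^*\to\Id$ that ends up at the bottom is the one from $\xi$, which you have already moved into the other factor), so there is a triangle identity to apply there. You do gesture toward this by saying you would reach for a curlycue or broken-zigzag identity from Figure~\ref{fig:curl}, which is exactly the right tool, but the framing ``no adjunction triangle identity available'' is at odds with that and at odds with how the computation actually closes. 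In short: the decomposition is a valid and arguably cleaner organization than the paper's, but the real content is precisely the string-diagram verification you defer, and it requires the same triangle-identity cancellation the paper performs rather than pure pseudofunctoriality coherence.
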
 
\begin{proof}
  Using \autoref{thm:bco-sliding}, and unfolding the definition of the
  ``sliding and splitting'' Beck-Chevalley morphism,
  \eqref{eq:bco-sliding-0} becomes the morphism shown in
  Figure~\ref{fig:xif1}.  In Figure~\ref{fig:xif2} we simply rearrange
  the order of all the morphisms, sliding the counit $\phi_! \phi^* \to
  \Id$ on the far left down to the bottom, the isomorphism $\phi^* \pi^*
  \cong \pi^*$ on the far right up to the top, and rearranging things in
  the middle so that the other unit and counit meet.

  In Figure~\ref{fig:xif3} we cancel the internal unit and counit using
  a triangle identity.  Now we have the isomorphisms $\phi^* \pi^* \cong
  \pi^*$ and $\pi_! \phi_! \cong \pi_!$ at the top, a counit $\phi_!
  \phi^* \to \Id$ at the bottom, and in the middle a composite of
  pseudofunctoriality isomorphisms for reindexing functors, which starts
  and ends at $\pi_! \phi_! \phi^* \pi^*$.  This middle composite is
  extracted in Figure~\ref{fig:xifm}, after peeling off the $\pi_!
  \phi_!$ and $\pi^*$ (which are unchanged throughout).  By coherence
  for pseudofunctors, this composite is equal to the identity, so that
  Figure~\ref{fig:xif3} is equal to Figure~\ref{fig:xif4}.  But this is
  exactly $\Sigma(\phi)$ (see~\eqref{eq:sigma-phi}).
\end{proof}
\afterpage{\begin{landscape}
\begin{figure}
  \centering
  \subfigure[Step 1]{\label{fig:xif1}
    \begin{tikzpicture}[strings,y={(0,1.1)}, yscale =.8]
      \def\yy{0}
      \node[pbeflat] (d-2\yy) at (-2,\yy,0) {};
      \node[pfeflat] (d7\yy) at (7,\yy,0) {};
      \draw (d-2\yy) -- (d7\yy);

      \def\yy{1}
      \node[pbeflat] (d-2\yy) at (-2,\yy,-1) {};
      \node[pbeflat] (d-1\yy) at (-1,\yy,1) {};
      \node[pbeflat] (d4\yy) at (4,\yy,0) {};
      \node[pfeflat] (d7\yy) at (7,\yy,0) {};
      \draw (d-2\yy) -- (d4\yy) -- (d7\yy);
      \draw (d-1\yy) -- (d4\yy);
      
      \def\yy{2}
      \node[pbeflat] (d-2\yy) at (-2,\yy,-1) {};
      \node[pbeflat] (d-1\yy) at (-1,\yy,1) {};
      \node[pbsmflat] (phi3\yy) at (3,\yy,1) {$\phi$};
      \node[pbeflat] (d4\yy) at (4,\yy,0) {};
      \node[pfeflat] (d7\yy) at (7,\yy,0) {};
      \draw (d-2\yy) -- (d4\yy) -- (d7\yy);
      \draw (d-1\yy) -- (phi3\yy) -- (d4\yy);

      \def\yy{3}
      \node[pbeflat] (d-2\yy) at (-2,\yy,-1) {};
      \node[pbeflat] (d-1\yy) at (-1,\yy,1) {};
      \node[pbsmflat] (phi3\yy) at (3,\yy,1) {$\phi$};
      \node[pbeflat] (d4\yy) at (4,\yy,0) {};
      \node[pfsmflat] (phi6\yy) at (6,\yy,0) {$\phi$};
      \node[pfeflat] (d7\yy) at (7,\yy,0) {};
      \draw (d-2\yy) -- (d4\yy) -- (phi6\yy) -- (d7\yy);
      \draw (d-1\yy) -- (phi3\yy) -- (d4\yy);

      \def\yy{4}
      \node[pbeflat] (d-2\yy) at (-2,\yy,-1) {};
      \node[pbeflat] (d-1\yy) at (-1,\yy,1) {};
      \node[pfsmflat] (phi1\yy) at (1,\yy,-1) {$\phi$};
      \node[pbsmflat] (phi2\yy) at (2,\yy,-1) {$\phi$};
      \node[pbsmflat] (phi3\yy) at (3,\yy,1) {$\phi$};
      \node[pbeflat] (d4\yy) at (4,\yy,0) {};
      \node[pfsmflat] (phi6\yy) at (6,\yy,0) {$\phi$};
      \node[pfeflat] (d7\yy) at (7,\yy,0) {};
      \draw (d-2\yy) -- (phi1\yy) -- (phi2\yy) -- (d4\yy) -- (phi6\yy) -- (d7\yy);
      \draw (d-1\yy) -- (phi3\yy) -- (d4\yy);

      \def\yy{5}
      \node[pbeflat] (d-2\yy) at (-2,\yy,-1) {};
      \node[pbeflat] (d-1\yy) at (-1,\yy,1) {};
      \node[pfsmflat] (phi1\yy) at (1,\yy,-1) {$\phi$};
      \node[pbeflat] (d4\yy) at (4,\yy,0) {};
      \node[pbsmflat] (phi5\yy) at (5,\yy,0) {$\phi$};
      \node[pfsmflat] (phi6\yy) at (6,\yy,0) {$\phi$};
      \node[pfeflat] (d7\yy) at (7,\yy,0) {};
      \draw (d-2\yy) -- (phi1\yy) -- (d4\yy) -- (phi5\yy) -- (phi6\yy) -- (d7\yy);
      \draw (d-1\yy) -- (d4\yy);

      \def\yy{6}
      \node[pbeflat] (d-2\yy) at (-2,\yy,-1) {};
      \node[pbeflat] (d-1\yy) at (-1,\yy,1) {};
      \node[pfsmflat] (phi1\yy) at (1,\yy,-1) {$\phi$};
      \node[pbeflat] (d4\yy) at (4,\yy,0) {};
      \node[pfeflat] (d7\yy) at (7,\yy,0) {};
      \draw (d-2\yy) -- (phi1\yy) -- (d4\yy) -- (d7\yy);
      \draw (d-1\yy) -- (d4\yy);

      \def\yy{7}
      \node[pbeflat] (d-2\yy) at (-2,\yy,-1) {};
      \node[pbeflat] (d-1\yy) at (-1,\yy,1) {};
      \node[pbsmflat] (phi0\yy) at (0,\yy,-1) {$\phi$};
      \node[pfsmflat] (phi1\yy) at (1,\yy,-1) {$\phi$};
      \node[pbeflat] (d4\yy) at (4,\yy,0) {};
      \node[pfeflat] (d7\yy) at (7,\yy,0) {};
      \draw (d-2\yy) -- (phi0\yy) -- (phi1\yy) -- (d4\yy) -- (d7\yy);
      \draw (d-1\yy) -- (d4\yy);

      \def\yy{8}
      \node[pbeflat] (d-2\yy) at (-2,\yy,-1) {};
      \node[pbeflat] (d-1\yy) at (-1,\yy,1) {};
      \node[pbeflat] (d4\yy) at (4,\yy,0) {};
      \node[pfeflat] (d7\yy) at (7,\yy,0) {};
      \draw (d-2\yy) -- (d4\yy) -- (d7\yy);
      \draw (d-1\yy) -- (d4\yy);
      
      \def\yy{9}
      \node[pbeflat] (d-2\yy) at (-2,\yy,0) {};
      \node[pfeflat] (d7\yy) at (7,\yy,0) {};
      \draw (d-2\yy) -- (d7\yy);

      \node[inneriso] (iso2) at (1,1.5,0) {};
      \node[inneriso] (iso3) at (6.5,2.5,0) {};
      \node[inneriso] (iso5) at (4,4.5,0) {};
      \node[inneriso] (iso7) at (-1.5,6.5,0) {};
      \begin{scope}[inner]
        \draw (d-20) -- (d-21) -- (d-22) -- (d-23) -- (d-24) -- (d-25) --
        (d-26) -- (iso7) -- (d-27) -- (d-28) -- (d-29);
        \draw (d70) -- (d71) -- (d72) -- (iso3) -- (d73) -- (d74) --
        (d75) -- (d76) -- (d77) -- (d78) -- (d79);
        \draw (d-11) to[out=90,in=90,looseness=1] node[inneriso] {} (d41);
        \draw (d41) -- (d42) -- (d43) -- (d44) -- (iso5) -- (d45) --
        (d46) -- (d47) -- (d48);
        \draw (d-11) -- (iso2) -- (d-12) -- (d-13) -- (d-14) --
        (d-15) -- (d-16) -- (d-17) -- (d-18);
        \draw (d-18) to[out=-90,in=-90,looseness=1] node[inneriso] {} (d48);
        \draw (iso2) -- (phi32) -- (phi33) -- (phi34) -- (iso5) -- (phi55);
        \draw (phi55) to[out=-90,in=-90,looseness=3] node[innercounit] {} (phi65);
        \draw (iso3) -- (phi63) -- (phi64) -- (phi65);
        \draw (iso5) to[out=0,in=-90] (phi24);
        \draw (phi24) to[out=90,in=90,looseness=3] node[innerunit] {} (phi14);
        \draw (phi14) -- (phi15) -- (phi16) -- (phi17);
        \draw (phi17) to[out=-90,in=-90,looseness=4] node[innercounit] {} (phi07);
        \draw (phi07) -- (iso7);
      \end{scope}
    \end{tikzpicture}}
  \hspace{1cm}
  \subfigure[Step 2]{\label{fig:xif2}
    \begin{tikzpicture}[strings,y={(0,1.1)}, yscale=.8]
      \def\yy{0}
      \node[pbeflat] (d-2\yy) at (-2,\yy,0) {};
      \node[pfeflat] (d7\yy) at (7,\yy,0) {};
      \draw (d-2\yy) -- (d7\yy);

      \def\yy{1}
      \node[pbeflat] (d-2\yy) at (-2,\yy,0) {};
      \node[pbsmflat] (phi0\yy) at (0,\yy,0) {$\phi$};
      \node[pfeflat] (d7\yy) at (7,\yy,0) {};
      \draw (d-2\yy) -- (phi0\yy)-- (d7\yy);
      
      \def\yy{2}
      \node[pbeflat] (d-2\yy) at (-2,\yy,0) {};
      \node[pbsmflat] (phi0\yy) at (0,\yy,0) {$\phi$};
      \node[pfsmflat] (phi6\yy) at (6,\yy,0) {$\phi$};
      \node[pfeflat] (d7\yy) at (7,\yy,0) {};
      \draw (d-2\yy)  -- (phi0\yy) -- (phi6\yy) -- (d7\yy);

      \def\yy{3}
      \node[pbeflat] (d-2\yy) at (-2,\yy,-1) {};
      \node[pbeflat] (d-1\yy) at (-1,\yy,1) {};
      \node[pbsmflat] (phi0\yy) at (0,\yy,-1) {$\phi$};
      \node[pbeflat] (d4\yy) at (4,\yy,0) {};
      \node[pfsmflat] (phi6\yy) at (6,\yy,0) {$\phi$};
      \node[pfeflat] (d7\yy) at (7,\yy,0) {};
      \draw (d-2\yy) -- (phi0\yy) -- (d4\yy) -- (phi6\yy) -- (d7\yy);
      \draw (d-1\yy)  -- (d4\yy);

      \def\yy{4}
      \node[pbeflat] (d-2\yy) at (-2,\yy,-1) {};
      \node[pbeflat] (d-1\yy) at (-1,\yy,1) {};
      \node[pbsmflat] (phi0\yy) at (0,\yy,-1) {$\phi$};
      \node[pbsmflat] (phi3\yy) at (3,\yy,1) {$\phi$};
      \node[pbeflat] (d4\yy) at (4,\yy,0) {};
      \node[pfsmflat] (phi6\yy) at (6,\yy,0) {$\phi$};
      \node[pfeflat] (d7\yy) at (7,\yy,0) {};
      \draw (d-2\yy) -- (phi0\yy) -- (d4\yy) -- (phi6\yy) -- (d7\yy);
      \draw (d-1\yy) -- (phi3\yy) -- (d4\yy);

      \def\yy{5}
      \node[pbeflat] (d-2\yy) at (-2,\yy,-1) {};
      \node[pbeflat] (d-1\yy) at (-1,\yy,1) {};
      \node[pbsmflat] (phi0\yy) at (0,\yy,-1) {$\phi$};
      \node[pfsmflat] (phi1\yy) at (1,\yy,-1) {$\phi$};
      \node[pbsmflat] (phi2\yy) at (2,\yy,-1) {$\phi$};
      \node[pbsmflat] (phi3\yy) at (3,\yy,1) {$\phi$};
      \node[pbeflat] (d4\yy) at (4,\yy,0) {};
      \node[pfsmflat] (phi6\yy) at (6,\yy,0) {$\phi$};
      \node[pfeflat] (d7\yy) at (7,\yy,0) {};
      \draw (d-2\yy) -- (phi0\yy) -- (phi1\yy) -- (phi2\yy) --
      (d4\yy) -- (phi6\yy) -- (d7\yy);
      \draw (d-1\yy) -- (phi3\yy) -- (d4\yy);

      \def\yy{6}
      \node[pbeflat] (d-2\yy) at (-2,\yy,-1) {};
      \node[pbeflat] (d-1\yy) at (-1,\yy,1) {};
      \node[pbsmflat] (phi2\yy) at (2,\yy,-1) {$\phi$};
      \node[pbsmflat] (phi3\yy) at (3,\yy,1) {$\phi$};
      \node[pbeflat] (d4\yy) at (4,\yy,0) {};
      \node[pfsmflat] (phi6\yy) at (6,\yy,0) {$\phi$};
      \node[pfeflat] (d7\yy) at (7,\yy,0) {};
      \draw (d-2\yy) -- (phi2\yy)-- (d4\yy) --  (phi6\yy) -- (d7\yy);
      \draw (d-1\yy) -- (phi3\yy) -- (d4\yy);

      \def\yy{7}
      \node[pbeflat] (d-2\yy) at (-2,\yy,-1) {};
      \node[pbeflat] (d-1\yy) at (-1,\yy,1) {};
      \node[pbeflat] (d4\yy) at (4,\yy,0) {};
      \node[pbsmflat] (phi5\yy) at (5,\yy,0) {$\phi$};
      \node[pfsmflat] (phi6\yy) at (6,\yy,0) {$\phi$};
      \node[pfeflat] (d7\yy) at (7,\yy,0) {};
      \draw (d-2\yy) -- (d4\yy) -- (phi5\yy) -- (phi6\yy) -- (d7\yy);
      \draw (d-1\yy) -- (d4\yy);

      \def\yy{8}
      \node[pbeflat] (d-2\yy) at (-2,\yy,0) {};
      \node[pbsmflat] (phi5\yy) at (5,\yy,0) {$\phi$};
      \node[pfsmflat] (phi6\yy) at (6,\yy,0) {$\phi$};
      \node[pfeflat] (d7\yy) at (7,\yy,0) {};
      \draw (d-2\yy) -- (phi5\yy) -- (phi6\yy) -- (d7\yy);
      
      \def\yy{9}
      \node[pbeflat] (d-2\yy) at (-2,\yy,0) {};
      \node[pfeflat] (d7\yy) at (7,\yy,0) {};
      \draw (d-2\yy) -- (d7\yy);

      \node[inneriso] (iso2) at (-.5,3.5,0) {};
      \node[inneriso] (iso3) at (6.5,1.5,0) {};
      \node[inneriso] (iso5) at (4,6.5,0) {};
      \node[inneriso] (iso7) at (-1.5,0.5,0) {};
      \begin{scope}[inner]
        \draw (d-20)-- (iso7)  -- (d-21) -- (d-22) -- (d-23) --
        (d-24) -- (d-25) -- (d-26) --(d-27) -- (d-28) -- (d-29);
        \draw (d70) -- (d71)  -- (iso3) -- (d72)-- (d73) -- (d74) --
        (d75) -- (d76) -- (d77) -- (d78) -- (d79);
        \draw (d-13) to[out=90,in=90,looseness=1] node[inneriso] {} (d43);
        \draw (d43) -- (d44) -- (d45) -- (d46) -- (iso5) -- (d47);
        \draw  (d-13)  -- (iso2)  -- (d-14) -- (d-15) -- (d-16) -- (d-17);
        \draw (d-17) to[out=-90,in=-90,looseness=1] node[inneriso] {} (d47);
        \draw (iso2) -- (phi34) -- (phi35)-- (phi36) --(iso5) -- (phi57) -- (phi58);
        \draw (phi58) to[out=-90,in=-90,looseness=3] node[innercounit] {} (phi68);
        \draw (iso3) -- (phi62) -- (phi63) -- (phi64) -- (phi65) --
        (phi66) -- (phi67) -- (phi68);
        \draw (iso5) to[out=0,in=-90] (phi26);
        \draw (phi25)--(phi26);
        \draw (phi25) to[out=90,in=90,looseness=3] node[innerunit] {} (phi15);
        \draw (phi15) to[out=-90,in=-90,looseness=4] node[innercounit] {} (phi05);
        \draw (phi05) -- (phi04) -- (phi03) -- (phi02)  -- (phi01)-- (iso7);
      \end{scope}
    \end{tikzpicture}}
  \caption{The composite $\xi\circ f$, steps 1--2}
  \label{fig:xif_1}
\end{figure}

\begin{figure}
  \centering
  \subfigure[Step 3]{\label{fig:xif3}
    \begin{tikzpicture}[strings,y={(0,1.1)}, yscale=.8]
      \def\yy{0}
      \node[pbeflat] (d-2\yy) at (-2,\yy,0) {};
      \node[pfeflat] (d7\yy) at (7,\yy,0) {};
      \draw (d-2\yy) -- (d7\yy);

      \def\yy{1}
      \node[pbeflat] (d-2\yy) at (-2,\yy,0) {};
      \node[pbsmflat] (phi0\yy) at (0,\yy,0) {$\phi$};
      \node[pfeflat] (d7\yy) at (7,\yy,0) {};
      \draw (d-2\yy) -- (phi0\yy)-- (d7\yy);
      
      \def\yy{2}
      \node[pbeflat] (d-2\yy) at (-2,\yy,0) {};
      \node[pbsmflat] (phi0\yy) at (0,\yy,0) {$\phi$};
      \node[pfsmflat] (phi6\yy) at (6,\yy,0) {$\phi$};
      \node[pfeflat] (d7\yy) at (7,\yy,0) {};
      \draw (d-2\yy)  -- (phi0\yy) -- (phi6\yy) -- (d7\yy);

      \def\yy{3}
      \node[pbeflat] (d-2\yy) at (-2,\yy,-1) {};
      \node[pbeflat] (d-1\yy) at (-1,\yy,1) {};
      \node[pbsmflat] (phi0\yy) at (0,\yy,-1) {$\phi$};
      \node[pbeflat] (d4\yy) at (4,\yy,0) {};
      \node[pfsmflat] (phi6\yy) at (6,\yy,0) {$\phi$};
      \node[pfeflat] (d7\yy) at (7,\yy,0) {};
      \draw (d-2\yy) -- (phi0\yy) -- (d4\yy) -- (phi6\yy) -- (d7\yy);
      \draw (d-1\yy)  -- (d4\yy);

      \def\yy{4}
      \node[pbeflat] (d-2\yy) at (-2,\yy,-1) {};
      \node[pbeflat] (d-1\yy) at (-1,\yy,1) {};
      \node[pbsmflat] (phi0\yy) at (0,\yy,-1) {$\phi$};
      \node[pbsmflat] (phi3\yy) at (3,\yy,1) {$\phi$};
      \node[pbeflat] (d4\yy) at (4,\yy,0) {};
      \node[pfsmflat] (phi6\yy) at (6,\yy,0) {$\phi$};
      \node[pfeflat] (d7\yy) at (7,\yy,0) {};
      \draw (d-2\yy) -- (phi0\yy) -- (d4\yy) -- (phi6\yy) -- (d7\yy);
      \draw (d-1\yy) -- (phi3\yy) -- (d4\yy);


      \def\yy{6}
      \node[pbeflat] (d-2\yy) at (-2,\yy,-1) {};
      \node[pbeflat] (d-1\yy) at (-1,\yy,1) {};
      \node[pbsmflat] (phi0\yy) at (2,\yy,-1) {$\phi$};
      \node[pbsmflat] (phi3\yy) at (3,\yy,1) {$\phi$};
      \node[pbeflat] (d4\yy) at (4,\yy,0) {};
      \node[pfsmflat] (phi6\yy) at (6,\yy,0) {$\phi$};
      \node[pfeflat] (d7\yy) at (7,\yy,0) {};
      \draw (d-2\yy) -- (phi0\yy)-- (d4\yy) --  (phi6\yy) -- (d7\yy);
      \draw (d-1\yy) -- (phi3\yy) -- (d4\yy);

      \def\yy{7}
      \node[pbeflat] (d-2\yy) at (-2,\yy,-1) {};
      \node[pbeflat] (d-1\yy) at (-1,\yy,1) {};
      \node[pbeflat] (d4\yy) at (4,\yy,0) {};
      \node[pbsmflat] (phi5\yy) at (5,\yy,0) {$\phi$};
      \node[pfsmflat] (phi6\yy) at (6,\yy,0) {$\phi$};
      \node[pfeflat] (d7\yy) at (7,\yy,0) {};
      \draw (d-2\yy) -- (d4\yy) -- (phi5\yy) -- (phi6\yy) -- (d7\yy);
      \draw (d-1\yy) -- (d4\yy);

      \def\yy{8}
      \node[pbeflat] (d-2\yy) at (-2,\yy,0) {};
      \node[pbsmflat] (phi5\yy) at (5,\yy,0) {$\phi$};
      \node[pfsmflat] (phi6\yy) at (6,\yy,0) {$\phi$};
      \node[pfeflat] (d7\yy) at (7,\yy,0) {};
      \draw (d-2\yy) -- (phi5\yy) -- (phi6\yy) -- (d7\yy);
      
      \def\yy{9}
      \node[pbeflat] (d-2\yy) at (-2,\yy,0) {};
      \node[pfeflat] (d7\yy) at (7,\yy,0) {};
      \draw (d-2\yy) -- (d7\yy);

      \node[inneriso] (iso2) at (-.5,3.5,0) {};
      \node[inneriso] (iso3) at (6.5,1.5,0) {};
      \node[inneriso] (iso5) at (4,6.5,0) {};
      \node[inneriso] (iso7) at (-1.5,0.5,0) {};
      \begin{scope}[inner]
        \draw (d-20)-- (iso7)  -- (d-21) -- (d-22) -- (d-23) --  (d-24) 
        -- (d-26) --(d-27) -- (d-28) -- (d-29);
        \draw (d70) -- (d71)  -- (iso3) -- (d72)-- (d73) -- (d74) 
        -- (d76) -- (d77) -- (d78) -- (d79);
        \draw (d-13) to[out=90,in=90,looseness=1] node[inneriso] {} (d43);
        \draw (d43) -- (d44) 
        -- (d46) -- (iso5) -- (d47);
        \draw  (d-13)  -- (iso2)  -- (d-14) 
        -- (d-16) -- (d-17);
        \draw (d-17) to[out=-90,in=-90,looseness=1] node[inneriso] {} (d47);
        \draw (iso2) -- (phi34) 
        -- (phi36) --(iso5) -- (phi57) -- (phi58);
        \draw (phi58) to[out=-90,in=-90,looseness=3] node[innercounit] {} (phi68);
        \draw (iso3) -- (phi62) -- (phi63) -- (phi64) 
        -- (phi66) -- (phi67) -- (phi68);
        \draw (iso5) to[out=0,in=-90] (phi06);
        \draw (phi06) -- (phi04) -- (phi03) -- (phi02)  -- (phi01)-- (iso7);
      \end{scope}
    \end{tikzpicture}}
  \hspace{1cm}
%
%
%
  \subfigure[The pseudofunctoriality part]{\label{fig:xifm}
    \begin{tikzpicture}[strings,y={(0,1.1)}, scale=.9]
      
      \def\yy{2}
      \node (d-2\yy) at (-2,\yy,-1) {};
      \node[pbsmflat] (phi0\yy) at (0,\yy,-1) {$\phi$};
      \node (phi6\yy) at (6,\yy,0){};
      \draw (d-2\yy)  -- (phi0\yy) -- (phi6\yy);

      \def\yy{3}
      \node(d-2\yy) at (-2,\yy,-1) {};
      \node[pbeflat] (d-1\yy) at (-1,\yy,1) {};
      \node[pbsmflat] (phi0\yy) at (0,\yy,-1) {$\phi$};
      \node[pbeflat] (d4\yy) at (4,\yy,0) {};
      \node (phi6\yy) at (6,\yy,0) {};
      \draw (d-2\yy) -- (phi0\yy) -- (d4\yy) -- (phi6\yy);
      \draw (d-1\yy)  -- (d4\yy);

      \def\yy{4}
      \node(d-2\yy) at (-2,\yy,-1) {};
      \node[pbeflat] (d-1\yy) at (-1,\yy,1) {};
      \node[pbsmflat] (phi0\yy) at (0,\yy,-1) {$\phi$};
      \node[pbsmflat] (phi3\yy) at (3,\yy,1) {$\phi$};
      \node[pbeflat] (d4\yy) at (4,\yy,0) {};
      \node (phi6\yy) at (6,\yy,0) {};
      \draw (d-2\yy) -- (phi0\yy) -- (d4\yy) -- (phi6\yy);
      \draw (d-1\yy) -- (phi3\yy) -- (d4\yy);


      \def\yy{6}
      \node (d-2\yy) at (-2,\yy,-1) {};
      \node[pbeflat] (d-1\yy) at (-1,\yy,1) {};
      \node[pbsmflat] (phi0\yy) at (2,\yy,-1) {$\phi$};
      \node[pbsmflat] (phi3\yy) at (3,\yy,1) {$\phi$};
      \node[pbeflat] (d4\yy) at (4,\yy,0) {};
      \node (phi6\yy) at (6,\yy,0) {};
      \draw (d-2\yy) -- (phi0\yy)-- (d4\yy) --  (phi6\yy);
      \draw (d-1\yy) -- (phi3\yy) -- (d4\yy);

      \def\yy{7}
      \node (d-2\yy) at (-2,\yy,-1) {};
      \node[pbeflat] (d-1\yy) at (-1,\yy,1) {};
      \node[pbeflat] (d4\yy) at (4,\yy,0) {};
      \node[pbsmflat] (phi5\yy) at (5,\yy,0) {$\phi$};
      \node (phi6\yy) at (6,\yy,0) {};
      \draw (d-2\yy) -- (d4\yy) -- (phi5\yy) -- (phi6\yy);
      \draw (d-1\yy) -- (d4\yy);

      \def\yy{8}
      \node (d-2\yy) at (-2,\yy,-1) {};
      \node[pbsmflat] (phi5\yy) at (5,\yy,0) {$\phi$};
      \node (phi6\yy) at (6,\yy,0) {};
      \draw (d-2\yy) -- (phi5\yy) -- (phi6\yy);

      \node[inneriso] (iso2) at (-.5,3.5,0) {};
      \node[inneriso] (iso5) at (4,6.5,0) {};
      \begin{scope}[inner]
        \draw (d-13) to[out=90,in=90,looseness=1] node[inneriso] {} (d43);
        \draw (d43) -- (d44) 
        -- (d46) -- (iso5) -- (d47);
        \draw  (d-13)  -- (iso2)  -- (d-14) 
        -- (d-16) -- (d-17);
        \draw (d-17) to[out=-90,in=-90,looseness=.8] node[inneriso] {} (d47);
        \draw (iso2) -- (phi34) 
        -- (phi36) --(iso5) -- (phi57) -- (phi58);
        \draw (iso5) to[out=0,in=-90] (phi06);
        \draw (phi06) -- (phi04) -- (phi03) -- (phi02) ;
      \end{scope}
    \end{tikzpicture}}
  \hspace{1cm}
  \subfigure[Step 4]{\label{fig:xif4}
    \begin{tikzpicture}[strings]
      \def\yy{0}
      \node[pbeflat] (d0\yy) at (0,\yy) {};
      \node[pfeflat] (d3\yy) at (3,\yy) {};
      \draw (d0\yy) -- (d3\yy);

      \def\yy{1}
      \node[pbeflat] (d0\yy) at (0,\yy) {};
      \node[pbsmflat] (phi1\yy) at (1,\yy) {$\phi$};
      \node[pfsmflat] (phi2\yy) at (2,\yy) {$\phi$};
      \node[pfeflat] (d3\yy) at (3,\yy) {};
      \draw (d0\yy) -- (phi1\yy) -- (phi2\yy) -- (d3\yy);

      \def\yy{2}
      \node[pbeflat] (d0\yy) at (0,\yy) {};
      \node[pfeflat] (d3\yy) at (3,\yy) {};
      \draw (d0\yy) -- (d3\yy);

      \node[inneriso] (iso11) at (.5,.5,0) {};
      \node[inneriso] (iso31) at (2.5,.5,0) {};
      \begin{scope}[inner]
        \draw (d00) -- (iso11) -- (d01) -- (d02);
        \draw (d30) -- (iso31) -- (d31) -- (d32);
        \draw (iso11) -- (phi11);
        \draw (iso31) -- (phi21);
        \draw (phi11) to[out=-90,in=-90,looseness=3] node[innercounit] {} (phi21);
      \end{scope}
    \end{tikzpicture}}
  \caption{The composite $\xi\circ f$, steps 3--4}
  \label{fig:xif_2}
\end{figure}
\end{landscape}
}

This completes the proof of \autoref{cor:totaltr3}; we now move on to
\autoref{thm:totaltr-transfer}.  In this case, we defined $\zeta$ to be
the composite
\[ A_\phi \odot \widehat{I_A}
\xto{\xi} I_A
\too (\pi_A)^* (\pi_A)_! I_A
\xto{\cong} \widehat{I_A} \odot \Sigma(A)
\]
where $\xi$ is as above.

\begin{prop}\label{thm:zeta-circ-f}
  The composite $\xi\circ f$ is $\Sigma(\Delta_A\circ \phi)$.
\end{prop}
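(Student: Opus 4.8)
The plan is to reduce \autoref{thm:zeta-circ-f} to \autoref{prop:xi_circ_f}, exploiting the fact (noted in \S\ref{sec:total}) that $\xi$ is a factor of $\zeta$. By its definition $\zeta$ is the composite of $\xi\colon A_\phi\odot\widehat{I_A}\to\widehat{I_A}$ with the $2$-cell
\[
\widehat{I_A}\too\widehat{(\pi_A)^*(\pi_A)_!I_A}\too[\cong]\widehat{I_A}\odot\Sigma(A),
\]
where the first map is the image, under $\sC^A\simeq\calBi CS(A,\star)$, of the unit $\eta\colon I_A\to(\pi_A)^*(\pi_A)_!I_A$ of $(\pi_A)_!\dashv(\pi_A)^*$, and the isomorphism is the instance of \autoref{thm:bco-action} coming from ${}_{\pi_A}\star\cong\widehat{I_A}$ (as in the proof of \autoref{thm:total-duality}). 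Composing everything with $\widecheck{I_A}\odot-$ and unwinding the definition of the composite $\zeta\circ f$ from \autoref{thm:totaltr2}, one obtains
\[
\zeta\circ f\;=\;\delta\circ(\xi\circ f),
\]
where $\delta\colon(\pi_A)_!I_A\to(\pi_A)_!I_A\otimes\Sigma(A)$ is $(\pi_A)_!$ applied to $\eta$ followed by the coherence isomorphism $(\pi_A)_!(\pi_A)^*(\pi_A)_!I_A\cong(\pi_A)_!I_A\otimes(\pi_A)_!I_A$ supplied by the projection formula. Since $\xi\circ f=\Sigma(\phi)$ by \autoref{prop:xi_circ_f}, it then suffices to prove $\delta=\Sigma(\Delta_A)$ (post-composed with the strong-monoidality isomorphism $\Sigma(A\times A)\cong\Sigma(A)\otimes\Sigma(A)$); functoriality of $\Sigma$ gives $\zeta\circ f=\Sigma(\Delta_A)\circ\Sigma(\phi)=\Sigma(\Delta_A\circ\phi)$, as desired.

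The identification $\delta=\Sigma(\Delta_A)$ involves neither $\phi$ nor $f$; it is a coherence statement relating $\Sigma$ to base change, which I would verify by a short string-diagram computation in the style of \S\ref{sec:colored-strings}. Writing out both sides: by \eqref{eq:sigma-phi} applied to $\Delta_A\colon A\to A\times A$, $\Sigma(\Delta_A)$ is the composite
\[
(\pi_A)_!(\pi_A)^*I_\star\;\cong\;(\pi_{A\times A})_!(\Delta_A)_!(\Delta_A)^*(\pi_{A\times A})^*I_\star\;\too\;(\pi_{A\times A})_!(\pi_{A\times A})^*I_\star,
\]
using $\pi_A=\pi_{A\times A}\circ\Delta_A$ and the \emph{counit} of $(\Delta_A)_!\dashv(\Delta_A)^*$, followed by the chain of isomorphisms displayed just after \eqref{eq:sigma-phi} which produces $\Sigma(A\times A)\cong\Sigma(A)\otimes\Sigma(A)$. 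On the other hand, $\delta$ uses the \emph{unit} of $(\pi_A)_!\dashv(\pi_A)^*$, which, since this adjunction is the composite of $(\Delta_A)_!\dashv(\Delta_A)^*$ with $(\pi_{A\times A})_!\dashv(\pi_{A\times A})^*$, decomposes accordingly. Peeling off the outermost $(\pi_{A\times A})_!$ and innermost $(\pi_{A\times A})^*$ (which are inert in both diagrams), the two sides differ only in a small configuration of a unit and a counit for $(\Delta_A)_!\dashv(\Delta_A)^*$ together with reindexing-pseudofunctoriality cells; these are reconciled by one triangle identity for that adjunction and by pseudofunctor coherence, exactly as the pseudofunctoriality cells in the proof of \autoref{prop:xi_circ_f} were shown to compose to the identity (cf.\ Figure~\ref{fig:xifm}).

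Thus the substantive input is \autoref{prop:xi_circ_f}, already established, and the only genuinely new work is this last coherence verification. I expect it to be the main obstacle, though still routine and bounded: one must carry out the string-diagram bookkeeping carefully enough to see that the unit of $(\pi_A)_!\dashv(\pi_A)^*$ appearing in $\delta$, once run through the projection-formula isomorphisms, coincides with the counit-of-$(\Delta_A)_!\dashv(\Delta_A)^*$ presentation of $\Sigma(\Delta_A)$. Once $\delta=\Sigma(\Delta_A)$ is in hand the proposition is immediate, and with it \autoref{thm:totaltr-transfer} follows from \autoref{thm:totaltr2} as explained in \S\ref{sec:total}.
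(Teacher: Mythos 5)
Your proposal is correct and takes essentially the same route as the paper: reduce via \autoref{prop:xi_circ_f}, isolate the $\phi$-independent remainder (your $\delta$, the paper's \eqref{eq:half-zeta}), show it equals $\Sigma(\Delta_A)$ by a string-diagram coherence check, and conclude by functoriality of $\Sigma$. The paper's verification of $\delta=\Sigma(\Delta_A)$ (Figures \ref{fig:zeta1}--\ref{fig:zeta6}) indeed runs on exactly the ingredients you name---a triangle identity for $(\Delta_A)_!\dashv(\Delta_A)^*$, in the packaged form of the broken zigzag identity of Figure~\ref{fig:curlycue-mate}, together with pseudofunctor coherence.
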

\begin{proof}
  Since $\zeta$ factors through $\xi$,  $\zeta\circ f$ factors
  through $\xi\circ f$.  Applying \autoref{prop:xi_circ_f}, we see that
  $\zeta\circ f$ is equal to the composite
  \[ \Sigma(A) \xto{\Sigma(\phi)} \Sigma(A) = (\pi_A)_! I_A
  \too (\pi_A)_! (\pi_A)^* (\pi_A)_! I_A
  \xto{\cong} \Sigma(A) \otimes \Sigma(A).
  \]
  Thus, it will suffice to show that the composite
  \begin{equation}
    (\pi_A)_! I_A
    \too (\pi_A)_! (\pi_A)^* (\pi_A)_! I_A
    \xto{\cong} \Sigma(A) \otimes \Sigma(A).\label{eq:half-zeta}
  \end{equation}
  is equal to $\Sigma(\Delta_A)$.  Now, by definition,
  $\Sigma(\Delta_A)$ is equal to the composite in
  Figure~\ref{fig:zeta1}.  We can rewrite the initial
  pseudofunctoriality isomorphisms to obtain Figure~\ref{fig:zeta2}, and
  then apply the broken zigzag identity (Figure~\ref{fig:curlycue-mate})
  to obtain Figure~\ref{fig:zeta3}.  In Figure~\ref{fig:zeta4} we
  isotope sideways, and then in Figure~\ref{fig:zeta5} we slide the
  adjunction unit to the top.  Finally, we can cancel the isomorphisms
  in the middle by pseudofunctor coherence, obtaining
  Figure~\ref{fig:zeta6}, which is exactly~\eqref{eq:half-zeta}.
\end{proof}

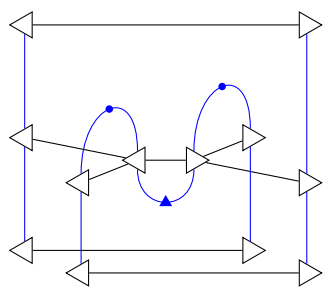
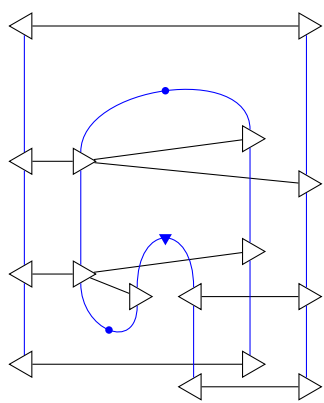
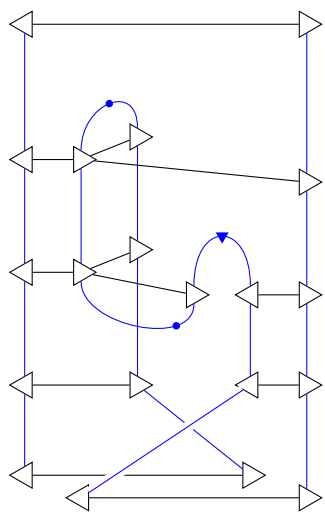
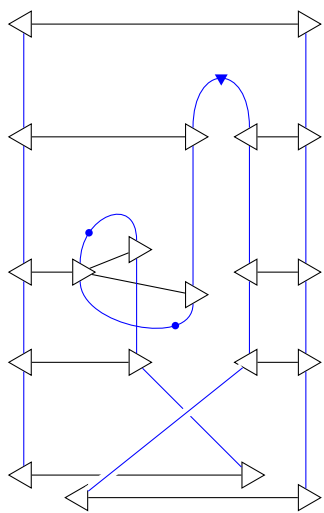
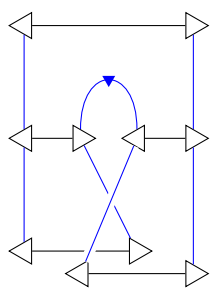
\begin{figure}
  \centering
  \subfigure[Step 1]{\label{fig:zeta1}
    \begin{tikzpicture}[strings]
      \def\yy{0}
      \node[pbeflat] (d1\yy) at (1,\yy,0) {};
      \node[pfeflat] (d4\yy) at (4,\yy,0) {};
      \draw (d1\yy) -- (d4\yy);

      \def\yy{1}
      \node[pbeflat] (d0\yy) at (0,\yy,2) {};
      \node[pbeflat] (d1\yy) at (1,\yy,0) {};
      \node[pbeflat] (d2\yy) at (2,\yy,1) {};
      \node[pfeflat] (d3\yy) at (3,\yy,1) {};
      \node[pfeflat] (d4\yy) at (4,\yy,2) {};
      \node[pfeflat] (d5\yy) at (5,\yy,0) {};
      \draw (d0\yy) -- (d2\yy);
      \draw (d1\yy) -- (d2\yy) -- (d3\yy) -- (d4\yy);
      \draw (d3\yy) -- (d5\yy);

      \def\yy{2}
      \node[pbeflat] (d0\yy) at (0,\yy,1) {};
      \node[pbeflat] (d1\yy) at (1,\yy,0) {};
      \node[pfeflat] (d4\yy) at (4,\yy,1) {};
      \node[pfeflat] (d5\yy) at (5,\yy,0) {};
      \draw (d0\yy) -- (d4\yy);
      \draw (d1\yy) -- (d5\yy);
      
      \begin{scope}[inner]
        \node[inneriso] (iso1) at (1,.5,0) {};
        \draw (d10) -- (iso1) -- (d11) -- (d12);
        \draw (iso1) -- (d01) -- (d02);
        \draw (iso1) -- (d21);
        \node[inneriso] (iso4) at (4,.5,0) {};
        \draw (d40) -- (iso4) -- (d41) -- (d42);
        \draw (iso4) -- (d51) -- (d52);
        \draw (iso4) -- (d31);
        \draw (d31) to[out=-90,in=-90,looseness=2] node[innercounit] {} (d21);
      \end{scope}
    \end{tikzpicture}}
  \hspace{1cm}
  \subfigure[Step 2]{\label{fig:zeta2}
    \begin{tikzpicture}[strings]
      \def\yy{0}
      \node[pbeflat] (d0\yy) at (0,\yy,0) {};
      \node[pfeflat] (d5\yy) at (5,\yy,0) {};
      \draw (d0\yy) -- (d5\yy);

      \def\yy{1}
      \node[pbeflat] (d0\yy) at (0,\yy,2) {};
      \node[pbeflat] (d1\yy) at (1,\yy,0) {};
      \node[pbeflat] (d2\yy) at (2,\yy,1) {};
      \node[pfeflat] (d3\yy) at (3,\yy,1) {};
      \node[pfeflat] (d4\yy) at (4,\yy,2) {};
      \node[pfeflat] (d5\yy) at (5,\yy,0) {};
      \draw (d0\yy) -- (d2\yy);
      \draw (d1\yy) -- (d2\yy) -- (d3\yy) -- (d4\yy);
      \draw (d3\yy) -- (d5\yy);

      \def\yy{2}
      \node[pbeflat] (d0\yy) at (0,\yy,1) {};
      \node[pbeflat] (d1\yy) at (1,\yy,0) {};
      \node[pfeflat] (d4\yy) at (4,\yy,1) {};
      \node[pfeflat] (d5\yy) at (5,\yy,0) {};
      \draw (d0\yy) -- (d4\yy);
      \draw (d1\yy) -- (d5\yy);
      
      \begin{scope}[inner]
        \draw (d00) -- (d01) -- (d02);
        \draw (d50) -- (d51) -- (d52);
        \draw (d11) -- (d12);
        \draw (d41) -- (d42);
        \draw (d11) to[out=90,in=90,looseness=3] node[inneriso] {} (d21);
        \draw (d31) to[out=90,in=90,looseness=3] node[inneriso] {} (d41);
        \draw (d31) to[out=-90,in=-90,looseness=2] node[innercounit] {} (d21);
      \end{scope}
    \end{tikzpicture}}
  \\
  \subfigure[Step 3]{\label{fig:zeta3}
    \begin{tikzpicture}[strings]
      \def\yy{0}
      \node[pbeflat] (d0\yy) at (0,\yy,0) {};
      \node[pfeflat] (d5\yy) at (5,\yy,0) {};
      \draw (d0\yy) -- (d5\yy);

      \def\yy{1}
      \node[pbeflat] (d0\yy) at (0,\yy,2) {};
      \node[pbeflat] (d1\yy) at (1,\yy,0) {};
      \node[pbeflat] (d4\yy) at (4,\yy,1) {};
      \node[pfeflat] (d5\yy) at (5,\yy,1) {};
      \draw (d0\yy) -- (d4\yy);
      \draw (d1\yy) -- (d4\yy) -- (d5\yy);

      \def\yy{2}
      \node[pbeflat] (d0\yy) at (0,\yy,2) {};
      \node[pbeflat] (d1\yy) at (1,\yy,0) {};
      \node[pfeflat] (d2\yy) at (2,\yy,2) {};
      \node[pbeflat] (d3\yy) at (3,\yy,2) {};
      \node[pbeflat] (d4\yy) at (4,\yy,1) {};
      \node[pfeflat] (d5\yy) at (5,\yy,1) {};
      \draw (d0\yy) -- (d2\yy);
      \draw (d3\yy) -- (d4\yy);
      \draw (d1\yy) -- (d4\yy) -- (d5\yy);

      \def\yy{3}
      \node[pbeflat] (d0\yy) at (0,\yy,1) {};
      \node[pbeflat] (d1\yy) at (1,\yy,0) {};
      \node[pfeflat] (d2\yy) at (2,\yy,1) {};
      \node[pfeflat] (d5\yy) at (5,\yy,0) {};
      \draw (d0\yy) -- (d2\yy);
      \draw (d1\yy) -- (d5\yy);
      
      \begin{scope}[inner]
        \draw (d00) -- (d01) -- (d02) -- (d03);
        \draw (d50) -- (d51) -- (d52) -- (d53);
        \draw (d11) -- (d12) -- (d13);
        \draw (d41) -- (d42);
        \draw (d11) to[out=90,in=90,looseness=1] node[inneriso] {} (d41);
        \draw (d22) to[out=90,in=90,looseness=3] node[innerunit] {} (d32);
        \draw (d32) to[out=-90,in=-90,looseness=2] node[inneriso] {} (d42);
        \draw (d22) -- (d23);
      \end{scope}
    \end{tikzpicture}}
  \hspace{1cm}
  \subfigure[Step 4]{\label{fig:zeta4}
    \begin{tikzpicture}[strings]
      \def\yy{0}
      \node[pbeflat] (d0\yy) at (0,\yy,0) {};
      \node[pfeflat] (d5\yy) at (5,\yy,0) {};
      \draw (d0\yy) -- (d5\yy);

      \def\yy{1}
      \node[pbeflat] (d0\yy) at (0,\yy,2) {};
      \node[pbeflat] (d1\yy) at (3,\yy,0) {};
      \node[pbeflat] (d4\yy) at (4,\yy,1) {};
      \node[pfeflat] (d5\yy) at (5,\yy,1) {};
      \draw (d0\yy) -- (d4\yy);
      \draw (d1\yy) -- (d4\yy) -- (d5\yy);

      \def\yy{2}
      \node[pbeflat] (d0\yy) at (0,\yy,2) {};
      \node[pbeflat] (d1\yy) at (3,\yy,0) {};
      \node[pfeflat] (d2\yy) at (1,\yy,2) {};
      \node[pbeflat] (d3\yy) at (2,\yy,2) {};
      \node[pbeflat] (d4\yy) at (4,\yy,1) {};
      \node[pfeflat] (d5\yy) at (5,\yy,1) {};
      \draw (d0\yy) -- (d2\yy);
      \draw (d3\yy) -- (d4\yy);
      \draw (d1\yy) -- (d4\yy) -- (d5\yy);

      \def\yy{3}
      \node[pbeflat] (d0\yy) at (0,\yy,1) {};
      \node[pbeflat] (d1\yy) at (3,\yy,1) {};
      \node[pfeflat] (d2\yy) at (1,\yy,1) {};
      \node[pfeflat] (d5\yy) at (5,\yy,1) {};
      \draw (d0\yy) -- (d2\yy);
      \draw (d1\yy) -- (d5\yy);

      \def\yy{4}
      \node[pbeflat] (d0\yy) at (0,\yy,1) {};
      \node[pbeflat] (d1\yy) at (1,\yy,0) {};
      \node[pfeflat] (d2\yy) at (4,\yy,1) {};
      \node[pfeflat] (d5\yy) at (5,\yy,0) {};
      \draw (d0\yy) -- (d2\yy);
      \draw (d1\yy) -- (d5\yy);
      
      \begin{scope}[inner]
        \draw (d00) -- (d01) -- (d02) -- (d03) -- (d04);
        \draw (d50) -- (d51) -- (d52) -- (d53) -- (d54);
        \draw (d11) -- (d12) -- (d13) -- (d14);
        \draw (d41) -- (d42);
        \draw (d11) to[out=90,in=90,looseness=2] node[inneriso] {} (d41);
        \draw (d22) to[out=90,in=90,looseness=3] node[innerunit] {} (d32);
        \draw (d32) to[out=-90,in=-90,looseness=1] node[inneriso,near start] {} (d42);
        \draw[white,line width=4pt] (d23) -- (d24);
        \draw (d22) -- (d23) -- (d24);
      \end{scope}
    \end{tikzpicture}}
  \\
  \subfigure[Step 5]{\label{fig:zeta5}
    \begin{tikzpicture}[strings]
      \def\yy{0}
      \node[pbeflat] (d0\yy) at (0,\yy,0) {};
      \node[pfeflat] (d5\yy) at (5,\yy,0) {};
      \draw (d0\yy) -- (d5\yy);

      \def\yy{1}
      \node[pbeflat] (d0\yy) at (0,\yy,0) {};
      \node[pfeflat] (d1\yy) at (1,\yy,0) {};
      \node[pbeflat] (d2\yy) at (2,\yy,0) {};
      \node[pfeflat] (d5\yy) at (5,\yy,0) {};
      \draw (d0\yy) -- (d1\yy);
      \draw (d2\yy) -- (d5\yy);

      \def\yy{2}
      \node[pbeflat] (d0\yy) at (0,\yy,1) {};
      \node[pfeflat] (d1\yy) at (1,\yy,1) {};
      \node[pbeflat] (d2\yy) at (2,\yy,2) {};
      \node[pbeflat] (d3\yy) at (3,\yy,0) {};
      \node[pbeflat] (d4\yy) at (4,\yy,1) {};
      \node[pfeflat] (d5\yy) at (5,\yy,1) {};
      \draw (d0\yy) -- (d1\yy);
      \draw (d2\yy) -- (d4\yy) -- (d5\yy);
      \draw (d3\yy) -- (d4\yy);

      \def\yy{3}
      \node[pbeflat] (d0\yy) at (0,\yy,0) {};
      \node[pfeflat] (d1\yy) at (1,\yy,0) {};
      \node[pbeflat] (d3\yy) at (3,\yy,0) {};
      \node[pfeflat] (d5\yy) at (5,\yy,0) {};
      \draw (d0\yy) -- (d1\yy);
      \draw (d3\yy) -- (d5\yy);

      \def\yy{4}
      \node[pbeflat] (d0\yy) at (0,\yy,1) {};
      \node[pbeflat] (d3\yy) at (1,\yy,0) {};
      \node[pfeflat] (d1\yy) at (4,\yy,1) {};
      \node[pfeflat] (d5\yy) at (5,\yy,0) {};
      \draw (d0\yy) -- (d1\yy);
      \draw (d3\yy) -- (d5\yy);

      \begin{scope}[inner]
        \draw (d00) -- (d01) -- (d02) -- (d03) -- (d04);
        \draw (d50) -- (d51) -- (d52) -- (d53) -- (d54);
        \draw (d11) to[out=90,in=90,looseness=3] node[innerunit] {} (d21);
        \draw (d21) -- (d22);
        \draw (d22) to[out=-90,in=-90,looseness=1] node[inneriso,near start] {} (d42);
        \draw (d42) to[out=90,in=90,looseness=2] node[inneriso,near start] {} (d32);
        \draw (d32) -- (d33) -- (d34);
        \draw[white,line width=4pt] (d13) -- (d14);
        \draw (d11) -- (d12) -- (d13) -- (d14);
      \end{scope}
    \end{tikzpicture}}
\hspace{1cm}
  \subfigure[Step 6]{\label{fig:zeta6}
    \begin{tikzpicture}[strings]
      \def\yy{0}
      \node[pbeflat] (d0\yy) at (0,\yy,0) {};
      \node[pfeflat] (d3\yy) at (3,\yy,0) {};
      \draw (d0\yy) -- (d3\yy);

      \def\yy{1}
      \node[pbeflat] (d0\yy) at (0,\yy,0) {};
      \node[pfeflat] (d1\yy) at (1,\yy,0) {};
      \node[pbeflat] (d2\yy) at (2,\yy,0) {};
      \node[pfeflat] (d3\yy) at (3,\yy,0) {};
      \draw (d0\yy) -- (d1\yy);
      \draw (d2\yy) -- (d3\yy);

      \def\yy{2}
      \node[pbeflat] (d0\yy) at (0,\yy,1) {};
      \node[pbeflat] (d2\yy) at (1,\yy,0) {};
      \node[pfeflat] (d1\yy) at (2,\yy,1) {};
      \node[pfeflat] (d3\yy) at (3,\yy,0) {};
      \draw (d0\yy) -- (d1\yy);
      \draw (d2\yy) -- (d3\yy);

      \begin{scope}[inner]
        \draw (d00) -- (d01) -- (d02);
        \draw (d30) -- (d31) -- (d32);
        \draw (d11) to[out=90,in=90,looseness=3] node[innerunit] {} (d21);
        \draw (d21) -- (d22);
        \draw[white,line width=4pt] (d11) -- (d12);
        \draw (d11) -- (d12);
      \end{scope}
    \end{tikzpicture}}
  \caption{The last half of $\zeta$}
  \label{fig:half-zeta}
\end{figure}

\end{document}